\DeclareMathOperator{\ind}{ind}
\newcommand{\R}{\mathbb{R}}
\newcommand{\C}{\mathbb{C}}
\newcommand{\Z}{\mathbb{Z}}
\DeclareMathOperator{\coker}{coker}
\newcommand{\p}{\mathfrak{p}}
\newcommand{\RP}{\mathbb{RP}}
\newcommand{\CP}{\mathbb{CP}}
\newcommand{\D}{\mathbb{D}}
\newcommand{\Grad}{\nabla}
\renewcommand{\P}{\mathcal{P}}
\newcommand{\M}{\mathcal{M}}
\newcommand{\tensor}{\otimes}
\newcommand{\boxtensor}{\boxtimes}
\renewcommand{\check}{\vee}
\declaretheoremstyle[
headfont=\bfseries,
bodyfont=\itshape,
]{myplain}
\declaretheorem[
style=myplain,
name=Propostion,
]{proposition}
\theoremstyle{plain}
\newtheorem{corollary}{Corollary}
\newtheorem{lemma}{Lemma}
\theoremstyle{definition}
\newtheorem{definition}{Definition}
\theoremstyle{remark}
\newtheorem{remark}{Remark}
\newtheorem{example}{Example}
\newtheorem*{conjecture}{Conjecture}
\title{Floer homology via Twisted Loop Spaces}
\author{Semon Rezchikov}
\date{\today}
\begin{document}

\maketitle
\begin{abstract}
	Answering a question of Witten, we introduce a novel method for defining an integral version of Lagrangian Floer homology, \emph{removing} the standard restriction that the Lagrangians in question must be relatively Pin. Using this technique, we derive stronger bounds on the self-intersection of certain exact Lagrangians $\RP^2 \times L'$ than those that follow from traditional methods.  We define a integral version of Lagrangian Floer homology \emph{all} oriented closed exact Lagrangians $L$ in a Liouville domain and prove a general self-intersection bound coming from the algebraic properties of the diagonal bimodule of a \emph{twist} of the $dg$-algebra of chains on the based loop space of $L$.
\end{abstract}
\section*{Introduction}

The literature on coefficients in Lagrangian Floer homology is built around the following idea: in order to make sense of $HF^*(L)$ as an invariant away from characteristic two, one must require that certain \emph{conditions} on $L$ hold, which coherently determine relative orientations of the moduli spaces of holomorphic disks with boundary on $L$. The most common such condition is that $L$ is relatively Spin in the ambient symplectic manifold \cite{FOOO}. If such a condition holds, then the Floer differential is extracted from a signed count of rigid elements in the Floer moduli spaces, where the sign is determined by the coherent orientations. In this paper, we develop a \emph{new} approach to the problem of defining Lagrangian Floer homology outside of characteristic two which does \emph{not} depend the existence of coherent orientations of Floer theoretic moduli spaces. The theme of the paper is that \emph{even when Floer moduli spaces may not be oriented}, they still define \emph{meaningful operations} on a \emph{variant} of the Floer complex. 

The investigation in this paper was prompted by a suggestion of Witten:
\begin{conjecture}(Witten)
	Suppose $L$ is a Lagrangian submanifold admitting a $Spin^c$ structure with associated complex line bundle $\lambda$. Then the Floer homology of $L$ should be defined over $\Z$ whenever $L$ admits a $Spin^c$ connection so that the associated connection on $\lambda$ is flat.
\end{conjecture}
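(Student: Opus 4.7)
The plan is to construct, for a Lagrangian $L$ with $Spin^c$ structure and flat associated line bundle $\lambda$, an integer-valued Floer chain complex by replacing the usual orientation framework with the structure of a module over chains on a \emph{twisted based loop space} $\widetilde{\Omega L}_\lambda$. The guiding principle is that the obstruction to orienting the Floer moduli spaces with boundary on $L$ is controlled by $w_2(L)$: while a $Spin$-structure kills this obstruction outright, a $Spin^c$-structure only lifts it to a $U(1)$-valued class carried by the curvature of $\lambda$, and the flatness hypothesis is precisely what one needs to convert this class into a coherent trivialization on loops.

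First I would construct $\widetilde{\Omega L}_\lambda$ as the $U(1)$-bundle over $\Omega L$ whose fiber over a loop $\gamma$ is the space of parallel-transport data for $\lambda$ along $\gamma$; flatness of the induced connection on $\lambda$ ensures that concatenation of loops lifts, so that $C_*(\widetilde{\Omega L}_\lambda)$ inherits a $dg$-algebra structure over $\Z$. Next, I would define the Floer complex as a free $C_*(\widetilde{\Omega L}_\lambda)$-module generated by Hamiltonian chords, with differential given by parameterized moduli spaces of holomorphic strips decorated at their breaking points by loops in $\widetilde{\Omega L}_\lambda$; the ``signs'' that would be needed in a traditional setup are absorbed into the data of these decorating loops.

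The main technical obstacle will be verifying $\partial^2 = 0$ over $\Z$. In the boundaries of one-dimensional moduli spaces, sign discrepancies between broken configurations assemble into a $\Z/2$-cocycle on $\Omega L$ representing the pullback of $w_2(L)$; I would need to identify this with the mod-$2$ reduction of the $U(1)$-cocycle determined by the holonomy of the flat $Spin^c$ connection, and show that the chain-level trivialization interacts correctly with loop concatenation. Once that is done, invariance under auxiliary choices follows from a parameterized version of the same argument, and the resulting invariant $HF^*(L; \Z)$ specializes to the standard Floer homology when $\lambda$ is trivial, recovering the relatively $Spin$ case.
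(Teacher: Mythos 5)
Your overall strategy---replacing signs by the structure of a module over chains on a twisted loop space---is the same one the paper uses, but two of your key steps do not work as stated. First, the twist you build is the wrong object. The orientation anomaly for strips with boundary on $L$ is a $\Z/2$-torsor over each path (concretely, the torsor of Pin structures on $\gamma^*TL$ relative to fixed Pin structures at the endpoints, or equivalently of square roots of $\lambda$ along $\gamma$), not a $U(1)$-torsor of parallel-transport data; the $U(1)$-bundle you describe is essentially pulled back from the fiber of $\lambda$ at the basepoint and carries no twist. Relatedly, flatness is not what makes concatenation lift---parallel transport always composes under concatenation of Moore paths---so the sentence explaining what the flatness hypothesis buys you is pointing at the wrong mechanism. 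The correct $\Z/2$-local system $\p$ on $\Omega L$ has characteristic class $\Omega w_2(L)\in H^1(\Omega L;\Z/2)$, the image of $w_2(L)$ under evaluation on tori swept out by loops of loops, and what you actually need is that this class \emph{vanishes}, so that the twisted group ring $\Z[\pi_1(L)]^{tw}$ (rather than a genuinely derived coefficient system) exists and $\partial^2=0$ can be checked there.

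Second, the place where the $Spin^c$/flatness hypothesis enters is purely topological, and your proposed identification of a ``mod-$2$ reduction of the $U(1)$-holonomy cocycle'' with the $w_2$-anomaly is not a well-defined operation and would not close the argument. The actual deduction is: a flat connection on $\lambda$ forces $c_1(\lambda)$ to be torsion; since $H^1(\Omega L;\Z)$ is torsion-free, the integral transgression $\bar\Omega c_1(\lambda)\in H^1(\Omega L;\Z)$ vanishes; and since $w_2(L)=c_1(\lambda)\bmod 2$, commutativity of transgression with reduction mod $2$ gives $\Omega w_2(L)=0$. The holonomy representation itself never enters. With that vanishing in hand, the rest of your outline (free module over the twisted loop-space algebra generated by chords, $\partial^2=0$ via the gluing of Pin structures relative to the ends along broken boundaries, invariance by a parametrized version) does match the paper's construction, though note that what one obtains over $\Z$ is a complex of bimodules over $\Z[\pi_1(L_0)]^{tw}\tensor\Z[\pi_1(L_1)]^{tw}$; recovering an honest $\Z$- or field-valued Floer group requires choosing an augmentation of this twisted group ring, a step absent from your sketch.
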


In Section \ref{sec:simplified-construction}, we answer Witten's question in the affirmative by constructing an integral version of Lagrangian Floer homology for Lagrangians $L$ satisfying a weaker condition than the one proposed by Witten. Consider the map 
\begin{equation}\Omega: H^2(L, \Z/2) \to H^1(\Omega L, \Z/2)
\end{equation} 
which maps a chain level representative 
$\phi \in H^2(L, \Z/2)$ to 
\begin{equation}\Omega(\phi)(\gamma) = \phi (ev(\gamma \times S^1))),
\end{equation}
 where $\gamma$ is a 1-chain on the based loop space $\Omega L$, and $ev: \Omega L \times S^1 \to L$ is the evaluation map. The second Steifel-Whitney class $w_2(L)$ thus determines a class $\Omega w_2(L) \in H^1(\Omega L; \Z/2)$ represented by a real line bundle on $\Omega L$. The condition needed in Section \ref{sec:simplified-construction} is
\begin{equation}
\label{assumption:omega}
\begin{minipage}{0.9\textwidth}
The line bundle corresponding to $\Omega w_2(L)$ is trivial, or in other words, $\Omega w_2(L) = 0$.
\end{minipage}
\end{equation}

The original observation that led to the construction of Section \ref{sec:simplified-construction} was that even though there was no canonical way to assign \emph{signs} to holomorphic curves bounding $\RP^2$ because $\RP^2$ is not \emph{Pin}, there is a canonical procedure which assigns \emph{Gaussian integers} to such holomorphic curves. In the formalism of this paper this observation amounts to the following: Assumption (\ref{assumption:omega}), satisfied by $L = \RP^2$, determines a twist $\Z[\pi_1(L)]^{tw}$ of the group ring of the fundamental group of $L$ (see Section \ref{sec:twisted-fundamental-group}), and there is an isomorphism of unital rings 
\begin{equation} \epsilon: \Z[\pi_1(\RP^2)]^{tw} \to \Z[i] 
\end{equation}
sending the nontrival loop on $\RP^2$ to $i$. Section \ref{sec:augmentations-and-proof-of-propositions} explains how Floer theory assigns a complex of $R$-modules corresponding to any augmentation to $R$ of the twisted group ring of a Lagrangian satisfying Assumption (\ref{assumption:omega}). This generalizes the standard construction which assigns Floer homology groups to spin Lagrangians $L$ equipped with unitary local systems $\eta$, which correspond to maps 
\[\epsilon_\eta: \Z[\pi_1(L)]^{tw} = \Z[\pi_1(L)] \to \C. \]

The construction in Section \ref{sec:simplified-construction} lets one prove lower bounds on the self-intersection numbers of exact Lagrangians which are explicit and are stronger than those that could previously be proven. The proposition below is proven in Section \ref{sec:example-prop-proofs-subsec}:
 \begin{proposition}
 \label{prop:example}
 Let $L$ be a closed exact Lagrangian in a Liouville domain satisfying Assumption (\ref{assumption:omega}). 	
 
 If $\Z[\pi_1(L)]^{tw}$ admits an augmentation $\epsilon$ to a field $k$, then the number of intersection points of $L$ with any transversely intersecting Lagrangian that is Hamiltonian isotopic to $L$ is bounded from below by 
 \[\dim H_*(L, k_\epsilon), \]
 where $k_\epsilon$ is a certain $k$-local system depending on $\epsilon$. 
 \end{proposition}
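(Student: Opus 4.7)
The plan is to reduce the bound to the standard Floer-theoretic inequality
\[\#(L \cap L') \geq \dim_k HF^*(L, L'; \epsilon),\]
followed by a computation identifying the self-Floer homology with the desired twisted ordinary homology. First, I would invoke the construction of Section \ref{sec:augmentations-and-proof-of-propositions}, which assigns to $L$ together with the augmentation $\epsilon : \Z[\pi_1(L)]^{tw} \to k$ a chain complex $CF^*(L, L'; \epsilon)$ of $k$-modules whose underlying free module has exactly one generator per transverse intersection point of $L$ with $L'$. Thus $\#(L \cap L') = \dim_k CF^*(L, L'; \epsilon) \geq \dim_k HF^*(L, L'; \epsilon)$, and it remains to identify the right-hand side with $\dim_k H_*(L; k_\epsilon)$.

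Next I would establish Hamiltonian isotopy invariance of $HF^*(L, L'; \epsilon)$. This is the standard continuation-map argument: the parameterized moduli spaces of Floer cylinders interpolating between pairs of isotopic Lagrangians carry the same twisted-loop-space data as the original Floer moduli spaces, so the augmentation $\epsilon$ converts those data into honest coefficients in $k$ compatibly with the gluing identities. Because the construction of Section \ref{sec:simplified-construction} is natural in the pair $(L, L')$, the usual composition and chain-homotopy identities carry through verbatim, yielding an isomorphism $HF^*(L, L'; \epsilon) \cong HF^*(L, L''; \epsilon)$ for any two Hamiltonian-isotopic perturbations $L', L''$ of $L$.

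Having reduced to the self-Floer homology, I would employ a PSS-type argument: take $L' = \phi^1_H(L)$ for a $C^2$-small Hamiltonian $H$ generated by a Morse function $f$ on $L$, so that intersection points $L \cap L'$ are identified with $\mathrm{Crit}(f)$ and, in the small-energy window, rigid Floer strips are in bijection with gradient flowlines of $f$. In this regime each strip comes decorated with a loop in $L$ obtained by closing up its boundary via the isotopy $\phi^t_H$, and this loop represents a class in $\pi_1(L)$. The twisted construction weights the contribution of each such trajectory by $\epsilon$ applied to the image of that loop in $\Z[\pi_1(L)]^{tw}$. Summing over trajectories in each homotopy class then recovers the Morse differential on $C_*(L; k_\epsilon)$, where $k_\epsilon$ is the rank-one $k$-local system with monodromy representation $\epsilon \circ i$, for $i : \pi_1(L) \to \Z[\pi_1(L)]^{tw}$ the natural inclusion.

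The main obstacle I expect is this last identification: one must verify that the weight with which a Floer strip contributes to the differential is exactly the value of $\epsilon$ on the $\pi_1$-class of its boundary loop, even though the twisting data from Section \ref{sec:simplified-construction} natively lives on the loop space $\Omega L$ rather than on $L$. Concretely, one has to check that pulling $\Omega w_2(L)$ back along the evaluation maps from PSS moduli spaces produces the same trivialization as the one determined by $\epsilon$ acting on $\Z[\pi_1(L)]^{tw}$. Once this loop-space-to-fundamental-group compatibility is established, Morse--Smale genericity together with the usual small-energy window argument delivers the required quasi-isomorphism with the Morse complex $C_*(L; k_\epsilon)$, and the proposition follows.
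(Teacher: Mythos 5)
Your overall architecture matches the paper's proof: the complex is freely generated over $k$ by intersection points, continuation maps give invariance under change of Floer data, and a $C^2$-small Hamiltonian reduces the computation to Morse theory via Floer's original strip-to-flowline bijection (the paper uses exactly this, Lemma \ref{lemma:floers-lemma}, rather than a PSS map). The genuine problem is in your final identification of the local system. You assert that $k_\epsilon$ has monodromy $\epsilon \circ i$ for ``the natural inclusion'' $i: \pi_1(L) \to \Z[\pi_1(L)]^{tw}$. No such multiplicative map exists in general: the multiplication on $\Z[\pi_1(L)]^{tw}$ differs from that of $\Z[\pi_1(L)]$ by the $2$-cocycle $\mu$ determined by $w_2(L)$ (Section \ref{sec:taking-h0}), so the set-theoretic assignment $\gamma \mapsto [\gamma]$ is not a group homomorphism into the units, and $\epsilon \circ i$ need not be a representation. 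For $L = \RP^2$ with $\epsilon$ sending the generator to $i \in \Z[i]$, your recipe would give ``monodromy $i$,'' which is not a $\Z/2$-representation away from characteristic $2$.

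The correct definition (Definition \ref{def:k-epsilon}) uses the diagonal ring map $\Delta: \Z[\pi_1(L)] \to (\Z[\pi_1(L)]^{tw})^{op} \tensor \Z[\pi_1(L)]^{tw}$, $\gamma \mapsto \gamma^{-1} \tensor \gamma$, and sets $k_\epsilon$ to be the local system with monodromy $(\epsilon \tensor \epsilon)\circ \Delta$. The doubling is forced by the geometry: a Floer strip between two copies of $L$ has \emph{two} boundary arcs, one on each Lagrangian, and under the small-Hamiltonian degeneration these become a Morse trajectory together with its reverse, each carrying a Pin-structure twist; the cocycle signs cancel between the two factors, which is precisely why $\Delta$ is a well-defined ring map while your single-factor $i$ is not. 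Concretely for $\RP^2$ one gets monodromy $i\cdot i = -1$, i.e.\ the orientation local system, with rank-$2$ homology as in Lemma \ref{lemma:RP2-good}. Since the content of the proposition includes pinning down which local system appears in the bound, this step needs to be repaired before your dimension count goes through.
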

 
 We now give a concrete application of Proposition \ref{prop:example}.
 \begin{example} 
 Let $n \geq 2$, let $p$ be a prime number not equal to $2$, let $r$ an integer greater than zero, and let $L_2 = L(p)^{\# r}$ be a connect sum of $r$ lens spaces 
 \[L(p) := L(p, 1,1, \ldots, 1) \simeq S^{2n-1}/({\Z/p}).\] 
 Take any manifold $L_1$ such that $w_2(L) \neq 0$, but $\Omega w_2(L) = 0$. Moreover, choose $L_1$ so that there exists an augmentation 
 \begin{equation}\epsilon: \Z[\pi_1(L)]^{tw} \to k, \text{ where }k\text{ is a field with }char k \neq 0, p,
 \end{equation} such that the local system $k_\epsilon$ associated to $\epsilon$ by Proposition \ref{prop:example} satisfies
 \[\dim_k H_*(L, k_\epsilon) > 1. \]These constraints are satisfied, for example, by $L_1 = \RP^2$; see Lemma \ref{lemma:RP2-good} in Section \ref{sec:example-prop-proofs-subsec}. Notice that in this case, $L = L_1 \times L_2$ is not Pin. One can produce a Liouville domain $M \supset L$ such that $L$ is not \emph{relatively Pin} in $M$ by by enlarging $T^*L$ via subcritical Weinstein handle attachment \cite{cieliebak2012stein} to a Weinstein domain $M$ with  $H^2(W; \Z/2) = 0$.  In this setting the Lagrangian Floer Homology of $L$ is only defined with coefficients in a ring $R$ with $\text{char } R = 2$ due to the lack of a coherent orientation of the moduli spaces of Floer trajectories. The PSS map \cite{PSS96}, \cite{peter-albers} then gives an isomorphism 
 \begin{equation}HF(L, L; R) \simeq H^*(L_1 \times L_2; R) \simeq H^*(L_1; R) \tensor_R H^*(L_2; R).
 \end{equation} However, $H^*(L; R) \simeq H^*(S^{2n-1}; R)$ is a free rank $2$ $R$-module; thus, if $L_1$ was chosen to be $\RP^2$ then standard methods in Lagrangian Floer homology only show that the intersection of $L$ with any transverse Hamiltonian isotopy of itself have at least $3\cdot2=6$ distinct points.  However, Proposition \ref{prop:example} allows us to achieve a much better self-intersection bound, due to the following property of the local system $k_\epsilon$: 
\begin{proposition}
 \label{prop:example2}
 	Let the notation be that of Proposition \ref{prop:example}. 
 	
 If $L = L_1 \times L_2$ with $L_2$ spin, then $\Z[\pi_1(L)]^{tw} = \Z[\pi_1(L_1)]^{tw} \tensor_\Z \Z[\pi_1(L_2)]$, and $k_{\epsilon \tensor \epsilon_0} \simeq \pi_1^*(k_{\epsilon})$, with $\epsilon_0$ the canonical augmentation $\Z[\pi_1(L_2)] \to \Z \to k$ and  $\pi_1: L_1 \times L_2 \to L_1$ the projection. In particular, by the Kunneth formula,
 \[\{\# \text{self intersection points of }L\} \geq \dim H_*(L_1, k_\epsilon)\cdot\dim H_*(L_2, k). \]
\end{proposition}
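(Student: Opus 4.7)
The plan is to exploit the spin hypothesis on $L_2$ to show that the twisting datum on $L_1 \times L_2$ descends from $L_1$, then reduce the local-coefficient homology of $L$ to an external product and invoke Proposition \ref{prop:example}.

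First I would verify the tensor decomposition of the twisted group ring. Using the Whitney sum formula together with $w_1(L_2)=w_2(L_2)=0$, one gets $w_2(L_1 \times L_2) = p_1^* w_2(L_1)$, where $p_i : L \to L_i$ are the projections. The based loop space splits as $\Omega L = \Omega L_1 \times \Omega L_2$ with componentwise evaluation, so a direct chase of the definition of $\Omega$ in the introduction shows that $\Omega$ commutes with $p_1^*$, giving
\[ \Omega w_2(L) \;=\; \pi_{\Omega L_1}^* \, \Omega w_2(L_1) \;\in\; H^1(\Omega L;\Z/2). \]
In particular the twisting class is pulled back along the projection $\pi_1(L) = \pi_1(L_1) \times \pi_1(L_2) \to \pi_1(L_1)$ and is trivial on the $\pi_1(L_2)$ factor. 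Matching twisted multiplications on nose then yields $\Z[\pi_1(L)]^{tw} \cong \Z[\pi_1(L_1)]^{tw} \otimes_\Z \Z[\pi_1(L_2)]$.

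Next I would identify $k_{\epsilon \otimes \epsilon_0}$ with $p_1^* k_\epsilon$. Under the decomposition above, $\epsilon \otimes \epsilon_0$ sends $a \otimes b \mapsto \epsilon(a)$, so it factors through the projection onto $\Z[\pi_1(L_1)]^{tw}$. Since the construction of Section \ref{sec:augmentations-and-proof-of-propositions} assigning a $k$-local system to an augmentation is natural under pullback along continuous maps, applying it to $p_1 : L \to L_1$ yields $k_{\epsilon \otimes \epsilon_0} \simeq p_1^* k_\epsilon$. Because $k$ is a field and $p_1^* k_\epsilon$ is the external product of $k_\epsilon$ on $L_1$ with the constant local system $k$ on $L_2$, the Künneth formula for homology with local coefficients gives
\[ \dim_k H_*(L;\, k_{\epsilon \otimes \epsilon_0}) \;=\; \dim_k H_*(L_1;\, k_\epsilon) \,\cdot\, \dim_k H_*(L_2;\, k), \]
and Proposition \ref{prop:example} converts this into the stated self-intersection bound.

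The main obstacle is verifying naturality of the constructions of Section \ref{sec:twisted-fundamental-group} and Section \ref{sec:augmentations-and-proof-of-propositions} under the projection $p_1$. The spin hypothesis on $L_2$ is what makes this work: without it, the cross term $p_1^* w_1(L_1) \cdot p_2^* w_1(L_2)$ in $w_2(L)$ would contribute a nontrivial twist coupling $\pi_1(L_1)$ with $\pi_1(L_2)$, obstructing both the tensor decomposition of the twisted group ring and the identification of $k_{\epsilon \otimes \epsilon_0}$ as a pullback.
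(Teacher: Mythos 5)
Your proposal is correct and follows essentially the same route as the paper: the spin hypothesis kills the cross and second-factor terms in the Whitney formula so that $w_2(L)=p_1^*w_2(L_1)$, which gives the tensor decomposition of the twisted group ring, and the identification $k_{\epsilon\tensor\epsilon_0}\simeq p_1^*k_\epsilon$ comes from the compatibility of the diagonal map $\Delta$ (equivalently, the naturality you invoke) with that decomposition, after which K\"unneth and Proposition \ref{prop:example} finish the argument. You spell out a few steps the paper leaves implicit, but there is no substantive difference in approach.
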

 
We have chosen $L_1$ and $L_2$ to satisfy the conditions of Proposition \ref{prop:example2}. Since $\dim H_*(L_1, k_\epsilon) > 1$ and $\dim H_*(L_2, k) > r$, Proposition \ref{prop:example2} shows that our construction gives a family of $d$-dimensional exact Lagrangians in Liouville domains for which the self-intersection bounds proven by the methods of this paper are \emph{arbitrarily stronger} than those that can be derived from the usual approach to Lagrangian Floer Homology, in every odd dimension greater than $4$. Crossing $L_2$ with $S^1$, and re-running the above construction, shows that there are similar examples in every dimension greater than $4$.
\end{example}

We summarize the contents of Section \ref{sec:simplified-construction}. There is a  a Floer complex 
\begin{equation}
\label{eq:introducting-the-simplified-floer-complex}
CF(L_0, L_1; \pi_1^{tw}) := CF(L_0, L_1; \Z[\pi_1(L_0)]^{tw} \tensor \Z[\pi_1(L_1)]^{tw})\text{, defined in  (\ref{eq:floer-complex-simple}),}
\end{equation} for every pair of closed exact Lagrangians $L_0, L_1$ in a Liouville domain satisfying Assumption (\ref{assumption:omega}), which is a complex of $(\Z[\pi_1(L_0)]^{tw}, \Z[\pi_1(L_1)]^{tw})$-bimodules that is independent of Hamiltonian isotopy in the homotopy category of bimodules (Lemma \ref{lemma:floer-simplified-bimodule-invariance}). When $L_0$ and $L_1$ are Hamiltonian isotopic, Section \ref{sec:morse-theory-analog} provides a comparison of this complex with the corresponding Morse-theoretic complex. Section \ref{sec:augmentations-and-proof-of-propositions} incorporates the augmentation $\epsilon$ of Prop. \ref{prop:example} into the theory; the choice of augmentation  allows one to construct a smaller Floer complex
\[CF(L_0, L_0; \epsilon)\]
 which is a complex of $k$-vector spaces instead of a complex of bimodules over twisted group rings. We finally prove Propositions \ref{prop:example} and \ref{prop:example2}, and verify that Witten's condition implies Assumption (\ref{assumption:omega}), in Section \ref{sec:example-prop-proofs-subsec}.

In the same way that the existence of the augmentation $\epsilon$ allows us to simplify the complex of bimodules  $CF(L_0, L_1; \pi_1^{tw})$ (Eq. \ref{eq:introducting-the-simplified-floer-complex}) to a smaller complex of $k$-vector spaces $CF(L_0, L_0; \epsilon)$, the complex $CF(L_0, L_1; \pi_1^{tw})$ should be thought of as a simplification of a larger algebraic structure that exists even when the assumption $\Omega w_2(L_0)$ that is needed in Proposition \ref{prop:example} does not hold. Just as augmentations of the ring $\Z[\pi_1(L_0)]$ correspond to local systems on $L_0$, one should think of augmentations of $\Z[\pi_1(L_0)]^{tw}$ as \emph{local sytems on $L_0$ banded by the gerbe $w_2(L_0)$}, although we do not formalize this point of view in this paper. From the perspective of higher algebra, one can think of  $C_*(\Omega L_0, \Z)$, the Pontrjagin dg-algebra of chains on the based loop space, as a derived version of $\Z[\pi_1(L_0)]$. In Section \ref{sec:twisted-fundamental-group}, we  define dg-algebras $C_*(\P_{x,x}L_i, \p_{x,x})$ which are twists of the algebra $C_*(\Omega L_i; \Z)$, and are a derived analog of $\Z[\pi_1(L_i)]^{tw}$. One can think of dg-modules over $C_*(\Omega L_0, \Z)$ as ``derived local systems on $L_0$''; correspondingly, we informally think of dg-modules over $C_*(\P_{x,x}L_i, \p_{x,x})$ as ``derived local systems on $L_i$ banded by the gerbe $w_2(L_i)$''. From this point of view, the assumption $\Omega w_2 = 0$ comes about naturally as the condition that the gerbe defined by $w_2(L)$ supports an un-derived local system. In the general case, $w_2(L_i)$ may not support any un-derived local systems; but the gerbe \emph{always} supports a universal \emph{derived} local system, namely, $C_*(\P_{x,x}L_i, \p_{x,x})$, and we can make sense of Floer homology with coefficients in this universal derived local system.

Thus, making the technical assumption that $L_0, L_1$ are \emph{oriented}, we define in section \ref{sec:floer_complex} a complex 
\[ CF^*(\Omega L_0, \Omega L_1; H, J) \]
depending on Floer data $(H, J)$ for the Lagrangians $L_i$, which is a iterated extension of free bimodules over $(C_*(\P_{x,x}L_0, \p), C_*(\P_{x,x}L_1, \p))$ (see Section \ref{sec:bimodule-structure}) and is well defined up to a quasi-isomorphism which is canonical in the homotopy category of bimodules see (Prop. \ref{prop:bimodule-structures-invariance}). Generalizing Proposition \ref{prop:example}, in the case of $L_0 = L_1$, we compare this complex with a corresponding Morse theoretic complex defined in Section \ref{sec:morse_complex}, which we subsequently compute in terms of algebraic topology in Section \ref{sec:computing-the-morse-complex}. This computation leads to the following statement, proven in Section \ref{sec:proof-of-proposition-resolution}:

\begin{proposition}
\label{prop:resolution}
Let $L_0$ be an oriented closed exact Lagrangian in a Liouville domain. Then the minimal \emph{size} $s$ of a iterated extension of free modules (Definition \ref{def:twisted-complex}) that is quasi-isomorphic to the diagonal bimodule of $C_*(\P_{x,x}L_0, \p)$ is a bound from below on the number of intersection points with $L_0$ of any transversely intersecting Hamiltonian isotopy $L_1$ of $L_0$.
\end{proposition}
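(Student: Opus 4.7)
The plan is to exhibit an explicit iterated extension of free bimodules, quasi-isomorphic to the diagonal bimodule of $C_*(\P_{x,x}L_0, \p)$, whose size equals the number of intersection points $\#(L_0 \cap L_1)$. Any such exhibition gives an upper bound on the minimal size $s$, and hence the inequality $s \leq \#(L_0 \cap L_1)$ claimed in the proposition.

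First, for any Floer data $(H,J)$ realizing a transverse Hamiltonian isotopy $L_1$ of $L_0$, I would invoke the Floer complex $CF^*(\Omega L_0, \Omega L_1; H, J)$ constructed in Section \ref{sec:floer_complex}. By the bimodule structure established in Section \ref{sec:bimodule-structure}, this complex is an iterated extension of free $(C_*(\P_{x,x}L_0, \p), C_*(\P_{x,x}L_1, \p))$-bimodules, with one free bimodule generator per point of $L_0 \cap L_1$. Thus as an iterated extension of free modules in the sense of Definition \ref{def:twisted-complex}, its size is exactly $\#(L_0 \cap L_1)$.

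Next, using that $L_0$ and $L_1$ are Hamiltonian isotopic, I would apply Proposition \ref{prop:bimodule-structures-invariance} to canonically identify $C_*(\P_{x,x}L_1, \p)$ with $C_*(\P_{x,x}L_0, \p)$ in the homotopy category of $A_\infty$-algebras, converting the Floer complex into a twisted complex of free bimodules over a single algebra $A := C_*(\P_{x,x}L_0, \p)$. Then, by the Morse-theoretic comparison of Section \ref{sec:morse-theory-analog}/\ref{sec:morse_complex} (applied to the continuation between $(L_0, L_1)$ and the diagonal data), the Floer complex is quasi-isomorphic to the Morse-theoretic model constructed in Section \ref{sec:morse_complex}; and by the computation of Section \ref{sec:computing-the-morse-complex}, that Morse model is in turn quasi-isomorphic as an $A$-bimodule to the diagonal bimodule of $A$. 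Concatenating these quasi-isomorphisms yields the required representation of the diagonal bimodule.

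The main obstacle I anticipate is the bookkeeping needed to ensure that the identifications are compatible with the \emph{bimodule} (rather than just one-sided module) structures, since the algebras on the two sides come from different base paths on $L_0$ and $L_1$: making the quasi-isomorphism chain run through bimodules over one fixed algebra $A$ requires carefully combining the invariance statement of Proposition \ref{prop:bimodule-structures-invariance} with the continuation map that deforms $L_1$ back onto $L_0$, and checking that the continuation sends free bimodules to iterated extensions of free bimodules without increasing the number of generators. Once this is done, the chain of quasi-isomorphisms exhibits the diagonal bimodule as quasi-isomorphic to a size-$\#(L_0 \cap L_1)$ iterated extension of free bimodules, which by the definition of $s$ yields $s \leq \#(L_0 \cap L_1)$, completing the proof.
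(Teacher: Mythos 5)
Your overall strategy is the paper's: exhibit the diagonal bimodule of $A := C_*(\P_{x,x}L_0,\p)$ as quasi-isomorphic to an iterated extension of free bimodules with one generator per intersection point, via the chain Floer complex $\to$ Morse complex (Prop.~\ref{prop:morse-floer-comparison}) $\to$ diagonal bimodule (Prop.~\ref{prop:convenient-model}). However, the ``main obstacle'' you flag at the end is a real gap in your version and is not resolved the way you suggest. You set up the Floer complex for the geometric pair $(L_0, L_1)$, so that it is a bimodule over the two \emph{different} algebras $C_*(\P_{x,x}L_0,\p)$ and $C_*(\P_{x,x}L_1,\p)$, and you then propose to invoke Proposition~\ref{prop:bimodule-structures-invariance} to identify these algebras. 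That proposition does not say this: it asserts invariance of the Floer complex under change of Floer data $(H,J)$ for a \emph{fixed} pair of Lagrangians, as a bimodule over the \emph{fixed} algebras; it provides no identification of the path algebras of two Hamiltonian-isotopic Lagrangians, and the paper never proves such a statement. Carrying out your route would require an additional (unproven) comparison of $A_\infty$-algebras and a transport of bimodule structures along it.

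The paper dissolves this issue rather than solving it: one takes the Lagrangian pair to be $(L_0, L_0)$ and encodes the isotopy entirely in the Hamiltonian term of the Floer datum. By Proposition~\ref{prop:hamiltonian-is-part-of-regular-floer-datum}, the time-dependent Hamiltonian $H$ generating the isotopy is the Hamiltonian part of a regular Floer datum $(H,J)$ for $(L_0,L_0)$, and its time-$1$ chords from $L_0$ to $L_0$ are in bijection with the points of $L_0 \cap L_1$. Hence $CF(\Omega L_0, \Omega L_0; H, J)$ is, from the outset, an iterated extension of free $(A,A)$-bimodules over the single algebra $A$, of size $\#(L_0\cap L_1)$, and the remaining steps (invariance in $(H,J)$, Morse comparison, identification with the diagonal bimodule) go through exactly as you describe. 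If you rewrite your first step this way, the rest of your argument is correct and no identification of loop-space algebras is needed.
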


We expect that this proposition admits a modified version when $L_0$ is unoriented; the orientation assumptions made are not essential for the arguments of the paper, and are put in place  to avoid setting up some tedious homological algebra related to the grading shifts coming from nonorientability, which interact inconveniently with the language of iterated extensions of free modules or of twisted complexes. 

We do not know if the result of Proposition \ref{prop:resolution} is optimal. It is natural to imagine homotopical improvements of the proposition which could be proven by combining the ideas of this paper with the methods of Floer homotopy theory \cite{Cohen1995}. The proposition suggests an interesting question in pure algebraic topology:

\textbf{Question}: What is the set of manifolds $\mathcal{S}$ such that for any $L_0 \in \mathcal{S}$, the quantity $s$ defined in Proposition \ref{prop:resolution} (which only depends on the algebraic topology of $L_0$) is equal to the minimal number of critical points of a Morse function on $L_0$? 

\begin{remark}
	 Smale's work on the existence of Morse functions with minimal numbers of critical points shows that $\mathcal{S}$ contains all simply connected Pin manifolds $L_0$ of dimension at least $6$. 
\end{remark}

\textbf{Acknoledgements.} I thank, first and foremost, my advisor Mohammed Abouzaid, who told me about Witten's question, answered many questions about Floer theory, encouraged me to generalize the initial results, and suggested many improvements to the exposition in an earlier draft of the paper. I thank Paul Seidel for several interesting conversations. I also thank Luis Diogo for a helpful early discussion of signs for Floer homology of curves on surfaces.
\numberwithin{lemma}{subsection}
\numberwithin{remark}{subsection}
\numberwithin{definition}{subsection}
\numberwithin{proposition}{subsection}
\numberwithin{corollary}{subsection}
\section{A natural category of paths associated to a pair of Lagrangian submanifolds}
\label{sec:natural-category}
\subsection{Technicalities on path spaces}
\label{sec:technicalities-on-path-spaces}
Let $X$ be a topological space, and $x, y \in X$. The space of \emph{Moore paths in $X$ from $x$ to $y$} is
\[ \mathcal{P}_{x,y}X := \{f: [0, r] \to X\; | r \in [0, \infty), f(0) = x, f(r) = y\}; \]
every such $f$ has a canonical extension to a map $\bar{f}: [0, \infty) \to X$ by requiring that $f(\xi) = f(r)$ if $\xi > r$, and the topology on $\mathcal{P}_{x,y}$ is the subspace topology induced by the corresponding inclusion of $\mathcal{P}_{x,y}X$ into the space of continuous functions $C^0([0, \infty), X)$. Concatenation of paths defines a continuous composition operation, for any triple $(x, y, z) \in X$, of the form
\[ \alpha_{x, y, z}: \P_{x,y}X \times \P_{y,z}X \to \P_{x,z}X \]
which is associative in the sense that for any $x,y, z, w \in X$, the two functions 
\[ \P_{x,y}X \times \P_{y,z}X, \times \P_{z, w}X\to \P_{x,w}X \]
given by $\alpha_{x, z, w} \circ (\alpha_{x,y,z} \times 1)$ and $\alpha_{x,y, w} \circ (1 \times \alpha_{y,z, w})$ are equal. The elements of $\P_{x,x}X$ given by a constant path at $x$ of length (``$r$'') $0$ is are units with respect to this composition, making $\P_{x,y}X$ into the morphism space $\mathcal{P} X(x, y)$ of a topological category $\P X$, the \emph{category of Moore paths on $X$}, with objects given by the points of $X$ in the discrete topology.

\subsection{A category of pairs of paths}
\label{sec:def-pl}

Let $L = (L_0, L_1)$ be a pair of manifolds equipped with a basepoint pair 
\begin{equation}
y_b = (y_b(0), y_b(1)) \in L_0 \times L_1.
\end{equation}

We define a topological category $\P L$ with objects $L_0 \times L_1$, as follows. For any object $y \in \P L$, let $y(0)$ and $y(1)$ refer to the corresponding points on $L_0$ and $L_1$, respectively.  Let 
$(y'', y', y) \in \mathcal{C}$ be a general triple of objects.

The morphism spaces in $\P L$ are defined to be 
\begin{equation}
\mathcal{P} L(y', y) := 
\P_{y(0), y'(0)}L_0 \times \P_{y'(1), y(1)}L_1,
\end{equation}
 and the composition map 
 \begin{equation}
c_{y'', y', y}: \mathcal{P} L(y'', y') \times \mathcal{P} L(y', y) \to \mathcal{P} L(y'', y)
 \end{equation} is the map 
\begin{gather*}
\P_{y'(0), y''(0)}L_0 \times \P_{y''(1), y'(1)}L_1 \times \P_{y(0), y'(0)}L_0 \times \P_{y'(1), y(1)}L_1 \simeq \\ 
\P_{y(0), y'(0)}L_0 \times \P_{y'(0), y''(0)}L_0 \times \P_{y''(1), y'(1)}L_1 \times \P_{y'(1), y(1)}L_1 \xrightarrow{f} \P_{y(0), y''(0)}L_0 \times \P_{y''(1), y(1)}L_1 ; 
\end{gather*}
where the first isomorphism just exchanges factors, and the second map 
\begin{equation}
f := \alpha_{y(0), y'(0), y''(0)} \times \alpha_{y''(1), y'(1), y(1)}
\end{equation}
is the cartesian product of the path concatenation maps on the first two factors and the last two factors, respectively. Figure \ref{fig:graphical_PL_1} gives a graphical representation of the Hom spaces in $\P L$, and Figure \ref{fig:graphical_PL_2} gives a graphical description of composition in this category.

\begin{figure}
\centering
		\includegraphics[width=0.25\linewidth]{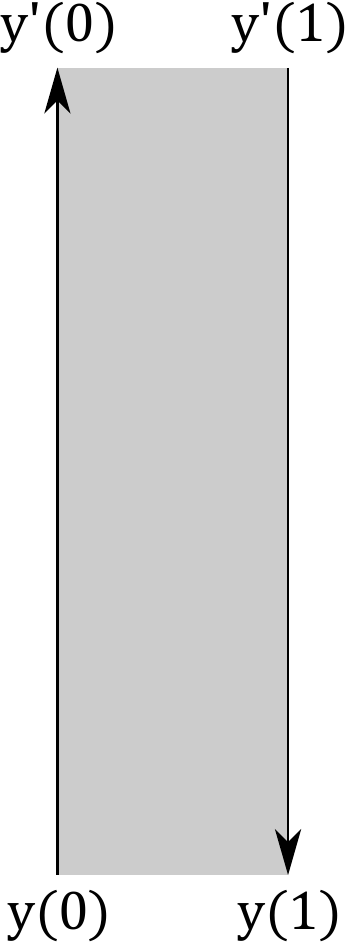}
		\caption{The morphism space $\mathcal{P} L(y',y)$ in $\P L$.} 
	\label{fig:graphical_PL_1}
\end{figure}

\begin{figure}
\includegraphics[width=0.9\linewidth]{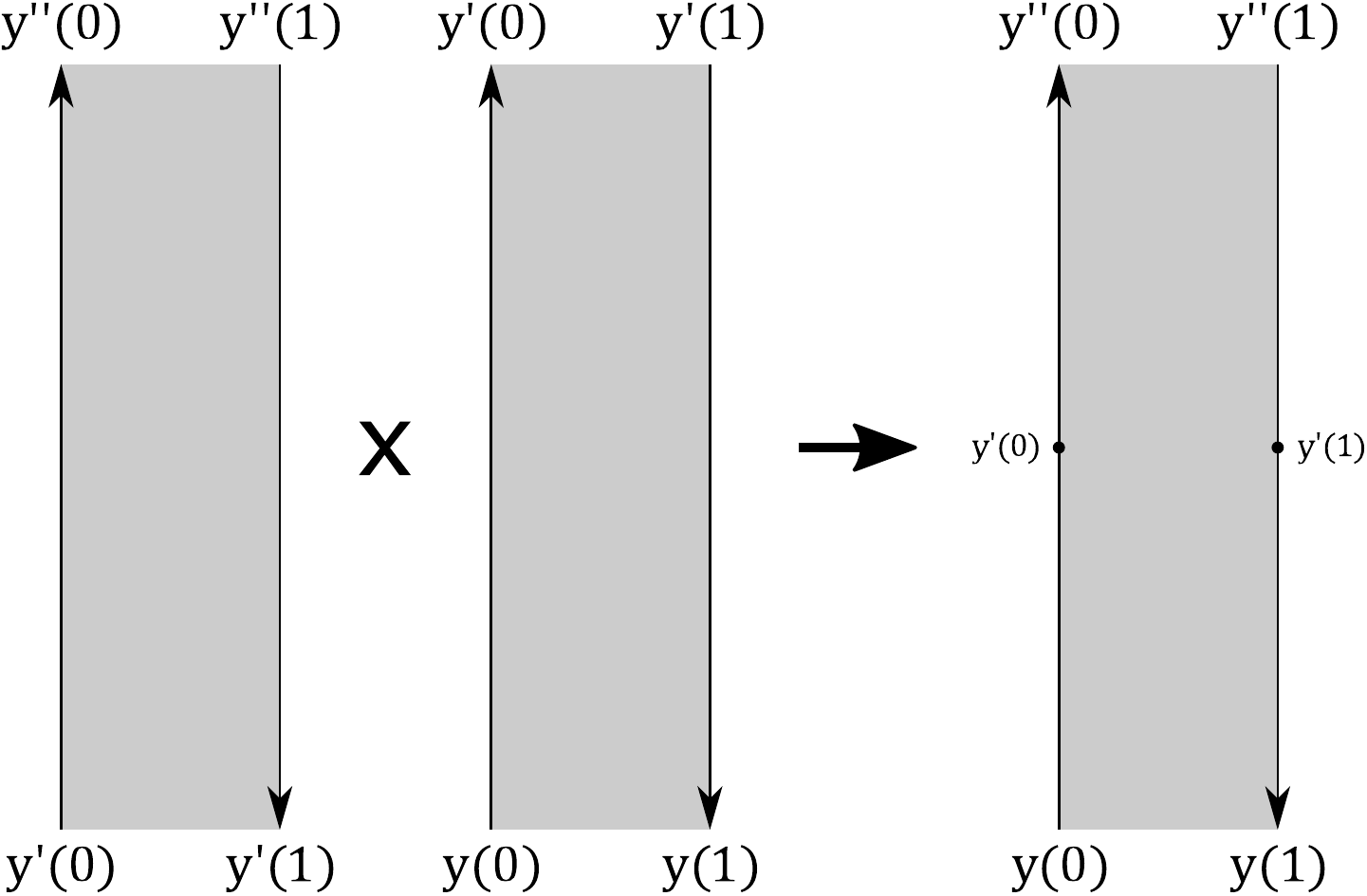}
\caption{Composition in $\mathcal{P}L$.}
\label{fig:graphical_PL_2}
\end{figure}

Let $C_*(\,\cdot\,): Top \to Ch$ be the functor sending a topological space to its asociated complex of singular chains with $\Z$-coefficients. This functor is lax monoidal; the Eilenberg-Zilber map \cite{may_simplicial} gives rise to maps
\begin{equation}
EZ: C_*(X) \tensor C_*(Y) \to C_*(X \times Y) 
\end{equation} 
for every pair of topological spaces $X, Y$, which is natural in both variables, is an isomorphism on homology, and has the property that for any three topological spaces $X, Y, Z$, the two different natural maps
\[ C_*(X) \tensor C_*(Y) \tensor C_*(Z) \to C_*(X \times Y \times Z) \]
that can be constructed of the Eilenberg-Zilber map, are equal as maps of chain complexes.

Using the lax-monoidal structure of $C_*(\,\cdot\,)$, we obtain $C_*(\P L)$, the $dg$-category with objects the points of $X$ and morphism complexes
\[ C_*(\P L)(x, y) = C_*(\P L(x,y))\]
and composition maps defined by composing the Eilenberg-Zilber map  $C_*(\P L(x,y)) \times  C_*(\P L(y,z)) \to C_*(\P L(x,y) \times \P L(y,z))$with $C_*(c_{x,y,z})$. We wish to twist this $dg$-category by a certain ``multiplicative local system''; to explain this twist we must introduce some notation. 

\subsection{Torsors, line bundles, and local systems}\label{sec:torsors-line-bundles-and-local-systems}
Given a $\Z/2$-torsor $E$ over a topological space $X$, we can form its \emph{associated $\Z$-local system} $|E|$ which at $x \in X$ has stalk given by the quotient of the free abelian group on the two-element fiber $E_x$ by the relation $e = (-1)(-e)$, where $e \in E_x$ and $(-e)$ is the image of $e$ under $-1 \in \Z/2$.
From a $\Z$-local system $|E|$ we can then form a real line bundle $|E| \tensor_\Z \R$, which comes equipped with a canonical Riemannian metric with the property that the image of $|E|$ in $|E| \tensor_\Z \R$ is the set of vectors of length $1$. This line bundle has $E$ as its $\Z/2$-torsor of orientations.

Given $\Z/2$ torsors $E \to X$, $E' \to X'$, we can form the external tensor product of local systems $|E| \boxtensor|E'|$ on $X \times X'$. We can also form a $\Z/2 \times \Z/2$-torsor $E \times E'$ over $X \times X'$ by taking the product of the maps $E \to X, E'\to X'$; the quotient of $\Z/2 \times \Z/2$ by the diagonal embedding of $\Z/2$ is canonically identified with $\Z/2$, and the map $E \times E' \to X\times X'$ factors through the quotient by the diagonal $\Z/2$ action $(E \times E')/(\Z/2) \to X \times X'$, which makes $(E \times E')/\Z/2$ into a $\Z/2$ torsor on $X \times X'$ via the (equal) left or right $\Z/2$ actions. It is elementary to check that $|(E \times E')/(\Z/2)|$ is canonically isomorphic to $|E| \boxtensor_\Z |E'|$.

\subsection{Definition of $C_*(\P L, \p)$}
\label{sec:natural-local-system}

In this section we will define a dg-category $C_*(\P L, \p)$ which will be referenced repeatedly in the rest of the paper. 

\begin{remark}
Our conventions for $dg$-algebra are described in Appendix \ref{sec:dg-algebra-conventions}. We note here that in our conventions, the differential in a $dg$-category \emph{decreases degree}.
\end{remark}

\noindent The category $C_*(\P L, \p)$ depends on an additional datum beyond the data needed to define $\mathcal P L$. We thus fix
\begin{equation}
\label{eq:choose_Pin_structures}
\begin{minipage}{0.9\textwidth}
a Pin structure on the tangent spaces 
$T_{y(i)}L_i$, for $i = 0,1$ for every object $y \in Ob(\P L)$. 
\end{minipage}
\end{equation}
\begin{remark}
	This is not the same as a Pin structure on either of the $L_i$, as we do not require the Pin structures chosen above to vary smoothly from point to point.
	In Remark \ref{rk:expanding_C} we explain how the construction depends on the above choice  and the choice of points $\mathcal{C}$. 
\end{remark}

Recall that the morphism spaces of $\P L$ consist of pairs of paths $\gamma_0$ and $\gamma_1$ in $L_0$ and $L_1$ with fixed endpoints; each path in the pair is equipped with a natural vector bundle $\gamma_i^*TL_i \to \text{Domain}(\gamma_i)$, $i = 0, 1$ given by the pullbacks  of the tangent bundle of $L$. The construction of the category $C_*(\P L, \p)$ will involve the use of Pin structures on these vector bundles; in Appendix \ref{sec:gluing_pin_structures_relative_to_ends} we introduce the notions of \emph{Pin structures at the ends} of such a bundle and \emph{Pin structures relative to the ends} on such a bundle, and will use these notions and the notation introduced in that section freely in this construction.  The choice in Eq. \ref{eq:choose_Pin_structures} equips each vector bundle $\gamma_i^*TL_i$ with a Pin structure at the ends. Thus, we have a pair of $\Z/2$-torsors $\Pi(\gamma_i^*TL_i)$, $i = 0, 1$, of Pin structures relative to the ends on $\gamma_i^*TL_i$, for each morphism in $\P L$. The unions

\[\Pi_i(x,y) := \bigcup_{(\gamma_0, \gamma_1) \in \P L(x,y)} \Pi(\gamma_i^*TL_i), \;\;i = 0, 1 \] 
have canonical maps to $\P L(x,y)$ sending $\Pi(\gamma_i^*TL_i)$ to $\gamma_i$, and have a unique topology under which these maps are local homeomorphisms and the $\Z/2$ action is continuous, making them into $\Z/2$-torsors over $\P L(x,y)$.  Define the local system
\begin{equation}
\label{eq:def-local-system-of-pin-structures}
\p_{x,y} = |\Pi_0(x,y)| \tensor |\Pi_1(x,y)|. 
\end{equation}
\begin{remark}
While we will be careful with the notation in this section, throughout this paper, we will occasionally abuse notation and write $\mathfrak{p}$ for $\mathfrak{p}_{x,y}$ whenever the endpoints $x,y$ are clear from the context.
\end{remark}

The operation of gluing Pin structures relative to the ends (see Appendix \ref{sec:gluing_pin_structures_relative_to_ends}) gives maps 
\begin{equation}
\Pi_i(x,y) \times \Pi_i(y, z) \to \Pi_i(x,z)
\end{equation} 
covering the composition map $c_{x,y,z}$ in $\P L$ which, over corresponding points, is a map of $(\Z/2 \times \Z/2)/\Z/2$-torsors, giving an isomorphism
\begin{equation}
\Pi_i(x,y) \times \Pi_i(y, z)/\Z/2 \to c_{x,y,z}^*\Pi_i(x,z)
\end{equation}
 of $\Z/2$ torsors, and thus an isomorphism of $\Z$-local systems 
 \begin{equation}
 \p_{x,y} \boxtensor \p_{y,z} \to c^*_{x,y,z} \p_{x,z}.
 \end{equation} 

Given a space $X$ and a $\Z$-local system $\eta_X$ on $X$ we write $C_*(X, \eta_X)$ for the chain complex of singular chains with coefficients in $\eta_X$. The Eilenberg-Zilber map then gives a homology equivalence $C_*(X, \eta_X) \tensor C_*(Y, \eta_Y) \to C_*(X \times Y, \eta_X \boxtensor \eta_Y)$ for any pair of topological spaces $X, Y$ equipped with $\Z$-local systems $\eta_X, \eta_Y$. We define the dg category $C_*(\P L, \p)$ with objects the objects of $\P L$ and morphism complexes
\[ C_*(\P L, \p) (x,y) = C_*(\P L(x,y), \p_{x,y}) \]
with composition given by the Eilenberg-Zilber map followed by the map 
\begin{align*} C_*(\P L(x,y) \times \P L(y,z), \p_{x,y} \tensor_\Z \p_{y,z}) &\simeq C_*(\P L(x,y) \times \P L(y,z), c^*_{x,y,z} \p_{x,z}) \\ &\xrightarrow{c_{x,y,z}} C_*(\P L(x,z), \p_{x,z}). 
\end{align*}
This is associative because of the associativity property of the gluing maps for Pin structures relative to the ends, as explained in Appendix \ref{sec:gluing_pin_structures_relative_to_ends}. 

\begin{remark}
\label{rk:expanding_C}
The above construction of $C_*(\P L, \p)$ depends on the choices of Pin structures in (\ref{eq:choose_Pin_structures}). Given a different choice of Pin structures resulting category $C_*(\P L, \p)'$, one gets an isomorphism of dg categories by choosing isomorphisms between the two choices of Pin structure for every point of $L_0$ and of $L_1$. The non-canonicity of the choice in (\ref{eq:choose_Pin_structures}) does not affect any of the results of the paper.
\end{remark}

\subsection{Twisted fundamental group(oid)}
\label{sec:twisted-fundamental-group}
Let $L$ be a manifold with a basepoint $x \in L$ and a choice of Pin structure on $T_bL$. Then the local system over $\P_{x,x}L$
\[ \p_{x,x} = \left|\bigcup_{\gamma \in \P_{x,x}L} \Pi(\gamma^*TL)\right| \]
defines a characteristic class in $[\p_{x,x}] = w_1(\p_{x,x}) \in H^1(\P_{x,x}, \Z/2)$. This characteristic class can also be computed as follows: there is an evaluation map 
\[ \P_{x,x}L \times S^1 \to L \]
giving a pullback map
\[ H^*(L; \Z/2) \to H^*(\P_{x,x}L \times S^1; \Z/2)  = H^*(\P_{x,x}L; \Z/2) \oplus H^{*-1}(\P_{x,x}L; \Z/2); \]
let $\Omega: H^*(L; \Z/2) \to H^{*-1}(\P_{x,x}L; \Z/2)$ be the composition of the above map with the projection to the second component.
\begin{proposition}There is an equality of cohomology classes
\[[\p_{x,x}] = \Omega w_2(L). \]
\end{proposition}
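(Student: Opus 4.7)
The plan is to pair both cohomology classes against an arbitrary smooth $1$-cycle $\sigma: S^1 \to \P_{x,x}L$. After reparameterizing the Moore paths to unit length, $\sigma$ is adjoint to a map $\bar\gamma: [0,1]\times S^1 \to L$ with $\bar\gamma(0,\theta)=\bar\gamma(1,\theta)=x$; since both boundary circles collapse to $x$, this descends to a map from the torus $T^2 := ([0,1]\times S^1)/((0,\theta)\sim(1,\theta))$, which I also call $\bar\gamma$, sending the identified circle $C_0 := \{0=1\}\times S^1$ to $x$. The definition of $\Omega$ together with naturality of $w_2$ gives
\begin{equation*}
\langle \Omega w_2(L),\, [\sigma]\rangle \;=\; \langle w_2(L),\, ev_*(\sigma\times [S^1])\rangle \;=\; \langle w_2(\bar\gamma^* TL),\, [T^2]\rangle,
\end{equation*}
which handles the right hand side.

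For the left hand side, I would unwind the definition of $[\p_{x,x}]$: the pairing $\langle [\p_{x,x}],[\sigma]\rangle$ is precisely the monodromy around $S^1$ of the pullback local system $\sigma^*\p_{x,x}$, equivalently of the $\Z/2$-torsor $\theta\mapsto\Pi(\gamma_\theta^*TL)$ of Pin structures on $\gamma_\theta^*TL$ relative to the fixed Pin structure at $x$ on both ends. This monodromy vanishes iff such Pin structures can be chosen consistently as $\theta\in S^1$ varies --- equivalently, iff the constant boundary Pin structure on $V := \bar\gamma^*TL$ over the cylinder $[0,1]\times S^1$ extends to a Pin structure over the entire cylinder. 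Because the Pin structures on the two boundary circles are identical, any such extension descends through the quotient $[0,1]\times S^1 \to T^2$ to a Pin structure on $V' := \bar\gamma^*TL$ over $T^2$ whose restriction to $C_0$ is the fixed constant one, and conversely; so $\langle[\p_{x,x}],[\sigma]\rangle$ vanishes iff such a Pin structure on $V'$ exists.

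Finally, the restriction map $H^1(T^2;\Z/2)\to H^1(C_0;\Z/2)$ is surjective since $C_0$ represents a generator of $H_1(T^2;\Z/2)$, so the $H^1(T^2;\Z/2)$-torsor of Pin structures on $V'$, when nonempty, surjects onto the pair of Pin structures on $V'|_{C_0}$. Hence a Pin structure on $V'$ restricting to the prescribed constant one on $C_0$ exists iff $w_2(V')=0$, which gives
\begin{equation*}
\langle [\p_{x,x}],[\sigma]\rangle \;=\; \langle w_2(V'),[T^2]\rangle \;=\; \langle \Omega w_2(L),[\sigma]\rangle
\end{equation*}
for every $\sigma$, and therefore equality of classes in $H^1(\P_{x,x}L;\Z/2)$. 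The main obstacle I anticipate is verifying that the definition of ``Pin structure relative to the ends'' used to define $\Pi(\gamma^*TL)$ is compatible with the standard $w_2$-obstruction on a closed surface (rather than the $w_2+w_1^2$-obstruction for the other flavor of Pin), and tracking the gluing isomorphisms from the appendix through the quotient $[0,1]\times S^1 \to T^2$; once this compatibility is confirmed, the remainder of the argument is formal naturality together with the surjectivity above.
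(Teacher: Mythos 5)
Your proposal is correct and is essentially the paper's own argument, just written out in more detail: the paper likewise pairs against a loop $r:S^1\to\P_{x,x}L$, identifies the monodromy of $\p_{x,x}$ with the obstruction to a global Pin structure on the pullback of $TL$ over the resulting torus, and identifies that obstruction with $w_2$. Your one anticipated worry is resolved by the paper's conventions: $Pin(n)$ means $Pin^+(n)$ (Appendix B), for which the existence obstruction is exactly $w_2$ (Proposition B.1), not $w_2+w_1^2$.
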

\begin{proof}
The monodromy $\p_{x,x}$ along a loop $r: S_1 \to \P_{x,x}L$ corresponding to a map $\lambda: S_1 \times S_1 \to L$ is trivial exactly when the coresponding bundle $T^*\lambda$ admits a global Pin structure, which is exactly when $\lambda^*w_2(L) = r^* \Omega w_2(L) = 0$.
\end{proof}

Arguing as in Section \ref{sec:natural-local-system}, we can define an associative unital multiplication
\begin{equation}
\label{eq:multiplication}
c: C_*(\P_{x,x}L, \p_{x,x})^{\tensor 2} \to C_*(\P_{x,x}L, \p_{x,x})
\end{equation} 
by concatenating Pin structures relative to the ends. (See Remark \ref{rk:unital-pin-structure} for a description of the unit in this algebra.)  This is the ``twist'' of the algebra of chains on the based loop space which was mentioned in the introduction.

\subsubsection{Taking $H_0$}
\label{sec:taking-h0}
If $\Omega w_2(L) = 0$ then $\p = \p_{x,x}$ is a trivial local system on each connected component of $\P_{x,x}L$. In that case $\p$ is the pullback of a local system $\p_{0}$ on $\pi_0(\P_{x,x}L) = \pi_1(L)$ by the map sending a path to the connected component it lies in, and the operation of gluing Pin structures relative to the ends actually gives an isomorphism
\[ a_0: \p_{0} \boxtensor \p_0 \to \alpha_0^* \p_0 \]
where $\alpha_0$ is the multiplication on the group $\pi_0(\P_{x,x}L)$. Furthermore, $a_0$ is associative in the sense that $a_0(1 \times a_0) = a_0(a_0 \times 1)$. Moreover, the map
\begin{equation} H_0(\P_{x,x}L, \p) \to H_0(\pi_1(L), \p_0) =: \Z[\pi_1(L)]^{tw}
\end{equation}
is an isomorphism of rings. The latter ring is a  \emph{twisted fundamental group ring of $L$} in the following sense: a choice of an identification of abelian groups $H_0(\pi_1(L), \p_0) \simeq \Z \pi_1(L)$ gives a ring structure on the latter group of the form
\[ [\gamma_1][\gamma_2] = (-1)^{\mu([\gamma_1], [\gamma_2])} [\gamma_1 \cdot \gamma_2] \]
where $\gamma_i \in \P_{x,x}L$ , $(\cdot)$ denotes concatenation of paths, and $\mu: \pi_1(L) \times \pi_1(L) \to \Z/2$ is a certain group $2$-cocycle for the group cohomology $H^2(\pi_1(L), \Z/2)$. Different choices of identification between $H_0(\pi_1(L), \p_0)$ and $\Z \pi_1(L)$  cause the cocycle $\mu$ to change by a coboundary. 

The above construction has a categorical generalization:
\begin{definition}
	\label{def:ho-pl}
	Let $H_0(\P L, \p)$ be the category enriched in abelian groups obtained 
	 by applying the monoidal functor $H_0$ to the morphism complexes of the category $C_*(\mathcal{P}L, \mathfrak{p})$.
\end{definition}

There is a \emph{projection functor} $C_*(\P L, \p) \to H_0(\P L, \p)$ which acts by the identity on objects, and on morphisms, sends zero-chains to their corresponding homology class, and sends chains of positive dimension to zero.
\subsection{Bimodule structure}
\label{sec:bimodule-structure}
For each $y \in \mathcal{C}$, the complex $C_*(\P L, \p)(y, y_b)$ is a right $C_*(\P L, \p)(y_b, y_b)$-module, and given any element $\gamma \in C_*(\P L, \p)(y', y)$, the map 
$C_*(\P L, \p)(y, y_b) \to C_*(\P L, \p)(y', y_b)$ given by left-composition with $\gamma$ is a map of $C_*(\P L, \p)(y_b, y_b)$-modules. Moreover, the standard argument proving that the fundamental groupoid of a space is a groupoid adapts to prove the elementary 
\begin{lemma}
	\label{lemma:PL-is-groupoid}
	If $\gamma = (\gamma_0, \gamma_1) \in C_*(\P L, \p)(y', y)_0$ is a degree zero morphism given by a \emph{single} pair of paths equipped with Pin structures relative to their ends, then left-composition with $\gamma$  map is a homotopy equivalence; a homotopy inverse is given by left-composition with the element $\gamma^{-1} \in C_*(\P L, \p)(y', y)_0$ given by the pair $(i^*\gamma_0, i^*\gamma_1)$, where $i: [0, \ell] \to [0, \ell]$ is the affine map that reverses the parametrization of a Moore path, and the Pin structure relative to the ends on the Moore path $i^*\gamma_j$ is given by the pullback by $i$ of that on $\gamma_j$. $\qed$
\end{lemma}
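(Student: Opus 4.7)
The strategy is to reduce the claim to constructing explicit $1$-chains in $C_*(\P L, \p)(y', y')$ and $C_*(\P L, \p)(y, y)$ which respectively exhibit the degree-zero cycles $\gamma \cdot \gamma^{-1} := c_{y',y,y'}(\gamma, \gamma^{-1})$ and $\gamma^{-1} \cdot \gamma := c_{y, y', y}(\gamma^{-1}, \gamma)$ as homologous to the unit elements $1_{y'}$ and $1_y$. Because composition in a $dg$-category is a chain map obeying the graded Leibniz rule, a $1$-chain $h$ with $\partial h = \gamma \cdot \gamma^{-1} - 1_{y'}$ yields, via left-composition, a chain homotopy between left-composition by $\gamma \cdot \gamma^{-1}$ and the identity; by associativity of composition, the former is the composite of left-composition by $\gamma^{-1}$ followed by left-composition by $\gamma$, giving one of the two required homotopies. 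The symmetric construction at $y$ produces the other.

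The $1$-chains are built from the classical null-homotopy of an out-and-back loop, lifted to the twisted coefficients. In the $L_0$ factor, let $\ell$ denote the length of $\gamma_0$ and define, for $s \in [0,1]$, the Moore loop $H^0_s \in \P_{y'(0), y'(0)}L_0$ of length $2(1-s)\ell$ obtained by concatenating the truncation $i^*\gamma_0|_{[0,(1-s)\ell]}$ with the corresponding truncation of $\gamma_0$; at $s=0$ this recovers $i^*\gamma_0 \cdot \gamma_0$, and at $s=1$ this is the length-zero constant path at $y'(0)$. The symmetric construction in $L_1$ gives a continuous $1$-simplex $(H^0, H^1): [0,1] \to \P L(y', y')$. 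Using the gluing operation for Pin structures relative to the ends from Appendix \ref{sec:gluing_pin_structures_relative_to_ends}, the fixed Pin structure relative to the ends on $\gamma_0$ restricts continuously to every truncation of $\gamma_0$ and, after pulling back by $i$, to every truncation of $i^*\gamma_0$; gluing these families assembles to a continuous lift of the $1$-simplex $H^0$ to the torsor $\Pi_0(y', y')$ whose $s=1$ endpoint is the unit Pin structure on the constant loop described in Remark \ref{rk:unital-pin-structure}. Combining with the analogous data on $L_1$, we obtain a $1$-chain $h \in C_*(\P L, \p)(y', y')_1$ with $\partial h = \gamma \cdot \gamma^{-1} - 1_{y'}$. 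Exchanging the roles of $\gamma$ and $\gamma^{-1}$ gives the required $1$-chain at $y$.

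The main obstacle is bookkeeping rather than any novel input: one must verify that the gluing operation applied to a Pin structure relative to the ends together with its $i$-pullback produces precisely the element of the torsor over the null-homotopic concatenation $i^*\gamma_0 \cdot \gamma_0$ that extends continuously, through the family $\{H^0_s\}_{s\in[0,1]}$, to the unit Pin structure on the constant loop at $s=1$, so that no extraneous sign appears in $\partial h$. Equivalently, this is the statement that gluing a Pin structure relative to the ends to its formal reverse recovers the canonical trivial Pin structure on the out-and-back loop; it follows from the functoriality of the gluing of Pin structures relative to the ends with respect to the null-bordism of $H^0_0$ supplied by the homotopy $\{H^0_s\}$ itself, once the conventions of Appendix \ref{sec:gluing_pin_structures_relative_to_ends} are unraveled.
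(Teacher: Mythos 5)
Your proposal is correct and is exactly the argument the paper has in mind: the paper gives no written proof beyond invoking "the standard argument proving that the fundamental groupoid of a space is a groupoid," and your explicit null-homotopy of the out-and-back loop, lifted through the gluing of Pin structures relative to the ends to land on the unit of Remark \ref{rk:unital-pin-structure}, is precisely that standard argument adapted to the twisted coefficients. The reduction via the Leibniz rule and associativity to producing the $1$-chains $h$ with $\partial h = \gamma\cdot\gamma^{-1} - 1_{y'}$ (and symmetrically at $y$) is also the intended mechanism, so there is nothing to add.
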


We now describe a nice model for $C_*(\P L, \p)(y_b, y_b)$. Our conventions for tensor products of $dg$-algebras, etc., are stated in Appendix \ref{sec:dg-algebra-conventions}. 
\begin{lemma}
	\label{lemma:left-compostion-is-module-map}
	Define 
	\begin{equation}
	\label{eq:base-rings}
	A_i := C_*(\P_{y_b(0),y_b(0)}(L_0, \p)), \text{ for } i = 0,1.
	\end{equation}
	There is a map of $dg$-algebras
	\begin{equation}
	\label{eq:tensor-product-to-hom-map}A_0^{op} \tensor A_1 \to C_*(\P L, \p)(y_b, y_b) 
	\end{equation}
	defined as the composition of the map that flips the directionality of simplices of paths in the first factor with the Eilenberg-Zilber map. This map is a quasi-isomorphism of $dg$-algebras.

	Thus, composition with $\gamma \in C_*(\P L, \p)(y', y)$ gives a map of right $A_0^{op} \tensor A_1$ modules, and thus a map of $(A_0,  A_1)$-bimodules. In particular, in view of Lemma \ref{lemma:PL-is-groupoid}, for all objects $y$, the bimodules 
	$C_*(\P L, \p)(y, y_b)$ are quasi-isomorphic to rank $1$ free $(A_0, A_1) $-bimodules.
\end{lemma}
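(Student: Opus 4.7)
The plan is to construct the map (\ref{eq:tensor-product-to-hom-map}) at the chain level, verify by direct unwinding of the composition law in $\P L$ that it is a $dg$-algebra homomorphism, check via a twisted Eilenberg--Zilber theorem that it is a quasi-isomorphism, and deduce the bimodule statement from Lemma \ref{lemma:PL-is-groupoid}.

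First I would unwind the composition map $c_{y_b, y_b, y_b}$ on $\P L(y_b, y_b) = \P_{y_b(0), y_b(0)} L_0 \times \P_{y_b(1), y_b(1)} L_1$. Tracing through the definition of $f$ in Section \ref{sec:def-pl}, one finds that the two input paths in the $L_0$-factor get concatenated in the \emph{opposite} of their listed order (the second input's $L_0$-path is traversed first), while in the $L_1$-factor they concatenate in the natural order. Consequently, sending $a \tensor b \in A_0^{op} \tensor A_1$ to the pair $(a, b)$ via the twisted Eilenberg--Zilber map $C_*(\P_{y_b(0), y_b(0)} L_0, \p_0) \tensor C_*(\P_{y_b(1), y_b(1)} L_1, \p_1) \to C_*(\P L(y_b, y_b), \p_0 \boxtensor \p_1)$ gives a chain map whose $L_0$-slot multiplication is the opposite of concatenation in $A_0$, matching $A_0^{op}$, and whose $L_1$-slot matches $A_1$. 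The passage from $A_0$ to $A_0^{op}$ can be implemented either formally or by reversing the directionality of path simplices; the two presentations are canonically equivalent as $dg$-algebras. Compatibility with the local system $\p$ then follows from the strict associativity of the gluing maps for Pin structures relative to the ends established in Appendix \ref{sec:gluing_pin_structures_relative_to_ends}.

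To see the map is a quasi-isomorphism, I would appeal to the Eilenberg--Zilber theorem with twisted coefficients: for spaces $X, Y$ with $\Z$-local systems $\eta_X, \eta_Y$, the map $C_*(X, \eta_X) \tensor C_*(Y, \eta_Y) \to C_*(X \times Y, \eta_X \boxtensor \eta_Y)$ is a quasi-isomorphism, which reduces to the untwisted statement by passing to a CW-cover trivializing both local systems. For the bimodule assertion, strict associativity of composition in $\P L$ yields $(\gamma \circ \sigma) \circ \tau = \gamma \circ (\sigma \circ \tau)$ at the chain level, so left-composition with any $\gamma \in C_*(\P L, \p)(y', y)$ is a right $A_0^{op} \tensor A_1$-module map; under the standard identification of right $A_0^{op} \tensor A_1$-modules with $(A_0, A_1)$-bimodules, this gives the claimed bimodule structure. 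Finally, for any object $y$ in the path-components of $y_b(0)$ and $y_b(1)$, I would choose a degree zero morphism $\gamma \in \P L(y, y_b)_0$ represented by a single pair of Pin-decorated paths and apply Lemma \ref{lemma:PL-is-groupoid}: left-composition with $\gamma$ is a homotopy equivalence $C_*(\P L, \p)(y_b, y_b) \to C_*(\P L, \p)(y, y_b)$. Composing with (\ref{eq:tensor-product-to-hom-map}) then exhibits $C_*(\P L, \p)(y, y_b)$ as quasi-isomorphic to the rank $1$ free bimodule $A_0^{op} \tensor A_1$.

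The main technical obstacle I anticipate is the bookkeeping for the local system $\p$: verifying that the twisted Eilenberg--Zilber map interacts correctly with the external tensor product $\p_0 \boxtensor \p_1$ and with the gluing of Pin structures used to define composition in $C_*(\P L, \p)$. Once this compatibility is in hand, everything else reduces to standard categorical arguments.
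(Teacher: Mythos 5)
Your proposal is correct and follows the same route the paper takes: the paper gives no explicit proof of this lemma, but Remark \ref{rk:homotopy-vs-qis} attributes the quasi-isomorphism to the (twisted) Eilenberg--Zilber map, and the bimodule statements are exactly the consequence of strict associativity of composition in $C_*(\P L, \p)$ together with Lemma \ref{lemma:PL-is-groupoid} that you describe. Your extra care about restricting to objects in the path components of $y_b$ and about reconciling the formal opposite with path reversal (the latter supplying the anti-isomorphism $A_0 \simeq A_0^{op}$ compatibly with the Pin gluing) is consistent with, and slightly more explicit than, what the paper records.
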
 

\begin{remark}
	\label{rk:mod-2-graded}
	One can view the category $C_*(\P L, \p)$ and as a $\Z/2$-graded $dg$-category instead of a $\Z$-graded $dg$-category. 
	Everything in Section \ref{sec:natural-category} makes sense when stated with $\Z/2\Z$-graded chain complexes with $\Z$-graded chain complexes. For the rest of the paper, the notation $C_*(\P L, \p)$ will refer to the $\Z/2\Z$-graded version of the above constructions.
\end{remark}
\begin{remark}
	\label{rk:homotopy-vs-qis}
	The map (\ref{eq:tensor-product-to-hom-map}) is a quasi-isomorphism because the Eilenberg-Zilber map is a quasi-isomorphism. The method of acyclic models is shows in fact that the Eilenberg-Zilber map is a homotopy equivalence; this is probably also true in this setting, but we do not verify this. 
\end{remark}
\section{A simplified construction}
\label{sec:simplified-construction}
In this section, we define certain Floer complexes associated to non-Pin Lagrangians that allow us to prove Propositions \ref{prop:example} and \ref{prop:example2}. These will only be defined in the restricted setting of those propositions, and they will be significantly easier to define and compute with than those complexes needed to prove Proposition \ref{prop:resolution}; the latter complexes are defined in Section \ref{sec:floer-theory-start}. 

\subsection{Setup}
\label{sec:simplified-setup}
Let $(M, \omega, \theta)$ be an \emph{exact symplectic manifold with convex boundary}, or a \emph{Liouville domain}: namely, $M$ is a manifold with boundary, $\omega$ is a symplectic form on $M$, $\theta$ is a $1$-form on $M$ satisfying $d\theta = \omega$, and the Liouville vector field $X$ defined by the equation 
$i_X \omega = \theta$ points outwards on $\partial M$. In an open neighborhood $U$ of $\partial M$, there is a canonically-defined function $h: U \to \R$ characterized by the requirement that $h^{-1}(1) = \partial M$ and $X.h = h$. Let $L_0, L_1$ be closed exact Lagrangian submanifolds of $M$. 

In Appendix \ref{sec:technical-floer-appendix}, we state our conventions about Floer-theoretic moduli spaces. We now make a \emph{choice}:
\begin{equation}
\label{eq:chose-regular-floer-data}
\text{Choose regular Floer data }(H, J)\text{ for }L_0, L_1.
\end{equation}
In Appendix \ref{sec:floer-moduli-spaces} we recall that for every pair of time-1 Hamiltonian chords $\gamma_\pm \in \mathcal{C}(L_0, L_1; H)$ from $L_0$ to $L_1$, we have a moduli space of broken Floer trajectories $\overline{\M}^F(\gamma_-, \gamma_+)$.  

In Appendix \ref{sec:def-orientation-lines} we review the theory of orientation lines in Lagrangian Floer theory as developed e.g. in \cite{seidel}, and introduce some notation. Specifically, we have, for every Hamiltonian chord $y \in \mathcal{C}(L_0, L_1; H)$, 
\begin{itemize}
	\item an \emph{orientation line} $o^n(y)$ for every integer $n$, which arises as the determinant line of an Cauchy-Riemann operator of Fredholm index $n$ on a disk with with one incoming boundary puncture;
	\item and a \emph{shift line} $o^S(y)$, which arises as the determinant line of a Cauchy-Riemann operator on a strip with one outgoing and one incoming puncture, and which is constructed so that gluing of determinant lines of Cauchy-Riemann operators gives a canonical isomorphism 
	\begin{equation}o^{n+1}(y) \simeq o^S(y) \tensor o^{n}(y).
	\end{equation}
\end{itemize} 

We now make the following \emph{choices}:
\begin{equation}
\label{eq:pin-structures-choice-floer}
\begin{minipage}{0.9\textwidth}
Choose basepoints for $L_0$ and $L_1$. Choose Pin structures at tangent spaces to $L_0$, $L_1$ at the endpoints of all Hamiltonian chords from $L_0$ to $L_1$, as well as at the respective basepoints.
\end{minipage}
\end{equation}

These choices allow us to define the category $C_*(\mathcal{P}L, \mathfrak{p})$ discussed in Section \ref{sec:natural-local-system}, where $L= (L_0, L_1)$, and the chosen pair of basepoints define the object $y_b$. Given any Hamiltonian chord $y \in \mathcal{C}(L_0, L_1; H)$, we will abuse notation and let $y$ denote the corresponding object $(y(0), y(1))$ of $C_*(\mathcal{P}L, \mathfrak{p})$. This section will focus on the associated category $H_0(\mathcal{P}L, \mathfrak{p})$ defined in Section \ref{sec:taking-h0}.

Finally, make one last \emph{choice}
\begin{equation}
\label{eq:trivialize-shift-line}
\begin{minipage}{0.9\textwidth}
For every Hamiltonian chord $y \in C(L_0, L_1; H)$, choose a trivialization of the shift line $o^S(y)$.
\end{minipage} 
\end{equation}

For the remainder of this section, we make an \emph{assumption}: 
\begin{equation}
\label{assumption:trivial-local-system}
\begin{minipage}{0.9\textwidth}
Assume that the characteristic classes $\Omega w_2(L_0), \Omega w_2(L_1)$ defined in Section \ref{sec:twisted-fundamental-group} are zero.
\end{minipage}
\end{equation}

\subsection{The complex}
\label{sec:floer-complex-simple}

Given the assumptions and choices of the previous section, define the free abelian group
\begin{equation}
\label{eq:floer-complex-simple}
CF(L_0, L_1; \Z[\pi_1(L_0)]^{tw} \tensor \Z[\pi_1(L_1)]^{tw}; H, J) := \bigoplus_{y \in \mathcal{C}(L_0, L_1; H)} \Z^\check_y \tensor o^0(y) \tensor H_0(\mathcal{P}L, \mathfrak{p}))(y, y_b).
\end{equation}

\begin{remark}
The notation $\Z^\check_y$ denotes a \emph{free rank-1 abelian group of $\Z/2\Z$-degree $1$ with the label $y$}, here and throughout the paper. To decrease the complexity of the signs, we will incorporate many sign manipulations into the signs implicitly introduced by commuting graded lines past one another, and explicit degree-shifting isomorphisms of trivial lines. We describe our conventions on graded lines in Appendix \ref{sec:orientation-conventions}.
\end{remark}

Given $\gamma \in CF(L_0, L_1; \Z[\pi_1(L_0)]^{tw} \tensor \Z[\pi_1(L_1)]^{tw}; H, J)$, we will write $\gamma_y$ for the component of $\gamma$ in $\Z^\check_y \tensor o^0(y) \tensor H_0(\mathcal{P}L, \mathfrak{p}))(y, y_b)$.

\begin{figure}[h]
			\centering
	\resizebox{\textwidth}{!}{
		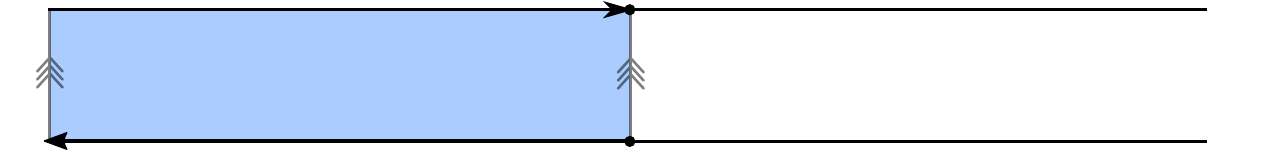}
	\caption{The differential in the Floer complex. Shaded blue region denotes a holomorphic strip. }
\label{fig:illustration-of-differential-a}
\end{figure}
\begin{figure}[h]
		\centering
		\resizebox{\textwidth}{!}{
			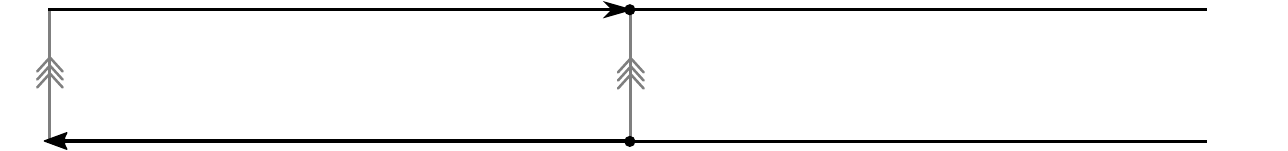}
\caption{The differential in the Morse complex. Thick black arrow denotes a negative gradient flow line. }
\label{fig:illustration-of-differential-b}
\end{figure}


This abelian group admits a differential defined as follows. Linear gluing theory for Cauchy Riemann operators, outlined in Appendix \ref{sec:gluing_and_orientations}, shows that the fiber of the orientation local system of $\overline{\M}^F(y', y)$ at a point $u$, where $u$ is an index-$n$ solution to Floer's equation, is canonically isomorphic to 
\begin{equation}
\label{eq:outcome_of_linear_gluing_theory}
\Z_{\partial/\partial s}^\check \tensor o^n(y') \tensor o^0(y)^\check \tensor \mathfrak{p},
\end{equation}
where $\mathfrak{p}$ is the abelian group of Pin structures along the boundary of $u$.  The orientation local system of a zero-dimensional manifold is canonically trivial, so we can view an index-$1$ solution $u$ to Floer's equation, with Hamiltonian chords $y', y$ at the positive and negative ends, as giving an element, 
\begin{equation}
\label{eq:phi-k-simple}
\overline{\phi}^k_* u \in \Z_{\partial/\partial s}^\check  \tensor o^{k+1}(y') \tensor o^k(y)^\check \tensor C_*(\P L, \p)(y', y)_0.
\end{equation}  
Here we use the case $k = 0$. Using the gluing isomorphism and the choice made in Equation \ref{eq:trivialize-shift-line}, we have a canonical isomorphism $o^1(y') \simeq o^S(y') \tensor o^0(y) \simeq o(y)$. Combining this isomorphism with the projection functor $C_*(\P L, \p) \to H_0(\P L, \p)$, we get that points of $\overline{\M}^F(y', y)$ lying on zero-dimensional components give elements 
\[ \phi_* u \in \Z_{\partial/\partial s}^\check  \tensor o^0(y') \tensor o^0(y)^\check \tensor Hom_{H_0(\P L, \p)}(y', y). \]
Given an element in $\gamma_y \in \Z^\check_y \tensor o^0(y) \tensor H_0(\mathcal{P}L, \mathfrak{p}))(y, y_b)$, we can define another element
\begin{equation} 
\label{eq:def-star1-simple}
\phi_* u *_1 \gamma_y \in CF(L_0, L_1; \Z[\pi_1(L_0)]^{tw} \tensor \Z[\pi_1(L_1)]^{tw}; H, J)
\end{equation}
by composing the tensor factors of $\phi_* u$ and $\gamma$ that are morphisms in $H_0(\P L, \p)$, commuting the line $\Z^\check_y$ to the left, pairing the orientation line $o^0(y')$ with its dual,  and applying the \emph{grading-shifting} isomorphism of trivial lines $\Z^\check_y \tensor \Z_{\partial/\partial_s} \simeq \Z^\check_{y'}$.

Thus define the operator $d$ on $CF(L_0, L_1; \Z[\pi_1(L_0)]^{tw} \tensor \Z[\pi_1(L_1)]^{tw}; H, J)$ to be 
\begin{equation}
\label{eq:differential-simplified} d  \gamma = \sum_{y, y' \in \mathcal{C}(L_0, L_1; H)}\sum_{\substack{ u \in \overline{\M}^F(y', y); \\ \text{ind } u = 1}} \phi_* u *_1 \gamma_y.
\end{equation}
Refer to Figure \ref{fig:illustration-of-differential-a} for a graphical description of this operator. 
To show that this operator defines a chain complex, we must verify the
\begin{lemma}
	\label{lemma:d2-zero-simple}
	The above operator defined in (\ref{eq:differential-simplified}) satisfies \[d^2 = 0.\]
\end{lemma}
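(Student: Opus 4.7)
The plan is to follow the classical argument that $d^2 = 0$ in Floer theory, derived from a signed count of the boundary of the 1-dimensional components of the compactified moduli space of Floer trajectories, and to check that the additional ingredient (the factor in $H_0(\mathcal{P}L, \mathfrak{p})(y, y_b)$ obtained from the boundary of the holomorphic strip) behaves compatibly with gluing. First, I would expand
\begin{equation*}
d^2 \gamma \;=\; \sum_{y'',\,y',\,y} \;\sum_{\substack{u_1 \in \overline{\M}^F(y'',y')\\ \mathrm{ind}\,u_1 = 1}} \;\sum_{\substack{u_2 \in \overline{\M}^F(y',y)\\ \mathrm{ind}\,u_2 = 1}} \phi_* u_1 \,*_1\,\bigl( \phi_* u_2 \,*_1\, \gamma_y \bigr),
\end{equation*}
and check that, upon unraveling the definitions of $*_1$ and commuting the graded lines past one another, this expression coincides with the contribution to $d \gamma$ from the \emph{boundary points} of the 1-dimensional components of $\overline{\M}^F(y'',y)$ coming from broken configurations $(u_1, u_2)$.

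The next step is to use the gluing theory recalled in the appendices to identify the boundary of the compactified 1-dimensional components of $\overline{\M}^F(y'',y)$ with pairs of index-1 broken trajectories. On the Pin-structure side, the boundary of the glued strip $u_1 \# u_2$ is obtained by concatenating the two boundary arcs, and the operation of gluing Pin structures relative to the ends (Appendix \ref{sec:gluing_pin_structures_relative_to_ends}) is precisely the operation that enters the composition in $C_*(\mathcal{P}L, \mathfrak{p})$ and hence in $H_0(\mathcal{P}L, \mathfrak{p})$. Thus the element of $\mathrm{Hom}_{H_0(\P L,\p)}(y'', y_b)$ attached to a boundary point of the 1-dimensional moduli space is exactly the composition of the two elements attached to $u_1$ and $u_2$. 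This is the essential compatibility statement that allows the new factor to be pulled out of the sum over boundary points.

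Once this identification is in place, the proof reduces to the standard sign verification. Fixing an index-2 trajectory $u$ whose compactification is a 1-manifold with boundary, I would show that the sum of $\phi_* u_1 *_1 (\phi_* u_2 *_1 \gamma_y)$ over the (at most two) boundary points vanishes in $\Z^{\vee}_{y''} \tensor o^0(y'') \tensor H_0(\mathcal{P}L, \mathfrak{p})(y'', y_b)$. This is a consequence of the fact that the restriction of the orientation local system of the 1-dimensional moduli space to its boundary is canonically trivialized via the gluing isomorphism (\ref{eq:outcome_of_linear_gluing_theory}) and the chosen trivializations of the shift lines in (\ref{eq:trivialize-shift-line}), so that the two boundary contributions differ by the sign coming from the outward-pointing normal, and cancel.

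The main obstacle I expect is purely bookkeeping: carefully tracking how the gluing isomorphism for determinant lines, the isomorphism $o^{k+1}(y) \simeq o^S(y) \tensor o^k(y)$, and the Koszul signs from commuting the $\Z^{\vee}_y$ factors past $o^0(y)$ and the path-Hom factors combine, and verifying that the single sign produced by the linear gluing theory for Cauchy--Riemann operators on the determinant side matches the sign produced on the Pin-structure side by the associativity and unit compatibility of the gluing-of-Pin-structures map. Associativity of that gluing (already used in Section \ref{sec:natural-local-system} to define composition in $C_*(\P L, \p)$) is exactly what guarantees that this bookkeeping works out, and after passing to $H_0$ no further subtleties arise because the higher chain-level data is discarded.
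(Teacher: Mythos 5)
Your proposal is correct and follows essentially the same route as the paper's proof: identify the terms of $d^2\gamma$ with broken configurations on the boundary of the $1$-dimensional components of $\overline{\M}^F(y'',y)$, use the compatibility of gluing of Pin structures relative to the ends with composition in $H_0(\P L,\p)$ together with the shift-line equivariance of the gluing isomorphism, and conclude by the standard two-ends-of-an-interval sign cancellation plus the observation that the re-association sign in $(\phi_* u' *_1 \phi_* u)*_1\gamma = \pm\,\phi_* u' *_1(\phi_* u *_1\gamma)$ is independent of the trajectories. No substantive differences from the argument given in the paper.
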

\begin{proof}
	Let $u \in \M^F(y', y)$, $u' \in \M^F(y'', y)$ be a pair of index $1$ solutions to Floer's equation. Given a solution $w$ to Floer's equation, let $D_u$ denote the linearization of Floer's equation or the inhomogeneous pseudoholomorphic map equation at $u$; this is a Cauchy-Riemann operator with totally real boundary conditions, and is Fredholm on an appropriate Banach space. We will call $\det D_u$ (see Eq. \eqref{eq:determinant-line-def} in Section \ref{sec:regularity}) the determinant line of $u$. 
	
	The proof that the differential on the usual Floer complex for spin exact Lagrangians squares to zero can be summarized as follows (see \cite[Section~II.12f]{seidel}): Linear gluing theory of Cauchy-Riemann operators with totally real Pin boundary conditions gives an isomorphism $\lambda_{u, u'}: \ker D_{u'} \oplus \ker D_u \simeq \ker D_{u' \# u}$, where $u' \# u$ is an index $2$ solution to Floer's equation constructed by gluing $u', u$ via some gluing parameter. The translation action on solutions to Floer's equation gives canonical elements $(\frac{\partial}{\partial s})_{w} \in \ker D_w$ for any solution to Floer's equation; linear gluing theory then shows that $(\frac{\partial}{\partial s})_{u'} + (\frac{\partial}{\partial s})_{u}$ is sent to an \emph{inwards-pointing vector} in $\ker D_{u' \# u}/(\frac{\partial}{\partial s})_{u\# u} \simeq T_{(u' \# u)}\M^F(y'', y)$ by $\lambda_{u, u'}$. Thus, using $1$-dimensional moduli spaces to compare the contributions to $d^2$, one sees that they cancel in pairs.  
	
	This lemma follows from exactly the same argument. Indeed, given $u', u$, one can consider 
	\[\phi_* u' *_1 \phi_* u \in \Z^\check_{\partial/\partial s} \tensor o^0(y'')^\check \tensor o^0(y) \tensor Hom_{H_0(\P L, \p)}(y'', y)\]
	where $*_1$ is defined as in Equation (\ref{eq:def-star1-simple}) by commuting the right-most trivial line to the left, applying a grading-shifting isomorphism to the trivial lines, and simplifying the orientation lines and $Hom$s. Similarly, it makes sense to define 
	\[\overline\phi^1_* u' *_1 \overline\phi^0_* u \in \Z^\check_{\partial/\partial s} \tensor o^2(y'') \tensor o^0(y)^\check \tensor Hom_{H_0(\P L, \p)}(y'', y).\]
	Using the shift line we can exchange the $o^2$ for an $o^0$; this gives the same element as $\phi_* u' *_1 \phi_* u$ because gluing theory is equivariant with respect to tensoring by shift lines (see (\ref{eq:equivariance-of-gluing}), Appendix \ref{sec:gluing_and_orientations}). Now the ``inwards-pointing vector'' in $T_{(u' \# u)}\M^F(y'', y)$ orients the tangent space to $\M^F(y'', y)$ at $u' \# u$, and so gives, by linear gluing theory, an element 
	\[\phi^0_* (u' \# u) \in (\Z_{\partial/\partial s})^\check \tensor o^2(y'') \tensor o^0(y) \tensor  Hom_{H_0(\P L, \p)}(y'', y),\] 
	and the argument sketched in the previous paragraph shows \emph{precisely} that $\overline\phi^1_* u' *_1 \overline\phi^0_* u = \phi^0_* (u' \# u)$.  So we have proven that that for pairs $(u_i', u_i)_{i = 1, 2}$ lying on the boundary of a $1$-dimensional moduli space of Floer trajectories, the elements $\phi_* u_i' *_1 \phi_* u_i$ have opposite sign in $o^0(y'')^\check \tensor o^0(y) \tensor Hom_{H_0(\P L, \p)}(y', y)$. Since $(\phi_* u' *_1 \phi_* u) *_1 \gamma = \pm \phi_* u' *_1 (\phi_* u *_1 \gamma)$ with a sign independent of $u, u', \gamma$, this shows that the contributions to the differential cancel in pairs. 
\end{proof}

\subsubsection{Continuation maps : definitions}
Analogously to usual Floer theory, we can define continuation, and homotopies between continuation maps. First, we fix terminology.
\begin{definition}
	Let $\mathcal{C}$ be a category linear over $\Z$. 
	
	An \emph{ungraded chain complex} of objects in a category $\mathcal{C}$ is an element $V \in Ob(\mathcal{C})$ equipped with an operator $d \in End_{\mathcal{C}}(V)$ with $d^2 = 0$. Without qualification, an ungraded chain complex is an ungraded chain complex of abelian groups.
	
	A \emph{map of ungraded chain complexes} $(V, d_V) \to (W, d_W)$ is a map $C: V \to W$ satisfying $Cd_V = d_W W$. These define $Ch^u(\mathcal{C})$, the category of ungraded chain complexes in $\mathcal{C}$.
	
	A \emph{homotopy} of maps $C_1, C_2: (V, d_V) \to (W, d_W)$ is a map in $\mathcal{C}$, $H: V \to W$, satisfying $C_1 - C_2 = d_W H - Hd_V$. The existence of a homotopy defines an equivalence relation on maps of ungraded complexes in $\mathcal{C}$, and the composition of maps respects this equivalence relation. Define $Ho(Ch^u(\mathcal{C})))$, the \emph{homotopy category of ungraded chain complexes of objects in} $\mathcal{C}$, to be the quotient of $Ch^u(\mathcal{C})$ by this equivalence relation.
\end{definition}
We now recall a notion originally described by Conley, which gives a convenient way to state the sense in which Floer homology is an invariant:
\begin{definition}
	Given a set of ungraded chain complexes $\{V_s\}_{s \in \mathcal{S}}$ of objects in a category $\mathcal{C}$ depending on some auxiliary data $s$ ranging over some set $\mathcal{S}$, we say that $V_s$ form a \emph{connected simple system} if there is a functor $F: Gr_{\mathcal{S}} \to Ho(Ch^u(\mathcal{C}))$, where $Gr_{\mathcal{S}}$ is the category with objects $S$ and one morphism between every pair of objects, and $F(s) = V_s$. 
\end{definition}

We now proceed to define the continuation map between Floer complexes associated to different choices of regular Floer data $(H_0, J_0)$ and $(H_1, J_1)$. Equip $Z$ with boundary Floer data given by $(H_1, J_1)$ at the positive end and $(H_0, J_0)$ at the negative end, and choose a regular Floer datum on $Z$ compatible with these perturbation data.
 In Appendix \ref{sec:floer-moduli-spaces}, we recall that for every pair  $y' \in  \mathcal{C}(L_0, L_1; H_0),  y \in \mathcal{C}(L_0, L_1; H_1)$ and regular perturbation datum $(K, J)$ on $Z$ compatible with the boundary Floer data, there is a moduli space $\M^C(y', y)$ of solutions to the inhomogeneous pseudoholomorphic map equation with Gromov compactification $\overline{\M}^C(y', y)$. Linear gluing theory for Cauchy Riemann opeartors shows that the fiber of the orientation local system at a point $u \in \overline{\M}^C(\gamma_-, \gamma_+)$, where $u$ is an index $n$ solution, is canonically isomorphic to $o^n(y') \tensor o^0(y) \tensor \p$, where $\p$ is the abelian group of Pin structures along the boundary of $u$.  Thus, using the isomorphism $o^n(y') \simeq o^0(y')$ coming from the choice of orientation of the shift line of $y'$, we see that an index $0$ element $u \in \overline{\M}^C(y', y)$ defines an element 
\[ \phi_* u: o^0(y') \tensor o^0(y)^\check \tensor Hom_{H_0(\P L,\p)}(y', y). \]
For every pair $y_0, y_1 \in \mathcal{C}(L_0, L_1; H_i)$, and for every $u$ as above, and any $\gamma_y \in \Z^\check_y \tensor o^0(y) \tensor H_0(\mathcal{P}L, \mathfrak{p}))(y, y_b),$ let 
\begin{equation}
\label{eq:def-star2-simple}
\phi_* u *_2 \gamma \in  CF(L_0, L_1; \Z[\pi_1(L_0)]^{tw} \tensor \Z[\pi_1(L_1)]^{tw}; H_0, J_0)
\end{equation}
be the element obtained by composing in $H_0(\mathcal{P}L, \mathfrak{p})$, moving the trivial line $\Z^\check_y$ to to the left and identifying it with $\Z^\check_{y'}$, and pairing off the orientation lines for $y$. Define a map
\begin{equation}
C: CF(L_0, L_1; \Z[\pi_1(L_0)]^{tw} \tensor \Z[\pi_1(L_1)]^{tw}; H_1, J_1) \to CF(L_0, L_1; \Z[\pi_1(L_0)]^{tw} \tensor \Z[\pi_1(L_1)]^{tw}; H_0, J_0),
\end{equation}
depending on a choice of regular perturbation datum $(K, J)$, by 
\begin{equation}
C \gamma = \sum_{y, y' \in \mathcal{C}(L_0, L_1; H)}\sum_{\substack{ u \in \overline{\M}^C(y', y); \\ \text{ind } u = 2}} \phi_* u *_2 \gamma_y.
\end{equation}

Given a pair of choices $(K_0, J_0), (K_1, J_1)$ of perturbation data on $Z$, as in the previous paragraph, with corresponding maps $C_0, C_1$ between Floer complexes, we will now define maps
\begin{equation}
H:CF(L_0, L_1; \Z[\pi_1(L_0)]^{tw} \tensor \Z[\pi_1(L_1)]^{tw}; H_1, J_1) \to CF(L_0, L_1; \Z[\pi_1(L_0)]^{tw} \tensor \Z[\pi_1(L_1)]^{tw}; H_0, J_0).
\end{equation}
which we will show to be homotopies between $C_0, C_1$. To do this we invoke the following standard
\begin{proposition}
	\label{prop:homotopy-moduli-space}
	There exists a family of perturbation data $(K_t, J_t)$, $t \in [0,1]$ compatible with the same boundary Floer data, such that at $t=0$ and $t=1$ these perturbation data agree with the specified perturbation data $\{(K_0, J_1)\}_{i = 0, 1}$, and such that the space
	\[ \M^H(\gamma^+, \gamma^-; Z, \mathfrak{B}, K_t, J_t) := \cup_{t \in (0,1)} \M^C(\gamma^+, \gamma^-; Z, \mathfrak{B}, K_t, J_t), \]
	equipped with the topology induced from its inclusion into $C^0(Z, M) \times (0, 1)$, is a disjoint union of smooth manifolds with each connected component of dimension equal to one more than the indices of the maps comprising the component. 
\end{proposition}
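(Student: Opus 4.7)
The statement is the standard parametric transversality result for continuation maps, so my plan is to set it up in the usual way (e.g. along the lines of \cite{seidel}, Sections 9 and 10, or McDuff-Salamon, Chapter 8), carried out rel the two prescribed endpoint perturbation data.

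First I would fix a suitable Banach manifold $\mathcal{B}$ of paths $t \mapsto (K_t, J_t)$ of perturbation data on $Z$ that are compatible with the specified boundary Floer data at the ends of $Z$, and that agree with $(K_0, J_0)$ at $t=0$ and $(K_1, J_1)$ at $t=1$. Concretely one takes a $C^\varepsilon$-completion of the space of smooth paths of compactly supported perturbations vanishing at $t = 0, 1$, added onto a fixed smooth interpolation between the two given endpoints. Next I would form the universal moduli space
\[
\widetilde{\M}^H(y^+, y^-) := \bigl\{\, (t, u, (K_\bullet, J_\bullet)) \;\big|\; (K_\bullet, J_\bullet) \in \mathcal{B},\; t \in (0,1),\; u \in \M^C(y^+, y^-; Z, K_t, J_t) \,\bigr\},
\]
sitting inside $(0,1) \times W^{1,p}(Z, M) \times \mathcal{B}$.

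The central step is to show that $\widetilde{\M}^H(y^+, y^-)$ is a Banach manifold. For this one verifies that the universal linearized operator at a solution $(t, u, (K_\bullet, J_\bullet))$, namely the sum of the Cauchy-Riemann linearization $D_u$ acting on variations of $u$ and the derivative with respect to variations $(\dot t, \dot K_\bullet, \dot J_\bullet)$ in the parameters, is surjective. Fredholmness of $D_u$ gives a finite-dimensional cokernel; to kill it one argues as in the standard case: any element of $\coker D_u$ must be an $L^2$ solution of the formal adjoint equation, so by unique continuation it has an open dense set of regular points in the interior of $Z$, and at such a point one can cook up compactly supported perturbations $\dot K_\bullet$ (or $\dot J_\bullet$) producing a nonzero pairing. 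The only subtlety relative to a single-parameter continuation argument is that the perturbations are required to vanish at $t = 0, 1$; since regular points of the cokernel can be chosen at interior values of $t$, one can localize the perturbations away from $\{0, 1\}$, so this causes no difficulty. This is the step I expect to be the main technical obstacle, but it is by now a routine application of the standard machinery.

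Once $\widetilde{\M}^H(y^+, y^-)$ is a Banach manifold, the projection $\pi: \widetilde{\M}^H(y^+, y^-) \to \mathcal{B}$ is Fredholm of index equal to the virtual dimension of $\M^C$ plus one (the extra one coming from varying $t$). Applying the Sard-Smale theorem yields a comeager set $\mathcal{B}_{\mathrm{reg}} \subset \mathcal{B}$ of regular values; for any $(K_\bullet, J_\bullet) \in \mathcal{B}_{\mathrm{reg}}$, the fiber $\pi^{-1}(K_\bullet, J_\bullet) = \bigcup_{t \in (0,1)} \M^C(y^+, y^-; Z, K_t, J_t)$ is a smooth manifold whose connected components have dimension equal to one more than the index of the component, as claimed. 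Choosing such a regular path $(K_t, J_t)$ and equipping the fiber with the topology induced from $C^0(Z, M) \times (0,1)$ (which agrees with that induced from $W^{1,p} \times (0,1)$ on the moduli space by elliptic regularity) proves the proposition. Summing over the countably many pairs $(y^+, y^-)$ preserves comeagreness, so a single $(K_t, J_t)$ works simultaneously for all of them.
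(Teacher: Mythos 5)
Your proposal is correct and is precisely the standard parametric Sard--Smale argument that the paper itself invokes without proof (it introduces the proposition with ``we invoke the following standard'' and later refers to ``the standard transversality proof of Proposition \ref{prop:homotopy-moduli-space}''). Your handling of the endpoint constraint --- localizing the perturbations of the path away from $t\in\{0,1\}$, which is harmless since the union defining $\M^H$ is over the open interval $(0,1)$ --- is the right way to deal with the only wrinkle beyond the unparametrized case.
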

\begin{proposition}
	\label{prop:homotopy-moduli-space-compactification}
	The Gromov compactification of 
	\begin{equation*}\overline{\M}^H(\gamma^+, \gamma^-; Z, \mathfrak{B}, K_t, J_t)
	\end{equation*} 
	of $\M^H(\gamma^+, \gamma^-; Z, \mathfrak{B}, K_t, J_t)$ is a disjoint union of topological manifolds with corners (see Appendix \ref{sec:topological-preliminaries} for a definition of the latter), with the union of the codimension $1$ strata of each connected component equal to 
	\begin{align*} &\M^C(\gamma^+, \gamma^-; Z, \mathfrak{B}, K_0, J_0) 
	\cup \M^C(\gamma^+, \gamma^-; Z, \mathfrak{B}, K_1, J_1) \;\cup \\
	&\M^H(\gamma^+, \gamma^0; Z, \mathfrak{B}, K_t, J_t) \times \M^F(\gamma^0, \gamma^-; H_1, J_1)\; \cup \\ & 
	\M^F(\gamma^+, \gamma^0; H_0, J_0) \times \M^H(\gamma^0, \gamma^-; Z, \mathfrak{B}, K_t, J_t) 
	\end{align*}
\end{proposition}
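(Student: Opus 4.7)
The plan is to derive the compactification structure from Gromov compactness together with standard gluing results for Cauchy-Riemann operators, mirroring the classical argument that compactifies one-parameter families of continuation maps in Floer theory.

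First, I would address convergence. Take any sequence $(u_n, t_n) \in \M^H(\gamma^+, \gamma^-; Z, \mathfrak{B}, K_t, J_t)$. The energy is a priori bounded in terms of the actions of $\gamma^\pm$ by an argument that is uniform in $t \in [0,1]$, thanks to the compactness of the family of perturbation data and the exactness of $L_0, L_1$ and of the ambient symplectic form. By the standard Gromov compactness theorem for inhomogeneous pseudoholomorphic maps with Lagrangian boundary conditions (as recalled in Appendix \ref{sec:floer-moduli-spaces}), and using that disk and sphere bubbling are excluded by exactness, a subsequence converges to one of three types of limits: (i) if $t_n \to 0$ or $t_n \to 1$, a map $u_\infty \in \M^C(\gamma^+, \gamma^-; Z, \mathfrak{B}, K_i, J_i)$ for $i=0$ or $i=1$; (ii) if energy escapes from the negative end at a fixed $t_\infty \in (0,1)$, a configuration in $\M^H(\gamma^+, \gamma^0; K_t, J_t) \times \M^F(\gamma^0, \gamma^-; H_1, J_1)$; or (iii) symmetrically at the positive end, a configuration in $\M^F(\gamma^+, \gamma^0; H_0, J_0) \times \M^H(\gamma^0, \gamma^-; K_t, J_t)$. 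In the relevant index range (so that the indices of pieces are compatible with a codimension one stratum), no further breaking can occur.

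Next, I would invoke the gluing package of Appendix \ref{sec:floer-moduli-spaces} to show each of these limits is actually attained and carries a smooth collar. For the endpoint strata $t = 0, 1$, the collar coordinate is simply $t$ itself. For broken strata of type (ii) and (iii), standard pregluing followed by a Picard-Newton iteration produces, for each broken configuration and each sufficiently large gluing parameter $\rho$, a unique nearby solution in $\M^H$ at some time $t(\rho) \in (0,1)$ depending smoothly on the data, with $t(\rho)$ bounded away from $\{0,1\}$; the parameter $e^{-\rho}$ serves as the transverse collar coordinate at that stratum. Combined with parametric regularity of the family $(K_t, J_t)$ (Proposition \ref{prop:homotopy-moduli-space}), this gives charts modeled on $[0, \varepsilon)^k \times \R^{d-k}$ near strata of codimension $k = 0, 1$.

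Finally, I would assemble these charts into the stated topological manifold with corners structure, using the fact that the gluing maps are homeomorphisms onto open neighborhoods of the strata and are compatible where they overlap (up to reparametrization of the gluing parameter), following the template of \cite{seidel}. The main technical obstacle is verifying the structure at codimension two corners, in particular the corner where a Floer trajectory breaks off at an end and simultaneously $t \to 0$ or $t \to 1$; here one needs a compatible two-parameter gluing theorem, taking the gluing parameter $\rho$ at the broken end together with the collar parameter at $\{t=0\}$ or $\{t=1\}$ as independent coordinates. This is a standard but delicate compatibility check, handled by the same pregluing and contraction-mapping scheme applied simultaneously in both parameters; the resulting local model $[0, \varepsilon)^2 \times (\text{corner stratum})$ is what identifies $\overline{\M}^H$ as a topological manifold with corners with the codimension-one boundary described in the proposition.
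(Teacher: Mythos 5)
Your argument is correct and is exactly the standard Gromov-compactness-plus-parametrized-gluing argument that the paper itself invokes without proof (the proposition is introduced as ``standard'' and no proof appears in the text); you correctly isolate the two genuinely technical points, namely the uniform energy bound over the compact family $(K_t,J_t)$ and the two-parameter gluing needed at the codimension-two corners where breaking coincides with $t\to 0$ or $t\to 1$. One minor labeling slip: the paper places $(H_1,J_1)$ at the \emph{positive} end of $Z$, so the stratum $\M^H(\gamma^+,\gamma^0)\times\M^F(\gamma^0,\gamma^-;H_1,J_1)$ arises from energy escaping at the positive end rather than the negative one, but this does not affect the substance of the proof.
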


Choose perturbation data as in Proposition \ref{prop:homotopy-moduli-space}. Then every index $-1$ solution $u \in \overline{\M}^H(y', y)$ lying over $t \in (0,1)$ lies on a zero-dimensional component of $\overline{\M}^H(y', y)$, so $\ker D_u$ is zero dimensional and $\coker D_u$ has rank $1$. The standard transversality proof of Proposition \ref{prop:homotopy-moduli-space} shows that $u$ is transversely cut out a section of a a smooth vector bundle over an open neighborhood of $t$ with fiber $\coker D_u$, so the vector $\partial/\partial_t$ gives an element of $(\coker{D_u})^\check$ and thus an orientation of $\det D_u$. Then linear gluing theory gives us a canonical element in $o^{-1}(y') \tensor o(y) \tensor \p$ where $\p$ is the abelian group of Pin structures along $u$. The choice of orientation of the shift line then gives an element
\[ \phi_* u \in o^0(y') \tensor o^0(y) \tensor Hom_{H_0(\P L, \p)}(y', y)\]
corresponding to $u$. We define
\begin{equation}
H \gamma = \sum_{y' \in \mathcal{C}(L_0, L_1; H)}\sum_{\substack{ u \in \overline{\M}^H(y', y); \\ \text{ind } u = 1}}\phi_* u *_2 \gamma_y
\end{equation}
where $*_2$ is defined as in Equation \ref{eq:def-star2-simple}. 

We now prove that the Floer complex we have defined gives a well-defined invariant:
\begin{lemma}
	$C$ is a map of ungraded chain complexes, and $H$ is a homotopy of maps. With this structure, the complexes $\{CF(L_0, L_1; \Z[\pi_1(L_0)]^{tw} \tensor \Z[\pi_1(L_1)]^{tw}; H, J)\}_{(H, J)}$ form a connected simple system of ungraded complexes of abelian groups.
\end{lemma}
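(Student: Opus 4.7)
The plan is to imitate the proof of Lemma \ref{lemma:d2-zero-simple} in two higher-dimensional settings: first for one-parameter families of continuation solutions (giving that $C$ is a chain map), and second for the families supplied by Proposition \ref{prop:homotopy-moduli-space} (giving that $H$ is a chain homotopy). Once both assertions are established, the coherence needed to assemble a functor $F: Gr_{\mathcal{S}} \to Ho(Ch^u(Ab))$ defining the connected simple system is a third, parallel argument. Throughout, the key observation is that in the $*_2$ operation (\ref{eq:def-star2-simple}) the $H_0(\P L, \p)$-component just gets propagated by composition, so that the argument reduces to exactly the linear-gluing and sign bookkeeping already treated in Lemma \ref{lemma:d2-zero-simple}, carried out one dimension higher.

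For Step 1, I would invoke the continuation analogue of Proposition \ref{prop:homotopy-moduli-space-compactification}: the codimension-one stratum of the Gromov compactification $\overline{\M}^C(y',y)$ of an index-one continuation moduli space is a disjoint union of fibre products
\[ \M^F(y',y^0; H_0, J_0) \times \M^C(y^0, y; K, J) \;\cup\; \M^C(y', y^0; K, J) \times \M^F(y^0, y; H_1, J_1). \]
Running the orientation argument sketched in the proof of Lemma \ref{lemma:d2-zero-simple} (linear gluing of determinant lines combined with the shift-line trivializations (\ref{eq:trivialize-shift-line}) and the projection $C_*(\P L, \p) \to H_0(\P L, \p)$), the signed contribution of the two boundary components to a given $1$-dimensional component produce precisely the terms of $d \circ C$ and $C \circ d$ in $CF(L_0, L_1; \ldots; H_0, J_0)$, with opposite orientations coming from the inward-pointing vector at either end of the component. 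Summing over components gives $dC - Cd = 0$.

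For Step 2, I use Propositions \ref{prop:homotopy-moduli-space} and \ref{prop:homotopy-moduli-space-compactification} directly. A $1$-dimensional component of $\overline{\M}^H(y',y)$ has codimension-one boundary consisting of four kinds of strata: a copy of $\M^C(y', y; K_0, J_0)$, a copy of $\M^C(y', y; K_1, J_1)$, and the two Floer-breaking strata listed in Proposition \ref{prop:homotopy-moduli-space-compactification}. The orientation analysis is identical in spirit to Lemma \ref{lemma:d2-zero-simple}, except that the parametrized moduli space is oriented by the vector $\partial/\partial t$ on its interior, which supplies the extra sign needed to identify the first two strata with $C_1$ and $-C_0$ respectively. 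The two Floer-breaking strata contribute $\pm d_0 H$ and $\pm H d_1$ by the same gluing-plus-projection argument as in Step 1, and the expression $C_1 - C_0 - d_0 H - H d_1$ sums (in $o^0(y')\otimes o^0(y)^\check \otimes Hom_{H_0(\P L, \p)}(y', y)$) to zero over each component, as desired.

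For Step 3, define $F$ to send a regular Floer datum $(H, J)$ to the chain complex $CF(L_0, L_1; \Z[\pi_1(L_0)]^{tw} \otimes \Z[\pi_1(L_1)]^{tw}; H, J)$, and define $F((H_0, J_0)\to (H_1, J_1))$ to be the homotopy class of any continuation map built from a regular perturbation datum on $Z$; Step 2 shows this is well-defined. Functoriality requires that the composition of continuation maps $(H_0, J_0)\to (H_1, J_1) \to (H_2, J_2)$ is homotopic to a direct continuation map, and that the continuation map associated to a family of the form $(H, J)$-on-both-ends is homotopic to the identity. Both facts follow from the standard gluing-parameter argument: interpolating between a long-neck continuation on $Z$ (which by a neck-stretching analysis breaks to the composition) and a direct continuation produces a parametrized moduli space whose associated $H$-map realizes the needed homotopy. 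The main obstacle is once again sign consistency, but this is already dispatched by Lemma \ref{lemma:d2-zero-simple} and by Step 2, since all signs in play arise from the same linear gluing theory applied one dimension up each time.
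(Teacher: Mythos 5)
Your proposal is correct and follows the paper's own proof essentially step for step: both establish that $C$ is a chain map and $H$ a homotopy by analyzing the codimension-one boundary strata of the one-dimensional components of $\overline{\M}^C$ and $\overline{\M}^H$ and matching signs via the inward/outward images of the translation vector $\partial/\partial s$ and the parameter vector $\partial/\partial t$ under linear gluing. The only divergence is in your Step 3, where the paper argues more directly that for a sufficiently small gluing parameter the glued perturbation datum is itself regular with rigid solutions in bijection with the terms of the composite, so that $C' \circ C$ \emph{equals} a continuation map rather than being homotoped to one through a parametrized family; you also explicitly verify that the self-continuation is homotopic to the identity, a point the paper omits here (it appears only later as Proposition \ref{prop:floer-continuation-identity}) but which is indeed needed for $F$ to be a functor on $Gr_{\mathcal{S}}$.
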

\begin{proof}
	This is very similar to the proof of Lemma \ref{lemma:d2-zero-simple}. 
    To prove that $C$ is a map of ungraded chain complexes, one analyses $1$-dimensional components of $\overline{\M}^C(y'', y)$. Gluing theory shows that if $u' \in \overline{\M}^C(y'', y')$ has index zero and  $u \in \overline{\M}^F(y', y)$ has index $1$, then gluing $u'$ and $u$ to a curve $u' \# u$  sends the translation vector $\partial/\partial s$ to the outwards-pointing normal vector in $T_{u' \# u}\overline{\M}^C(y'', y')$, while if $u' \in \overline{\M}^F(y', y)$ has index one and $u \in \overline{\M}^C(y', y)$ has index zero, then gluing these sends $\partial/\partial s$ to the inwards pointing vector. These translate to the statement that 
    \[ \phi_* u' *_2 \phi_* u  \in \Z^\check_{y''} \tensor o^0(y'') \tensor o^0(y)^\check \tensor Hom_{H_0(\P L, \p)}(y'', y)\]
    are exactly those elements in $ o^1(y'') \tensor o^0(y)^\check \tensor Hom_{H_0(\P L, \p)}(y'', y)$ constructed out of $u' \# u$ via linear gluing by orienting $T_{u' \# u}\overline{\M}^C(y'', y')$ using the appropriate inwards/outwards vectors, and then subsequently inserting a trivial line on the left and applying the isomorphism of orientation lines $o^1(y'') \simeq o^0(y)$. This means in turn that curves contributing to $dC\gamma $ either cancel among themselves or give the same contribution as a corresponding curve in $C d \gamma$, and similarly for the curves contributing to $d C \gamma$. This proves that $C$ is a map of ungraded chain complexes.
	
    To show that $H$ is a homotopy, one reasons analogously, by analyzing $1$-dimensional components of $\overline{\M}^H(y'', y)$. The claim translates into the truth of two analytic statements. The first is that if $u$ is an index $0$ strip contributing to $C_0$ or $C_1$, then the tangent space to $\overline{\M}^H$ at any strip that is $C^0$-close to $u$ is canonically isomorphic to the span of $\partial/\partial_t$, where $t$ is the coordinate of $[0,1]$. So a $1$-dimensional component of $\overline{\M}^H$ breaking at a strip continuing to $C_0$ and a strip contributing to $C_1$ implies that the contributions of that pair of strips to $C_0 - C_1$ cancels. The second analytic fact is that given an index $1$ Floer trajectory $u$ and an index $-1$ strip $u'$ contributing to $H$, if one can glue $u'$ and $u$ to a curve $u' \# u$ then the translation vector field in $\ker D_u$ is sent to an outwards pointing vector field, while if one can glue $u \# u'$, then the translation vector field in $\ker D_u$ is sent to an inwards-pointing vector field. The key point is that the image of the translation vector fields for $u$ and for $u'$ under the linearization of the gluing map point in \emph{opposite} directions near the boundary. 
	
	Abbreviate $CF(L_0, L_1; \Z[\pi_1(L_0)]^{tw} \tensor \Z[\pi_1(L_1)]^{tw}; H, J)$ to $C_\bullet(H, J)$. To finish the proof one must show that given three regular Floer data $(H, J)_{i = 0, 1, 2}$ the composition of continuations $C: C_\bullet(H_2, J_2) \to  C_\bullet(H_1, J_1)$ and $C': C_\bullet(H_1, J_1) \to  C_\bullet(H_0, J_0)$ is homotopic to some continuation $C: C_\bullet(H_2, J_2) \to  C_\bullet(H_0, J_1)$. This follows again from gluing and compactness of all moduli spaces contributing to $C, C'$: there is a sufficiently small $\epsilon > 0$ so that the perturbation datum on $Z$ coming from gluing $(K_1, J_1)$ and $(K_0, J_0)$ with gluing parameter $\epsilon$ is regular, and has zero dimensional moduli spaces in bijection with terms in $\gamma \mapsto C' \circ C$. Using the glued perturbation datum defines the desired $C''$.
\end{proof}
\subsection{Reduction to Morse theory}
\subsubsection{A Morse-theoretic analog of the Floer complex}
\label{sec:morse-theory-analog}
We can imitate the construction of Section \ref{sec:floer-complex-simple} in Morse theory. In Appendix \ref{sec:morse-theory-technical} we recall basic language and theorems in Morse theory. In particular, given a Morse-Smale pair of metric $g$ and Morse function $f$, and a pair of critical points $p, q \in Crit(f)$, we write $\overline{\M}^M(p, q; f, g)$ for the compactified moduli space of Morse trajectories from $p$ to $q$ following the downwards gradient flow of $f$. So, to define the ``Morse version'' of the complex (\ref{eq:floer-complex-simple}) for the manifold $L_0$, we need to make the following \emph{choices}:
\begin{equation}
\label{eq:morse-smale-choice}
\begin{minipage}{0.9\textwidth}
Choose a Morse-Smale pair of function and metric $(f, g)$ on the manifold $L_0$.
\end{minipage}
\end{equation}
\begin{equation}
\label{eq:morse-theory-pin-choice}
\begin{minipage}{0.9\textwidth}
Choose a base point $b \in L_0$. \\
For every point $p \in L_0$, choose a Pin structure on $T_p L_0$. 
\end{minipage}
\end{equation}

The choice made in (\ref{eq:morse-theory-pin-choice}) lets us define the categories $C_*(\P L, \p)$ and $H_0(\P L, \P)$  as in Section \ref{sec:natural-local-system} for $L = (L_0, L_0)$, with objects $Crit(f) \sqcup b$. In Appendix \ref{sec:morse-theory-orientations} we recall that orientations in Morse theory involves choices of trivialization of the orientation lines $o(T^+_p)$ of the positive eigenspaces of the Hessian at critical points $p$. Define the graded vector space

\begin{equation}
CM(L_0, \Z[\pi_1(L_0)]^{tw} \tensor \Z[\pi_1(L_0)]^{tw}; f, g) = \bigoplus_{p \in Crit(f)} \Z^\check_p \tensor o(T^+_p) \tensor H_0(\mathcal{P}L, \mathfrak{p}))(p, b).
\end{equation}
\begin{definition}
	Given $\gamma \in CM(L_0, \Z[\pi_1(L_0)]^{tw} \tensor \Z[\pi_1(L_0)]^{tw}; f, g) $, let $\gamma_y$ denote its component in $\Z^\check_p \tensor o(T^+_p) \tensor H_0(\mathcal{P}L, \mathfrak{p}))(p, b).$ Define the grading of $\gamma_p$ to be the index of $p$.
\end{definition}

We define an operator $d$ acting on $CM(L_0, \Z[\pi_1(L_0)]^{tw} \tensor \Z[\pi_1(L_0)]^{tw}; f, g)$ making it into a chain complex. Standard orientation theory for Morse moduli spaces shows that 
\[ \mathfrak{o}(\overline{\M}^M(p, q; f, g)) \simeq \Z^\check_{\partial/\partial t} \tensor o(T_p^-) \tensor o(T_q^-)^\check. \]

\begin{definition}
\label{def:single-path-chain-category}
 Let $\Pi_1 L_0$ be the topological category of Moore paths on $L_0$ defined in Section \ref{sec:technicalities-on-path-spaces}. Note that \emph{morphisms in this category are single paths}, \textbf{not} pairs of paths on $L_0$.   Define $C_*(\Pi_1 L_0)$ to be the $dg$ category obtained by applying the monoidal functor of singular chains to $\Pi_1 L_0$. 
 
 Write  $\Z \Pi_1(L_0)$ for the category obtained by applying the monoidal functor $H_0$  from chain complexes to abelian groups to  $C_*(\Pi_1 L_0)$; thus morphisms in $\Z \Pi_1(L_0)$ are formal $\Z$-linear combinations of elements in the fundamental groupoid of $L_0$. 
\end{definition}
\begin{definition}
\label{def:delta-functor}

 Define the functors
 \begin{align}
 \Delta: C_*(\Pi_1 L_0) \to C_*(\P L, \p)\\
 \Delta: \Pi_1(L_0) \to H_0(\P L, \p)
 \end{align}
 by defining them on simplicies $\{\gamma_s\}_{s \in \Delta}$ of paths from $p$ to $q$, as follows. Given such a simplex, choose a continuous family $p_s$ of Pin structures relative to the ends on $\gamma_s$; this defines a corresponding continuous family $p^{-1}_s$ of Pin structures relative to the ends on the \emph{inverse paths} $\gamma^{-1}_s$ by the condition that under the affine map 
 \[i: [0,\ell] \to [0,\ell]\]
 with $i(t) = (\ell - t)$, one has $i^*p_s = p^{-1}_s$. 
 The functor
 \begin{equation}
 \label{eq:delta-functor-chains}
 \Delta: C_*(\Pi_1 L_0) \to C_*(\P L, \p)
 \end{equation} then sends the simplex of paths $\{\gamma_s\}_{s \in \Delta}$ to the element of $C_*(\P L, \p)(p, q)$ represented by 
 the simplex of pairs of inverse paths $\{\gamma_s, \gamma_s^{-1}\}_{s \in \Delta}$ from $q \to p, p \to q$, equipped with the Pin structures relative to the ends $\{(p_s, p^{-1}_s)\}_{s \in \Delta}$. Because the elements of the local system $\p$ come from a \emph{tensor product} of local systems associated to \emph{isomorphism classes} of Pin structures on paths relative to the ends, this element does not depend on the choice of $p_s$. The functor 
 \[\Delta: \Pi_1(L_0) \to H_0(\P L, \p)\]
 is the functor induced by taking applying $H_0$ to the functor defined in (\ref{eq:delta-functor-chains})
 \end{definition}

Then given a Morse trajectory $u \in \overline{\M}^{M}(p, q)$ of index difference equal to $1$, let 
 \[ \phi_* u \in \Z^\check_{\partial/\partial t} \tensor o(T_p^-) \tensor o(T_q^-)^\check \tensor H_0(\mathcal{P}L, \mathfrak{p}))(q, b)\]
 be the tensor product of the canonical trivialization of the orientation line of a zero-dimensional manifold with the image under $\Delta$ of the path traced by $u$ viewed as a morphism in $\Pi_1(L_0)$.  Given $\gamma_p \in \Z^\check_p \tensor o(T^+_p) \tensor H_0(\mathcal{P}L, \mathfrak{p}))(p, b)$, we can then make sense of 
 \[ \phi_* u *_1 \gamma_p \in \Z^\check_q \tensor o(T^-_q) \tensor H_0(\mathcal{P}L, \mathfrak{p}))(q, b)\]
 where $*_1$ is defined as in Equation \ref{eq:def-star1-simple} by replacing the the Floer orientation lines with the corresponding Morse orientation lines. We then define an operator
 \[d: CM(L_0, \Z[\pi_1(L_0)]^{tw} \tensor \Z[\pi_1(L_0)]^{tw}; f, g) \to CM(L_0, \Z[\pi_1(L_0)]^{tw} \tensor \Z[\pi_1(L_0)]^{tw}; f, g) \]
 \[ d \gamma = \sum_{\substack{p,q \in Crit(f)\\ |p| - |q| = 1}}\sum_{u \in \overline{\M}^M(p, q) } \phi_* u *_1 \gamma_p. \]
 Refer to Figure \ref{fig:illustration-of-differential-b} for a graphical description of this operator. We have 
 \begin{lemma}
 	\label{lemma:morse-complex-d2-simple}
 	The operator $d$ makes $CM(L_0, \Z[\pi_1(L_0)]^{tw} \tensor \Z[\pi_1(L_0)]^{tw}; f, g)$ into a chain complex. $\qed$
 \end{lemma}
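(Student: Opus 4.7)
The plan is to adapt the proof of Lemma \ref{lemma:d2-zero-simple} to the Morse setting, where the analysis is simpler but the underlying algebraic mechanism is identical. First I would consider the one-dimensional components of $\overline{\M}^M(p, r; f, g)$ for critical points $p, r$ with $|p| - |r| = 2$. Standard Morse theory (Appendix \ref{sec:morse-theory-technical}) shows that each such component is a compact topological $1$-manifold with boundary, whose boundary points correspond precisely to broken gradient flow lines $(u', u)$ with $u' \in \overline{\M}^M(p, q)$, $u \in \overline{\M}^M(q, r)$ and $|q| = |p| - 1$; these are exactly the pairs of index-$1$ trajectories contributing to $d^2 \gamma$ through a fixed intermediate critical point $q$.

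The next step is to invoke functoriality of $\Delta$. By Definition \ref{def:delta-functor}, $\Delta$ is a functor from $\Pi_1(L_0)$ to $H_0(\P L, \p)$, so it sends concatenation of paths to composition. In particular, if $\gamma_{u'}$ and $\gamma_u$ denote the paths traced by $u'$ and $u$, then $\Delta(\gamma_{u'} \cdot \gamma_u) = \Delta(\gamma_{u'}) \circ \Delta(\gamma_u)$ as morphisms in $H_0(\P L, \p)$. A standard continuity argument in Morse gluing theory moreover shows that the path traced by any glued trajectory $u' \# u$ lying on the same $1$-dimensional component is homotopic, rel endpoints, to the concatenation $\gamma_{u'} \cdot \gamma_u$. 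Hence the $H_0(\P L, \p)$-valued factor of the contributions of $(u', u)$ to $d^2 \gamma$ matches the image under $\Delta$ of the path traced by the glued trajectory, and is constant along each one-dimensional component.

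I would then verify sign cancellation by linear gluing theory for the linearized gradient flow operators, which yields canonical isomorphisms of determinant lines fully analogous to the Floer case. Under these isomorphisms, the translation vectors $\partial/\partial t$ at the two factors add to an outward-pointing normal at one end of the corresponding $1$-dimensional component of $\overline{\M}^M(p, r)$ and to an inward-pointing normal at the other end. Consequently, the contributions of the two boundary points of each $1$-dimensional component to $d^2 \gamma$ appear with opposite signs in $\Z^\check_r \tensor o(T_r^+) \tensor H_0(\P L, \p)(r, b)$ and cancel pairwise, using that $*_1$ is associative up to a global sign independent of $u, u', \gamma$.

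The main obstacle is the bookkeeping of orientation conventions: one must check that the Morse gluing isomorphisms combine with the grading-shift isomorphisms between $o(T_p^-)$, $o(T_p^+)$, and the trivial lines $\Z^\check_p$ to produce precisely opposite signs at the two boundary points of a one-dimensional component of $\overline{\M}^M(p, r)$. This is the direct analog of the sign verification in Lemma \ref{lemma:d2-zero-simple}, with Floer shift lines replaced by the canonical identifications furnished by the Hessian's spectral decomposition; I expect no new phenomena beyond careful sign-tracking, and the argument concludes as in the Floer case once these identifications are pinned down.
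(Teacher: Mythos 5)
Your proof is correct, but it is worth noting that it is not the route the paper actually takes. The paper states the lemma with only the remark that it ``can be proven by analogy to the proof of Lemma \ref{lemma:d2-zero-simple}'' --- which is precisely your argument: compactness of the one-dimensional components of $\overline{\M}^M(p,r)$, constancy of the $H_0(\P L,\p)$-valued factor along each component (via functoriality of $\Delta$ and the fact that homotopic paths give equal morphisms after applying $H_0$), and pairwise sign cancellation at the two boundary points. The paper's official proof, however, is deferred: Lemma \ref{lemma:morse-complex-d2-simple} is obtained as a corollary of the Floer--Morse comparison (Lemma \ref{lemma:morse-floer-easy}), which for a $C^2$-small Hamiltonian produces an isomorphism of the Morse complex with a Floer complex intertwining the differentials, so that $d^2=0$ transfers from Lemma \ref{lemma:d2-zero-simple}. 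Your approach buys a self-contained argument entirely within Morse theory, with no pseudoholomorphic curves; the paper's approach avoids redoing the orientation bookkeeping (outsourcing it to the Floer case already handled) and simultaneously establishes the comparison isomorphism it needs anyway for Proposition \ref{prop:example}. One small point of care in your version: the sign analysis in the Morse setting is most cleanly run through the fiber-product description of $\widetilde\M^M(p,q)$ as $W^u(p)\times_L W^s(q)$ (Appendix \ref{sec:morse-theory-orientations}) rather than an abstract linear gluing theory, but the conclusion --- opposite inward/outward identifications of $\partial/\partial t$ at the two ends of each interval component --- is the same, and your argument goes through.
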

This can be proven by analogy to the proof of Lemma \ref{lemma:d2-zero-simple}; moreover, one can define continuation maps and homotopies and show that these Morse complexes form a connected simple system of chain complexes. We do not provide these definitions here, and leave them to the interested reader. Instead, in Section \ref{sec:morse_complex}, we define a derived version of the complex defined in this section, and write out the chain maps and homotopies explicitly. In the next section, we give a direct comparison between this Morse complex and an an associated Floer complex; this comparison will prove Lemma \ref{lemma:morse-complex-d2-simple} as a corollary. 
 
\subsubsection{Floer-Morse Comparison}
Floer's original method of comparison between Lagrangian Homology and Morse theory used the following lemma
\begin{lemma}[\cite{floer1989}, Theorem 2]
	\label{lemma:floers-lemma}
	Let $L \subset T^*L$ be a compact manifold, thought of as a Lagrangian submanifold of the symplectic manifold $T^*L$. Choose a sufficiently $C^2$-small function $H$ on $L$ that is Morse-Smale with respect to a Riemannian metric $g$ on $L$. Let $\bar{H}$ denote the constant function $[0, 1] \mapsto H$. For every critical point $x \in Crit(H)$, let $\gamma_x$ denote the constant path $[0, 1] \mapsto x \in T^*L$. Then there exists a $J$ such that $(\bar{H}, J)$ is a regular Floer datum for $(L, L)$, and such that the map 
	\[ \M^F(\gamma_x, \gamma_y) \ni u \mapsto (\tau \mapsto u(\tau + 0i) = u(\tau + 1i)) \in \M^M(x, y))\]
	is a homeomorphism for every pair of critical points $x, y \in Crit(H)$. These bijections fit together to give stratum-preserving homeomorphisms
	\[ \overline{\M}^F(\gamma_x, \gamma_y) \to \overline{\M}^M(x, y).\]
\end{lemma}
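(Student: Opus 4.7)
My plan is to give a proof that essentially reproduces Floer's original argument from \cite{floer1989}, adapted to the Moore-path conventions in use here. First, for $H$ sufficiently $C^2$-small the time-$1$ Hamiltonian flow $\phi^1_H$ on $T^*L$ is $C^1$-close to the identity, so its graph meets the graph of the identity (inside $T^*L \times T^*L$) only at the critical points of $H$; equivalently, the time-$1$ chords $\gamma \in \mathcal{C}(L,L;\bar H)$ are exactly the constant chords $\gamma_x$ for $x \in Crit(H)$. I would then fix the almost complex structure $J$ to be the canonical $\omega$-compatible structure on $T^*L$ associated to $g$: in local Darboux coordinates $(q,p)$ around a point of $L$, $J$ swaps the horizontal and vertical distributions via the metric identification $TL \xrightarrow{\sim} T^*L$. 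With this choice, strips $u:\R\times[0,1]\to T^*L$ which are $t$-independent and satisfy Floer's equation are exactly the negative gradient flow lines of $H$ for $g$, and each Morse trajectory $v: \R \to L$ lifts to the strip $u(\tau+it)=v(\tau) \in L \subset T^*L$.

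The analytic heart is to show that, conversely, every finite energy Floer strip $u$ with boundary on $L$ and asymptotics $\gamma_x,\gamma_y$ is $t$-independent. The first step is a $C^0$ confinement estimate: by exactness of $L$, the energy of $u$ is $\|u\|_E = H(x)-H(y) = O(\|H\|_{C^0})$, and a monotonicity argument for $J$-curves with boundary on a Lagrangian forces the image of $u$ to lie in a small tubular neighborhood $U$ of $L$ in $T^*L$ whose size shrinks to zero with $\|H\|_{C^2}$. Inside $U$ one writes $u(\tau,t)=(q(\tau,t),p(\tau,t))$ and Floer's equation becomes a perturbation of the standard Cauchy-Riemann equation of the form
\begin{align}
\partial_\tau q - p &= O(\|H\|_{C^2})\cdot p + O(|p|^2), \\
\partial_\tau p + \nabla H(q) &= O(\|H\|_{C^2})\cdot p + O(|p|^2),
\end{align}
with boundary condition $p=0$ at $t=0,1$. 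Differentiating in $t$, integrating $\int|\partial_t u|^2$ over the strip, and using the boundary condition together with an integration by parts, one gets an estimate $\|\partial_t u\|_{L^2}^2 \leq C\|H\|_{C^2}\cdot \|\partial_t u\|_{L^2}^2$; for $\|H\|_{C^2}$ sufficiently small this forces $\partial_t u\equiv 0$. This is the main obstacle, and it is where the smallness of $H$ is essentially used.

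Having shown that every Floer strip is $t$-independent, the map $u\mapsto (\tau\mapsto u(\tau+0i))$ is a well-defined continuous bijection $\mathcal{M}^F(\gamma_x,\gamma_y)\to \mathcal{M}^M(x,y)$, and its inverse (sending a Morse trajectory $v$ to the constant-in-$t$ extension) is also continuous, hence it is a homeomorphism. For regularity of $(\bar H,J)$, the linearized Floer operator $D_u$ at such a strip splits as a direct sum of the linearized Morse operator $D^M_u = \partial_\tau + \text{Hess}_v(H)$ (on $t$-independent sections) and a transverse operator of Jacobi type on $t$-dependent sections with Dirichlet boundary conditions; the Morse-Smale hypothesis gives surjectivity of $D^M_u$, and the transverse operator has strictly positive spectrum for $\|H\|_{C^2}$ small, since its leading symbol is the standard Dirichlet Laplacian on $[0,1]$. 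Hence $D_u$ is surjective, so $(\bar H,J)$ is regular Floer data.

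Finally, to extend to the compactifications, I observe that broken trajectories on both sides are indexed by the same data (a sequence of intermediate critical points $x=x_0,x_1,\ldots,x_k=y$ of $H$ together with an unbroken trajectory in each $\mathcal{M}^\bullet(x_{i-1},x_i)$), and the correspondence above respects this breaking. Gromov compactness on the Floer side and the standard broken-trajectory compactification on the Morse side define the same topology on the space of broken trajectories (both are metrized by the $C^0_{loc}$ convergence of the reparametrized pieces), so the bijection extends to a stratum-preserving homeomorphism $\overline{\mathcal{M}}^F(\gamma_x,\gamma_y) \to \overline{\mathcal{M}}^M(x,y)$ as required.
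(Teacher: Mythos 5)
The paper offers no proof of this lemma: it is imported verbatim from \cite{floer1989} (Theorem 2), with only the remark that the stated form ``follows immediately from his proof.'' So you are reconstructing Floer's argument rather than competing with anything in the text, and your architecture is indeed Floer's: identify the chords of $\bar H$ with $Crit(H)$, take $J$ to be the metric almost complex structure so that $t$-independent strips are exactly negative gradient lines, show every finite-energy strip is $t$-independent, split the linearized operator to get regularity from the Morse--Smale condition, and match the broken-trajectory compactifications. Most of these steps are fine as sketched. (One small inaccuracy: the boundary condition for variations of $u$ is tangency to $L$, not a Dirichlet condition; the operator $-J\partial_t$ with those boundary conditions has kernel exactly the constant sections tangent to $L$, which is precisely the Morse summand, and the spectral gap you need lives on its orthogonal complement.)

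The step that does not close as written is the central estimate. Setting $v=\partial_t u$, Floer's equation forces $v(s,0)$ and $v(s,1)$ to be \emph{vertical} (they lie in $J\cdot TL$), not zero, and the manipulation you describe --- differentiate in $t$, pair the resulting linear equation $\nabla_s v+J\nabla_t v+Av=0$ with $v$, integrate over the strip, integrate by parts in $t$ --- cannot produce a term $\|v\|_{L^2}^2$ with a uniform positive constant on the left-hand side. The $\nabla_s$ term integrates to zero by the asymptotic decay, and the skew-adjoint term $J\nabla_t$ contributes no positive-definite quadratic form; the only way to extract $\|v\|_{L^2}^2$ from the $t$-direction would be a Poincar\'e inequality for $-J\partial_t$ with both endpoints constrained to the vertical Lagrangian, and that operator has the constant vertical sections in its kernel, so no such inequality holds. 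Consequently the inequality $\|\partial_t u\|_{L^2}^2\le C\|H\|_{C^2}\|\partial_t u\|_{L^2}^2$ does not follow from the computation you describe. Floer's actual mechanism is second order: he shows that $\alpha(s)=\tfrac12\int_0^1|\partial_t u(s,t)|^2\,dt$ is \emph{convex} in $s$ when $\|H\|_{C^2}$ is small and the strip stays near the zero section (the computation of $\alpha''$ produces the good term $\int_0^1|\nabla_s\partial_t u|^2\,dt$, against which the curvature and Hessian errors are absorbed), and a nonnegative convex function on $\R$ tending to $0$ at $\pm\infty$ vanishes identically. You should replace your global integration by parts with this convexity argument; with that substitution the rest of your proof goes through and recovers the cited statement.
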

\begin{remark}
	Floer proved a slightly different lemma, and did not make any assumptions about the Morse-Smale-ness of $H$ , but the above follows immediately from his proof. 
\end{remark}
This lemma was further extended by many people, for example, in the work of Fukaya-Oh on higher-genus curves in the cotangent bundle \cite{Fukaya1997}. By using this lemma, we prove the a Floer-Morse comparison result for the complexes that we have just defined:
\begin{lemma}
	\label{lemma:morse-floer-easy}
	Let $L_0$ be an exact Lagrangian submanifold of a Liouville domain with $\Omega w_2(L_0) = 0$.  
	Given the choices in (\ref{eq:morse-smale-choice}), (\ref{eq:morse-theory-pin-choice}), there exist regular Floer data $\bar{H}, \bar{J}$ for $(L_0, L_0)$, choices as in Eq. \ref{eq:pin-structures-choice-floer}, \ref{eq:trivialize-shift-line}, and a map
	\begin{equation}
	\begin{gathered}
	Y: CF(L_0, L_0; \Z[\pi_1(L_0)]^{tw} \tensor \Z[\pi_1(L_0)]^{tw};\bar{H}, \bar{J}) \to\\ CM(L_0, \Z[\pi_1(L_0)]^{tw} \tensor \Z[\pi_1(L_0)]^{tw}; f, g).
	\end{gathered}
	\end{equation}
	which is an isomorphism of abelian groups that commutes with the differentials on both sides.
\end{lemma}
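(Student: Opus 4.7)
The plan is to use Floer's lemma (Lemma \ref{lemma:floers-lemma}) to reduce the Floer-theoretic count on the left-hand side to the Morse-theoretic count on the right-hand side, matching generators, moduli spaces, and orientation data piece by piece. First, I would invoke a Weinstein tubular neighborhood to identify a neighborhood of $L_0$ in $M$ with a neighborhood of the zero section in $T^*L_0$, and choose $\bar H$ to agree with the pullback of a sufficiently $C^2$-small rescaling of the Morse function $f$ in this neighborhood (extended to be compactly supported away from $\partial M$ without creating new time-$1$ chords, using a small cutoff as in standard Floer theory on Liouville domains). By Lemma \ref{lemma:floers-lemma}, the set of time-$1$ chords $\mathcal{C}(L_0,L_0;\bar H)$ is then in bijection with $\mathrm{Crit}(f)$ via $x \mapsto \gamma_x$, the constant chord at $x$, and one can choose $\bar J$ so that Floer's lemma supplies stratum-preserving homeomorphisms $\overline{\M}^F(\gamma_x,\gamma_y) \to \overline{\M}^M(x,y)$.

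Next I would arrange the auxiliary orientation data so that the two complexes have matching generators. For each $x\in\mathrm{Crit}(f)$, the endpoints of $\gamma_x$ both equal $x$, so the choice in (\ref{eq:morse-theory-pin-choice}) of a Pin structure on $T_xL_0$ provides, in the same stroke, the Pin-structure choices at endpoints of Hamiltonian chords needed in (\ref{eq:pin-structures-choice-floer}); take the basepoints to agree. I would trivialize the shift line $o^S(\gamma_x)$ in (\ref{eq:trivialize-shift-line}) using the standard identification coming from the fact that the linearized Floer operator at a constant chord on a strip splits into Morse data, which yields a canonical isomorphism $o^0(\gamma_x) \simeq o(T_x^+)$ (this is the standard spectral-flow identification of Floer orientation lines at constant chords with Morse orientation lines). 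With these identifications fixed, the map $Y$ is defined on generators by
\[
Y\bigl(\Z^\vee_{\gamma_x}\tensor o^0(\gamma_x)\tensor H_0(\mathcal P L,\mathfrak p)(\gamma_x,y_b)\bigr) \to \Z^\vee_x\tensor o(T_x^+)\tensor H_0(\mathcal P L,\mathfrak p)(x,b)
\]
via the identifications just described; this is an isomorphism of abelian groups on each summand.

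It remains to verify that $Y$ commutes with the differentials, which amounts to showing that for each index-$1$ Floer trajectory $u\in\overline{\M}^F(\gamma_x,\gamma_y)$ and its image Morse trajectory $\tilde u\in\overline{\M}^M(x,y)$, the Floer contribution $\phi_* u$ coincides with $\phi_*\tilde u$ under $Y$. Three things must match: (i) the signs from orientations on the zero-dimensional moduli spaces, which agree by Floer's homeomorphism together with the compatibility of linear gluing theory of Cauchy-Riemann operators with its Morse-theoretic counterpart at small strips; (ii) the shift-line identification, which is absorbed into the choice of trivialization $o^1(\gamma_x)\simeq o^0(\gamma_y)$ above; and (iii) the element of $H_0(\mathcal P L,\mathfrak p)(\gamma_x,\gamma_y)$ produced by the boundary of $u$ must equal $\Delta$ applied to the class of $\tilde u$ in $\Pi_1(L_0)(x,y)$. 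Item (iii) is immediate from Floer's lemma: both boundary components of $u$ trace out the \emph{same} path on $L_0$, namely $\tilde u$, so the pair $(\partial_0 u,\partial_1 u) = (\tilde u,\tilde u^{-1})$ is exactly what Definition \ref{def:delta-functor} assigns to $\tilde u$; the Pin structures relative to the ends also match because they are continuous families extending the chosen Pin structures at the endpoints $x,y$, and the construction of $\Delta$ is designed to be independent of the specific choice of such a continuous family.

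The main obstacle is bookkeeping in step (i): verifying that under the homeomorphism of moduli spaces, the orientation of $\overline{\M}^F(\gamma_x,\gamma_y)$ produced by linear gluing in Cauchy-Riemann theory (which output the tensor $\Z^\vee_{\partial/\partial s}\tensor o^n(\gamma_x)\tensor o^0(\gamma_y)^\vee\tensor\mathfrak p$ of (\ref{eq:outcome_of_linear_gluing_theory})) is sent to the corresponding Morse orientation $\Z^\vee_{\partial/\partial t}\tensor o(T_x^-)\tensor o(T_y^-)^\vee$ via the identifications $o^0(\gamma_x)\simeq o(T_x^+)$ and duality $o(T_x^+)\simeq o(T_x^-)^\vee$. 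This is the classical comparison for small Hamiltonians on cotangent bundles and can be done by explicit spectral-flow analysis on linearized operators at constant chords, essentially because the operator $\partial_s + \partial_t - \nabla f$ on the strip becomes, after integrating out $t$, the Morse operator $\partial_s - \nabla f$ on the line. Given this comparison, the Floer differential (\ref{eq:differential-simplified}) is sent term by term to the Morse differential, so $Y$ is a chain isomorphism. As a corollary, the Morse differential squares to zero, proving Lemma \ref{lemma:morse-complex-d2-simple}.
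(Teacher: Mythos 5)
Your proof follows essentially the same route as the paper's: a Weinstein neighborhood, Floer's Lemma \ref{lemma:floers-lemma}, identical Pin structures at both endpoints of each constant chord, a shift-line trivialization identifying $o^0(\gamma_x)$ with the Morse orientation line, and the observation that both boundary arcs of a Floer strip trace the same path on $L_0$, so the Pin-structure torsor $\mathfrak{p}$ is canonically trivialized and the coefficient in $H_0(\P L,\p)$ is exactly $\Delta$ applied to the underlying Morse trajectory. One step is missing: Lemma \ref{lemma:floers-lemma} concerns strips in $T^*L_0$, whereas the moduli spaces on the left-hand side live in the ambient Liouville domain $M$, and you must rule out Floer strips that escape the Weinstein neighborhood $U$. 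The paper does this by citing Lemma II.7.5 of \cite{seidel} (exactness together with the contact-type boundary of $U$ confines all strips to $U$); without that confinement you can conclude neither that $(\bar H,\bar J)$ is regular in $M$ nor that the bijection between Floer and Morse trajectories is exhaustive. Your choice to fix the shift-line trivialization canonically by spectral flow at constant chords, rather than arbitrarily as in the paper (which then checks that the arbitrariness cancels via the equivariance of gluing under tensoring with shift lines), is a harmless and arguably cleaner variant.
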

\begin{proof}
	 Let $(H, g)$ be a Morse-Smale pair on $L = L_0$, and let $U \subset T^*L$ be a Weinstein neighborhood of $L \subset M$. Write $\pi: T^*L \to L$ for the projection map. Choose a $\overline{H}$ on $M$ that agrees with $\pi^*H$ on $U$, and a $t$-dependent family of almost complex structures $\overline{J}$ on $M$ which extends the restriction to $U$ of the family $J$ arising in the statement of Lemma \ref{lemma:floers-lemma}.  Then for a sufficiently small $H$, the image $L'$ of $L$ under the time-$1$ flow of $H$ still lies in $U$. The solutions to the inhomogeneous pseudoholomorphic map equation for the Floer datum $(\overline{H}, \overline{J})$ are bijection with pseudoholomorphic strips with boudary on $L \cup L'$. since $U$ is an exact symplectic manifold with contact type boundary, Lemma $II.7.5$ of \cite{seidel} shows that all such strips lie in $U$, and thus come from solutions to the inhomogeneous map equations with target $U \subset T^*L$ with respect to the Floer datum $(H, J)$. Thus the pair $\overline{H}, \overline{J}$ is regular, and Lemma \ref{lemma:floers-lemma} actually gives stratum-preserving homeomorphisms
	\[ \overline{\M}^F(\gamma_x, \gamma_y; \overline{H}, \overline{J}) \to \overline{\M}^M(x, y; H, g).\]
	
	The Floer data $(\bar{H}, \bar{J})$ described above are those in the proposition. We make the choice in  Eq. \ref{eq:pin-structures-choice-floer} by choosing the same Pin structures on both endpoints of the (constant) Hamiltonian chord, and we make the choice in Eq. \ref{eq:trivialize-shift-line} arbitrarily. 
	
	With these choices, there is an isomorphism (see, for example, Remark 6.1 of \cite{abouzaid_wrapped})
	\begin{equation}
	o^{ind x}(\gamma_x) \simeq o(T^-_x)
	\end{equation}
	for each critical point $x \in Crit(f)$. Thus, the isomorphism $o^0(\gamma_x) \simeq o^{\text{ind} x}$ coming from the trivialization of the shift line, together with the above isomorphism, and the identifications $\Z^\check_{\gamma_x} = \Z^\check_x$, define $Y$ as a map of abelian groups. 
	
	It remains to check that $Y$ commutes with the differential. The bijection between Floer strips and Morse trajectories gives a bijection between terms, and it suffices to check that the signs agree. Now, as for the Morse trajectories in Section \ref{sec:morse-theory-analog}, the abelian groups $\overline{p}$ of Pin structures along the boundaries of the Floer strips contributing to the Floer differential are all canonically isomorphic to $\Z$. The usual comparison between Floer and Morse signs, together with the equivariance of the gluing isomorphism with respect to the shift line (see Eq. \ref{eq:equivariance-of-gluing}), shows that the Floer signs agree with the Morse signs.
	
\end{proof}

\subsection{Module structures}
Recall that in Section \ref{sec:bimodule-structure} we explain how $C_*(\P L, \p)(y, y_b)$ is a right $A_0^{op} \tensor A_1$ module via a map 
\[A_0^{op} \tensor A_1 \to C_*(\P L, \p)(y_b, y_b).\] 
Applying $H_0$ to this map shows that $Hom_{H_0(\P L, \p)}(y, y_b)$ is a free right $H_0(A_0)^{op} \tensor H_0(A_1)$ module. Since
\begin{equation}
\begin{gathered}
H_0(A_i) = \Z[\pi_1(L_i)]^{tw} \text{ for } i = 0, 1, 
\end{gathered}
\end{equation} 
this is the same as a $(\Z[\pi_1(L_0)]^{tw}, \Z[\pi_1(L_1)]^{tw})$ bimodule, and left composition with a morphism in $Hom_{H_0(\P L, \p)}(y', y)$ is a map of bimodules.

The complexes \[CF(L_0, L_1; \Z[\pi_1(L_0)]^{tw} \tensor \Z[\pi_1(L_1)]^{tw};H, J)\]and \[CM(L_0, \Z[\pi_1(L_0)]^{tw} \tensor \Z[\pi_1(L_0)]^{tw}; f, g)\] are direct sums of tensor products of free rank $1$ abelian groups, i.e. ``lines'', with  $Hom$ spaces in $H_0(\P L, \p)$, and the differentials, homotopies, and continuation maps are defined as tensor products of isomorphisms of these lines with left compositions with morphisms in $H_0(\P L, \p)$. This immediately implies the following 
\begin{lemma}
	\label{lemma:floer-simplified-bimodule-invariance}
	The complexes \[\{CF(L_0, L_1; \Z[\pi_1(L_0)]^{tw} \tensor \Z[\pi_1(L_1)]^{tw}; H, J)\}_{H, J}\] indexed by regular Floer data $(H, J)$ form a connected simple system of ungraded complexes of $(H_0(A_0), H_0(A_1))$-bimodules.
	
	The complexes \[\{CM(L_0, \Z[\pi_1(L_0)]^{tw} \tensor \Z[\pi_1(L_0)]^{tw}; f, g)\}_{f, g}\] indexed by Morse-Smale pairs $(f,g)$ form a connected simple system of complexes of $(H_0(A_0), H_0(A_1))$-bimodules.
	
	The isomorphism in Lemma \ref{lemma:morse-floer-easy} is an isomorphism of 
    $(H_0(A_0), H_0(A_1))$-bimodules.
\end{lemma}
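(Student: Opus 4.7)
The plan is to observe that every operator defining the Floer and Morse complexes (differentials, continuation maps, and homotopies), as well as the isomorphism $Y$ of Lemma \ref{lemma:morse-floer-easy}, is built as a sum of tensor products of \emph{$\Z$-linear isomorphisms of lines} with \emph{left composition by morphisms in $H_0(\P L, \p)$}, and then to invoke Lemma \ref{lemma:left-compostion-is-module-map} (after passage to $H_0$) to conclude that each such operator is a bimodule map.

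First, I would record the bimodule structure on the summands. Applying the monoidal functor $H_0$ to the quasi-isomorphism of Lemma \ref{lemma:left-compostion-is-module-map} produces, for $L = (L_0, L_0)$ or $L = (L_0, L_1)$, a ring map $H_0(A_0)^{op} \tensor H_0(A_1) \to Hom_{H_0(\P L, \p)}(y_b, y_b)$, making $Hom_{H_0(\P L, \p)}(y, y_b)$ into a free rank-1 right $H_0(A_0)^{op} \tensor H_0(A_1)$-module, equivalently an $(H_0(A_0), H_0(A_1))$-bimodule. Tensoring on the left with the graded line $\Z^\check_y \tensor o^0(y)$ (respectively $\Z^\check_p \tensor o(T_p^+)$ in the Morse setting) preserves this bimodule structure, so each summand of $CF$ and $CM$ is a free rank-1 bimodule.

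Second, I would check that $d$, $C$, and $H$ are bimodule maps. Each is a sum of operators of the form $\gamma_y \mapsto \phi_* u *_i \gamma_y$, where the operations $*_1, *_2$ consist of (a) left composition in $H_0(\P L, \p)$ with the class $\phi_* u$ (which by the previous paragraph is a map of bimodules), (b) commuting graded lines past each other, (c) pairing an orientation line with its dual, and (d) applying a grading-shifting isomorphism of trivial lines. Operations (b)--(d) act purely on the line tensor factors and are tensored with the identity on the $Hom_{H_0(\P L, \p)}$ factor carrying the bimodule structure; they are therefore automatically bimodule maps. The same argument applies in the Morse setting, where $\phi_* u$ is obtained from a Morse trajectory via the functor $\Delta$ of Definition \ref{def:delta-functor} and is again a morphism in $H_0(\P L, \p)$, so left composition with it is a bimodule map. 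This establishes the first two assertions of the lemma: the continuation maps and homotopies from Section \ref{sec:floer-complex-simple} (and their obvious Morse analogues) assemble into functors $Gr_{\mathcal{S}} \to Ho(Ch^u)$ landing in the category of bimodules rather than just abelian groups.

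Finally, for the Floer--Morse comparison, I would note that the map $Y$ of Lemma \ref{lemma:morse-floer-easy} is defined, on each direct summand, as the tensor product of a $\Z$-linear isomorphism of lines (coming from the identifications $\Z^\check_{\gamma_x} = \Z^\check_x$, $o^0(\gamma_x) \simeq o^{\mathrm{ind}\, x}(\gamma_x) \simeq o(T_x^-)$, and the trivialization of the shift line) with the \emph{identity} on $Hom_{H_0(\P L, \p)}(\gamma_x, \gamma_b) = Hom_{H_0(\P L, \p)}(x, b)$, since the constant Hamiltonian chords get identified with the corresponding critical points and the Pin choices are matched. Hence $Y$ is a bimodule map on each summand, and therefore a bimodule isomorphism of the total complexes. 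The hardest part is purely bookkeeping: verifying that the sign/line manipulations implicit in $*_1, *_2$ and in the definition of $Y$ really do only touch the line tensor factors, which follows directly from the construction of these operators in Sections \ref{sec:floer-complex-simple} and \ref{sec:morse-theory-analog}.
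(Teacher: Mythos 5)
Your proposal is correct and follows essentially the same route as the paper, which derives the lemma immediately from the observation that the summands are free rank-one bimodules (via $H_0$ applied to Lemma \ref{lemma:left-compostion-is-module-map}) and that $d$, $C$, $H$, and $Y$ are all tensor products of line isomorphisms with left compositions in $H_0(\P L, \p)$. No gaps.
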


\subsection{Augmentations and the proof of propositions}
\label{sec:augmentations-and-proof-of-propositions}
The results of the previous three sections need to be slightly generalized to prove Propositions \ref{prop:example}, and \ref{prop:example2}, as we have not yet made use of the \emph{augmentations} mentioned in the statements of those propositions. In this section we prove both propositions. Let $L = L_0$ be a closed exact Lagrangian in Liouville domain, and let \[\epsilon: \Z[\pi_1(L_0)]^{tw} \to End_\Z(M)\] make the $\Z$-module $M$ into a module over the twisted group ring, e.g. let $\epsilon$ be an augmentation to some field $k$, or a ring map to some ring $R$.

Abusing notation, we write 
\[ \Delta: \Z[\pi_1(L_0)] \to (\Z[\pi_1(L_0)]^{tw})^{op} \tensor \Z[\pi_1(L_0)]^{tw}\]
for the restriction of $\Delta: \Z\Pi_1(L_0)^* \to H_0(\P L, \p)$ (see Definition \ref{def:delta-functor}) to the automorphisms of the basepoint $b$. 
\label{sec:morse-theory-analog}

\begin{definition}
	\label{def:k-epsilon}
	Using the notation of the previous two paragraphs, we define $M_\epsilon$ be the $\Z$-local system on $L_0$ induced by the composition $\epsilon \circ \Delta$. 
\end{definition} 
\subsubsection{Modification of complexes}
The complexes $CF(L_0, L_0; \Z[\pi_1(L_0)]^{tw} \tensor \Z[\pi_1(L_0)]^{tw}), CM(L_0, \Z[\pi_1(L_0)]^{tw} \tensor \Z[\pi_1(L_0)]^{tw})$ are naturally right modules over $A := Hom_{H_0(\P L, \p)}(y_b, y_b) = \Z[\pi_1(L_0)]^{tw} \tensor \Z[\pi_1(L_0)]^{tw}$, and the action of the differentials $d$ commutes with the module structure, in the sense that 
\[ d (\gamma a) = (d \gamma) a\]
for any $\gamma$ in the appropriate complex and $a \in Hom_{H_0(\P L, \p)}(y_b, y_b)$. We then define 
\[CF(L_0, L_0; \epsilon; H, J) := CF\left(L_0, L_0; \Z[\pi_1(L_0)]^{tw} \tensor \Z[\pi_1(L_0)]^{tw}; H, J\right) \tensor_{A} M,\]
\[ CM(L_0; \epsilon; f, g) :=  CM(L_0, \Z[\pi_1(L_0)]^{tw} \tensor \Z[\pi_1(L_0)]^{tw}; f, g) \tensor_{A} M,  \]
where $M$ is thought of as a left $A$-module via $\epsilon$. An immediate corollary of the previous sections is that
\begin{lemma}
	\label{lemma:connected-simple-system-simplified}
	Extend the operators $d, C, H$ to 
	\begin{equation}
	\label{eq:cut-down-complexes} CF(L_0, L_0; \epsilon; H, J),  CM(L_0; \epsilon; f, g) 
	\end{equation}
	via $d = d \tensor_A 1, C = C \tensor_A 1, H = H \tensor_A 1$. With this structure, the vector spaces in (\ref{eq:cut-down-complexes}) form a connected simple system of ungraded chain complexes. 
\end{lemma}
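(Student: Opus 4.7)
The plan is to reduce the claim to a purely formal consequence of the connected simple system structures that were already established, using the additivity of the tensor product functor $(-) \tensor_A M$.

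By Lemma \ref{lemma:floer-simplified-bimodule-invariance}, the complexes $CF(L_0, L_0; \Z[\pi_1(L_0)]^{tw} \tensor \Z[\pi_1(L_0)]^{tw}; H, J)$ and $CM(L_0, \Z[\pi_1(L_0)]^{tw} \tensor \Z[\pi_1(L_0)]^{tw}; f, g)$ are ungraded chain complexes of $(H_0(A_0), H_0(A_1))$-bimodules, and in particular are right $A$-modules where $A = \Z[\pi_1(L_0)]^{tw} \tensor \Z[\pi_1(L_0)]^{tw}$. Moreover, that lemma shows that the differentials $d$, the continuation maps $C$ and the homotopies $H$ defined in the previous sections are all maps of right $A$-modules; this is because these operators were constructed as tensor products of isomorphisms of lines with left-composition by morphisms in $H_0(\P L, \p)$, and left-composition commutes with the right action of $A = \text{End}_{H_0(\P L, \p)}(y_b, y_b)$.

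The functor $(-) \tensor_A M$ from right $A$-modules to abelian groups is additive, so it preserves all of the defining identities of a connected simple system. Concretely, applying $(-) \tensor_A 1$ to $d^2 = 0$ yields $(d \tensor_A 1)^2 = d^2 \tensor_A 1 = 0$, so the tensored operator $d \tensor_A 1$ makes the right-hand sides of (\ref{eq:cut-down-complexes}) into ungraded chain complexes. Similarly, $(C d) \tensor_A 1 = (d C) \tensor_A 1$ gives that $C \tensor_A 1$ is a chain map, and $(C_0 - C_1) \tensor_A 1 = (d H - H d) \tensor_A 1$ gives that $H \tensor_A 1$ is a homotopy between the tensored continuation maps. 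Finally, the statement that for any three regular Floer data $(H_i, J_i)$ the composition of continuations is homotopic to a continuation involves an equality of the form $C' \circ C - C'' = d H' - H' d$ among maps of right $A$-modules; tensoring with $M$ preserves this identity.

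Consequently, the functor from the groupoid of regular Floer data (respectively Morse--Smale pairs) to the homotopy category of ungraded complexes of right $A$-modules provided by Lemma \ref{lemma:floer-simplified-bimodule-invariance}, composed with the additive functor $(-) \tensor_A M$ landing in $Ho(Ch^u(\Z\text{-mod}))$, is the required connected simple system structure. There is essentially no obstacle here beyond identifying the relevant module structure; the only observation needed is the functoriality and additivity of $(-) \tensor_A M$, which is standard. $\qed$
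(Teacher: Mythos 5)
Your proof is correct and takes the same route the paper intends: the paper states this lemma as an "immediate corollary of the previous sections," and your argument — that $d$, $C$, $H$ are right $A$-module maps by Lemma \ref{lemma:floer-simplified-bimodule-invariance}, so the additive functor $(-)\tensor_A M$ carries the connected simple system of bimodules to one of ungraded complexes — is exactly the content being left implicit. No gaps.
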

We now proceed with the computation of $CM(L_0; \epsilon; f, g)$:
\begin{lemma}
	\label{lemma:compute-CM-simplified}
	The complex $CM(L_0, \epsilon; f, g)$ computes the Morse homology of $M_\epsilon$. 
\end{lemma}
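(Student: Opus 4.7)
The plan is to exhibit an explicit isomorphism of ungraded chain complexes between $CM(L_0;\epsilon;f,g)$ and the standard singular Morse complex of $L_0$ with coefficients in the local system $M_\epsilon$; the lemma then follows from the classical identification of the latter with $H_*(L_0,M_\epsilon)$.

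First I would trivialize the modules one critical point at a time. For each $p \in Crit(f)$, choose a path $\alpha_p$ in $L_0$ from $p$ to $b$ (taking $\alpha_b$ to be the constant path), equipped with a Pin structure relative to its ends that is compatible at the endpoints with the choices in (\ref{eq:morse-theory-pin-choice}). Applying the functor $\Delta$ of Definition \ref{def:delta-functor} produces an element $\tilde\alpha_p \in H_0(\P L,\p)(p,b)$ represented by a single pair of paths, and by Lemma \ref{lemma:PL-is-groupoid} (applied after $H_0$) $\tilde\alpha_p$ is invertible. Consequently right-composition with $\tilde\alpha_p$ is an isomorphism of right $A$-modules $A \xrightarrow{\sim} H_0(\P L,\p)(p,b)$, and tensoring over $A$ with $M$ (through $\epsilon$) canonically identifies $H_0(\P L,\p)(p,b) \tensor_A M$ with $M$. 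Assembling over all $p$ gives an isomorphism of graded abelian groups
\[
Y: CM(L_0;\epsilon;f,g) \xrightarrow{\sim} \bigoplus_{p\in Crit(f)} \Z^\check_p \tensor o(T^+_p) \tensor M,
\]
whose right-hand side is the underlying module of the standard Morse complex computing $H_*(L_0,M_\epsilon)$ with framings given by $\{\alpha_p\}$.

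Next I would verify that $Y$ intertwines the differentials. For each index-$1$ Morse trajectory $u$ between critical points $p,q$, let $\gamma_u$ denote the path it traces, viewed as a morphism in $\Pi_1 L_0$. Because $\Delta$ is a strict functor of $\Z$-linear categories, after identifying both the domain and codomain with $A$ via the units $\tilde\alpha_p,\tilde\alpha_q$, the operator on $A$ induced by composing with $\Delta(\gamma_u)$ becomes multiplication by
\[
\tilde\alpha_q^{-1}\circ\Delta(\gamma_u)\circ\tilde\alpha_p \;=\; \Delta\bigl([\alpha_p\cdot\gamma_u\cdot\alpha_q^{-1}]\bigr) \;\in\; A,
\]
where the bracketed expression is a loop at $b$. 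Tensoring with $M$ through $\epsilon$ then converts this into the endomorphism $(\epsilon\circ\Delta)([\alpha_p\cdot\gamma_u\cdot\alpha_q^{-1}])$ of $M$, which by Definition \ref{def:k-epsilon} is precisely the parallel transport of $M_\epsilon$ along $\gamma_u$ relative to the framings $\alpha_p,\alpha_q$. Since the tensor factors $\Z^\check_p$ and $o(T^+_p)$ appear identically on both sides of $Y$, the operator $Y$ carries the differential of $CM(L_0;\epsilon;f,g)$ term-by-term to the differential on the usual Morse complex with $M_\epsilon$-coefficients.

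The main obstacle I expect is keeping the Pin-structure data honest when verifying the displayed equality: one must check that $\tilde\alpha_q^{-1}\circ\Delta(\gamma_u)\circ\tilde\alpha_p$ literally equals $\Delta$ of the concatenated loop, with no extra sign twist. This reduces to the strict functoriality of $\Delta$, which follows from the fact that in Definition \ref{def:delta-functor} the Pin structures on each pair $(\gamma,\gamma^{-1})$ are produced by a single rule (pullback along the affine parametrization reversal), combined with the fact that the local system $\p$ depends only on the $\Z/2$-class of a Pin structure so that the choice of continuous family $p_s$ drops out. Once this compatibility is in hand, the standard theorem that the Morse complex of a local system computes singular homology with those coefficients concludes the proof.
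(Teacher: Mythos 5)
Your proposal is correct and is essentially the paper's argument: both identify $H_0(\P L,\p)(p,b)\tensor_A M$ with the stalk of $M_\epsilon$ at $p$ through the functor $\Delta$, and both observe that because the Morse differential is defined via $\Delta$ the Pin-structure data drops out and the comparison with parallel transport in $M_\epsilon$ is tautological. The only cosmetic difference is that you trivialize each stalk to $M$ by choosing framing paths $\alpha_p$ (using Lemma \ref{lemma:PL-is-groupoid} after applying $H_0$), whereas the paper keeps the canonical description of the stalk as $Hom_{\Z\Pi_1(L_0)}(p,b)\tensor_{\Z[\pi_1(L_0)]} M$ and never makes that choice.
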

\begin{proof}
	First, for every critical point $p \in Crit(f)$, we have a canonical isomorphism between the vector space $Hom_{H_0(\P L, \p)}(p, b) \tensor_A M$ and $(M_\epsilon)_p$. Indeed, since $M_\epsilon$ is defined using $\Delta$, which is part of the functor $\Delta$ defined in Section \label{sec:morse-theory-analog}, one has a canonical isomorphism 
	\begin{equation}
	\label{eq:a-comparison-map}Hom_{H_0(\P L, \p)}(p, b) \tensor_A M = Hom_{\Z\Pi_1(L_0)}(p, b) \tensor_{Hom_{\Z\Pi_1(L_0)}} M = (M_\epsilon)_p
	\end{equation}
	where the right-most equality holds essentially by definition.

    The above isomorphism extends to an isomorphism of graded vector spaces from $CM(L_0, \epsilon; f, g)$ to the usual Morse complex of the local system $M_\epsilon$ \cite[Section~2.7]{kronheimer-mrowka} by taking the sum of the maps in (\ref{eq:a-comparison-map}) over all $p \in Crit(f)$. It is immediate that this isomorphism intertwines the terms of the differentials: one has to simply check that the signs are identical. A clean way to see this is to use the fact that differential on $CM(L_0, \epsilon; f, g)$ is defined using the functor $\Delta$; using this one can remove any mention of Pin structures from the definition of $CM(L_0, \epsilon; f, g)$, after which the sign comparison is tautological.  $\blacksquare$.
\end{proof}
\subsubsection{Proof of Propositions}
\label{sec:example-prop-proofs-subsec}
\begin{proof}[Proof of Prop. \ref{prop:example}]We write $L_0$ for the Lagrangian in the proposition. A transversely intersecting Hamiltonian isotopy of $L_0$ comes from a time-dependent Hamiltonian $H_t$. By Proposition \ref{prop:hamiltonian-is-part-of-regular-floer-datum}, this is the Hamiltonian part of a regular Floer datum $(H, J)$ for the pair of Lagrangians $L = (L_0, L_0)$. The complex $CF(L_0, L_0; \epsilon; H, J)$ is, by construction, a vector space over $k$ of dimension equal to the number of intersection points of $L_0$ with its image under the flow of $H_t$. The combination of Lemma \ref{lemma:connected-simple-system-simplified} and Lemma  \ref{lemma:compute-CM-simplified} then show that the above complex is quasi-isomorphic to $CM(L_0; \epsilon; f, g)$ for some Morse-Smale pair $(f, g)$. Lemma \ref{lemma:compute-CM-simplified} then shows that this latter complex is a complex computing the homology of $k_\epsilon$. Thus we have the inequality 
\[ \dim_k CF(L_0, L_0; \epsilon; H, J) \geq \dim_k  H_*(L, k_\epsilon). \]

\end{proof}
\begin{proof}[Proof of Prop. \ref{prop:example2}]
	When $L = L_0 \times L_1$ with $L_1$ spin, we have that $w_2(L)$ is the pullback of $w_2(L_0)$ by the projection to the first factor, and thus $\Z[\pi_1(L)]^{tw} = \Z[\pi_1(L_0)]^{tw} \tensor \Z[\pi_1(L_1)]$. The claim about $k_{\epsilon \tensor \epsilon_0}$ then follows from the compatibility of the map $\Delta: \Z[\pi_1(L)] \to (\Z[\pi_1(L)]^{tw})^{op} \tensor \Z[\pi_1(L)]^{tw}$ with the above tensor decomposition.
\end{proof}

Now, we verify that the condition proposed by Witten implies Assumption (\ref{assumption:omega}):
\begin{lemma}
	\label{lemma:witten-implies-assumption}
	Let $L$ be a $Spin^c$ manifold admitting a $Spin^c$ connection such that the connection on the complex line bundle $\lambda$ associated to the $Spin^c$ structure is flat. Then $\Omega w_2 (L) = 0$.
\end{lemma}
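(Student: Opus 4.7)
The plan is to unwind the definition of $\Omega w_2(L)$ dually, reducing the claim $\Omega w_2(L) = 0$ to a statement about pairings with 2-cycles arising from tori in $L$, and then use the hypothesis that the induced connection on $\lambda$ is flat to show that those pairings vanish identically.

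First, I would note that by universal coefficients $H^1(\Omega L; \Z/2) \cong \mathrm{Hom}(H_1(\Omega L; \Z/2), \Z/2)$, so it suffices to check that $\Omega w_2(L)$ pairs to zero with every homology class of the form $\ell_*[S^1]$, where $\ell: S^1 \to \Omega L$ is a loop. Unwinding the definition of $\Omega$ via the Kunneth splitting of $ev^* w_2(L) \in H^2(\Omega L \times S^1; \Z/2)$, this pairing equals
\begin{equation}
\langle \Omega w_2(L), \ell_*[S^1] \rangle \;=\; \langle w_2(L),\, \hat{\ell}_*[S^1 \times S^1] \rangle \;\in\; \Z/2,
\end{equation}
where $\hat{\ell} := ev \circ (\ell \times \mathrm{id}): S^1 \times S^1 \to L$ is the adjoint torus map. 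So the claim reduces to showing that for every such $\hat{\ell}$, the mod-2 evaluation of $w_2(L)$ on $\hat{\ell}_*[T^2]$ vanishes.

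Next I would use the $Spin^c$ structure and the flat connection hypothesis. Because $L$ is $Spin^c$, the class $w_2(L) \in H^2(L; \Z/2)$ is the mod-2 reduction of the integral class $c_1(\lambda) \in H^2(L; \Z)$. Because $\lambda$ admits a flat connection (as the associated connection of some $Spin^c$ connection), the Chern-Weil representative $\tfrac{i}{2\pi} F_\nabla$ vanishes as a 2-form, so $c_1(\lambda)$ maps to $0$ in $H^2(L; \R)$; equivalently, $c_1(\lambda)$ is torsion in $H^2(L; \Z)$. Let $n \geq 1$ be such that $n \cdot c_1(\lambda) = 0$.

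Finally I would conclude: by naturality of mod-2 reduction applied to the integer pairing $H^2(L; \Z) \otimes H_2(L; \Z) \to \Z$, one has
\begin{equation}
\langle w_2(L), \hat{\ell}_*[T^2] \rangle \;=\; \langle c_1(\lambda), \hat{\ell}_*[T^2] \rangle \bmod 2.
\end{equation}
The integer on the right satisfies $n \cdot \langle c_1(\lambda), \hat{\ell}_*[T^2] \rangle = \langle n\, c_1(\lambda), \hat{\ell}_*[T^2] \rangle = 0$, and since $\Z$ is torsion-free this forces the pairing itself to be zero, hence zero mod $2$. Combining with the first paragraph gives $\Omega w_2(L) = 0$, completing the proof.

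The only step requiring any real care is the first one, namely correctly identifying the pairing of $\Omega w_2(L)$ with a 1-cycle in $\Omega L$ as the pairing of $w_2(L)$ with the corresponding toroidal 2-cycle in $L$; everything else is formal once this bookkeeping is done.
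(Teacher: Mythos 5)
Your proof is correct and rests on the same two inputs as the paper's: flatness of the connection on $\lambda$ forces $c_1(\lambda)$ to be torsion, and $w_2(L)$ is its mod-$2$ reduction, so $\Omega w_2(L)$ is the reduction of a torsion class landing in a torsion-free group. The only difference is packaging: the paper pushes $c_1(\lambda)$ through an integral lift $\bar\Omega$ into $H^1(\Omega L,\Z)$, which is torsion-free by universal coefficients, whereas you dualize and evaluate on toroidal $2$-cycles, using that $\Z$ itself is torsion-free; both are the same argument.
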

\begin{proof}
	For a reference on $Spin^c$ structures and the definition of the line bundle $\lambda$, see \cite[Appendix~D]{lawson1989spin}. The complex line bundle $\lambda$ admits a flat connection if and only if the its first chern class $c_1(\lambda)$ is a torsion class. But $c_1(\lambda) = w_2(L) \mod 2$; this follows, for example, from the argument proving that a manifold is $Spin^c$ if and only if its second Steifel-Whitney class is the reduction of an integral class \cite[Theorem~D.2]{lawson1989spin}. So Witten's condition implies that $w_2(L)$ is the reduction of a torsion integral class. However, there is a commutative diagram
		\begin{equation*}
	\begin{tikzcd}
	H^2(L, \Z) \arrow[r, "\bar\Omega"]
	\arrow[d, "/2"] & H^1(\Omega L, \Z) \arrow[d, "/2"] \\
	H^2(L, \Z/2) \arrow[r, "\Omega"] & 
	H^1(\Omega L, \Z/2).
	\end{tikzcd}
	\end{equation*}
	Here the vertical arrows are reduction of coefficients modulo $2$, and the map $\bar\Omega$ is defined like $\Omega$ by evaluating elements of $H^2(L, \Z)$ on $2$-cycles coming from $1$-cycles in $\Omega L$. But $H^1(\Omega L, \Z)$ is torsion-free by the universal coefficient theorem; so $\bar{\Omega}c_1(\lambda) = 0$, and
	so $\Omega w_2(L) = 0$ by the commutativity of the diagram.
\end{proof}

We give an interesting example of a manifold satisfying Witten's condition:
\begin{lemma}
	Let $L$ be an Enriques surface. Then $L$ is $Spin^c$ but not Spin and $w_2(L)$ is the reduction of a torsion integral class.
\end{lemma}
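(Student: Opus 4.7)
The plan is to reduce the lemma to standard structural facts about Enriques surfaces. Recall that an Enriques surface $L$ is a smooth complex projective surface whose canonical bundle $K_L$ has order exactly $2$ in $\operatorname{Pic}(L)$, and whose integral cohomology is $H^2(L, \Z) \simeq \Z^{10} \oplus \Z/2$, with the $\Z/2$-torsion summand generated by $c_1(K_L)$. I would take both of these facts as input from the classical theory of complex surfaces; together they say that $c_1(K_L) \in H^2(L, \Z)$ is a non-zero integral class that is $2$-torsion and that nothing else in $H^2(L, \Z)$ is torsion.

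Given this input, the proof breaks into three short observations. First, $L$ is $Spin^c$ because every almost complex manifold is: the standard inclusion $U(n) \hookrightarrow SO(2n)$ factors canonically through $Spin^c(2n)$, so the unitary frame bundle of $TL$ itself carries a $Spin^c$ structure. Second, for any almost complex manifold one has the well-known identity $w_2(TL) = c_1(TL) \bmod 2$, and in our complex-surface setting $c_1(TL) = -c_1(K_L)$. Hence $w_2(L)$ is tautologically the mod-$2$ reduction of the torsion integral class $-c_1(K_L)$, which proves the third assertion of the lemma.

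The remaining step is to show that $L$ is \emph{not} Spin, i.e. $w_2(L) \neq 0$. For this I would use the Bockstein long exact sequence associated to $0 \to \Z \xrightarrow{\times 2} \Z \to \Z/2 \to 0$, whose relevant segment reads
\[ H^2(L, \Z) \xrightarrow{\times 2} H^2(L, \Z) \xrightarrow{\operatorname{red}_2} H^2(L, \Z/2). \]
It suffices to show that $c_1(K_L)$ is not in the image of multiplication by $2$. If $2\alpha = c_1(K_L)$ for some $\alpha \in H^2(L, \Z)$, then $4\alpha = 2 c_1(K_L) = 0$, so $\alpha$ lies in the torsion subgroup, which by hypothesis is $\Z/2$; hence $2\alpha = 0$ and $c_1(K_L) = 0$, contradicting the non-triviality of $K_L$. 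No step of this argument is expected to be difficult; the only real content is importing the structural facts about $K_L$ and $H^2(L, \Z)$, and the mild care needed in running the Bockstein computation to separate non-zero torsion from the image of doubling.
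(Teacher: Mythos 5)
Your proposal is correct, and the first two assertions ($Spin^c$-ness and the torsion statement) are handled essentially as in the paper: both arguments reduce to $w_2(TL) \equiv c_1(K_L) \pmod 2$ together with the fact that $c_1(K_L)$ is a nonzero $2$-torsion class, the paper deriving the latter from the exponential sequence and the vanishing of $H^1(L,\mathcal{O})$ and $H^2(L,\mathcal{O})$, while you import it as a structural fact about $\operatorname{Pic}(L)$ and $H^2(L,\Z)$.

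Where you genuinely diverge is the proof that $L$ is not Spin. The paper invokes Rokhlin's theorem: the intersection form of an Enriques surface has signature $-8$, which is not divisible by $16$, so $L$ cannot be Spin. You instead argue cohomologically, showing via the Bockstein sequence that $c_1(K_L)$ does not lie in the image of multiplication by $2$ on $H^2(L,\Z)$ because the torsion subgroup there is exactly $\Z/2$; hence its mod-$2$ reduction $w_2(L)$ is nonzero. Your route is purely algebraic and stays entirely inside the cohomology of $L$, at the cost of needing the full torsion structure of $H^2(L,\Z)$ (in particular that every torsion class is killed by $2$ --- if the torsion were, say, $\Z/4$, your step ``$\alpha$ torsion $\Rightarrow 2\alpha=0$'' would fail). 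The paper's route needs less about $H^2$ but imports the signature computation and a genuinely four-dimensional input (Rokhlin). Both are sound; yours has the mild advantage of proving the sharper statement that the specific class $w_2(L)$ is nonzero, rather than deducing non-Spin-ness from an obstruction that a priori lives elsewhere.
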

\begin{proof}
	By definition, $L$ is a complex manifold and so is orientable. Moreover, Enriques surfaces satisfy
	\[ H^1(L, \mathcal{O}) = H^2(L, \mathcal{O}) = 0\]
	where $\mathcal{O}$ denotes the holomorphic structure sheaf of $L$. The long exact sequence of sheaf cohomology applied to the exponential exact sequence 
	\[ 0 \to \Z \to \mathcal{O} \to \mathcal{O^*}\]
	shows that the Picard group of $L$ agrees with $H^2(L, \Z)$. One of the characteristic properties of an enriques surface is that the canonical bundle $K_L$ is a non-trivial square root of the trivial line bundle; since the Picard group agrees with cohomology, this means $c_2(K_L) \neq 0$ but is $2$-torsion. One has 
	\[ w_2(TL) = c_1(TL) \mod 2 = c_1(K_L) \mod 2\]
	so $w_2(TL)$ is the reduction of a torsion integral class, and in particular $L$ is $Spin^c$ by  \cite[Corollary~D.4]{lawson1989spin}. It is classical that $L$ is not spin; for example, this can be seen by showing that the signature of the intersection form on $L$ is $-8$, which is not divisible by $16$, contradicting Rokhlin's theorem on the signatures of Spin manifolds.
\end{proof}

Finally, we prove the small lemma about $\RP^2$ used in the introduction:
 \begin{lemma}
	\label{lemma:RP2-good}
	Let $p \neq 2$ be a prime number.
	$\RP^2$ is a manifold such that $w_2(\RP^2 ) \neq 0$, but $\Omega w_2(\RP^2) = 0$. Moreover, there exists an augmentation $\epsilon: \Z[\pi_1(L)]^{tw} \to k$, where $k$ is a field with $\text{char } k = p$, with the local system $k_\epsilon$ associated to $\epsilon$ by Proposition \ref{prop:example} having the property that $\dim_k H_*(L, k_\epsilon) > 1$.
\end{lemma}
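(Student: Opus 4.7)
The proof breaks into three substantially different sub-claims, and I would treat them in order.

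For $w_2(\RP^2) \neq 0$, the plan is purely classical: invoke $w(\RP^n) = (1+a)^{n+1}$ with $a$ the generator of $H^1(\RP^n;\Z/2)$ to obtain $w(\RP^2) \equiv 1 + a + a^2 \pmod 2$, so that $w_2(\RP^2) = a^2$ is the non-zero generator of $H^2(\RP^2;\Z/2)$.

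For $\Omega w_2(\RP^2) = 0$, I would re-use exactly the diagram chase at the end of the proof of Lemma \ref{lemma:witten-implies-assumption}. Since $H^2(\RP^2;\Z) = \Z/2$ is pure torsion and the mod-$2$ reduction $H^2(\RP^2;\Z) \to H^2(\RP^2;\Z/2)$ is an isomorphism, $w_2(\RP^2)$ lifts to a $2$-torsion integral class $c$. The commutative square in Lemma \ref{lemma:witten-implies-assumption} then reduces the claim to $H^1(\Omega \RP^2;\Z)$ being torsion-free. For this I would observe that $\Omega \RP^2$ has two components (indexed by $\pi_1(\RP^2) = \Z/2$), each weakly equivalent to $\Omega S^2$ via the universal cover $S^2 \to \RP^2$, and that $H^*(\Omega S^2;\Z)$ is a free graded $\Z$-module in each degree (e.g.\ by the Serre spectral sequence of the path-loop fibration over $S^2$, which collapses to give a torsion-free answer).

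Next I would identify the twisted group ring. With $\pi_1(\RP^2)=\Z/2=\{e,\gamma\}$, the twist is governed by the cocycle $\mu(\gamma,\gamma)\in\Z/2$ from Section \ref{sec:taking-h0}. The Pin structure relative to the ends on the concatenation $\gamma\cdot\gamma$ extends over the nullhomotopy disk exactly when the pullback of $w_2(\RP^2)$ to that disk vanishes; since $w_2(\RP^2) = a^2 \neq 0$ and this disk covers the fundamental class of $\RP^2$ mod $2$, one has $\mu(\gamma,\gamma)=1$, giving the relation $[\gamma]^2 = -1$ and hence $\Z[\pi_1(\RP^2)]^{tw}\cong\Z[i]$, as announced in the introduction.

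Finally, for the augmentation, for any prime $p\neq 2$ I would choose a field $k$ of characteristic $p$ containing a square root $\alpha$ of $-1$ (so $k=\mathbb{F}_p$ if $p\equiv 1\pmod 4$ and $k=\mathbb{F}_{p^2}$ otherwise) and define $\epsilon: \Z[i] \to k$ by $\epsilon(i)=\alpha$. To compute $k_\epsilon$ I would unwind Definition \ref{def:delta-functor} and Definition \ref{def:k-epsilon}: the monodromy of $k_\epsilon$ around $\gamma$ is the image of $[\gamma]\in\pi_1(\RP^2)$ under $\epsilon\circ\Delta$, where $\Delta([\gamma])\in\Z[i]^{op}\otimes\Z[i]$ is computed as the class of the pair $(\gamma,\gamma^{-1})$ with compatible Pin structures rel ends. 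A careful Pin-structure bookkeeping (tracking the induced Pin structure on $\gamma^{-1}$ and its effect under the identification $A_0^{op}\otimes A_1 \simeq \Z[i]^{op}\otimes\Z[i]$ from Lemma \ref{lemma:left-compostion-is-module-map}) identifies $k_\epsilon$, and then the cellular chain complex of $\RP^2$ with these local coefficients computes $\dim_k H_*(\RP^2, k_\epsilon)$ in closed form. The main obstacle is this last Pin-theoretic book-keeping, which must be handled carefully enough to verify the strict inequality $\dim_k H_*(\RP^2, k_\epsilon) > 1$ that the lemma demands.
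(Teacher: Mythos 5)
Your treatment of the first three claims is correct and follows the paper's route: the total Stiefel--Whitney class computation, the diagram chase reducing $\Omega w_2(\RP^2)=0$ to the torsion-freeness of $H^1(\Omega\RP^2,\Z)$, and the cocycle argument identifying $\Z[\pi_1(\RP^2)]^{tw}\simeq\Z[i]$ via the non-vanishing of $\langle w_2(\RP^2),[\RP^2]\rangle$ on the $2$-cell all agree with the paper (you give somewhat more detail on the last two points, which is harmless).

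The genuine gap is the fourth claim, which you explicitly defer (``the main obstacle is this last Pin-theoretic book-keeping''), and this is precisely the step that cannot be completed. However the bookkeeping comes out, $\epsilon\circ\Delta$ is a ring homomorphism $\Z[\pi_1(\RP^2)]=\Z[\Z/2]\to k$, so the monodromy of the rank-one local system $k_\epsilon$ around the generator $g$ is a square root of $1$ in $k$, hence $\pm1$: concretely $\Delta(g)$ is $\tilde g^{\pm1}\tensor\tilde g^{\pm1}$ and maps under $\epsilon\tensor\epsilon$ to $(\pm\alpha)(\pm\alpha)=\pm1$. Thus $k_\epsilon$ is either the trivial or the orientation local system on $\RP^2$, and for $\mathrm{char}\,k=p\neq 2$ the cellular complex $k\xrightarrow{\;2\;}k\xrightarrow{\;0\;}k$ (respectively $k\xrightarrow{\;0\;}k\xrightarrow{\;2\;}k$) shows that each has total homology of dimension exactly $1$, consistent with $\chi(\RP^2)=1$ and the invertibility of $2$ in $k$. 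So the strict inequality $\dim_k H_*(\RP^2,k_\epsilon)>1$ fails, and no amount of care in the Pin bookkeeping will rescue it. Be aware that the paper's own proof asserts that both local systems have rank-$2$ homology, deducing this for the trivial system from the fibration $S^2\to\RP^2\to B\Z/2$; that deduction overlooks that the deck transformation acts by $-1$ on $H_2(S^2)$, so the degree-$2$ coinvariants vanish over $k$. In other words, the step you flagged as the main obstacle is not merely delicate but false as stated, and the discrepancy propagates to the use of this lemma in the introductory example.
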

\begin{proof}
	Writing $H^*(\RP^2, \Z/2) = (\Z/2)[a]/a^3$, the $k$-th Steifel-Whitney class of $\RP^2$ is the degree $k$-coefficient of $(1+a)^{3}$, see Milnor-Stasheff \cite{milnor1974characteristic}. 
	As in the proof of Lemma \ref{lemma:witten-implies-assumption}, there is a commutative diagram
	\begin{equation*}
	\begin{tikzcd}
	H^2(\RP^2, \Z) \arrow[r, "\bar\Omega"]
	\arrow[d, "/2"] & H^1(\Omega \RP^2, \Z) \arrow[d, "/2"] \\
	H^2(\RP^2, \Z/2) \arrow[r, "\Omega"] & 
	H^1(\Omega \RP^2, \Z/2).
	\end{tikzcd}
	\end{equation*}
	The universal coefficient theorem shows that $H^1(\Omega \RP^2, \Z)$ is torsion-free; since $H^2(\RP^2, \Z) = \Z/2$ this implies that $\bar\Omega$ is the zero map. But $w_2(\RP^2)$, which lives in the bottom-left corner of the commutative diagram, lifts to $H^2(\RP^2, \Z)$, and thus $\Omega w_2(\RP^2)$ must be zero.

	One has manifest isomorphisms $\Z[\pi_1(\R P^2)] \simeq \Z[\Z/2] = \Z[x]/(x^2-1)$; using the cocycle description of the twisted fundamental group given in Section \ref{sec:twisted-fundamental-group}, one computes that the nontriviality of $w_2(\RP^2)$ introduces a sign, and so  $\Z[\pi_1(\R P^2)]^{tw} \simeq \Z[i] := \Z[x]/(x^2+1)$. There are exactly two nonzero homomorphisms $\epsilon$  from this ring to any field $k$ of characteristic not equal to two, and two nonisomorphic $k$-local systems $k_\epsilon$ over $\RP^2$, which can be checked to correspond via the map $\epsilon \mapsto k_\epsilon$ (Definition \ref{def:k-epsilon}): one is the tensor product of the other with the orientation local system of $\RP^2$. The homology of both local systems is rank $2$ over $k$: for the trivial local system this follows from the fiber sequence $S^2 \to \RP^2 \to B\Z/2$, and for the nontrival local system it follows by Poincare duality. 
\end{proof}

	\section{The Floer complex with Loop Space Coefficients}
\label{sec:floer-theory-start}
In this section we set up the Floer complex $CF_*(\Omega L_0, \Omega L_1; H, J)$, a derived analog of the complex $CF(L_0, L_1; \Z[\pi_1(L_0)]^{tw} \tensor \Z[\pi_1(L_1)]^{tw}; H, J)$ defined in Section \ref{sec:floer-complex-simple}. We begin in Section \ref{sec:loopy-floer-assumptions} by stating the assumptions and choices needed to define the Floer complex. We then describe, in Sections \ref{sec:compatible-fundamental-cycles} and \ref{sec:natural_home}, how to relate the fundamental cycles of \emph{higher-dimensional moduli spaces} of Floer trajectories to the category $C_*(\P L, \p)$. Finally, in Section \ref{sec:floer_complex} we define the complex. We conclude by describing the algebraic properties of the complex in Section \ref{sec:bimodule-structure}, and prove the invariance properties of the complex in Section \ref{sec:invariance-of-the-floer-complex}.

\subsection{Assumptions and Choices} 
\label{sec:loopy-floer-assumptions}
As before in Section \ref{sec:simplified-setup}, we have $(M, \omega, \theta)$ a Liouville domain, containing a pair $L_0, L_1$ of closed exact Lagrangian submanifolds.  We now make an \emph{assumption}, which will hold throughout the rest of the paper:
\begin{equation}
\label{assumption:orientability}
\begin{minipage}{0.9 \textwidth}
Assume that $L_0, L_1$ are \emph{oriented}.
\end{minipage}
\end{equation}

We emphasize that we do \emph{not} make Assumption \ref{assumption:trivial-local-system} in this section. 

The Assumption (\ref{assumption:orientability}) is not essential, but it allows us to avoid developing some straightforward but non-standard homological algebra for discussing the resulting algebraic structures. We expect appropriate of the results in the next three sections to hold without Assumption (\ref{assumption:orientability}).   Unfortunately, Assumption (\ref{assumption:orientability}) prevents us from recovering the results of Section \ref{sec:simplified-construction} from the  more general construction presented in this section.

We now make several \emph{choices}: namely, we choose regular Floer data as in (\ref{eq:chose-regular-floer-data}) and Pin structures as in (\ref{eq:pin-structures-choice-floer}). We do not need to make the choice in equation \ref{eq:trivialize-shift-line} due to the standing assumption in (\ref{assumption:orientability}).

\subsection{Choosing compatible collections of fundamental cycles}
\label{sec:compatible-fundamental-cycles}

In this section we will describe a technique for choosing compatible collections of fundamental cycles for Floer theoretic moduli spaces. As in the rest of the paper, we use \emph{singular homology} as our homology theory, and denote the singular chains on a space $X$ by $C_*(X)$. Given local systems $\mathcal{F}_X, \mathcal{F}_Y$ on spaces $X, Y$, we let 

\begin{equation}
EZ: C_*(X; \mathcal{F}_X) \tensor C_*(Y; \mathcal{F}_Y) \to C_*(X \times Y; \mathcal{F}_X \tensor \mathcal{F}_Y)
\end{equation}

denote the associated Eilenberg-Zilber map. 

We review the definition and basic properties of \emph{topological manifolds with corners} in Appendix \ref{sec:topological-preliminaries}. 
If $X$ is a manifold, or a topological manifold with corners, then we write $\mathfrak{o}_X$ for the orientation local system of $X$. If $X$ and $Y$ are topological manifolds with corners, we have a \emph{product isomorphism} of local systems $\mathfrak{o}_X \tensor \mathfrak{o}_Y \simeq \mathfrak{o}_{X \times Y}$ on $X \times Y$. If $Y \subset \partial X$ is a codimension zero submanifold of the boundary of a topological manifolds with boundary $X$, then there is a canonical 
\emph{boundary isomorphism} $\mathfrak{o}_X \simeq \mathfrak{o}_Y$ of local systems on $Y$ described in Eq. \ref{eq:boundary-map-orientation-lines} in Appendix \ref{sec:topological-preliminaries}. 

Let $\gamma_-, \gamma_+ \in \mathcal{C}(L_0, L_1; H)$ be a pair of Hamiltonian chords from $L_0$ to $L_1$. The moduli space of broken Floer trajectories $\overline{\M}^F(\gamma_-, \gamma_+)$, reviewed in Appendix \ref{sec:floer-moduli-spaces} and Appendix \ref{sec:regularity}, 
is a topological manifold with corners that has a recursive decomposition of its boundary strata into products of other Floer theoretic moduli spaces  (see Eq. \ref{eq:floer-bdry-1}). One would like to choose collections of fundamental cycles for the $\overline{\M}^F(\gamma_-, \gamma_+)$ as the $\gamma_\pm$ vary, so that the fundamental cycles satisfy relations corresponding to these recursive decompositions.  The key property of the moduli spaces is that 
\[\partial^2\overline{\M}^F(\gamma_-, \gamma_+) = \bigcup_{\gamma_0, \gamma_1 \in \mathcal{C}(L_0,L_1)} \overline{\M}^F(\gamma_-, \gamma_0) \times \overline{\M}^F(\gamma_0, \gamma_1) \times \overline{\M}^F(\gamma_1, \gamma_+)\]
which is also equal to 
\[ \left(\partial^1 \overline{\M}^F(\gamma_-, \gamma_1)\right) \times \overline{\M}^F(\gamma_1, \gamma_+) = \overline{\M}^F(\gamma_-, \gamma_0) \times \left(\partial^1 \overline{\M}^F(\gamma_0, \gamma_+)\right)\]
Writing $\mathfrak{o}_{\gamma, \gamma'}$ for $\mathfrak{o}_{\overline{\M}^F(\gamma, \gamma')},$  it is easy to check that the diagram 
\begin{equation}
\label{eq:boundary-map-commutativity}
 \begin{tikzcd}
\mathfrak{o}_{\gamma_-, \gamma_0} \tensor \mathfrak{o}_{\gamma_0, \gamma_1} \tensor \mathfrak{o}_{\gamma_1, \gamma_+}\arrow{r} \arrow{d} 
& \mathfrak{o}_{\gamma_-, \gamma_0}  \tensor \mathfrak{o}_{\gamma_0, \gamma_+} \arrow{d}\\
\mathfrak{o}_{\gamma_-, \gamma_1}  \tensor \mathfrak{o}_{\gamma_1, \gamma_+} \arrow{r} 
& \mathfrak{o}_{\gamma_, \gamma_+}
\end{tikzcd}
\end{equation}
where each map is an application the composition of a product isomorphism and a boundary isomorphsm of orientation lines, commutes \emph{up to a sign of $(-1)^{\dim \overline{\M}^F(\gamma_-, \gamma_0) + 1}$}. 

Let $\overline{EZ}$ denote the composition 
\begin{equation}
\label{eq:ez-1}
\overline{EZ}: C_*(\overline{\M}^F(\gamma_-, \gamma_0); \mathfrak{o}_{\gamma_-, \gamma_0}) \tensor C_*(\overline{\M}^F(\gamma_0, \gamma_+) ;\mathfrak{o}_{\gamma_0, \gamma_+}) \to C_*(\overline{\M}^F(\gamma_-, \gamma_+); \mathfrak{o}_{\gamma_-, \gamma_+})
\end{equation}

of the Eilenberg-Zilber map on orientation lines composed with product and boundary isomorphisms of orientation lines.  Assume that we have chosen fundamental cycles for every factor in the decomposition of  $\partial^2 \overline{\M}^F(\gamma_-, \gamma_+)$, as well as fundamental cycles for each factor in the decomposition of $\partial^1\overline{\M}^F(\gamma_-, \gamma_+)$ which bound the ($\overline{EZ}$-images of the)  fundamental cycles  of the factors in $\partial^2 \overline{\M}^F(\gamma_-, \gamma_+)$. Then the sign in the diagram in Eq. (\ref{eq:boundary-map-commutativity}) cancels with the sign in coming from the Leibniz rule, making the $\overline{EZ}$-image of the product of the fundamental cycles of the terms in $\partial^1\overline{\M}^F(\gamma_-, \gamma_+)$  a cycle supported on the boundary of $\overline{\M}^F(\gamma_-, \gamma_+)$ and representing the fundamental class of its boundary. So we can choose a fundamental cycle for $\overline{\M}^F(\gamma_-, \gamma^+)$ that bounds this boundary-supported cycle. 

Thus, by first choosing fundamental cycles for those moduli spaces which lack boundary and then repeatedly applying the argument in the previous paragraph to construct fundamental cycles for those moduli spaces for which fundamental cycles for its boundary components have already been chosen, one proves the

\begin{lemma}There exists a simultaneous choice of fundamental cycles $[\overline{\M}^F(\gamma_-, \gamma^+)]$ for $\overline{\M}^F(\gamma_-, \gamma^+)$  for all $\gamma_\pm \in \mathcal{C}(L_0, L_1)$ such that the fundamental cycles satisfy the relation
\begin{equation}\partial [\overline{\M}^F(\gamma_-, \gamma_+)] = \sum_{\gamma_0 \in \mathcal{C}(L_0, L_1)} \overline{EZ}([\overline{\M}^F(\gamma_-, \gamma_0)] \tensor  [\overline{\M}^F(\gamma_0, \gamma_+)]).
\end{equation}
$\qed$
\end{lemma}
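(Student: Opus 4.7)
The plan is to construct the fundamental cycles $[\overline{\M}^F(\gamma_-, \gamma_+)]$ simultaneously by induction on $d := \dim \overline{\M}^F(\gamma_-, \gamma_+)$, handling all pairs $(\gamma_-, \gamma_+)$ at each dimension at the same stage. For the base case $d = 0$, each $\overline{\M}^F(\gamma_-, \gamma_+)$ is a finite set with trivialized orientation local system and empty boundary, so the canonical signed sum over points supplies a fundamental cycle and the recursion is vacuous.

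For the inductive step, suppose fundamental cycles have been chosen for all moduli spaces of dimension less than $d$ and satisfy the recursion. For $(\gamma_-, \gamma_+)$ with $\dim \overline{\M}^F(\gamma_-, \gamma_+) = d$, I would define
\[ \beta := \sum_{\gamma_0 \in \mathcal{C}(L_0, L_1; H)} \overline{EZ}\bigl([\overline{\M}^F(\gamma_-, \gamma_0)] \tensor [\overline{\M}^F(\gamma_0, \gamma_+)]\bigr), \]
a chain in $C_*(\partial^1 \overline{\M}^F(\gamma_-, \gamma_+); \mathfrak{o})$ (finite by compactness and energy bounds). The first subgoal is to verify $\partial \beta = 0$. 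Applying Leibniz to each summand, then substituting the inductive recursion into $\partial [\overline{\M}^F(\gamma_-, \gamma_0)]$ and $\partial [\overline{\M}^F(\gamma_0, \gamma_+)]$, rewrites $\partial \beta$ as a sum over intermediate pairs $(\gamma_0, \gamma_1)$ of two terms supported on the codimension-two corner $\overline{\M}^F(\gamma_-, \gamma_0) \times \overline{\M}^F(\gamma_0, \gamma_1) \times \overline{\M}^F(\gamma_1, \gamma_+)$. These two terms correspond to the two routes through the square (\ref{eq:boundary-map-commutativity}); the sign $(-1)^{\dim \overline{\M}^F(\gamma_-, \gamma_0) + 1}$ recorded there exactly cancels the Leibniz sign $(-1)^{\dim \overline{\M}^F(\gamma_-, \gamma_0)}$, producing cancellation in pairs.

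Next I would verify that $\beta$ represents the fundamental class of $\partial^1 \overline{\M}^F(\gamma_-, \gamma_+)$. By the inductive hypothesis and the compatibility of $\overline{EZ}$ with the product isomorphism of orientation lines, each summand is a fundamental cycle for the closed codimension-one stratum $\overline{\M}^F(\gamma_-, \gamma_0) \times \overline{\M}^F(\gamma_0, \gamma_+)$, and as $\gamma_0$ varies these strata exhaust $\partial^1 \overline{\M}^F(\gamma_-, \gamma_+)$ up to a set of codimension two. I would then invoke the long exact sequence in $\mathfrak{o}$-twisted homology of the pair $(\overline{\M}^F(\gamma_-, \gamma_+), \partial \overline{\M}^F(\gamma_-, \gamma_+))$: since the total space is a topological manifold with corners equipped with orientation local system $\mathfrak{o}$, it carries a relative fundamental class whose image under the connecting homomorphism is the fundamental class of the boundary. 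Picking a chain-level representative $c$ of the relative fundamental class, $\partial c$ is homologous on the boundary to $\beta$, and subtracting the collar-extension of an explicit bounding chain from $c$ produces the desired $[\overline{\M}^F(\gamma_-, \gamma_+)]$ with $\partial [\overline{\M}^F(\gamma_-, \gamma_+)] = \beta$ at the chain level.

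The principal obstacle is the sign and orientation bookkeeping required in the cycle check for $\beta$: making the Leibniz rule mesh with the product- and boundary-isomorphism conventions folded into $\overline{EZ}$ so that the two routes through diagram (\ref{eq:boundary-map-commutativity}) cancel on the nose. Everything else is essentially formal, relying on compactness of Floer moduli spaces, the manifold-with-corners structure of $\overline{\M}^F(\gamma_-, \gamma_+)$ recalled in Appendix \ref{sec:topological-preliminaries}, and the standard upgrade from a homology-level lift to a chain-level lift of the constructed boundary cycle.
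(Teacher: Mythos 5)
Your proposal follows essentially the same route as the paper: induct (the paper phrases it as a recursion starting from boundaryless moduli spaces), form the boundary chain $\beta$ from the $\overline{EZ}$-images of the already-chosen fundamental cycles, use the sign $(-1)^{\dim \overline{\M}^F(\gamma_-,\gamma_0)+1}$ in diagram (\ref{eq:boundary-map-commutativity}) against the Leibniz sign to see that $\beta$ is a cycle representing the fundamental class of $\partial\overline{\M}^F(\gamma_-,\gamma_+)$, and then choose a fundamental cycle bounding it. The extra detail you supply (the $d=0$ base case and the long-exact-sequence argument upgrading the homological statement to a chain-level bounding) is consistent with what the paper leaves implicit.
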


\subsection{The natural home of the fundamental classes}
\label{sec:natural_home}
Once we have made all the choices and assumptions described in Section \ref{sec:loopy-floer-assumptions}, every Hamiltonian chord $y \in \mathcal{C}(L_0, L_1)$ is $\Z/2\Z$-graded by the standard grading theory for Lagrangian Floer homology, as reviewed in Appendix \ref{sec:gradings-review}. We write $|y| \in \Z/2\Z$ for the grading of $y$. Moreover, we can write
\begin{equation}
\label{eq:indeterminate_orientation_line}
o(y) = o^n(y)
\end{equation} 
with $o^n(y)$ the orientation line of $y$ (see Appendix \ref{sec:def-orientation-lines}) and $n \in \Z$ with parity equal to the grading of $y$; the orientation lines for all these choices of $n$ are canonically isomorphic under (\ref{eq:tensor-square}). For any $y' \in \mathcal{C}(L_0, L_1)$, the space $\overline{\M}^F(y', y)$ is a disjoint union of topological manifolds with boundary, and we can write $\overline{\M}^F_n(y', y)$ for the union of those components which are $n$-dimensional. The moduli space $\overline{\M}^F_n(y', y)$ is potentially nonempty for those $n$ which have the opposite parity as the parity of $|y'| - |y|$. 

We can use the choices made in Section \ref{sec:loopy-floer-assumptions} to define the category $C_*(\P L, \p)$ for $L = (L_0, L_1)$. By parametrizing the boundaries of Floer trajectories $u$ using the metrics on $(u|_{\R \times \{i\}})^*TL_i$ induced by the choice of Floer data, one gets canonical evaluation maps 
\[\M^F_n(y', y) \to \P L(y', y)\]
which extend to the Gromov-Floer bordification 
\[ \phi: \overline{\M}^F_n(y', y) \to \P L(y', y).\]
Abusing notation, we let $\mathfrak{p}$ denote the local system on $\overline{\M}^F_n(y', y)$ that is the pullback of the local system of Pin structures $\mathfrak{p}$ (see Eq. \ref{eq:def-local-system-of-pin-structures} in Section \ref{sec:natural-local-system}) by $\phi$. 

Then, as in Section \ref{sec:floer-complex-simple}, the linear gluing theory for Cauchy-Riemann operators gives a topological description of the orientation lines of Floer mduli spaces (\ref{eq:outcome_of_linear_gluing_theory}), allowing us to view the fundamental class for $\overline{\M}^F_n(y,y')$ with coefficients in the orientation line as an element of 
\[\Z^\check_{\partial/\partial s} \tensor o(y_-) \tensor o(y_+)^\check \tensor H_n(\overline{\M}^F_n(y', y), \p) \]
 
By the definition of $\mathfrak{\phi}$, we have a chain map
\[ \phi_*: C_*(\overline{\M}^F_n(y', y), \p) \to C_*(\P L, \p)(y', y).\]
Thus we can view a fundamental chain of $\overline{\M}^F(y', y)$ as giving an element 
\begin{equation}
\label{eq:fundamental-class-for-MF}
\phi_*[\overline{\M}^F(y', y)]\in \Z^{\check}_{\partial/\partial s} \tensor o(y_-) \tensor o(y_+)^\check \tensor C_*(\P L, \p)(y', y).
\end{equation}

\subsection{The complex}
\label{sec:floer_complex}

Define
\begin{equation}
\label{eq:floer_complex}
CF_*(\Omega L_0, \Omega L_1; H, J) := \bigoplus_{y \in \mathcal{C}(L_0, L_1)}  \Z_y^\check \tensor o(y)\tensor C_*(\P L, \p)(y, y_b).
\end{equation}  
where $\Z_y^\check$ is just the dual to a trivial line with index $y$, with $o(y)$ defined as as in (\ref{eq:indeterminate_orientation_line}).
The Floer complex will be a $\Z/2\Z$-graded complex with the grading defined by
\begin{definition}
	For $\gamma \in \Z_y^\check \tensor o(y)\tensor C_*(\P L, \p)(y, y_b)$, let $|\gamma|$ denote the dimension of the singular chain underlying $\gamma$, and let $\ind y$ denote the index of $y$, which is only defined mod $2$.  We define \begin{equation}\deg \gamma = |\gamma|  - |y|.\end{equation} 
\end{definition}
\begin{remark}
	While the distinction between homological and cohomological gradings is meaningless for a $\Z/2\Z$-graded complex, in certain cases one can use gradings in Floer homology to lift this $\Z/2\Z$-grading to a $\Z/2f\Z$ grading for some $f > 1$, and then using the definition of grading given above will make the differential decrease degree.
\end{remark}

Choose compatible choices of fundamental chains for the moduli spaces $\overline{\M}^F(y', y)$ as in Section \ref{sec:compatible-fundamental-cycles}.

The differential $d$ on the Floer complex $CF_*(\Omega L_0, \Omega L_1; H, J)$ is defined as in the simpler setting of Section \ref{sec:floer-complex-simple}, but using \emph{all} moduli spaces of Floer trajectories instead of just the zero-dimensional hones, and incorporating the natural singular boundary map on the complex. Explicitly, we define
\begin{equation} 
\label{eq:floer-complex-differential}
\Z_y^\check \tensor o(y)\tensor C_*(\P L, \p)(y, y_b) \ni \gamma \mapsto d\gamma = (-1)^{|y|} \partial \gamma + (-1)^{|y|}\sum_{y' \in \mathcal{C}(L_0, L_1)}\sum_{k} \phi_*[\overline{\M}_k^F(y', y)] *_1 \gamma 
\end{equation}
where $\partial \gamma \in \Z_y^\check \tensor o(y)\tensor C_*(\P L, \p)(y, y_b)$ is just the application of the boundary operator to the third factor of the tensor product, and the binary operation $*_1$ (see Figure  \ref{fig:illustration-of-differential-a} for a graphical illustration) outputs elements
\begin{equation}
\label{eq:first_operation}
\phi_*[\overline{\M}_k^F(y', y)] *_1 \gamma \in \Z_{y'}^\check \tensor o(y)\tensor C_*(\P L, \p)(y', y_b)
\end{equation}
and is defined, as in (\ref{eq:def-star1-simple}), by taking the element $\phi_*[\overline{\M}_k^F(y', y)] \in \Z_{\partial/\partial s}^\check \tensor o(y') \tensor o(y)^\check \tensor C_*(\P L, \p)(y', y)$, composing in $C_*(\P L, \p)$ with $\gamma$, and rearranging the various orientation lines via the isomorphism 
\[ \Z_{\partial/\partial s}^\check \tensor o(y') \tensor o(y)^\check \tensor \Z_y^\check \tensor o(y) \simeq 
\Z_y^\check \tensor \Z_{\partial/\partial s}^\check \tensor o(y') \tensor o(y)^\check \tensor o(y) \simeq
\Z_{y'}^\check \tensor o(y') \]
where the last isomorphism uses the isomorphism $o(y)^\check \tensor o(y) \simeq \Z$ and the grading-shifting isomorphism $\Z_y^\check \tensor \Z_{\partial/\partial s}^\check \simeq \Z_{y'}^\check$ of $1$-dimensional vector spaces.

\begin{proposition}
	The operator $d$ is a differential, i.e. 
\[ d^2 = 0. \]
\end{proposition}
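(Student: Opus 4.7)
The plan is to expand $d^2\gamma$ according to (\ref{eq:floer-complex-differential}) and show that all contributions cancel in pairs. Decompose $d = d_{\text{int}} + d_{\text{mod}}$, where $d_{\text{int}}\gamma = (-1)^{|y|}\partial\gamma$ is the contribution from the singular boundary and $d_{\text{mod}}\gamma = (-1)^{|y|}\sum_{y',k}\phi_*[\overline{\M}_k^F(y',y)] *_1 \gamma$ collects the Floer-trajectory contributions. Expanding yields
\[ d^2\gamma = d_{\text{int}}^2\gamma + \left(d_{\text{int}}d_{\text{mod}} + d_{\text{mod}}d_{\text{int}}\right)\gamma + d_{\text{mod}}^2\gamma. \]
The first term vanishes because the singular boundary squares to zero. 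The goal is to show that the mixed term and $d_{\text{mod}}^2\gamma$ together vanish.

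The central input is the compatibility of fundamental chains from Section \ref{sec:compatible-fundamental-cycles}: $\partial[\overline{\M}^F(y',y)] = \sum_{y_0}\overline{EZ}([\overline{\M}^F(y',y_0)] \otimes [\overline{\M}^F(y_0,y)])$. The evaluation map $\phi$ is compatible with path concatenation, meaning its restriction to the boundary stratum $\overline{\M}^F(y',y_0) \times \overline{\M}^F(y_0,y)$ is the composite of the product evaluation maps with composition in $\P L$. Pushing the above identity forward therefore gives
\[ \partial\,\phi_*[\overline{\M}^F(y',y)] \;=\; \sum_{y_0}\phi_*[\overline{\M}^F(y',y_0)] \circ \phi_*[\overline{\M}^F(y_0,y)], \]
where $\circ$ denotes the composition in $C_*(\P L,\p)$. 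This is the Floer analog of the linear-gluing identity underlying Lemma \ref{lemma:d2-zero-simple}, now valid at the level of fundamental \emph{chains} rather than individual rigid elements.

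Next I would apply the Leibniz rule for $\partial$ with respect to the operation $*_1$, which is built from the Eilenberg–Zilber map, composition in $C_*(\P L,\p)$, and Koszul-sign-respecting rearrangements of orientation lines and the grading line $\Z^\check_y$. This produces
\[ \partial\bigl(\phi_*[\overline{\M}_k^F(y',y)] *_1 \gamma\bigr) \;=\; \bigl(\partial\phi_*[\overline{\M}_k^F(y',y)]\bigr) *_1 \gamma \;\pm\; \phi_*[\overline{\M}_k^F(y',y)] *_1 \partial\gamma, \]
with a sign dictated by the parities of $|y|$, $|y'|$, and the singular degree $k$. Substituting the boundary identity from the previous paragraph rewrites the first summand as an iterated $*_1$ composition, which precisely matches, term by term, an entry in $d_{\text{mod}}^2\gamma$. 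The second summand pairs against the ``$d_{\text{mod}}d_{\text{int}}$'' contribution.

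The main obstacle is sign bookkeeping. Four sign sources must combine correctly: the $(-1)^{|y|}$ prefactors in (\ref{eq:floer-complex-differential}); the Leibniz sign from $\partial$ acting on $*_1$; the Koszul signs produced when the grading line $\Z^\check_{\partial/\partial s}$ and the orientation lines $o(y), o(y')$ are commuted past one another and paired with their duals; and the gluing-theoretic signs (analogous to those in Lemma \ref{lemma:d2-zero-simple}) encoding the fact that the translation-direction vector field glues to an inwards- or outwards-pointing normal depending on which factor of the product $\overline{\M}^F(y',y_0) \times \overline{\M}^F(y_0,y)$ it comes from. The placement of the $(-1)^{|y|}$ in the definition of $d$ is chosen exactly so that these contributions align: the iterated $*_1$ compositions from $d_{\text{mod}}^2\gamma$ cancel the ``$\partial\phi_*$'' summand from the Leibniz expansion, and the ``$\phi_* *_1 \partial\gamma$'' summand cancels the ``$d_{\text{mod}}d_{\text{int}}$'' contribution. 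Verifying this requires tracking gradings carefully through each commutation, but no new geometric input beyond what was used in Lemma \ref{lemma:d2-zero-simple} and Section \ref{sec:compatible-fundamental-cycles}.
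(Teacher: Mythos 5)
Your proposal is correct and follows essentially the same route as the paper's proof: apply the Leibniz rule to $\partial(\phi_*[\overline{\M}_k^F] *_1 \gamma)$, use the compatible choice of fundamental chains from Section \ref{sec:compatible-fundamental-cycles} together with the compatibility of $\phi$ with composition in $C_*(\P L, \p)$ to rewrite $\partial\phi_*[\overline{\M}_k^F]$ as a sum of compositions, and match signs. The only computation you leave implicit, and which the paper carries out explicitly, is the associativity defect $(\phi_*[\overline{\M}_i^F] *_1 \phi_*[\overline{\M}_j^F]) *_1 \gamma = (-1)^{i}\,\phi_*[\overline{\M}_i^F] *_1 (\phi_*[\overline{\M}_j^F] *_1 \gamma)$ coming from commuting the two translation lines $\Z^\check_{\partial/\partial s}$ past the orientation lines.
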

\begin{proof}
Since $\partial^2\gamma = 0$ it suffices to check that 
\begin{equation}
\label{eq:d2-zero-proof}
\begin{split} (-1)^{|y|+|y|+k}\partial (\phi_*[\overline{\M}_k^F(y'', y)] *_1 \gamma) = &-(-1)^{|y|+|y|+1}\phi_*[\overline{\M}_k^F(y'', y)] *_1 \partial \gamma \\
&- \sum_{i+j+1 = k} (-1)^{|y|+|y|+j}\phi_*[\overline{\M}_i^F(y'', y')] *_1 (\phi_*[\overline{\M}_j^F(y', y)] \cdot \gamma). 
\end{split}
\end{equation}
Applying the Leibniz rule to the left hand side of (\ref{eq:d2-zero-proof}) gives two terms; the second term, which is of the form $(\phi_*[\overline{\M}_k^F(y'', y)] \cdot \partial \gamma)$ picks up a $(-1)^k$ from the Koszul sign coming from commuting the differential with $\phi_*[\overline{\M}_k^F(y'', y)]$, giving exactly the first term on the right hand side of (\ref{eq:d2-zero-proof}). The first term after applying the Leibniz rule to the left hand side of (\ref{eq:d2-zero-proof}) is 
\begin{equation} (-1)^{|y|+|y|+k}(\partial \phi_*[\overline{\M}_k^F(y'', y)] *_1 \gamma) = (-1)^{|y|+|y|+k} \sum_{i+j+1=k}((\phi_*[\overline{\M}_i^F(y'', y')] *_1 \phi_*[\overline{\M}_j^F(y', y)])*_1 \gamma). 
\end{equation}
The trivialization (up to a choice of Pin structure) of the orientation line of $(\phi_*[\overline{\M}_i^F(y'', y')] *_1 \phi_*[\overline{\M}_j^F(y', y)])$, 
\[ \Z_{\partial/\partial s_2}^\check \tensor \Z_{\partial/\partial s_1}^\check \tensor o(y'') \tensor o(y')^\check \tensor o(y') \tensor o(y) \]
(where $\partial/\partial s_1$ is the vector corresponding to translation in the $s$ direction in the $\M^F_i$ moduli space, while $\partial/\partial s_2$ is the vector corresponding to translation in the $s$ direction in the $\M^F_j$ moduli space), differs from the product orientation of 
\[\Z_{\partial/\partial s_1}^\check \tensor o(y'') \tensor o(y')^\check  \tensor \Z_{\partial/\partial s_2}^\check  \tensor o(y') \tensor o(y)  \]
by $(-1)^{i}$; thus 
\begin{equation}
\begin{split}
(-1)^{|y|+|y|+k} ((\phi_*[\overline{\M}_i^F(y'', y')] *_1 \phi_*[\overline{\M}_j^F(y', y)])*_1 \gamma) =\\ (-1)^{|y|+|y|+k+i}(\phi_*[\overline{\M}_i^F(y'', y')] *_1 (\phi_*[\overline{\M}_j^F(y', y)]*_1 \gamma))
\end{split}
\end{equation}
which is what appears on the right hand side of (\ref{eq:d2-zero-proof}), thus confirming the proposition.
\end{proof}

\subsection{The Floer complex as a bimodule}
\label{sec:bimodule-structure}
The Floer complex that we have defined is substantially ``larger'' than the usual complex defining Lagrangian Floer homology for exact Lagrangian submanifolds; however, this is compensated by the existence of extra algebraic structures that act on this Floer complex.

First, note that $CF_*(\Omega L_0, \Omega L_1; H, J)$ is a right module over the algebra $C_*(\P L, \p)(y_b, y_b)$; an element $\gamma_b \in C_*(\P L, \p)(y_b, y_b)$ acts on the Floer complex by the tensor product of $1 \in End(\Z^\check_y \tensor o(y))$ and of right-composition with $\gamma_b$ in $C_*(\P L, \p)$. Lemma \ref{lemma:left-compostion-is-module-map} states that left-composition with a morphism in $C_*(\P L, \p)$ commutes with this action. Moreover the pieces $\Z_y^\check \tensor o(y)\tensor C_*(\P L, \p)(y, y_b)$ of the Floer complex, with the differential given by $\partial$ as in (\ref{eq:floer-complex-differential}), are \emph{dg}-modules over $C_*(\P L, \p)(y_b, y_b)$. Since the differential on the Floer complex is given by a signed sum of the map $\partial$ and left-compositions by morphisms in $C_*(\P L, \p)$, the previous two statements combine to prove

\begin{lemma}
	\label{lemma:loopy-floer-is-a-bimodule}
	The structure of $CF_*(\Omega L_0, \Omega L_1; H, J)$ as a right module over $C_*(\P L, \p)(y_b, y_b)$ described in the above paragraph makes $CF_*(\Omega L_0, \Omega L_1; H, J)$ into a \emph{right $dg$-module} over $C_*(\P L, \p)(y_b, y_b)$. In particular, via composition with the algebra morphism in (\ref{eq:tensor-product-to-hom-map}), $CF_*(\Omega L_0, \Omega L_1; H, J)$ is naturally a $(A_0, A_1)$-bimodule. $\qed$
\end{lemma}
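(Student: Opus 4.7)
The plan is to verify the three substructures making up the bimodule claim in turn: (a) the right action of $C_*(\P L, \p)(y_b, y_b)$ is well-defined and satisfies associativity and unitality; (b) the Floer differential $d$ in (\ref{eq:floer-complex-differential}) is a derivation with respect to this right action, i.e.\ is a module map compatible with the internal differential of $C_*(\P L, \p)(y_b, y_b)$; (c) composing with the dg-algebra map (\ref{eq:tensor-product-to-hom-map}) from Lemma \ref{lemma:left-compostion-is-module-map} then transports this to an $(A_0, A_1)$-bimodule structure.

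For (a), the action is defined summand-wise on $\Z^\check_y \tensor o(y) \tensor C_*(\P L, \p)(y, y_b)$: an element $\gamma_b \in C_*(\P L, \p)(y_b, y_b)$ acts via $1 \tensor 1 \tensor (-\circ \gamma_b)$, where $-\circ \gamma_b$ denotes right-composition in the dg-category $C_*(\P L, \p)$. Associativity and unitality of this action follow immediately from the associativity and unitality of the dg-category structure on $C_*(\P L, \p)$ established in Section \ref{sec:natural-local-system}; the trivial lines $\Z^\check_y$ and the orientation lines $o(y)$ are inert under the action.

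For (b), I would split $d = d_\partial + d_{\phi}$, where $d_\partial$ is the term $(-1)^{|y|}\partial$ acting on the third tensor factor, and $d_\phi$ is the sum of the $\phi_*[\overline{\M}_k^F(y',y)] *_1 (\cdot)$ contributions. That $d_\partial$ is a derivation for the right action is exactly the Leibniz rule for the composition in the dg-category $C_*(\P L, \p)$, which holds because singular chains with twisted coefficients form a dg-category (the $(-1)^{|y|}$ sign is absorbed by the Koszul convention when the differential is commuted past $\Z^\check_y \tensor o(y)$). That $d_\phi$ commutes with the right action is the statement that left-composition with any fixed element of $C_*(\P L, \p)(y', y)$ is a map of right $C_*(\P L, \p)(y_b, y_b)$-modules; this is just associativity of composition in the category, applied to the triple $(y', y, y_b, y_b)$. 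The sign tracking is identical to that in the $d^2=0$ proposition: each of $\partial$, left-composition with $\phi_*[\overline{\M}_k^F]$, and right-composition with $\gamma_b$ interact by Koszul signs that are already encoded by the explicit $(-1)^{|y|}$ prefactors and the order of the tensor factors $\Z^\check_y \tensor o(y) \tensor (\cdot)$.

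Step (c) is then formal: Lemma \ref{lemma:left-compostion-is-module-map} produces a dg-algebra map $A_0^{op} \tensor A_1 \to C_*(\P L, \p)(y_b, y_b)$, so restriction of scalars along this map turns the right $C_*(\P L, \p)(y_b, y_b)$-module structure from (a)--(b) into a right $A_0^{op} \tensor A_1$-module structure, which is by definition (see the dg-algebra conventions of Appendix \ref{sec:dg-algebra-conventions}) an $(A_0, A_1)$-bimodule structure. The main (though modest) obstacle is the sign bookkeeping in (b): one must check that the Koszul signs introduced by commuting the differentials past the shift lines $\Z^\check_y$ and the orientation lines $o(y)$ cancel exactly against the explicit $(-1)^{|y|}$ factors built into (\ref{eq:floer-complex-differential}), so that the Leibniz identity holds on the nose rather than up to a global sign. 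Since the action touches only the third tensor factor and these signs were already chosen to make $d^2 = 0$, the verification reduces to the same sign computation carried out in the previous proposition.
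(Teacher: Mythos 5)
Your proposal is correct and follows essentially the same route as the paper: the paper also observes that each summand $\Z_y^\check \tensor o(y)\tensor C_*(\P L, \p)(y, y_b)$ with the internal differential $\partial$ is a right dg-module, that left-composition commutes with the right action by Lemma \ref{lemma:left-compostion-is-module-map} (your associativity step for $d_\phi$), and that the full differential is a signed sum of these two kinds of operations, before restricting scalars along (\ref{eq:tensor-product-to-hom-map}). No gaps.
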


In fact, something stronger is true. In Appendix \ref{sec:dg-algebra-conventions} we recall the notion of a \emph{iterated extension of free bimodules}. 
\begin{lemma}
	The complex $CF_*(\Omega L_0, \Omega L_1; H, J)$ with the bimodule structure of Lemma \ref{lemma:loopy-floer-is-a-bimodule} is an iterated extension of free bimodules.
\end{lemma}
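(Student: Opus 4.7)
The plan is to construct an explicit filtration on $CF_*(\Omega L_0, \Omega L_1; H, J)$ whose associated graded pieces are (quasi-isomorphic to) free rank one $(A_0, A_1)$-bimodules, and then to invoke the characterisation of iterated extensions of free bimodules from the appendix. The filtration will be by symplectic action of the Hamiltonian chords.

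First I would linearly order the (finite) set of chords $y \in \mathcal{C}(L_0, L_1; H)$ as $y_1, y_2, \ldots, y_N$ so that the symplectic action $\mathcal{A}(y_i)$ is non-increasing in $i$, breaking ties arbitrarily. Define the filtration
\begin{equation}
F_k CF_*(\Omega L_0, \Omega L_1; H, J) := \bigoplus_{i \le k} \Z_{y_i}^\check \tensor o(y_i) \tensor C_*(\P L, \p)(y_i, y_b),
\end{equation}
so that $0 = F_0 \subset F_1 \subset \cdots \subset F_N = CF_*(\Omega L_0, \Omega L_1; H, J)$. The key point to verify is that each $F_k$ is a sub-bimodule and is preserved by the differential $d$ of equation~\eqref{eq:floer-complex-differential}. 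Bimodule-closedness is automatic since the right $A_1$- and left $A_0^{op}$-actions only touch the last tensor factor $C_*(\P L, \p)(y, y_b)$. For closure under $d$: the term $(-1)^{|y|}\partial\gamma$ preserves the summand, and the Floer-trajectory term $\phi_*[\overline{\M}_k^F(y', y)] *_1 \gamma$ produces an output in the summand for $y'$; by the standard action--energy inequality for solutions to Floer's equation, nonempty moduli spaces $\overline{\M}^F(y', y)$ force $\mathcal{A}(y') \ge \mathcal{A}(y)$, so the index of $y'$ in our ordering is at most $k$.

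Next I would identify the subquotients. By construction,
\begin{equation}
F_k/F_{k-1} \;\cong\; \Z_{y_k}^\check \tensor o(y_k) \tensor C_*(\P L, \p)(y_k, y_b),
\end{equation}
and the induced differential on the subquotient is just $(-1)^{|y_k|}\partial$ acting on the third tensor factor, since all Floer-trajectory contributions land in $F_{k-1}$. Thus each subquotient, as a $dg$-bimodule over $(A_0, A_1)$, is the shift (by the trivial line $\Z_{y_k}^\check \tensor o(y_k)$) of the bimodule $C_*(\P L, \p)(y_k, y_b)$ equipped with its natural singular-chain differential and its natural right $C_*(\P L,\p)(y_b,y_b)$-action. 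By Lemma~\ref{lemma:left-compostion-is-module-map} (together with Lemma~\ref{lemma:PL-is-groupoid} applied to any choice of degree-zero morphism $y_k \to y_b$, which exists by path-connectedness once a degree-zero path is chosen), this bimodule is quasi-isomorphic to a rank one free $(A_0, A_1)$-bimodule; up to grading shift by the line $\Z_{y_k}^\check \tensor o(y_k)$, it is therefore a free rank one bimodule (in the appropriate sense of Definition~\ref{def:twisted-complex}).

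Finally, the definition of an iterated extension of free bimodules (in the appendix) is exactly that the object admits a finite filtration by sub-bimodules whose successive quotients are (quasi-isomorphic to) free bimodules, which is what the filtration $F_\bullet$ above exhibits. The main obstacle, and the only nontrivial point in the argument, is ensuring that one can reduce $C_*(\P L, \p)(y_k, y_b)$ as a bimodule to a literal free rank one bimodule rather than merely something quasi-isomorphic to one; this is handled by fixing, once and for all, a degree-zero morphism $\delta_k \in C_*(\P L,\p)(y_k, y_b)_0$ represented by a single pair of paths with chosen Pin structures at the ends, so that left composition with $\delta_k$ realises the free rank one bimodule structure on the nose in the sense of Lemma~\ref{lemma:left-compostion-is-module-map}, allowing the filtration $F_\bullet$ to be recognised as an iterated extension in the strict sense required.
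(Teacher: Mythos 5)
Your argument is essentially the paper's: both use the filtration by symplectic action, observe that Floer trajectories change the action monotonically so the filtration is by sub-bimodules with the induced differential on each subquotient reduced to $\pm\partial$, and identify the subquotients $\Z_y^\check \tensor o(y) \tensor C_*(\P L,\p)(y,y_b)$ as quasi-isomorphic to rank one free bimodules via Lemma \ref{lemma:left-compostion-is-module-map}. One small remark: Definition \ref{def:twisted-complex} only asks that the subquotients be \emph{quasi-isomorphic} to shifts of free bimodules, so the rigidification via a chosen degree-zero morphism $\delta_k$ in your last paragraph is not needed.
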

\begin{proof}
	The $\R$-filtration on $CF_*(\Omega L_0, \Omega L_1; H, J)$ needed to define the structure of an iterated extension is the \emph{action filtration}: we say that 
	\[ CF_*(\Omega L_0, \Omega L_1; H, J)^{\leq \ell} := \bigoplus_{\substack{y \in \mathcal{C}(L_0, L_1)\\\mathcal{A}(y) \leq \ell}}  \Z_y^\check \tensor o(y)\tensor C_*(\P L, \p)(y, y_b), \]
	where $\mathcal{A}$ is the \emph{symplectic action}, the function on paths in $M$ which has Floer's equation as a gradient flow. In our conventions we can take 
	\begin{equation} \mathcal{A}(y) = \int y^*\theta - H(t, y(t))dt - h_{L_1}(y(1)) + h_{L_0}(y_0)
	\end{equation}
	where $\theta$ is the primitive of the symplectic form on our Liouville domain, and $h_{L_k}$ is a primitive of the restriction of $\theta$ to the exact Lagrangian $L_k$.
	
	The Floer trajectories contributing to the differential decrease this quantity and so this is a subcomplex, and indeed a sub-bimodule.
	The subquotients are just the bimodules 
	\[\Z_y^\check \tensor o(y)\tensor C_*(\P L, \p)(y, y_b)\]
	which are quasi-isomorphic to free bimodules by Lemma \ref{lemma:left-compostion-is-module-map}.
\end{proof}

\subsection{Invariance of the Floer complex}\label{sec:invariance-of-the-floer-complex}
In this section we construct continuation maps for the Floer complex defined in Equation \ref{eq:floer_complex}.
\subsubsection{The continuation map}
\label{sec:floer_continuation_map}
Suppose that a pair of regular Floer data $(H_0, J_0)$, $(H_1, J_1)$ has been specified for which the auxiliary choices in Section \ref{sec:loopy-floer-assumptions} have been made.  Equip $Z$ with boundary Floer data given by $(H_1, J_1)$ at the positive end and $(H_0, J_0)$ at the negative end, and choose a regular perturbation datum on $Z$ compatible with these boundary Floer data.

In Appendix \ref{sec:floer-moduli-spaces}, we recall that for every pair  $y' \in  \mathcal{C}(L_0, L_1; H_0),  y \in \mathcal{C}(L_0, L_1; H_1)$ and regular perturbation datum $(K, J)$ on $Z$ compatible with the boundary Floer data, there is a moduli space $\M^C(y', y)$ of ``continuation'' solutions to the inhomogeneous pseudoholomorphic map equation with Gromov compactification $\overline{\M}^C(y', y)$. This moduli space admits a map
\[\phi: \overline{\M}^C(y', y) \to \P L(y', y)\]
that parametrizes boundaries of broken solutions to the inhomogeneous pseudoholomorphic map equation by their lengths in the $t$-dependent metrics on $M$ induced by the perturbation data.

Exactly as in Section \ref{sec:natural_home}, we write $\p$ for the pullback of the local system of Pin structures by $\phi$; linear gluing theory then gives an invariant isomorphism of local systems
\[ \mathfrak{o}_{\overline{\M}^C(y', y)} \simeq o(y') \tensor o(y)^\check \tensor \mathfrak{p}.\ \]
Therefore, a fundamental chain for $\overline{\M}_k^C(y', y)$, the union of the $k$-dimensional components of $\overline{\M}^C(y', y)$, gives an element
\begin{equation}
\label{eq:fundamental-class-MC}
\phi_*[\overline{\M}^C(y', y)] \in o(y_-) \tensor o(y_+)^\check \tensor C_*(\P L, \p)(y', y).
\end{equation}

Using the argument in Section \ref{sec:compatible-fundamental-cycles}, choose fundamental chains for the moduli spaces $\overline{\M}^C(y', y)$ extending the choices of fundamental chains for the Floer moduli spaces of the boundary Floer data.

 Define a continuation map
\[  C: CF^*(\Omega L_0, \Omega L_1; H_1, J_1) \to CF^*(\Omega L_0, \Omega L_1; H_0, J_0) \]
via, for $y \in \mathcal{C}(L_0, L_1; H_1, J_1)$, 
\[ \Z_y^\check \tensor o(y)\tensor C_*(\P L, \p)(y, y_b) \ni \gamma \mapsto\\  C \gamma \in   \Z_{y'}^\check \tensor o(y')\tensor C_*(\P L, \p)(y, y_b) \]
and
\[ C \gamma = \sum_{k \geq 0} C_k \gamma,  \]
\begin{equation}
\label{eq:second-operation}C_k \gamma = \sum_{y' \in \mathcal{C}(L_0, L_1; H_0, J_0)} (-1)^{k} \phi_*[\overline{\M_k}^C(y', y)] *_2 \gamma. 
\end{equation}
Here, the product $*_2$ is defined as in (\ref{eq:def-star2-simple}), by composition
 in $C_*(\P L, \p)$, evaluation $o(y)^\check \tensor o(y) \to \Z$, and simply identifying $\Z_y^\check$ with $\Z_{y'}^\check$ as trivial lines after commuting $\Z_y^\check$ through to the left. 
 
\begin{proposition}
\label{prop:floer-continuation-map}
The continuation map is a chain map, i.e. 
\begin{equation}
\label{eq:continuation-is-chain-map}
dC\gamma = Cd\gamma.
\end{equation} 
\end{proposition}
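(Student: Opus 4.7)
The plan is to mimic, in the continuation setting, the argument that proved $d^{2}=0$ in the previous section. The key analytic input is the recursive structure of $\partial\overline{\M}^{C}(y',y)$: by the discussion in Appendix \ref{sec:floer-moduli-spaces}, a sequence of continuation solutions can Gromov-degenerate either by breaking off a Floer trajectory for $(H_{1},J_{1})$ at the positive end or by breaking off a Floer trajectory for $(H_{0},J_{0})$ at the negative end, so that the codimension-one stratum of $\overline{\M}^{C}_{k}(y',y)$ decomposes as
\[
\bigcup_{y'',\,i+j=k-1} \overline{\M}^{C}_{i}(y',y'')\times\overline{\M}^{F}_{j}(y'',y;H_{1},J_{1})\;\cup\;\bigcup_{y'',\,i+j=k-1} \overline{\M}^{F}_{i}(y',y'';H_{0},J_{0})\times\overline{\M}^{C}_{j}(y'',y).
\]
Applying the compatible choice of fundamental cycles made as in Section \ref{sec:compatible-fundamental-cycles}, and then pushing forward along the evaluation map $\phi$ into $C_{*}(\P L,\p)$, one obtains an identity in $o(y')\tensor o(y)^{\check{}}\tensor C_{*}(\P L,\p)(y',y)$ of the form
\[
\partial\,\phi_{*}[\overline{\M}^{C}_{k}(y',y)] = \sum_{y'',i+j=k-1}\bigl(\epsilon_{1}\,\phi_{*}[\overline{\M}^{C}_{i}(y',y'')]\cdot\phi_{*}[\overline{\M}^{F}_{j}(y'',y;H_{1})] + \epsilon_{2}\,\phi_{*}[\overline{\M}^{F}_{i}(y',y'';H_{0})]\cdot\phi_{*}[\overline{\M}^{C}_{j}(y'',y)]\bigr),
\]
with signs $\epsilon_{1},\epsilon_{2}$ dictated by the boundary-orientation isomorphism on orientation lines described in Appendix \ref{sec:topological-preliminaries} and the Koszul rule used in the proof of Lemma \ref{lemma:d2-zero-simple}.

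From here I would expand both sides of \eqref{eq:continuation-is-chain-map} by writing $C=\sum_{k}C_{k}$ and $d=\pm\partial+\sum_{j}d_{j}$, where $d_{j}$ collects the contribution of $\overline{\M}^{F}_{j}$. The term $\partial(C_{k}\gamma)$ is handled by the Leibniz rule: the piece where $\partial$ differentiates $\gamma$ contributes to $C(\partial\gamma)$ after commuting $\partial$ past $\phi_{*}[\overline{\M}^{C}_{k}]$ (producing a Koszul sign of $(-1)^{k}$ that cancels the $(-1)^{k}$ prefactor in \eqref{eq:second-operation}), while the piece where $\partial$ falls on $\phi_{*}[\overline{\M}^{C}_{k}]$ is expanded using the boundary identity above. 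The two families of terms produced this way are then matched, one-to-one, with the broken trajectories appearing in $C\circ d_{j}$ (breakings at the positive end) and in $d_{i}\circ C$ (breakings at the negative end).

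The main obstacle is the sign bookkeeping, exactly as in the proof of $d^{2}=0$. Concretely, I would verify three sign equalities: first, the sign coming from applying $\overline{EZ}$ and the boundary isomorphism in the positive-end stratum matches the sign picked up when one successively applies $*_{2}$ and $*_{1}$; second, the analogous statement for the negative-end stratum, where one applies $*_{1}$ followed by $*_{2}$ and has to commute a trivial line $\Z^{\check{}}_{\partial/\partial s}$ past an orientation line of a $y$-chord; and third, that the equivariance of the gluing isomorphism with respect to shift lines (Appendix \ref{sec:gluing_and_orientations}, Eq.\ \eqref{eq:equivariance-of-gluing}) makes the contributions independent of the implicit integer $n$ in $o(y)=o^{n}(y)$, so the computation is well-defined in the $\Z/2$-graded setting of Remark \ref{rk:mod-2-graded}. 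Once these three sign checks are in place, each codimension-one boundary point of $\overline{\M}^{C}_{k}$ contributes to precisely one term on each side of \eqref{eq:continuation-is-chain-map}, and with opposite signs after the standard Leibniz reshuffling, yielding the desired identity.
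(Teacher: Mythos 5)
Your proposal follows essentially the same route as the paper's proof: apply the Leibniz rule to $\partial\bigl(\phi_*[\overline{\M}_k^C(y',y)]*_2\gamma\bigr)$, use the two types of codimension-one boundary strata of $\overline{\M}^C$ (breaking at the positive end into $\overline{\M}^C\times\overline{\M}^F$ and at the negative end into $\overline{\M}^F\times\overline{\M}^C$) together with the compatible fundamental cycles, and match the resulting terms against $Cd\gamma$ and the Floer contributions to $dC\gamma$, with the sign discrepancies accounted for by reordering orientation lines and by the outward-pointing-vector convention at the boundary. The paper carries out the three sign checks you list (in particular identifying the reassociation sign $(-1)^i$ and the minus sign from $-\partial/\partial s$ being outward-pointing on $\M^C$), but the structure of the argument is identical.
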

\begin{proof}
We analyze the left hand side of (\ref{eq:continuation-is-chain-map}) by applying the Leibniz rule to the term
\begin{equation}
\label{eq:continuation-is-chain-map-expand}
(-1)^{|y|}\partial(\phi_*[\overline{\M_k}^C(y', y)]*_2 \gamma).
\end{equation}
When commuting the $\partial$ over to the $\gamma$, we pick up a sign of $(-1)^k$, thus getting the expression
$(-1)^{|y|+k} \phi_*[\overline{\M_k}^C(y', y)] *_2\gamma$ which also appears on the right hand side of (\ref{eq:continuation-is-chain-map}). Analyzing the first term the Leibniz expansion of (\ref{eq:continuation-is-chain-map-expand}) we get two kinds of terms corresponding to the two kinds of boundary strata of $\overline{\M}^C$. Half of the terms take the form
\begin{equation}
\label{eq:continuation_1} (-1)^{|y|} (\phi_*[\overline{\M_i^C}] *_2 \phi_*[\overline{\M_j^F}]) *_1 \gamma 
\end{equation}
where the induced orientation (up to a choice of Pin structure) of the orientation line associated to $(\phi_*[\overline{\M_i^C}] *_2 \phi_*[\overline{\M_j^F}])$ 
\[\Z_{\partial/\partial s}^\check \tensor o(y'') \tensor o(y')^\check \tensor o(y') \tensor o(y)^\check, \]
differs from the product orientation of 
\[  o(y'') \tensor o(y')^\check  \tensor \Z_{\partial/\partial s}^\check \tensor o(y') \tensor o(y)^\check \]
by $(-1)^i$ with $i = \deg o(y'')+ \deg o(y')^\check$. Therefore,
\begin{equation}(-1)^{|y|}(\phi_*[\overline{\M_i^C}] *_2 \phi_*[\overline{\M_j^F}]) *_1 \gamma   = (-1)^{|y|+i}(\phi_*[\overline{\M_i^C}] *_2\cdot (\phi_*[\overline{\M_j^F}] *_1 \gamma)),
\end{equation}
and where the right hand side is exactly a term appearing in the right hand side of (\ref{eq:continuation-is-chain-map}). Thus we have found the terms on the right hand side of (\ref{eq:continuation-is-chain-map}) on the left hand side; it suffices to show that the remaining terms on the left hand side cancel.  A calculation keeping track of orientation lines shows that the remaining term in the Leibniz expansion of (\ref{eq:continuation-is-chain-map-expand}) is 
\begin{equation}
\label{eq:continuation_3} - (-1)^{|y|}\phi_*[\overline{\M_i}^F(y'', y')]*_1 (\phi_*[\overline{\M_j}^C(y', y)]*_2 \gamma) 
\end{equation}
where the \emph{minus sign} comes from the fact the vector $- \partial/\partial s$ corresponds to an \emph{outwards-pointing vector} on the moduli $\M^C_k$, where $\partial/\partial s$ is the translation in the $s$ direction in the linearized Fredholm problem defining $\widetilde{\M}_i^F(y'', y')$.  The terms in (\ref{eq:continuation_3}) cancel the terms of $dC\gamma$ coming from contributions of Floer moduli spaces to $d$, proving the proposition.
\end{proof}

\subsubsection{Homotopy of continuation maps}
Given two different choices $(K_0, J_0), (K_1, J_1)$ used to define two different continuation maps 
\[ C_0, C_1:  CF^*(\Omega L_0, \Omega L_1; H_1, J_1) \to CF^*(\Omega L_0, \Omega L_1; H_0, J_0)\]
we define a map 
\[H:  CF^*(\Omega L_0, \Omega L_1; H_1, J_1) \to CF^*(\Omega L_0, \Omega L_1; H_0, J_0)\]
which will be a homotopy between the continuation maps.

Choose a $1$-parameter family of perturbation data $(K_t, J_t)$ on $Z$ as in Proposition \ref{prop:homotopy-moduli-space} to define the moduli spaces 
$\overline{\M}^H(\gamma^+, \gamma^-) := \overline{\M}^H(\gamma^+, \gamma^-; Z, \mathfrak{B}, K_t, J_t)$
as in Proposition \ref{prop:homotopy-moduli-space-compactification}.
Using the argument in Section \ref{sec:compatible-fundamental-cycles}, choose fundamental cycles for the components of $\overline{\M}^H(\gamma^+, \gamma^-)$ that are compatible with the previously made choices of fundamental cycles for the boundary strata of $\overline{\M}^H(\gamma^+, \gamma^-)$, which are described in Proposition \ref{prop:homotopy-moduli-space-compactification}. 

Define the following homotopy between $C_0$, the continuation map associated to $(K_0, J_0)$, and $C_1$, the continuation map associated to $(K_1, J_1)$: 
\[ C_*(\P L, \p)(\gamma^-, \gamma^b) \ni \gamma \mapsto H\gamma = \sum_{\gamma^+ \in \mathcal{C}(H_0, J_0)} (-1)^{|y|}\phi_*[\overline{\M}^H](\gamma^+, \gamma^-) *_2 \gamma^+. \]
 Here, the operation $*_2$ is defined as in (\ref{eq:second-operation}). We make sense of the element 
 \begin{equation}\phi_*[\overline{\M}^H](\gamma^+, \gamma^-) \in \mathfrak{o}(\gamma^+) \tensor \mathfrak{o}(\gamma^-)^\check \tensor C_*(\P L, \p)(\gamma^+, \gamma^-)
 \end{equation} as follows: the standard transversality arguments for parametrized moduli spaces (which prove Prop. \ref{prop:homotopy-moduli-space}) also show that at a point $u \in \overline{\M}^H(\gamma^+, \gamma^-)$ lying over $t \in [0,1]$, the orientation line of the tangent space at $u$ of the ambient moduli space is a tensor product of the orientation line $o(T_t[0,1])$ with the determinant line of the linearized operator for the Floer equation defining 
 \begin{equation}u \in \overline{\M}^C(\gamma^+, \gamma^-; Z, \mathfrak{B}, K_t, J_t);
 \end{equation} we then trivialize the orientation local system $o(T_t[0,1])$ by choosing $\partial/\partial_t$ to be the positive orientation, and use linear gluing theory to conclude that determinant line of this (possibly not surjective) linearized operator is canonically isomorphic to $\mathfrak{o}(\gamma^+) \tensor \mathfrak{o}(\gamma^-)^\check \tensor \p$ as required. 
\begin{proposition}
	\label{prop:floer-homotopy-map}
	The map $H$ defined above is a homotopy between the continuation maps $C_0$ and $C_1$, i.e. 
\begin{equation}
\label{eq:homotopy-is-homotopy}dH\gamma + Hd\gamma = C_0\gamma - C_1\gamma. 
\end{equation}
\end{proposition}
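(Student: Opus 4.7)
The plan is to mimic the proof of Proposition \ref{prop:floer-continuation-map}, replacing the moduli spaces $\overline{\M}^C$ by the parametrized moduli spaces $\overline{\M}^H$, and exploiting the boundary decomposition of Proposition \ref{prop:homotopy-moduli-space-compactification}, which has four codimension-one strata instead of the two that appeared for $\overline{\M}^C$. Two of those strata are the usual ``breaking'' strata (Floer moduli glued to a parametrized continuation, on either side), and the other two strata are $\overline{\M}^C$ at $t=0$ and at $t=1$. The contribution of the first two strata to $\partial[\overline{\M}^H]$ should, after the same Koszul-sign bookkeeping as in Proposition \ref{prop:floer-continuation-map}, cancel against the Floer-strip contributions to $dH\gamma$ and $Hd\gamma$, while the contribution of the last two strata should reproduce $C_0\gamma - C_1\gamma$.

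Concretely, I would first expand $dH\gamma$ by applying the Leibniz rule to each summand $(-1)^{|y|}\partial(\phi_*[\overline{\M}^H_k(y',y)] *_2 \gamma)$. The term in which $\partial$ acts on $\gamma$, once commuted past the degree-$k$ fundamental chain, gives exactly $(-1)^{|y|+k}\phi_*[\overline{\M}^H_k(y',y)]*_2 \partial\gamma$, which matches the $\partial\gamma$-piece of $Hd\gamma$. The remaining term has $\partial$ acting on $\phi_*[\overline{\M}^H_k]$; by the compatibility of the chosen fundamental chains (Section \ref{sec:compatible-fundamental-cycles}) together with Proposition \ref{prop:homotopy-moduli-space-compactification}, this singular boundary splits into the four summands above. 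For the two ``breaking'' summands, the argument is essentially copied verbatim from Proposition \ref{prop:floer-continuation-map}: the gluing isomorphism for determinant lines is equivariant under the orientation lines up to a Koszul sign $(-1)^i$ (cf.\ (\ref{eq:equivariance-of-gluing})), which cancels against the sign picked up when reassociating $*_1$ and $*_2$, yielding precisely the terms of $Hd\gamma$ coming from Floer moduli of the $(H_1,J_1)$-data and of $dH\gamma$ coming from Floer moduli of the $(H_0,J_0)$-data.

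The substantive new input, and the main obstacle, is tracking signs at the two codimension-one strata $\{t=0\}$ and $\{t=1\}$. The orientation convention for $\overline{\M}^H$ uses $\partial/\partial_t$ as the positive first factor in $o(T_t[0,1])\otimes\det D_u$. At $t=1$ the vector $\partial/\partial_t$ is outward-pointing while at $t=0$ it is inward-pointing, so the boundary isomorphism $\mathfrak{o}_{\overline{\M}^H}\simeq\mathfrak{o}_{\partial\overline{\M}^H}$ (in the form of Eq.\ \ref{eq:boundary-map-orientation-lines}) identifies the restriction of the fundamental chain of $\overline{\M}^H$ with $+[\overline{\M}^C(K_1,J_1)]$ at $t=1$ and with $-[\overline{\M}^C(K_0,J_0)]$ at $t=0$. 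After incorporating the overall $(-1)^{|y|}$ from the definition of $H$ and of $C_i$ in (\ref{eq:second-operation}), these two contributions to $\partial[\overline{\M}^H_k]$ pair with $*_2\gamma$ to produce exactly $-(C_1\gamma-C_0\gamma)$ on the left-hand side of (\ref{eq:homotopy-is-homotopy}), i.e.\ $C_0\gamma-C_1\gamma$ after moving to the right-hand side.

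The delicate part, on which I expect to spend the most care, is precisely this orientation-sign calculation at the endpoints $t=0,1$: reconciling the conventions for $\partial/\partial_t$-orientation, the outward-normal convention in the boundary isomorphism, the $(-1)^{|y|}$ convention in the definitions of $H, C_0, C_1, d$, and the Koszul signs introduced by $*_2$. Once these three ingredients (interior breaking strata canceling $Hd$ and $dH$ Floer terms; endpoint strata producing $C_0-C_1$; the $\partial\gamma$-term absorbed by the Leibniz rule) are combined, equation (\ref{eq:homotopy-is-homotopy}) follows.
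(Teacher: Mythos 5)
Your overall strategy is the same as the paper's: expand $\partial(\phi_*[\overline{\M}^H]*_2\gamma)$ by the Leibniz rule, use the four codimension-one strata of Proposition \ref{prop:homotopy-moduli-space-compactification}, and check that the $t=0$, $t=1$ strata produce $C_0\gamma - C_1\gamma$ via the inward/outward position of $\partial/\partial_t$. That last part of your analysis is correct and is exactly the paper's ``quick check with outwards pointing vectors.''

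However, there is a genuine gap in how you handle the other terms. You assert that the $\partial\gamma$-term of $dH\gamma$ ``matches the $\partial\gamma$-piece of $Hd\gamma$'' and that the breaking-strata analysis is ``copied verbatim'' from Proposition \ref{prop:floer-continuation-map}. That cannot be right as stated: in the chain-map identity $dC\gamma = Cd\gamma$ the corresponding terms sit on opposite sides of an equality and must \emph{agree}, whereas in $dH\gamma + Hd\gamma = C_0\gamma - C_1\gamma$ both $dH$ and $Hd$ appear with the same sign on the left, so the $\partial\gamma$-terms and the Floer-breaking terms must \emph{cancel} against each other inside the left-hand side (the right-hand side contains no such terms). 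If they literally matched, they would double rather than vanish. The extra minus signs that effect this cancellation are precisely the new content of the homotopy proof and do not appear in the chain-map argument: first, the component $\overline{\M}^H_{k+1}$ has dimension one greater than the index of its elements, so commuting $\partial$ past its fundamental chain contributes $(-1)^{k+1}$ rather than $(-1)^k$, which flips the relative sign of the two $\partial\gamma$-terms; second, for the breaking strata one must commute the parameter direction $\partial/\partial_t$ of $[0,1]$ through the orientation lines of the Floer factor $\overline{\M}^F_i$ before the boundary orientations can be compared, contributing an additional $(-1)^i$ that has no analogue in the $\overline{\M}^C$ case. Without tracking these two effects the computation yields $dH\gamma - Hd\gamma$ (or a doubled $\partial\gamma$-term) instead of the homotopy identity, so this step needs to be redone rather than transplanted from the chain-map proof.
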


\begin{figure}[h]
		\centering
		\resizebox{\textwidth}{!}{
			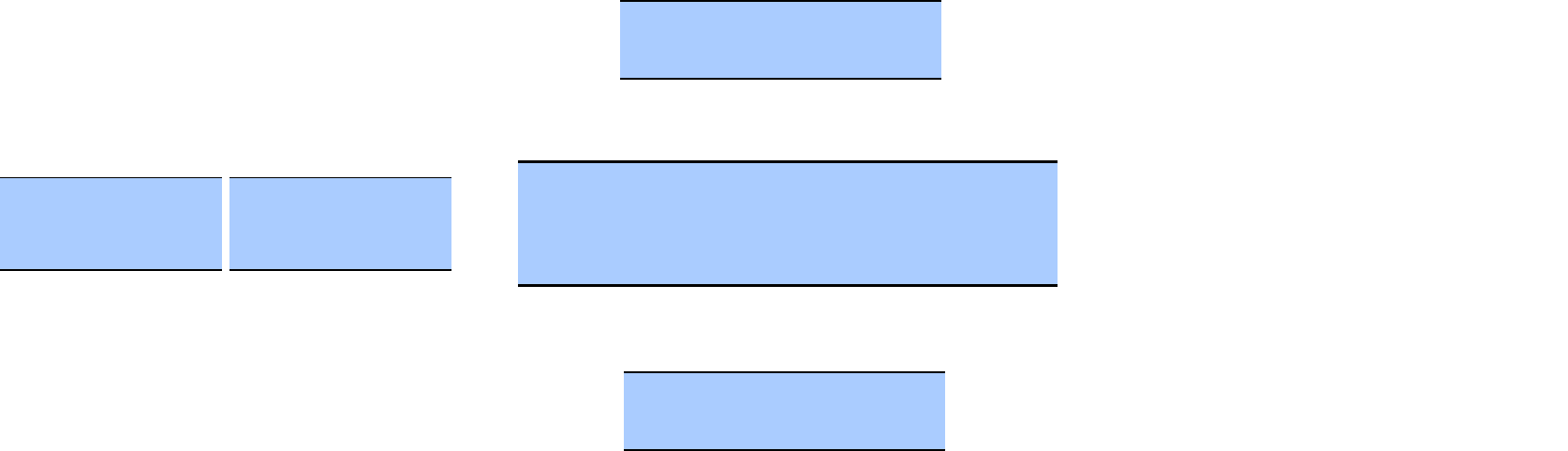}
		\caption{The codimension $1$ components of the boundary of $\overline{\M}^H$. }
		\label{fig:homotopy-method}
\end{figure}
\begin{proof}
The  components of the moduli space $\overline{\M}^H$ have codimension $1$ boundary components that break up into the four types displayed in Figure \ref{fig:homotopy-method}, and the identity is verified by checking signs for each type. Thus, we write $\overline{\M}_k^H$ for the union of the $k$-dimensional components of $\overline{\M}^H$, and consider the Leibniz expansion of 
\begin{equation}
\label{eq:leibniz-expand-homotopy}
(-1)^{|y|+k+1}\partial\left((-1)^{|y|}\phi_*[\overline{\M}_{k+1}^H] *_2 \gamma \right)
\end{equation}
for $k \geq 0$, which appears in the expansion of $dH\gamma$. A quick check with outwards pointing vectors shows that the $\overline{\M}^C$ components of the boundary of $\overline{\M}^H$ (see Fig. \ref{fig:homotopy-method}) contribute exactly $(C_0)_k\gamma - (C_1)_k \gamma$. Thus, we have identified the right hand side of (\ref{eq:homotopy-is-homotopy}) as a subset of the terms in the left hand side, and it remains to demonstrate that the rest of the terms on the left hand side cancel. 

When we commute the $\partial$ in (\ref{eq:leibniz-expand-homotopy}) through to $\gamma$, we pick up a Koszul sign of $(-1)^{k+1}$ giving us a total sign of $(-1)^0$, which is the opposite of the sign $(-1)^{|y|+|y|+1}$ carried by the corresponding term contributing to $Hd\gamma$. So these types of terms cancel.

 Finally, the remaining contributions of the boundary of $\overline{\M}^H$ to (\ref{eq:leibniz-expand-homotopy}), which take the schematic form  $\M_i^H \M_j^F \gamma$, $\M_i^F \M_j^H \gamma$ as in Figure \ref{fig:homotopy-method}. We will will argue that these terms cancel with the contributions of Floer moduli spaces to $d$ in the expansion of $dH\gamma$ and $Hd\gamma$. Indeed, the sign carried by the term of the form $\M_i^H \M_j^F \gamma$ in $Hd\gamma$ is $(-1)^{|y|+j+|y|}(-1)^i$, with the $(-1)^i$ comes from commuting the extra trivial line hidden by the grading-shifting isomorphism through the orientation lines of $\overline{\M}^H_i$. The sign carried by the term of the form $\M_i^F \M_j^H \gamma$ in $dH\gamma$ is $(-1)^{|y|+j+|y|}$ because there is no extra commutation of lines coming from an grading-shifting isomorphism; however, when comparing this term with the corresponding term in the Leibniz expansion of (\ref{eq:leibniz-expand-homotopy}) one picks up an extra sign $(-1)^i$  because during the Leibniz expansion one has to commute the $\partial/\partial_t$ of the base parameter space $[0,1]_t$ of the parametrized moduli space $\overline \M_j^H$ through the $\overline \M_i^F$ to be able to compare orientations. Thus both terms carry a total sign of $(-1)^{i+j}$ which is the opposite of the opposite of the sign $(-1)^{k+1} = (-1)^{i+j+1}$ in (\ref{eq:leibniz-expand-homotopy}), and so these types of terms cancel in the left hand side of (\ref{eq:homotopy-is-homotopy}) as well.
\end{proof}

\begin{proposition}
\label{prop:floer-compose-continuations}
Let $(H_i, J_i)$, $i = 0,1, 2$ be a triplet of regular Floer data, and let $C_0, C_1$ be choices of continuation maps 
\[ C_i: CF^*(\Omega L_0, \Omega L_1; H_i, J_i) \to CF^*(\Omega L_0, \Omega L_1; H_{i+1}, J_{j+1}). \]
Then there is a choice of data defining a continuation map 
\[ C: CF^*(\Omega L_0, \Omega L_1; H_0, J_0) \to CF^*(\Omega L_0, \Omega L_1; H_2, J_2) \]
such that $C_1 C_0$ is homotopic to $C$. 
\end{proposition}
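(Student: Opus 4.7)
The plan is to exhibit both $C_1 \circ C_0$ and some continuation map $C$ as ``boundary continuation maps'' of a single one-parameter family of perturbation data on $Z$, then extract the homotopy via an argument parallel to the proof of Proposition \ref{prop:floer-homotopy-map}. Concretely, I would construct a family of perturbation data $(K_\rho, J_\rho)$ on $Z$, parametrized by $\rho \in [0, \infty]$, compatible with the boundary Floer data $(H_0, J_0)$ at one end and $(H_2, J_2)$ at the other. At $\rho = 0$, choose an arbitrary regular perturbation datum; this defines $C$ by the construction of Section \ref{sec:floer_continuation_map}. For large finite $\rho$, take $(K_\rho, J_\rho)$ to be obtained from $(K_0, J_0)$ and $(K_1, J_1)$ by inserting a strip of length $\rho$ carrying the Floer datum $(H_1, J_1)$; the limiting ``fiber'' at $\rho = \infty$ is the broken configuration consisting of one solution for each of $(K_i, J_i)$ joined at a $(H_1, J_1)$-chord.

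Next, I would assemble the parametrized moduli space $\overline{\M}^P(\gamma^+, \gamma^-) := \bigcup_{\rho \in [0, \infty]} \overline{\M}^C(\gamma^+, \gamma^-; K_\rho, J_\rho)$ into a topological manifold with corners, via a one-parameter analog of Proposition \ref{prop:homotopy-moduli-space}, whose codimension one strata are of four types: (i) the fiber over $\rho = 0$, which is the moduli space defining $C$; (ii) the fiber over $\rho = \infty$, which by standard gluing theory is identified with $\bigsqcup_{\gamma^0 \in \mathcal{C}(L_0, L_1; H_1)} \overline{\M}^C(\gamma^+, \gamma^0; K_1, J_1) \times \overline{\M}^C(\gamma^0, \gamma^-; K_0, J_0)$; and (iii)--(iv) the two Floer-strip breakings at the positive and negative ends of $Z$. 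Following Section \ref{sec:compatible-fundamental-cycles}, I would choose fundamental cycles on $\overline{\M}^P$ extending the previously chosen fundamental cycles on all four types of strata, and define
\[
H\gamma \;=\; \sum_{\gamma^+} (-1)^{|\gamma^-|}\, \phi_*[\overline{\M}^P(\gamma^+, \gamma^-)] \,*_2\, \gamma^-,
\]
in complete analogy with the homotopy map constructed in Section \ref{sec:invariance-of-the-floer-complex}.

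The verification that $dH + Hd = \pm(C - C_1 \circ C_0)$ is then a line-by-line adaptation of the sign bookkeeping in the proof of Proposition \ref{prop:floer-homotopy-map}: expanding $d(\phi_*[\overline{\M}^P] *_2 \gamma)$ via the Leibniz rule produces one contribution per codimension one boundary stratum. Strata (i) and (ii) contribute $C\gamma$ and $(C_1 \circ C_0)\gamma$ with opposite signs (they are outward boundary components at the two opposite ends of $[0, \infty]$), while strata (iii) and (iv) cancel against $Hd\gamma$ and the pure $\partial$-part of $dH\gamma$ exactly as in the earlier proof. The principal obstacle is the sign analysis at the $\rho = \infty$ stratum: one must verify that under the gluing identification of a collar of this stratum with $[\rho_0, \infty) \times \overline{\M}^C_{(K_1, J_1)} \times \overline{\M}^C_{(K_0, J_0)}$, the Eilenberg-Zilber product of the fundamental cycles of the two factors, pushed into $C_*(\P L, \p)$ via $\phi_*$ and composed there, matches the restriction of $\phi_*[\overline{\M}^P]$ up to the anticipated sign. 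This combines linear gluing theory for Cauchy-Riemann operators (Appendix \ref{sec:gluing_and_orientations}) to identify orientation lines, with the compatibility between path-concatenation and Eilenberg-Zilber that was used throughout Section \ref{sec:natural_home}; reconciling the resulting Koszul signs from commuting orientation lines, trivial lines, and the grading-shifting isomorphism with the signs implicit in the definition of $*_2$ is precisely what identifies the stratum (ii) contribution with $(C_1 \circ C_0)\gamma$ and not its negative.
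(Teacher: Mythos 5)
Your argument is correct, and the underlying mechanism---a one-parameter family of perturbation data on $Z$ interpolating between the broken configuration $\overline{\M}^C \times \overline{\M}^C$ and a single unbroken continuation moduli space, with the homotopy read off from compatible fundamental cycles on the parametrized moduli space---is the same one the paper uses. The difference is in which portion of the neck-stretching family you take, and it affects how much analysis you owe. You parametrize over $[0,\infty]$, placing an \emph{arbitrary} regular datum at $\rho=0$ and the broken configuration at $\rho=\infty$; this obliges you to (a) interpolate between the arbitrary datum and the neck-stretched data while keeping the whole parametrized problem regular, and (b) establish the manifold-with-corners structure at the $\rho=\infty$ stratum via Gromov--Floer compactness and gluing over a non-compact parameter space. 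The paper instead exploits the freedom in the statement (``there is a choice of data defining a continuation map $C$''): it restricts the gluing parameter to $[0,\epsilon]$ for $\epsilon$ small, so the fiber over $0$ is the broken product contributing $C_1C_0$ and the fiber over $\epsilon$ is $\overline{\M}^C(y_2,y_0)$ for the \emph{glued} perturbation datum, which is then \emph{declared} to be the datum defining $C$. This collar-only version needs nothing beyond the gluing theorem near the broken stratum and no interpolation to a preassigned datum; combined with Proposition \ref{prop:floer-homotopy-map} it still implies that every continuation map from $(H_0,J_0)$ to $(H_2,J_2)$ is homotopic to $C_1C_0$, which is what your stronger formulation delivers directly at the cost of the extra transversality and compactness work. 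Your identification of the $\rho=\infty$ contribution with $C_1\circ C_0$ (matching the $\overline{EZ}$-image of the product of fundamental cycles, pushed forward by $\phi_*$ and composed in $C_*(\P L,\p)$, against the composite of the two $*_2$ operations, using the gluing isomorphism of orientation lines) is exactly the verification the paper needs at its $t=0$ fiber, so both proofs rest on the same sign bookkeeping at that stratum.
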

\begin{proof}
There are only a finite number of Hamiltonian chords for each of the choices of Floer data $(H_i, J_i)$; thus, the gluing theorem says that there exists an $\epsilon > 0$ such that for every triplet $y_i \in \mathcal{C}(H_i, J_i)$, $i = 0,1,2,$ the parametrized moduli space
\[ \bigsqcup_{t \in [0, \epsilon]} \overline{\M}^C(y_2, y_1) \#_t \overline{\M}^C(y_1, y_0) \]
is a disjoint union of topological manifolds with corners. It admits a map to $[0, \epsilon]$ by definition, and this map is continuous; the inverse images of $0$ and $\epsilon$ are codimension $1$ strata, where the inverse image of $\epsilon$ is $\overline{\M}^C(y_2, y_0)$ for a perturbation datum defining a continuation map $C$. The remaining codimension $1$ strata are products of similar parametrized moduli spaces with moduli spaces of solutions to Floer's equation with Floer data $(H_2, J_2)$ or $(H_0, J_0)$. This moduli space defines a homotopy between $C$ and $C_1C_0$ exactly in the same way that 
the moduli space $\overline{\M}^H$ defines a homotopy between $C_0$ and $C_1$. 
\end{proof}

\begin{proposition}
\label{prop:floer-continuation-identity}
The continuation map
\[ C: CF^*(\Omega L_0, \Omega L_1; H, J) \to CF^*(\Omega L_0, \Omega L_1; H, J) \]
is homotopic to the identity. 
\end{proposition}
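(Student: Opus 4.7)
The plan is to reduce the statement to a model computation via Prop~\ref{prop:floer-homotopy-map} and then realize the identity map as the ``limit'' of a continuation map associated to a degenerate, $s$-independent perturbation datum. By Prop~\ref{prop:floer-homotopy-map}, any two continuation maps from $CF^*(\Omega L_0, \Omega L_1; H, J)$ to itself are homotopic to each other, so it suffices to exhibit a single continuation map which is manifestly homotopic to the identity.

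I would take a perturbation datum $(K_\epsilon, J_\epsilon)$ on $Z$ obtained as a small, generic, compactly supported perturbation of the $s$-independent datum equal to $(H, J)$. A standard Gromov-compactness argument shows that for $\epsilon$ sufficiently small, every rigid element of $\overline{\M}^C_0(y', y; K_\epsilon, J_\epsilon)$ lies in an arbitrarily small $C^0$-neighborhood of a constant strip $u(s,t) = y(t)$; this forces $y' = y$ and produces exactly one rigid strip for each $y \in \mathcal{C}(L_0, L_1; H)$. The linearization at the constant strip is surjective since $(H, J)$ is itself a regular Floer datum, and the image $\phi(u)$ is the constant pair of paths at $(y(0), y(1))$, representing the identity morphism in $C_*(\P L, \p)(y, y)$. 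Tracking orientations via (\ref{eq:outcome_of_linear_gluing_theory}), the rigid contributions assemble into the identity operator on $CF^*$.

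To show that the full continuation map (including contributions from positive-dimensional components $\overline{\M}^C_k(y', y; K_\epsilon, J_\epsilon)$) is itself homotopic to the identity, I would construct an explicit homotopy via a $1$-parameter family of perturbation data $(K^\tau, J^\tau)$, $\tau \in [0,1]$, with $(K^1, J^1) = (K_\epsilon, J_\epsilon)$ and $(K^0, J^0)$ a tailored $s$-dependent datum whose continuation map is the identity on the nose. Assembling a homotopy operator $H$ from the fundamental chains of the parametrized moduli spaces $\overline{\M}^H$ as in the construction preceding Prop~\ref{prop:floer-homotopy-map}, that same proposition's argument yields $dH + Hd = C - C_{\tau = 0}$, and $C_{\tau = 0} = \mathrm{id}$ by construction, giving $C \simeq \mathrm{id}$.

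The main obstacle is producing the endpoint datum $(K^0, J^0)$ whose continuation is literally the identity, and more generally ensuring that the parametrized moduli space $\overline{\M}^H$ is a topological manifold with corners with the expected boundary and orientation structure. The cleanest route is a ``short-neck'' construction: take $(K^0, J^0)$ obtained by gluing $(H, J)$ to itself with gluing parameter zero, so that continuation solutions are forced to be constants and the rigid moduli spaces consist solely of constant strips with no higher-dimensional pieces. The sign verification then proceeds exactly as in the proof of Prop.~\ref{prop:floer-homotopy-map}, using the compatibility of the orientation data on constant strips with the trivialization (\ref{eq:outcome_of_linear_gluing_theory}) and the equivariance of gluing with respect to trivial lines.
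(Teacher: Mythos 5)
Your reduction via Prop.~\ref{prop:floer-homotopy-map} is sound (any two continuation maps between the same pair of Floer data are homotopic, so one good representative suffices), and your identification of the rigid contributions with the identity matches the paper's first step. In fact the paper does not perturb at all: it takes the $s$-independent perturbation datum $(H,J)^{\#}$ itself, which is already regular because $(H,J)$ is a regular Floer datum; the index-zero solutions are exactly the constant strips, so $C_0 = \mathrm{id}$.

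The gap is in your final step. In this framework the continuation map is $C=\sum_{k\ge 0}C_k$, where $C_k$ is built from the \emph{fundamental chain} of the $k$-dimensional components $\overline{\M}^C_k(y',y)$ pushed into $C_*(\P L,\p)$, not just from a count of rigid solutions. There is no perturbation datum for which these higher-dimensional components vanish: for the $s$-independent datum, $\overline{\M}^C(y',y)$ is the unquotiented Floer moduli space $\widetilde{\M}^F(y',y)$, whose nonconstant components have dimension at least $1$ and contribute nontrivial chains to $C_k$ for $k\ge 1$. The ``short-neck''/gluing-parameter-zero construction does not remove them, so a datum whose continuation map is ``literally the identity'' does not exist, and your proposed homotopy has no valid endpoint. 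The paper closes this gap algebraically rather than geometrically: with the $s$-independent datum one has $C_0=\mathrm{id}$ while every contribution from a nonconstant solution strictly decreases the symplectic action of the underlying chord, so $C$ is the identity plus a strictly filtration-lowering operator for the (finite) action filtration; it therefore induces the identity on the $E_1$ page of the associated convergent spectral sequence and is homotopic to the identity. You need some argument of this kind --- or an explicit inductive construction of the homotopy over the action filtration --- to finish.
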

\begin{proof}
We can simply choose $H, J$ as the data defining our continuation map; this is manifestly regular. The zero-dimensional moduli spaces make $C_0 = id$. By filtering the complex by the energy of $y$ we see that $C$ gives the identity page on the $E_1$ page of a convergent spectral sequence, and thus must be homotopic to the identity. 
\end{proof}

\begin{corollary}
The complex $CF^*(\Omega L_0, \Omega L_1; H, J)$ is independent of $(H, J)$ up to a homotopy equivalence which is canonical in the homotopy category of chain complexes.
\end{corollary}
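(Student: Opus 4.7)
The plan is to deduce the corollary formally from Propositions~\ref{prop:floer-continuation-map}, \ref{prop:floer-homotopy-map}, \ref{prop:floer-compose-continuations}, and \ref{prop:floer-continuation-identity}; no further geometric input is required. Given regular Floer data $(H_0, J_0)$ and $(H_1, J_1)$, Proposition~\ref{prop:floer-continuation-map} shows that any choice of regular perturbation datum on $Z$ interpolating between them yields a chain map $C \colon CF^*(\Omega L_0, \Omega L_1; H_0, J_0) \to CF^*(\Omega L_0, \Omega L_1; H_1, J_1)$, and Proposition~\ref{prop:floer-homotopy-map} shows that any two such choices produce chain-homotopic chain maps. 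Hence passing to the homotopy category of chain complexes yields a \emph{canonical} morphism
\[
[C_{(H_0,J_0) \to (H_1, J_1)}] \colon CF^*(\Omega L_0, \Omega L_1; H_0, J_0) \to CF^*(\Omega L_0, \Omega L_1; H_1, J_1).
\]

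Next, I would check that these morphisms are mutually inverse in the homotopy category. Pick continuation data realizing $C_{(H_0,J_0) \to (H_1,J_1)}$ and $C_{(H_1, J_1) \to (H_0, J_0)}$. Proposition~\ref{prop:floer-compose-continuations} furnishes continuation data from $(H_0, J_0)$ to itself whose associated continuation map $C'$ is homotopic to the composition $C_{(H_1, J_1) \to (H_0, J_0)} \circ C_{(H_0, J_0) \to (H_1, J_1)}$. But Proposition~\ref{prop:floer-continuation-identity}, combined with Proposition~\ref{prop:floer-homotopy-map} applied to the pair $((H_0, J_0), (H_0, J_0))$, implies that \emph{any} continuation map from $(H_0, J_0)$ to itself is homotopic to the identity. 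Hence $C_{(H_1, J_1) \to (H_0, J_0)} \circ C_{(H_0, J_0) \to (H_1, J_1)}$ is homotopic to the identity on $CF^*(\Omega L_0, \Omega L_1; H_0, J_0)$, and symmetrically in the other order. This proves that $[C_{(H_0, J_0) \to (H_1, J_1)}]$ is an isomorphism in the homotopy category.

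Finally, the canonicity statement follows from the first paragraph: since the morphism $[C_{(H_0, J_0) \to (H_1, J_1)}]$ is independent of the auxiliary choice of perturbation data on $Z$, and since Proposition~\ref{prop:floer-compose-continuations} combined with Proposition~\ref{prop:floer-homotopy-map} shows that the assignment is functorial (composition of continuations is sent to composition of morphisms), one obtains a well-defined functor from the groupoid whose objects are regular Floer data $(H, J)$ and which has a unique morphism between every pair of objects, to the homotopy category of chain complexes of $(A_0, A_1)$-bimodules; equivalently, the family $\{CF^*(\Omega L_0, \Omega L_1; H, J)\}_{(H, J)}$ forms a connected simple system of complexes in the sense of the earlier definition. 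There is no real obstacle to this argument---it is the standard Conley-type invariance packaging---and all the content has been absorbed into the four cited propositions.
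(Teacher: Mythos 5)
Your argument is correct and is exactly the formal assembly the paper intends: the corollary is stated without proof precisely because it follows immediately from Propositions \ref{prop:floer-continuation-map}, \ref{prop:floer-homotopy-map}, \ref{prop:floer-compose-continuations}, and \ref{prop:floer-continuation-identity} in the way you describe. The only cosmetic difference is that your direction convention for the continuation map is reversed relative to the paper's (which sends $CF^*(\Omega L_0,\Omega L_1;H_1,J_1)$ to $CF^*(\Omega L_0,\Omega L_1;H_0,J_0)$), and your closing remark about bimodules anticipates Proposition \ref{prop:bimodule-structures-invariance} rather than being needed for the corollary itself.
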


Since the Floer complex is generally infinite rank over $\Z$ even homologically, the above does not seem so useful. However, as in Section \ref{sec:bimodule-structure}, more is true:

\begin{proposition}
	\label{prop:bimodule-structures-invariance}
	The maps $C$ and $H$ are morphisms of $(C_*(\Omega L_0, \p), C_*(\Omega L_1, \p))$-bimodules. Thus, the Floer complex $CF^*(\Omega L_0, \Omega L_1; H, J)$ is independent of $(H, J)$ up to a quasi-isomorphism  of bimodules which is canonical in the homotopy category of $(C_*(\Omega L_0, \p), C_*(\Omega L_1, \p))$-bimodules.
\end{proposition}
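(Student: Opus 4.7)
The plan is to reduce both assertions to essentially formal consequences of the definitions already in place. First I would check the bimodule claim for $C$ and $H$. By construction, both operators are signed sums of applications of the binary operation $*_2$ defined in (\ref{eq:def-star2-simple}) and used again in (\ref{eq:second-operation}): on an element $\gamma \in \Z_y^\check \tensor o(y) \tensor C_*(\P L, \p)(y, y_b)$, this operation left-composes the third tensor factor with the $C_*(\P L, \p)(y', y)$-valued factor of $\phi_*[\overline{\M}^C_k(y', y)]$ or $\phi_*[\overline{\M}^H_k(y', y)]$, and then rearranges the grading and orientation lines associated to $y$ and $y'$. The right action of an element $a \in C_*(\P L, \p)(y_b, y_b)$, through which (by Lemma \ref{lemma:left-compostion-is-module-map} and the algebra map (\ref{eq:tensor-product-to-hom-map})) the $(A_0,A_1)$-bimodule structure is defined, right-composes only on the third tensor factor. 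Since composition in $C_*(\P L, \p)$ is associative and no Koszul sign crosses between the first two tensor factors and the third in either operation, the identities $C(\gamma \cdot a) = C(\gamma) \cdot a$ and $H(\gamma \cdot a) = H(\gamma) \cdot a$ will hold on the nose.

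With this in hand, the second statement is purely formal. Combining the first part with Propositions \ref{prop:floer-continuation-map}, \ref{prop:floer-homotopy-map}, \ref{prop:floer-compose-continuations}, and \ref{prop:floer-continuation-identity}, I would conclude that $\{CF^*(\Omega L_0, \Omega L_1; H, J)\}_{(H,J)}$ together with the homotopy classes of continuation maps forms a connected simple system in the homotopy category of $(A_0, A_1)$-bimodules: continuations are bimodule chain maps, any two continuations between the same pair of Floer data are linked by a bimodule homotopy, the composition of two continuations is bimodule-homotopic to a single continuation, and the self-continuation is bimodule-homotopic to the identity. The last of these implies that every continuation is a quasi-isomorphism of bimodules (its composition with a reverse-direction continuation is bimodule-homotopic to the identity on either side), and that the resulting isomorphism class in the bimodule homotopy category is canonical.

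The main obstacle will be the sign bookkeeping in the first part: one must verify that the Koszul signs incurred when commuting the grading-shifting lines $\Z_y^\check$, $\Z_{y'}^\check$ and the orientation lines $o(y), o(y')$ appearing in the definition of $*_2$ do not couple to a factor of $a$ sitting in the third tensor position. This is true because, in our conventions, right-multiplication by $a$ does not introduce any Koszul sign against elements lying to its left, and all of the line manipulations performed inside $*_2$ take place strictly to the left of the third tensor factor, which is where the right action acts. Once this sign check is in place, the rest of the argument reduces to reapplying the propositions cited above, now interpreted in the category of $(A_0, A_1)$-bimodules rather than the category of chain complexes of abelian groups.
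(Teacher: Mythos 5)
Your proposal is correct and follows essentially the same route as the paper: the bimodule claim is exactly the observation that $C$ and $H$ are built from line isomorphisms and left-compositions in $C_*(\P L, \p)$, which commute with the right action by Lemma \ref{lemma:left-compostion-is-module-map}, and the invariance statement then follows formally from Propositions \ref{prop:floer-continuation-map}, \ref{prop:floer-homotopy-map}, \ref{prop:floer-compose-continuations}, and \ref{prop:floer-continuation-identity} reinterpreted in the homotopy category of bimodules. Your explicit sign check is just an unpacking of what that lemma already guarantees.
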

\begin{proof}
	We refer to Appendix \ref{sec:dg-algebra-conventions} for the definition of morphisms of bimodules and the associated homotopy category. The statement of the proposition follows from the combination of Propositions \ref{prop:floer-continuation-map}, \ref{prop:floer-homotopy-map}, \ref{prop:floer-compose-continuations}, and \ref{prop:floer-continuation-identity}, with the claim that the maps $C$ and $H$ are bimodule morphisms; but $C$ and $H$ are both defined as a sum of tensor products of morphisms of lines and left-compositions in $C_*(\P L, \p)$, which by Lemma \ref{lemma:left-compostion-is-module-map} implies the claim.
\end{proof}
\section{The Morse complex with Loop Space Coefficients}
\label{sec:morse_complex}
To calculate the complex defined in Section \ref{sec:floer_complex} when $L_0 = L_1$, we define an analogous Morse-theoretic complex. In Section \ref{sec:morse-theory-fundamental-classes} we explain how to relate  Morse-theoretic moduli spaces to the categories $C_*(\P L, \p)$ for $L = (L_0, L_0)$, and in Section \ref{sec:morse-complex-def} we define the Morse complex $CM_*(\Omega L_0; f, g)$ and state its invariance properties. Finally in Section \ref{sec:morse-floer-comparison} we compare the Floer complex to the Morse-theoretic complex defined in this section.

\subsection{Fundamental chains in Morse theory}
\label{sec:morse-theory-fundamental-classes}

In Appendix \ref{sec:morse-theory-technical}, we review definitions of moduli spaces related to Morse theory:
\begin{itemize}
	\item $\overline{\M}^M(p, q; f, g)$ (Prop. \ref{prop:morse-theory-technical1}), the moduli space of broken downwards gradient trajectory for a Morse function $f$; 
	\item $\overline{\M}^{MC}(p, q; f_{t}, g)$ (Prop. \ref{prop:morse-theory-technical2}), a compactification of the moduli space of downards gradient trajectories for a time-dependent interpolation $f_t$ between a pair of Morse functions $f_0, f_1$;  and 
	\item $\overline{\M}^{H}(p,q; f_{s,t}, g)$ (Prop. \ref{prop:morse-theory-technical3} ), a compactification of the parametrized moduli space $\overline{\M}^{MC}(p, q; f_{s, t}, g)$  with parameter $s \in [0,1]$, associated to a homotopy $f_{s,t}$ between a pair of interpolations $f_t, f'_t$ between a pair of Morse functions $f_0, f_1$. 
\end{itemize}
For any one of these moduli spaces, which we will denote generically in the next two paragraphs by $W$, there is a map  $\phi: W  \to \P L(p, q)$ defined by sending a broken gradient trajectory to a pair of paths between  $p$ and $q$ going in opposite directions and parametrized by length with respect to the auxiliary metric $g$ used in the definition of $W$. As in Section \ref{sec:morse-theory-analog},  one can define a map $\phi_*: C_*(W, \Z) \to C_*(\P L, \p)(p, q)$ defined as the composition $\Delta \circ C_*(\phi)$, where $\Delta$ is the functor defined in Definition \ref{def:delta-functor}.

In Appendix \ref{sec:morse-theory-orientations}, we review how to orient the moduli spaces defined in Appendix \ref{sec:morse-theory-technical}; to do so one must chooseorientations of $o(T^-_p; f)$, the orientation lines of the negative eigenspaces of the Hessians of $f$ at critical points $p \in Crit(f)$. Orienting a space trivializes its orientation local system, so the map $\phi$ defined in the previous paragraph allows us to evaluate fundamental classes for the moduli spaces $W$ to elements
\[ \phi_*[\M^M(p, q; f, g)] \in \Z_{\partial/\partial t}^\check \tensor o(T^-_p; f) \tensor \det o(T^-_p; f)^\check \tensor C_*(\P L, \p) (p, q), \]
\[ \phi_*[\M^{MC}(p, q; f_t, g)] \in o(T^-_p; f_0) \tensor\det  o(T^-_q; f_1)^\check \tensor C_*(\P L, \p) (p, q), \]
\[ \phi_*[\M^{H}(p, q; f_{s,t}, g)] \in o(T^-_p; f_0) \tensor\det  o(T^-_q; f_1)^\check \tensor C_*(\P L, \p) (p, q). \]
Henceforth, we will drop the $f_i$ from the notation, as the relevant Morse functions will be clear from the context. 
\subsection{The (Morse) complex}
\label{sec:morse-complex-def}
Let $L_0$ be an oriented manifold. Make choices an in Section \ref{sec:morse-theory-analog} and Equations \ref{eq:morse-smale-choice}, \ref{eq:morse-theory-pin-choice}. Use the choice of Pin structures in (\ref{eq:morse-theory-pin-choice}) to define a category $C_*(\P L, \p)$ (see Section \ref{sec:natural-local-system}) for $L = (L_0, L_0)$. Define an abelian group
\[ CM_*(\Omega L_0; f, g) = \bigoplus_{p \in Crit(f)} \Z_{p}^\check \tensor  o(T^-_p) \tensor C_*(\P L, \p)(p, b). \]

\begin{definition}
	Given  $\gamma \in \Z_{p}^\check \tensor o(T^-_p) \tensor C_*(\P L, \p)(p,b)$ let $|\gamma|$ denote the dimension of the singular chain underlying $\gamma$, and let $|p|$ denote the index of the critical point $p$. 
	
	 \emph{Define} $\deg \gamma = |\gamma|  - |p|$. 
\end{definition}
This definition makes $CM_*(\Omega L_0; f, g)$ into a $\Z$-graded abelian group; this convention will make the differential we now introduce decrease degree. 

Choose compatible fundamental chains for $\overline\M^M(p, q;f, g)$, over $p, q \in Crit f$, as in Section \ref{sec:compatible-fundamental-cycles}. We define the differential (see Figure \ref{fig:illustration-of-differential-b})  via 
\begin{equation}
\label{eq:morse-complex-differential}
\begin{gathered}
\Z_y^\check \tensor o(T^-_q)\tensor C_*(\P L, \p)(q, b) \ni \gamma \mapsto d\gamma  \in CM_*(\Omega L_0; f, g)\\
d \gamma  = (-1)^{|q|} \partial \gamma + (-1)^{|q|}\sum_{p \in Crit(f)}\sum_{k} \phi_*[\overline{\M}_k^M(p,q)] *_1 \gamma 
\end{gathered}
\end{equation}
where $\partial \gamma \in \Z_q^\check \tensor o(T^-_q) \tensor C_*(\P L, \p)(q, b)$ is just the application of the boundary operator to the third factor of the tensor product, and $*_1$ is defined as in  (\ref{eq:first_operation}) by commuting the trivial line $\Z_p^\check$ to the left, applying a grading-shifting isomorphism, and canceling orientation lines.

Suppose that we are given a pair of Morse functions $f_0, f_1$ that are both Morse-Smale with respect to a Riemannian metric $g$, together with choices of Pin structures (\ref{eq:morse-theory-pin-choice}) for both Morse functions. Choose data $f_t$ as needed in Prop. \ref{prop:morse-theory-technical2} to define $\overline{\M}^C$.  Define a continuation map
\[  C: CM_*(\Omega L_0; f_1, g) \to CM_*(\Omega L_0; f_0, g), \]
by fixing
\[ C \gamma := \sum_{k \geq 0} C_k \gamma, \text{ for } \gamma \in \Z_q^\check \tensor o(T^-_q) \tensor C_*(\P L, \p)(q, y_b) \]
where
\[ C_k \gamma := \sum_{p \in Crit(f)} (-1)^{k} \phi_*[\overline{\M_k}^{MC}(p, q)] *_2 \gamma. \]
Here $*_2$ is defined as in Equation \ref{eq:second-operation} by moving the trivial line to the left and canceling orientation lines.

Given a given two choices of data $f_t, f'_t$ used to define continuation maps $C, C'$ as above, choose data $f_{s,t}$ as needed in Prop \ref{prop:morse-theory-technical3} to define $\overline{\M}^H$, and define a homotopy 
\[H:  CM_*(\Omega L_0; f_1, g) \to CM_*(\Omega L_0; f_0, g)\]
between the continuation maps by 
\[ \Z_y^\check \tensor o(T^-_q)\tensor C_*(\P L, \p)(q, b) \ni \gamma \mapsto H\gamma := \sum_{\gamma \in Crit(f')} (-1)^{|q|}\phi_*[\overline{\M}^H(p, q; f_{s,t}, g)] *_2 \gamma. \]

The sign computations needed to verify the following proposition are exactly identical to those in the section on the Floer complex, and we will omit them.
\begin{proposition}
We have that $d^2 = 0$, and that $C$ is a chain map and $H$ is a homotopy.
\end{proposition}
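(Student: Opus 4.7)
The plan is to mirror, step for step, the sign-tracking arguments carried out in Section \ref{sec:floer-theory-start} for the Floer complex, using the Morse-theoretic boundary decompositions of the moduli spaces $\overline{\M}^M$, $\overline{\M}^{MC}$, $\overline{\M}^H$ in place of their Floer counterparts $\overline{\M}^F$, $\overline{\M}^C$, $\overline{\M}^H$. The three inputs needed are: the compatibility of chosen fundamental cycles (Section \ref{sec:compatible-fundamental-cycles}, which applies verbatim to the Morse moduli spaces since they are also topological manifolds with corners whose boundary strata are products of moduli spaces of the same type); the description of the orientation lines of $\overline{\M}^M(p,q)$, $\overline{\M}^{MC}(p,q)$, $\overline{\M}^H(p,q)$ in terms of $o(T^-_p)$, $o(T^-_q)^\check$, and the pulled-back Pin-structure local system $\p$ via $\phi_*$ (Section \ref{sec:morse-theory-fundamental-classes}); and the fact, proven as Lemma \ref{lemma:PL-is-groupoid} and used implicitly throughout, that the gluing maps for morphisms in $\P L$ are associative. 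With these ingredients in hand, the three identities $d^2 = 0$, $dC - Cd = 0$, $dH + Hd = C_0 - C_1$ each reduce to a verification that the signs in the Leibniz expansions match the signs in the orientation-line identifications coming from linear gluing.

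For $d^2 = 0$, I would write out $d^2\gamma$ using the definition (\ref{eq:morse-complex-differential}) and expand via the Leibniz rule the term $(-1)^{|q|}\partial(\phi_*[\overline{\M}^M_k(p,q)] *_1 \gamma)$. Commuting $\partial$ past $\phi_*[\overline{\M}^M_k(p,q)]$ produces the Koszul sign $(-1)^k$, which cancels the contribution $\phi_*[\overline{\M}^M_k(p,q)] *_1 \partial\gamma$ coming from $d(\partial\gamma)$. The remaining term, $\partial\phi_*[\overline{\M}^M_k(p,q)] *_1 \gamma$, is evaluated using the decomposition $\partial \overline{\M}^M_k(p,q) = \bigcup_{r, i+j+1=k} \overline{\M}^M_i(p,r) \times \overline{\M}^M_j(r,q)$ and the compatible choice of fundamental chains: as in the Floer case, the product orientation differs from the induced boundary orientation by $(-1)^i$, which, after composing with the shifts implicit in $*_1$, leaves all contributions paired to cancel.

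For $C$ being a chain map, the analysis proceeds identically, expanding $dC\gamma - Cd\gamma$ and using the codimension-one boundary decomposition of $\overline{\M}^{MC}$ into $\overline{\M}^{MC} \times \overline{\M}^M$ and $\overline{\M}^M \times \overline{\M}^{MC}$ strata (Prop. \ref{prop:morse-theory-technical2}). The proof of Prop. \ref{prop:floer-continuation-map} transcribes without modification once the Floer orientation lines are replaced with $o(T^-_p), o(T^-_q)^\check$. For $H$ being a homotopy, the boundary of $\overline{\M}^H$ has four types of codimension-one strata, precisely analogous to the Floer picture in Figure \ref{fig:homotopy-method}: the two ``endpoint'' strata $\overline{\M}^{MC}|_{s=0}$ and $\overline{\M}^{MC}|_{s=1}$, which produce $C_1 \gamma - C_0 \gamma$; and the two ``breaking'' strata of the form $\overline{\M}^H \times \overline{\M}^M$ and $\overline{\M}^M \times \overline{\M}^H$, whose contributions cancel with the Morse-trajectory contributions to $Hd\gamma$ and $dH\gamma$ respectively, after the same Koszul-sign accounting as in Prop. \ref{prop:floer-homotopy-map}. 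The key sign input there is that the parameter direction $\partial/\partial t$ used to trivialize the extra orientation factor in $\overline{\M}^H$ must be commuted through $\overline{\M}^M$ when comparing one of the two terms, producing the $(-1)^i$ that cancels the Koszul sign from the Leibniz rule.

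The main obstacle, as in the Floer case, is strictly bookkeeping: keeping track of which orientation lines are being permuted past which, and ensuring that the grading-shifting isomorphisms $\Z_p^\check \tensor \Z_{\partial/\partial t}^\check \simeq \Z_q^\check$ interact correctly with the orientation-line identifications produced by gluing. However, since the local system $\p$ on the Morse moduli spaces is defined by \emph{pullback} via $\phi$ and the functoriality $\Delta$, every sign that arises is already present, with the same sign, in the corresponding Floer calculation. Consequently, no new phenomena appear and the proofs of Propositions \ref{prop:floer-continuation-map}, \ref{prop:floer-homotopy-map}, and the $d^2=0$ lemma for the Floer complex apply mutatis mutandis, as the author asserts.
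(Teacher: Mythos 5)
Your proposal is correct and takes exactly the route the paper intends: the paper itself omits the verification, stating only that ``the sign computations needed to verify the following proposition are exactly identical to those in the section on the Floer complex,'' and your write-up is precisely that transcription, substituting the boundary decompositions of $\overline{\M}^M$, $\overline{\M}^{MC}$, $\overline{\M}^H$ and the orientation lines $o(T^-_p)$, $o(T^-_q)^\check$ for their Floer counterparts. (One small citation slip: the associativity you need is that of gluing Pin structures relative to the ends from Appendix \ref{sec:gluing_pin_structures_relative_to_ends}, not Lemma \ref{lemma:PL-is-groupoid}, but this does not affect the argument.)
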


\begin{remark}
One can show that the composition of two continuation maps is a continuation map, by an analogous parametrized moduli space argument as in the section on the Floer complex, and that the continuation maps 
\[ CM_*(\Omega L_0; f, g) \to CM_*(\Omega L_0; f, g) \]
are homotopic to the identity. Of course, this also follows from the Morse-Floer comparison in Section \ref{sec:morse-floer-comparison} and the results of Section \ref{sec:floer-theory-start}. 
\end{remark}
\begin{remark}
	Suitable modifications of the arguments and results of of Sections \ref{sec:bimodule-structure} and Prop. \ref{prop:bimodule-structures-invariance}  all apply here: $CM_*(\Omega L_0; f, g)$ is an iterated extension of free $(C_*(\Omega L_0, \p),C_*(\Omega L_0, \p))$ bimodules and the maps $C, H$ are compatible with this bimodule structure, making $CM_*(\Omega L_0; f, g)$ independent of the pair $(f,g)$ up to quasi-isomorphism of bimodules that is canonical in the homotopy category of bimodules. 
\end{remark}
\subsection{Morse-Floer-comparison}
\label{sec:morse-floer-comparison}
\begin{proposition}
	\label{prop:morse-floer-comparison}
	Let $L_0$ be an oriented exact Lagrangian submanifold of a Liouville domain. 
	
	Given the choices in (\ref{eq:morse-smale-choice}), (\ref{eq:morse-theory-pin-choice}), there exist regular Floer data $\bar{H}, \bar{J}$ for $(L_0, L_0)$, choices as in Section \ref{sec:loopy-floer-assumptions}, and an isomorphism
	
	\[Y: CF(\Omega L_0, \Omega L_0;\bar{H}, \bar{J}) \to CM(\Omega L_0; f, g)
	\]
	of $\Z/2$-graded iterated extensions of free $C_*(\P L, \p)(y_b, y_b)$-modules. 
\end{proposition}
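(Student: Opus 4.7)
The plan is to imitate the proof of Lemma \ref{lemma:morse-floer-easy} while upgrading the $0$-dimensional comparison to the entire stratified moduli spaces, so that fundamental chains and their evaluations into $C_*(\P L, \p)$ match, not just the sets of unbroken rigid trajectories.

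First I would make the same geometric choices as in the proof of Lemma \ref{lemma:morse-floer-easy}. Pick a Morse--Smale pair $(H, g)$ on $L_0$ with $H$ sufficiently $C^2$-small, a Weinstein neighborhood $U \subset T^*L_0$ of $L_0 \subset M$, and extend $H$ and the family $J$ provided by Lemma \ref{lemma:floers-lemma} to $\bar H, \bar J$ on $M$. Seidel's confinement argument forces all Floer strips to lie in $U$, so Lemma \ref{lemma:floers-lemma} gives stratum-preserving homeomorphisms $\overline{\M}^F(\gamma_x, \gamma_y; \bar H, \bar J) \to \overline{\M}^M(x, y; H, g)$ for every pair of critical points. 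Choose the Pin structures at the two endpoints of each constant Hamiltonian chord $\gamma_x$ to agree with the Pin structure at $x$ fixed in (\ref{eq:morse-theory-pin-choice}). Choose, once and for all, a compatible family of fundamental chains for the Morse moduli spaces $\overline{\M}^M$ as in Section \ref{sec:compatible-fundamental-cycles}, and transport them via the homeomorphisms to obtain a compatible family of fundamental chains for $\overline{\M}^F$.

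Next I would compare the path-space evaluation maps. A Floer strip $u$ in $U$ projects along $\pi: T^*L_0 \to L_0$ to a Morse trajectory $u^M$, and both boundary components $u|_{\R \times \{0\}}$ and $u|_{\R \times \{1\}}$ are graphs over $u^M$ (the first in $L_0$, the second in $L_1 = \phi^1_{\bar H}(L_0)$). Since $\pi$ is a Riemannian submersion identifying the induced metrics along the zero section with $g$, after reparametrization by arclength the Floer evaluation
\[ \phi^F: \overline{\M}^F(\gamma_x, \gamma_y) \to \P L(y_x, y_y) = \P_{y(0),y'(0)}L_0 \times \P_{y'(1),y(1)}L_1 \]
becomes, under the homeomorphism with $\overline{\M}^M(x, y)$, precisely the composition $\Delta \circ \phi^M$, where $\Delta$ is the functor of Definition \ref{def:delta-functor} that assigns to a single path the pair $(\gamma, \gamma^{-1})$ equipped with matching Pin structures relative to the ends. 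Consequently $\phi^F_*[\overline{\M}^F_k(\gamma_x, \gamma_y)] = \Delta_* \phi^M_*[\overline{\M}^M_k(x, y)]$ in $C_*(\P L, \p)(y_x, y_y)$ for every $k$.

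Now I define $Y$ on generators by combining the identifications of trivial lines $\Z^\check_{\gamma_x} \simeq \Z^\check_x$, the canonical isomorphism $o(\gamma_x) \simeq o(T^-_x)$ recalled e.g.\ from Remark 6.1 of \cite{abouzaid_wrapped}, and the identity on the $C_*(\P L, \p)(y_x, y_b)$ factor. As a sum of tensor products of isomorphisms of lines with the identity on the right tensor factor, $Y$ is a morphism of right $C_*(\P L, \p)(y_b, y_b)$-modules; it preserves the action filtration (the symplectic action of $\gamma_x$ is a monotone function of $H(x)$ for $H$ small), and so it is an isomorphism of iterated extensions of free bimodules. The equation $Y \circ d^F = d^M \circ Y$ reduces to three separate comparisons: the singular boundary terms match trivially since $Y$ acts as the identity on the chain factor; the path-space contributions match by the identity $\phi^F_*[\overline{\M}^F_k] = \Delta_*\phi^M_*[\overline{\M}^M_k]$ established above; and the orientation-line signs match by the standard Floer--Morse orientation comparison together with the equivariance of linear gluing with respect to shift lines (\ref{eq:equivariance-of-gluing}), just as in Lemma \ref{lemma:morse-floer-easy}.

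The main technical obstacle is the chain-level identity $\phi^F_*[\overline{\M}^F_k] = \Delta_* \phi^M_*[\overline{\M}^M_k]$. One must verify that the arclength parametrizations along the two Floer boundary components, measured in the metrics induced by the choice of Floer data, agree up to orientation-reversal with the $g$-arclength parametrization of the corresponding Morse trajectory, and that the Pin structures relative to the ends transported along the two Floer boundaries are inverse in the sense used to define $\Delta$ (which is exactly ensured by our choice of matching Pin structures at the two endpoints of each constant chord). Both verifications are essentially pointwise computations in the Weinstein model, but require some care to express as equalities of chains in the local system $\p$; once they are in place, the proposition follows.
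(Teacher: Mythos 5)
Your proposal is correct and follows essentially the same route as the paper: both reduce to the setup of Lemma \ref{lemma:morse-floer-easy}, use Floer's stratum-preserving homeomorphisms $\overline{\M}^F(\gamma_x,\gamma_y)\to\overline{\M}^M(x,y)$ to identify the moduli spaces, and match the module structures, evaluation maps into $C_*(\P L,\p)$, and signs. The paper's proof is terser (it simply asserts the two complexes are \emph{equal} under these choices), whereas you make explicit the two points it leaves implicit — transporting the compatible system of fundamental chains across the homeomorphisms and checking that the Moore-path parametrizations and Pin structures agree so that $\phi^F_*=\Delta_*\phi^M_*$ at chain level — which is a faithful elaboration rather than a different argument.
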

\begin{proof}
	This follows from the same argument as in the proof of Lemma \ref{lemma:morse-floer-easy}. We make the choices of Floer data $\bar{H}, \bar{J}$ and auxiliary choice of Pin structures (\ref{eq:pin-structures-choice-floer}) as in the proof of that Lemma. As in that lemma, there is an identification of critical points for $f$ with Hamiltonian chords of $\bar{H}$, and under this identification, the $\Z/2$-graded abelian groups $CF(\Omega L_0, \Omega L_0;\bar{H}, \bar{J})$, $CM(\Omega L_0; f, g)$, are \emph{equal}, so $Y$ is the identity map. The  structures of the complexes as modules over $C_*(\P L, \p)(y_b, y_b)$ are also \emph{equal}; Lemma \ref{lemma:floers-lemma} gives a bijection between the terms in the respective differentials, and the signs agree by the same argument as in Lemma \ref{lemma:morse-floer-easy}. 
\end{proof}

\section{Computing the Morse Complex}
\label{sec:computing-the-morse-complex}
In this section, we compute $CM_*(\Omega L; f, g)$ in terms of algebraic topology. Using a Morse-to-simplicial comparison, we will show that this complex is isomorphic, as a $dg$-bimodule, to the diagonal bimodule of $A_L := C_*(\P_{b,b}L, \mathfrak{p}_{b,b})$. The argument in this section is similar in spirit to the work of Barraud-Cornea, ``Lagrangian Intersections and the Serre Spectral Sequence'' \cite{barraud-cornea}, and we will similarly use a moduli space considered by Barraud-Cornea (\emph{loc. cit.}) and Hutchings-Lee \cite{hutchings-lee}.

\subsection{A well-known moduli space}
It is well-known that Morse function on a manifold gives a CW structure on the manifold, with the open cells given by the (un)stable manifolds of the critical points. The proof of this statement requires the description of \emph{closed cells} for this CW structure, which can be constructed by a certain compactification of the (un)-stable manifolds of the  Morse function, which we describe in this section.  A careful treatment of a \emph{smooth} manifold-with-corners structure on the moduli space described in the subsequent definition is given in \cite{burghlea}, and a more recent, self-contained treatment can be found in \cite{wehrheim}. 
\begin{definition}
	Let $f$ be a Morse function on a closed manifold $L$ with a unique \emph{maximum} $m$, and let $x$ be a critical point of $f$. Choose a metric $g$ that is Morse-Smale with respect to $f$. A broken negative gradient trajectory $v$ of $f$ can naturally be thought of as a subset $\Gamma_\gamma$ of $L$. The \emph{blow-up of the \textbf{stable} manifold of $x$} is a space
	\[ \widehat{\M}(x) := \overline\M^M(m, x; f, g) \times [f(x), f(m)] / \sim \]
	where $\sim$ is the equivalence relation which identifies pairs $(v,t), (v', t')$ of a (possibly broken) negative gradient trajectories $v, v'$ of $f$ and number $t, t'$ of $f$ if $t = t'$ and $\Gamma_v \cap f^{-1}((-\infty, t)) = \Gamma_{v'} \cap f^{-1}((-\infty, t)).$
\end{definition}
\begin{remark}
	\label{remark:stable-vs-unstable}
	In \cite{barraud-cornea}, the Morse function $f$ are required to have unique \emph{minima}, and the \emph{blow-up of the \textbf{unstable} manifold} is defined up instead (see Section 2.4.6 of \cite{barraud-cornea}). Similarly, in \cite{burghlea}, Definition 5, the ``completed unstable manifold'' is defined and is proven to be a smooth manifold with corners in Theorem 1 of \cite{burghlea}.  However, in our conventions for Morse theory, the Morse differential \emph{increases} the value of the Morse function at a critical point. There is no analytical difference between the two conventions.
\end{remark}

For the remainder of this section we suppress the background choice of Morse-Smale pair $f, g$. The evaluation map
\[ \overline{\M}^M(m, x) \times [f(x), f(m)] \to M \]
\[ (v, t) \mapsto \Gamma_v \cap f^{-1}(t)\]
evaluating a broken Morse trajectory at a level set of $f$ then factors through $\widehat{\M}(x)$, giving a map 
\begin{equation}
\label{eq:structure-map-for-blown-up-manifolds}
\lambda_x: \widehat{\M}(x) \to L. 
\end{equation}
Up to the modification of conventions in Remark \ref{remark:stable-vs-unstable}, it is proven in  \cite{barraud-cornea} that 
\begin{lemma}[\cite{barraud-cornea}, Lemma 2.15]
	The space $\widehat{\M}(x)$ is a topological disk of dimension $\dim L - \ind x$. Its boundary has a decomposition
	\[ \widehat{\M}(x) = \cup_{y \in Crit(f)}  \widehat{\M}(y) \times \overline{\M}^M(y, x), \]
	and the map $\lambda_x$ restricted to $\widehat{\M}(y) \times \overline{\M}^M(y, x)$ is equal to projection to $\widehat{\M}(y)$ followed by $\lambda_y$.
	
	In particular, the maps $\{\lambda_x\}_{x \in Crit(f)}$ give a CW decomposition of $L$. 
\end{lemma}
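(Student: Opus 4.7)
The plan is induction on critical points of $f$, ordered by decreasing Morse value, with base case $x = m$: here $\overline{\M}^M(m,m)$ and $[f(m), f(m)]$ are both single points, so $\widehat{\M}(m) = \{m\}$ is a $0$-disk attached to $L$ via $\{m\} \hookrightarrow L$. For the inductive step I take as input the manifold-with-corners structure for compactified Morse moduli spaces \cite{burghlea, wehrheim}, which gives $Z_x := \overline{\M}^M(m,x) \times [f(x), f(m)]$ the structure of a topological manifold with corners of dimension $(\dim L - \ind x - 1) + 1 = \dim L - \ind x$, with corner stratification coming from the product of the breaking stratification on $\overline{\M}^M(m,x)$ with the endpoint stratification on the interval.

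Next I analyze the quotient $\widehat{\M}(x) = Z_x / \sim$ stratum by stratum. The bottom face $\overline{\M}^M(m,x) \times \{f(x)\}$ collapses entirely to a single point, since $\Gamma_v \cap f^{-1}((-\infty, f(x))) = \emptyset$ for every broken trajectory $v$ from $m$ to $x$. The top face $\overline{\M}^M(m,x) \times \{f(m)\}$ does not collapse: the condition reduces to $\Gamma_v \setminus \{m\} = \Gamma_{v'} \setminus \{m\}$, and hence $v = v'$. On each broken face $\overline{\M}^M(m,y) \times \overline{\M}^M(y,x) \times [f(x), f(m)]$, a case analysis in $t$ relative to $f(y)$ shows that the quotient is precisely $\widehat{\M}(y) \times \overline{\M}^M(y, x)$, via the map $(v_1, v_2, t) \mapsto ([v_1, \max(t, f(y))], v_2)$ (when $t < f(y)$ the factor $\overline{\M}^M(m,y)$ collapses to the cone point of $\widehat{\M}(y)$; when $t \geq f(y)$ the factor $v_2$ is untouched). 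This yields the claimed boundary decomposition $\partial \widehat{\M}(x) = \bigcup_y \widehat{\M}(y) \times \overline{\M}^M(y, x)$, and the stated compatibility of $\lambda_x$ with projection to $\widehat{\M}(y)$ followed by $\lambda_y$ is immediate from tracking representatives.

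To finish I must show $\widehat{\M}(x)$ is a topological $n$-disk, with $n = \dim L - \ind x$. I verify: (i) $\widehat{\M}(x)$ is a compact topological $n$-manifold with boundary; (ii) $\widehat{\M}(x)$ is contractible, via the explicit deformation retraction $([v,t], s) \mapsto [v, (1-s)t + s f(x)]$ (well-defined on the quotient because shrinking $t$ only enlarges the set of identifications), which pulls every class to the cone point; (iii) by induction and the boundary decomposition, $\partial \widehat{\M}(x)$ is a topological $(n-1)$-sphere, assembled from the faces $\widehat{\M}(y) \times \overline{\M}^M(y,x)$ glued along their shared sub-boundaries as dictated by the inductive boundary formula. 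A compact contractible topological $n$-manifold with $(n-1)$-sphere boundary is homeomorphic to $D^n$ (by doubling and the topological Poincar\'e conjecture together with Brown's collar theorem); alternatively, a direct homeomorphism is constructed using Morse normal-form charts along the gradient flow, in the style of \cite{burghlea}. The CW decomposition of $L$ then follows formally: $\lambda_x$ restricts to a homeomorphism from the interior of $\widehat{\M}(x)$ onto the open stable cell of $x$; these cells disjointly exhaust $L$ since every point lies on a unique negative gradient trajectory descending from $m$; and the attaching maps land in lower-dimensional cells by the decomposition.

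The main obstacle is (i): verifying that the quotient $Z_x / \sim$ is an honest topological manifold with boundary at the deepest corners of $Z_x$, where the bottom-face collapse interacts with identifications across \emph{several} broken strata at once (e.g.\ near classes represented by maximally broken trajectories through intermediate critical points $y_1, \ldots, y_{k-1}$, at level values $t$ near one of the $f(y_i)$). The essential analytic input is gluing theory for broken Morse trajectories together with the Morse normal form, which together imply that the $t$-coordinate on $[f(x), f(m)]$ serves as a length parameter that straightens these corners upon passage to the quotient. This is precisely the substance of the treatments in \cite{burghlea} and \cite{wehrheim}; once their manifold-with-corners structure for the blow-up is in hand, the remainder of the argument is formal topology.
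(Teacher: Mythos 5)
You should first note that the paper does not actually prove this statement: it is quoted verbatim from Barraud--Cornea (Lemma~2.15), with the manifold-with-corners structure of $\widehat{\M}(x)$ deferred to \cite{burghlea} and \cite{wehrheim}. So you are supplying a proof where the paper supplies a citation, and the question is whether your argument stands on its own. It does not: there is a genuine error in your identification of the boundary faces. You assert that the quotient of the broken face $\overline{\M}^M(m,y)\times\overline{\M}^M(y,x)\times[f(x),f(m)]$ is $\widehat{\M}(y)\times\overline{\M}^M(y,x)$ via $(v_1,v_2,t)\mapsto([v_1,\max(t,f(y))],v_2)$. For $t<f(y)$ the equivalence class of $((v_1,v_2),t)$ is determined by $\Gamma_{v_2}\cap f^{-1}((-\infty,t))$ together with $t$: it forgets $v_1$, but it \emph{also} forgets the part of $v_2$ above level $t$ and \emph{remembers} $t$. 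Consequently your map does not even descend to the quotient when $v_2$ is itself broken above level $t$ (identified representatives are sent to distinct points), and when $v_2$ is unbroken the classes with $t<f(y)$ are glued to the corresponding classes for every other $v_1'\in\overline{\M}^M(m,y)$ and become \emph{interior} points of $\widehat{\M}(x)$: the half-disk charts of the adjacent top strata of $\overline{\M}^M(m,x)\times[f(x),f(m)]$ glue along exactly these loci to full disks. The correct statement is that $\widehat{\M}(y)\times\overline{\M}^M(y,x)$ embeds into $\partial\widehat{\M}(x)$ by $([w,s],u)\mapsto[(\text{concatenation of }w\text{ and }u),\,s]$ with $s\in[f(y),f(m)]$ \emph{only}; the part of the broken face below level $f(y)$ lands in the interior or in deeper faces indexed by critical points of smaller value. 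Your parametrization in fact contradicts the compatibility $\lambda_x|_{\widehat{\M}(y)\times\overline{\M}^M(y,x)}=\lambda_y\circ\mathrm{pr}_1$ that you call immediate: for $t<f(y)$ one has $\lambda_x([(v_1,v_2),t])=\Gamma_{v_2}\cap f^{-1}(t)$, a point strictly below level $f(y)$, whereas $\lambda_y$ of the cone point is $y$. Note also that the union in the lemma must include $y=m$, whose face is the uncollapsed top face $\overline{\M}^M(m,x)\times\{f(m)\}$; your broken-face analysis does not produce it.

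This misidentification is not cosmetic: your step (iii), assembling $\partial\widehat{\M}(x)$ into an $(n-1)$-sphere from the inductively known faces, depends on knowing precisely which faces occur and along which sub-faces they overlap (namely $\widehat{\M}(y')\times\overline{\M}^M(y',y)\times\overline{\M}^M(y,x)$ for $f(y')>f(y)$), so the sphere claim is unsupported as written. Your contractibility argument in (ii) is correct ($t\mapsto(1-s)t+sf(x)$ only enlarges equivalence classes), and your honest acknowledgement that (i) -- the local manifold-with-boundary structure at deep corners -- requires the gluing analysis of \cite{burghlea} and \cite{wehrheim} is fair; but since (i) and (iii) carry the entire content of ``$\widehat{\M}(x)$ is a disk with the stated boundary decomposition,'' the proposal does not yet constitute a proof, and the appeal to the topological Poincar\'e conjecture plus Schoenflies is both heavier than and downstream of the part that is missing. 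A corrected argument should work with the level-$f(y)$ truncated faces $\{s\geq f(y)\}$ from the start, or follow the cited sources in building the disk structure directly from the flow.
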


For any critical point $x$ of $f$, let  $o(T^+_x)$ be the orientation line of the positive eigenspace of the Hessian of $f$ at $x$; this agrees with the orientation line of $\overline{\M}(x)$, which compactifies the stable manifold of $x$.   Because $L$ is oriented, there is an isomorphism
\begin{equation}
\label{eq:orientation_line_and_dual}
o(T^+_x) \simeq o(T^-_x)^\check
\end{equation}
 for every critical point $p$, and we will use $o(T^-_x)^\check$ to orient $\widehat{\M}(x)$. Thus, for every pair $x, y \in Crit(f)$, there is an isomorphism of orientation lines
\begin{equation}
\label{eq:compactified-morse-orientations}
o(T^+_y) \tensor \Z^\check_{\partial/\partial s} \tensor o(T^-_y) \tensor o(T^-_x)^\check \simeq o(T^+_x)
\end{equation} 
where one multiplies by $(-1)$, commutes the $\Z^\check_{\partial/\partial s}$ to the left hand side and removes it via a grading-shifting isomorphism, and then applies (\ref{eq:orientation_line_and_dual}) twice and cancels the two orientation lines at $y$. (The multiplication by $(-1)$ comes from the fact that this is a moduli space of downwards Morse flows, but we orient it with a vector that points up the flow.) Define the map
\begin{multline}
\label{eq:ez-final}
\overline{EZ}: \left(C_*(\widehat{\M}(y), \Z) \tensor o(T^+_y)\right) \tensor 
\left(C_*(\overline{\M}^M(y, x), \Z) \tensor \Z^\check_{\partial/\partial s} \tensor o(T^-_y) \tensor o(T^-_x)^\check) \right)  \\\to  C_*(\widehat{\M}(x), \Z) \tensor o(T^+_x) 
\end{multline}
as the tensor product of the Eilenberg-Zilber map on the singular chain complexes by map of singular chains induced by the inclusion 
\begin{equation}
\label{eq:stable-mfld-boundary-component}
\widehat{\M}(y) \times \overline{\M}^M(y, x) \to \partial\widehat{\M}(x) \to \widehat{\M}(x)
\end{equation}
and the isomorphism (\ref{eq:compactified-morse-orientations}) on orientation lines. Orientation theory for Morse moduli spaces, reviewed in Appendix \ref{sec:morse-theory-orientations}, gives an identification of the two tensor factors in the domain of $\overline{EZ}$ with the complexes of singular chains on $\widehat{\M}(y)$ and $\overline{\M}^M(y, x)$ with coefficients in the orientation lines of the respective moduli spaces. Moreover, due to the multiplication by $(-1)$, the map $\overline{EZ}$ is the same map, under this identification, as the composition of the Kunneth map on orientation lines followed by the product and boundary isomorphisms of orientation lines induced by the inclusion into the boundary   (\ref{eq:stable-mfld-boundary-component}).

Thus, we are in the setting of Section \ref{sec:compatible-fundamental-cycles}, and the argument of that section shows that
\begin{lemma}
	Given a compatible system of fundamental chains for the moduli spaces $\overline{\M}^M(x,y))$ in the sense of Section \ref{sec:compatible-fundamental-cycles}, we can extend this system to a compatible system of fundamental chains $[\widehat{\M}(x)] \in C_*(\widehat{\M}(x), \Z) \tensor o(T^+_x)$, in the sense that these chains satisfy the identity 
	\begin{equation}
	\label{eq:unstable_manifold_eq}
	\partial [\widehat{\M}(x)] = \sum_{y} \overline{EZ}([\widehat{\M}(y)] \tensor [\overline{\M}^M(y, x)])
	\end{equation} 
	where $\overline{EZ}$ is the map defined in (\ref{eq:ez-final}).
\end{lemma}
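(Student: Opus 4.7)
The plan is to mirror the argument of Section \ref{sec:compatible-fundamental-cycles} verbatim, using induction on the index of the critical point $x$ (equivalently, induction on the dimension of $\widehat{\M}(x)$). In the base case, $x = m$ is the unique maximum, and $\widehat{\M}(m)$ is a point with empty boundary; the canonical fundamental chain of a point (together with the chosen orientation of $o(T^+_m)$) serves as $[\widehat{\M}(m)]$, and the identity (\ref{eq:unstable_manifold_eq}) is vacuous since there are no critical points $y$ with $f(y) > f(m)$ and no Morse trajectories entering $m$ from above.

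For the inductive step, fix $x \in Crit(f)$ and suppose that $[\widehat{\M}(y)]$ has been chosen for every $y$ with $\mathrm{ind}(y) < \mathrm{ind}(x)$ (equivalently $\dim \widehat{\M}(y) < \dim \widehat{\M}(x)$) so that (\ref{eq:unstable_manifold_eq}) holds for these $y$. I would then consider the candidate boundary cycle
\[ c_x := \sum_{y \in Crit(f)} \overline{EZ}\bigl([\widehat{\M}(y)] \tensor [\overline{\M}^M(y,x)]\bigr) \in C_*(\partial\widehat{\M}(x), \Z) \tensor o(T^+_x) \]
and verify that $\partial c_x = 0$ by applying the Leibniz rule, using the inductive hypothesis on the $[\widehat{\M}(y)]$ together with the previously chosen compatibility (\ref{eq:ez-1}) for the Morse moduli spaces $\overline{\M}^M(y,x)$. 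The cancellations occur exactly as in Section \ref{sec:compatible-fundamental-cycles}: the sign $(-1)^{\dim \widehat{\M}(y) + 1}$ produced by the analog of diagram (\ref{eq:boundary-map-commutativity}) for the recursive boundary decomposition of $\widehat{\M}(x)$ cancels the Koszul sign coming from the Leibniz rule, because, by the paper's explicit identification, $\overline{EZ}$ is the Eilenberg--Zilber map composed with the standard product and boundary isomorphisms of orientation lines (the extra minus sign in (\ref{eq:compactified-morse-orientations}) having been absorbed into the chosen orientation).

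Once $\partial c_x = 0$ is established, I would show that $c_x$ represents the fundamental class of $\partial\widehat{\M}(x)$ with coefficients in $o(T^+_x)$. This holds on the nose: on each top-dimensional boundary stratum $\widehat{\M}(y) \times \overline{\M}^M(y,x)$ with $\mathrm{ind}(y) = \mathrm{ind}(x) + 1$, the chain $\overline{EZ}([\widehat{\M}(y)] \tensor [\overline{\M}^M(y,x)])$ is the Eilenberg--Zilber product of the respective fundamental classes, and so represents the fundamental class of that stratum with its boundary-induced orientation; on strata of larger codimension it contributes in a compatible way, since $\widehat{\M}(x)$ is a topological manifold with corners. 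Then, because $\widehat{\M}(x)$ is a closed disk (hence its relative homology pair $(\widehat{\M}(x), \partial\widehat{\M}(x))$ has $H_{\dim \widehat{\M}(x)}$ generated by the fundamental class), the short exact sequence of the pair gives a fundamental chain $[\widehat{\M}(x)] \in C_*(\widehat{\M}(x), \Z) \tensor o(T^+_x)$ with $\partial [\widehat{\M}(x)] = c_x$, which is exactly (\ref{eq:unstable_manifold_eq}).

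The main obstacle I expect is bookkeeping the orientation-line signs so that the Leibniz cancellations really do go through as advertised; this is the point at which the multiplication by $(-1)$ in (\ref{eq:compactified-morse-orientations}) (coming from the mismatch between orienting by a downwards flow vector versus the chosen upwards orientation) must be tracked carefully. The paper has already done most of this bookkeeping by explicitly identifying $\overline{EZ}$ with the composition of Kunneth with the canonical boundary/product isomorphisms, so the remainder is routine and parallels the Floer case almost verbatim; I would present it briefly by invoking the analogous argument from Section \ref{sec:compatible-fundamental-cycles} rather than reproducing the sign computation in full.
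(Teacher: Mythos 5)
Your proposal is correct and follows essentially the same route as the paper, which simply observes that the boundary decomposition of $\widehat{\M}(x)$ together with the identification of $\overline{EZ}$ with the Kunneth/boundary isomorphisms places one in the setting of Section \ref{sec:compatible-fundamental-cycles} and invokes that inductive argument verbatim. One small slip: in your inductive hypothesis the inequality should read $\mathrm{ind}(y) > \mathrm{ind}(x)$ (the $y$ appearing in $\partial\widehat{\M}(x)$ have \emph{higher} index, hence \emph{lower}-dimensional $\widehat{\M}(y)$), consistent with your stated induction on $\dim\widehat{\M}(x)$.
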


\subsection{A convenient model for the diagonal bimodule}
Write $\P_{b,*} L$ for the space of Moore paths with starting point at $b \in L$, and write $\P_{*, b} L$ for the space of Moore paths with endpoint at $b$: namely, 
\[ \P_{b,*} := \bigcup_{x \in L} \P_{b, x}, \]
\[\P_{*, b} := \bigcup_{x \in L} \P_{x, b},\]
topologized with subspace topology under the inclusion into $C^0([0, \infty), L)$. We will call the point $x$ in the above disjoint union the \emph{marked point} of a Moore path in $\P_{b, x}$ or $\P_{x, b}$.

There are evaluation maps 
\begin{equation}
ev: \P_{b, *} L \to L, \text{ and } ev: \P_{*, b} L \to L,
\end{equation} 
sending a Moore path to its marked point, 
 and there is a fibration $\pi: \P_{b, *} L \times_L \P_{*,b} L \to L$ given by the fiber product of the evaluation maps. 
 This the total space of this fibration is the domain of a map $\epsilon:  \P_{b, *} L \times_L \P_{*,b} \to \P_{b,b}L$ which fiberwise is the concatenation of paths at their marked points, and by ``splitting a Moore path at its midpoint'', one can construct a homotopy inverse, proving the
 \begin{lemma}
 	The map $\epsilon$ is a weak equivalence. 
 \end{lemma}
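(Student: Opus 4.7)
The plan is to write down the homotopy inverse $\sigma$ explicitly and then exhibit the only nontrivial homotopy: the one showing $\sigma \circ \epsilon \simeq \mathrm{id}$. Define $\sigma : \P_{b,b}L \to \P_{b,*}L \times_L \P_{*,b}L$ by sending a Moore loop $\gamma : [0, r] \to L$ of length $r$ to the pair $(\alpha_\gamma, \beta_\gamma)$, where $\alpha_\gamma : [0, r/2] \to L$ is the restriction $\alpha_\gamma(t) = \gamma(t)$ and $\beta_\gamma : [0, r/2] \to L$ is the shifted restriction $\beta_\gamma(t) = \gamma(t + r/2)$. Both have $b$ at the appropriate end and share marked point $\gamma(r/2)$, so $(\alpha_\gamma, \beta_\gamma)$ lies in the fiber product. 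Continuity of $\sigma$ is clear from the definition of the topology on Moore path spaces as a subspace of $C^0([0,\infty), L)$.

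Next, $\epsilon \circ \sigma = \mathrm{id}_{\P_{b,b}L}$ holds on the nose: the concatenation of $\alpha_\gamma$ and $\beta_\gamma$ at their marked point is, by construction, precisely $\gamma$ parametrized on $[0, r]$. So the only remaining task is to build a homotopy from $\sigma \circ \epsilon$ to the identity on $\P_{b,*}L \times_L \P_{*,b}L$. Given $(\alpha, \beta)$ with $\alpha$ of length $r_1$ and $\beta$ of length $r_2$, the loop $\epsilon(\alpha, \beta)$ has length $r_1 + r_2$ and $\sigma \circ \epsilon$ re-splits it at the midpoint $(r_1+r_2)/2$, which generally differs from the original cut point $r_1$. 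The natural homotopy moves the cut point linearly: for $s \in [0, 1]$, let the splitting parameter be $\tau(s) = (1-s) r_1 + s (r_1+r_2)/2$, and define $H_s(\alpha, \beta)$ to be the pair obtained by splitting $\epsilon(\alpha, \beta)$ at $\tau(s)$ and rescaling the pieces to Moore paths of lengths $\tau(s)$ and $r_1 + r_2 - \tau(s)$ in the obvious way.

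At $s = 0$ we recover $(\alpha, \beta)$, and at $s = 1$ we recover $\sigma \circ \epsilon(\alpha, \beta)$. Continuity of $H$ as a map $[0,1] \times \P_{b,*}L \times_L \P_{*,b}L \to \P_{b,*}L \times_L \P_{*,b}L$ again follows from the fact that the topology on Moore path spaces is induced from $C^0([0,\infty), L)$ and that the cut-and-reparametrize operation is continuous in $(s, \alpha, \beta)$ jointly. Since we have produced a strict section of $\epsilon$ together with a homotopy exhibiting it as a homotopy inverse, $\epsilon$ is a homotopy equivalence, hence in particular a weak equivalence.

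The only genuine subtlety worth double-checking is continuity of $H$ at points where $r_1$ or $r_2$ degenerates (e.g.\ one of the paths has length $0$); this is harmless because the Moore path formalism allows length-zero constant paths, and the cut-and-rescale map extends continuously to such boundary cases. No single step is hard: the main work is simply pinning down notation for the split-and-rescale operation carefully enough that the homotopy $H$ is manifestly continuous.
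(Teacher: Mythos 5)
Your proof is correct and is exactly the paper's (sketched) argument: the paper constructs the homotopy inverse "by splitting a Moore path at its midpoint," and you have simply filled in the details, including the strict identity $\epsilon\circ\sigma=\mathrm{id}$ and the sliding-cut-point homotopy for $\sigma\circ\epsilon\simeq\mathrm{id}$. One cosmetic remark: no rescaling is actually needed when you split at $\tau(s)$, since the two pieces of a Moore path split at parameter $\tau(s)$ already have lengths $\tau(s)$ and $r_1+r_2-\tau(s)$.
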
 

 Write $\mathfrak{p}$ for the pullback by $\epsilon$ the local system $\mathfrak{p}_{x,x}$ over $\P_{b,b}L$ defined in Section \ref{sec:twisted-fundamental-group}. 
We can then consider the chain complex $C_*(\P_{b, *} L \times_L \P_{*, b} L; \mathfrak{p})$. This is an $(A_L, A_L)$-bimodule, via the map on singular chain complexes induced by concatenation of paths at $b$ \[\P_{b, b}L \times \left(\P_{b, *} L \times_L \P_{*, b} L \right) \times \P_{b, b}L \to \P_{b, *} L \times_L \P_{*, b} L\]
together with concatenation of Pin structures over the paths. The map $\epsilon$ is a map of topological monoids which is an equivalence of underlying spaces, and since $\mathfrak{p}$ is pulled back from $\mathfrak{p}_{x,x}$ in a way that is compatible with the isomorphisms covering the left and right topological monoid structures of the underlying spaces over $\P_{b,b}L$, this proves the
\begin{lemma}
	The map 
	\[ \epsilon_*: C_*(\P_{b,*} L \times_L \P_{*,b} L, \mathfrak{p}) \to C_*(\P_{b,b}L, \mathfrak{p}_{b,b}) \]
	is a quasi-isomorphism of bimodules from its domain to the diagonal bimodule of $A_L$. $\qed$
\end{lemma}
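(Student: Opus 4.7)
The plan is to upgrade the previous lemma (that $\epsilon$ is a weak equivalence of topological spaces) to a statement about chain complexes with twisted coefficients, and then to check that the resulting quasi-isomorphism is strictly compatible with the $(A_L, A_L)$-bimodule structures on both sides.

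First I would record the fact that $\epsilon$ is a strict map of topological bimodule objects over the Moore-path monoid $\P_{b,b}L$. The left action on the fiber product sends $\alpha \cdot (\gamma_1, \gamma_2)$ to $(\alpha \cdot \gamma_1, \gamma_2)$, where $\cdot$ is concatenation at $b$, and the corresponding action on $\P_{b,b}L$ is $\alpha \cdot \beta = \alpha \cdot \beta$ by the same operation. Applying $\epsilon$ after the left action yields $(\alpha \cdot \gamma_1) \cdot \gamma_2$ (concatenation at the marked point of $\gamma_1$), which by strict associativity of Moore path concatenation equals $\alpha \cdot (\gamma_1 \cdot \gamma_2) = \alpha \cdot \epsilon(\gamma_1,\gamma_2)$. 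The analogous identity for the right action is symmetric. Thus $\epsilon$ intertwines the actions on the nose.

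Next I would check the compatibility of the local systems with these actions. By definition $\mathfrak{p} = \epsilon^* \mathfrak{p}_{b,b}$, so the $\epsilon_*$ map on chains with twisted coefficients is defined in the standard way. The bimodule structure on $C_*(\P_{b,b}L, \mathfrak{p}_{b,b})$ is induced by gluing Pin structures relative to the ends, and the corresponding structure on $C_*(\P_{b,*}L \times_L \P_{*,b}L, \mathfrak{p})$ pulls back this gluing. Associativity of Pin-structure concatenation from Appendix \ref{sec:gluing_pin_structures_relative_to_ends} ensures that the diagram comparing ``concatenate Pin structures on the source, then push forward by $\epsilon$'' with ``push forward by $\epsilon$, then concatenate Pin structures on the target'' commutes strictly, so $\epsilon_*$ is a bimodule map.

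Finally I would invoke the standard fact that a weak equivalence $f: X \to Y$ of topological spaces induces a quasi-isomorphism $f_*: C_*(X, f^*\mathcal{F}) \to C_*(Y, \mathcal{F})$ for any $\Z$-local system $\mathcal{F}$ on $Y$. This is most efficiently proved either by pulling back along universal covers to reduce to the case of trivial coefficients (where it follows from the Whitehead theorem and the definition of weak equivalence), or by a Serre spectral sequence argument applied to the fiber bundle classifying $\mathcal{F}$. Combining this with the preceding paragraphs gives the required quasi-isomorphism of $(A_L, A_L)$-bimodules.

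The main potential obstacle is not analytic but bookkeeping: one must ensure that the pullback identification $\mathfrak{p} = \epsilon^*\mathfrak{p}_{b,b}$ is compatible with both the left and the right actions \emph{simultaneously}, which amounts to the coherence of iterated gluing of Pin structures at $b$ and at the marked point. The associativity statement in the appendix is exactly what is needed, so no further homotopical argument is required and $\epsilon_*$ is a genuine quasi-isomorphism of bimodules, not merely one up to some additional homotopy.
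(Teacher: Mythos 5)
Your argument is correct and follows the same route as the paper: the paper likewise observes that $\epsilon$ is a weak equivalence intertwining the left and right concatenation actions, that $\mathfrak{p}$ is pulled back compatibly with those actions and with the gluing of Pin structures, and then concludes that the induced map on singular chains with local coefficients is a quasi-isomorphism of bimodules. You have simply spelled out the details (strict associativity of Moore concatenation, coherence of Pin-structure gluing, and the standard local-coefficient invariance of weak equivalences) that the paper leaves implicit.
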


We will view this quasi-isomorphism of bimodules as a map of right modules over $A_L^{op} \tensor A_L$ via the equivalence between bimodules and right modules described in Section \ref{sec:bimodule-structure}.
\subsection{The computation, continued}

Now, by definition, every point $r \in \widehat{\M}(x)$ corresponds to a Moore path $\gamma_r \in \P_{x, *}L$, and structure maps $\lambda_x$ factor through the map $r \mapsto \gamma_r$ (see Equation \ref{eq:structure-map-for-blown-up-manifolds}). Writing $i^*\gamma_r$ with for the inverse of the Moore path $\gamma_r$, we get a map 
\begin{equation}
\widehat{\M}(x) \ni r \mapsto \bar{\gamma_r} := (\gamma_r, i^*\gamma_r) \in \P_{x, *} L \times_L \P_{*, x} L.
\end{equation}
 As in the definition of the functor $\Delta$ (Def. \ref{def:delta-functor}), there is a canonical section of $\p$ over $\bar{\gamma_r}$, and as in Section \ref{sec:morse-theory-fundamental-classes} we can use this to define a chain map

 \begin{equation}\phi: C_*(\widehat{\M}(x)) \tensor o(T_p^+) \to C_*(\P_{x, *} L \times_L \P_{*, x}; \mathfrak{p}) \tensor o(T_p^+).
 \end{equation} 

We now define a map 
\begin{equation}
\mathcal{F}: CM_*(\Omega L; f, g) \to C_*(\P_{b, *} L \times_L \P_{*, b} L, \mathfrak{p}).
\end{equation}

Let $p$ be a critical point of $f$. Consider an element 
\begin{equation}\gamma \in \Z^\check_p \tensor o(T^-_p) \tensor C_*(\P L, \p)(p, b) \subset CM_*(\Omega L; f, g).
\end{equation} We define 
\begin{equation}\mathcal{F}(\gamma) := (-1)^nE ( \phi_*[\M(p)] *_1 \gamma)
\end{equation} 
where $*_2$ is defined analogously as in Equation \ref{eq:second-operation}: one commutes the trivial line that is part of $\gamma$ through the line $o(T_p^-)^\check $, which has degree $\text{ind }p$, and applies the concatenation operation
\begin{equation}
C_*(\P_{p, *} L \times_L \P_{*, p}; \mathfrak{p}) \tensor C_*(\P L, \p)(p, b) \to  C_*(\P_{b, *} L \times_L \P_{*, b} L, \mathfrak{p}) 
\end{equation} 
pairs off the $o(T_p^-)$ and $o(T_p^+)$ using (\ref{eq:orientation_line_and_dual}), and removes the extraneous trivial line. 

Note that the map $\mathcal{F}$ cannot  (yet!) be a chain map because it does not even preserve degree. However, as in Section \ref{sec:bimodule-structure}, the map $\mathcal{F}$ commutes with the right-module structure on $CM_*(\Omega L; f, g)$ because of Lemma \ref{lemma:left-compostion-is-module-map}. We now show that $\mathcal{F}$ commutes with the differentials up to a global sign
\begin{lemma}
	The map $\mathcal{F}$ satisfies
	\[ \mathcal{F}d = (-1)^{n} d \mathcal{F}.\]
\end{lemma}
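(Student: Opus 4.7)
The plan is to apply the Leibniz rule to $\partial\mathcal{F}(\gamma)$ and match the resulting terms against $\mathcal{F}(d\gamma)$, exploiting the compatibility identity (\ref{eq:unstable_manifold_eq}) for the fundamental chain of $\widehat{\M}(p)$. Since $\mathcal{F}(\gamma) = (-1)^n \overline{EZ}(\phi_*[\widehat{\M}(p)] *_1 \gamma)$ is built out of $[\widehat{\M}(p)]$ paired with the singular chain underlying $\gamma$, Leibniz produces two pieces: one in which $\partial$ acts on $\phi_*[\widehat{\M}(p)]$, the other in which $\partial$ acts on $\gamma$. These pieces will respectively match the Morse-trajectory terms and the $\partial\gamma$ term of the Morse differential (\ref{eq:morse-complex-differential}).

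For the piece in which $\partial$ acts on $\gamma$, commuting $\partial$ past $\phi_*[\widehat{\M}(p)]$ produces a Koszul sign of $(-1)^{\dim L - \ind p}$, which combines with the $(-1)^{|p|}$ prefactor of the Morse differential to give the expected match, modulo the global factor $(-1)^n$. For the Morse-trajectory piece, we use that $\phi_*$ is a chain map, so $\partial \phi_*[\widehat{\M}(p)] = \phi_*\partial[\widehat{\M}(p)]$, and then invoke (\ref{eq:unstable_manifold_eq}) to rewrite this as a sum $\sum_y \phi_* \overline{EZ}([\widehat{\M}(y)]\tensor[\overline{\M}^M(y,p)])$. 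The structural observation is that the map $\phi$ sends a point of the boundary face $\widehat{\M}(y)\times\overline{\M}^M(y,p)\subset\partial\widehat{\M}(p)$ to the concatenation of the pair-of-paths associated to its $\widehat{\M}(y)$ coordinate with the (inverse of the) Morse trajectory in $\overline{\M}^M(y,p)$; combined with the associativity of path concatenation and of the gluing of Pin structures relative to the ends (Appendix \ref{sec:gluing_pin_structures_relative_to_ends}), this identifies $\phi_*\overline{EZ}([\widehat{\M}(y)]\tensor[\overline{\M}^M(y,p)])$ with the composition of $\phi_*[\widehat{\M}(y)]$ and $\phi_*[\overline{\M}^M(y,p)]$ in $C_*(\P L,\p)$. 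Applying $*_1\gamma$ then yields $\mathcal{F}\bigl(\phi_*[\overline{\M}^M(y,p)]*_1\gamma\bigr)$ up to the global $(-1)^n$, and summing over $y\in\mathrm{Crit}(f)$ reproduces the trajectory part of $\mathcal{F}(d\gamma)$.

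The main obstacle will be the sign bookkeeping, which must reconcile several independent sources of signs: (i) the $(-1)$ built into the orientation identification (\ref{eq:compactified-morse-orientations}) that makes $\overline{EZ}$ agree with the composition of the Kunneth, product, and boundary isomorphisms of orientation lines; (ii) the $(-1)^{|q|}$ factors in the Morse differential (\ref{eq:morse-complex-differential}); (iii) Koszul signs from commuting the singular boundary $\partial$ through graded orientation lines and trivial lines $\Z_p^\check$; and (iv) the signs implicit in the grading-shifting isomorphisms used in the definition of $*_1$. Fortunately, because $\overline{EZ}$ has been arranged (via the sign in (\ref{eq:compactified-morse-orientations})) to be exactly the composition of standard orientation-line maps, this sign computation is structurally identical to the argument used in Lemma \ref{lemma:d2-zero-simple} and in the proof that $d^2=0$ for the Floer complex, and the same bookkeeping carries over essentially verbatim. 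The global factor $(-1)^n$ that survives in the statement reflects only the overall parity shift introduced by the prefactor in the definition of $\mathcal{F}$, and a subsequent section is presumably intended to absorb it into a genuine chain map after an appropriate grading shift.
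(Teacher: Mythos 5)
Your proposal follows essentially the same route as the paper's proof: split the Leibniz expansion of $\partial\mathcal{F}(\gamma)$ into the $\partial\gamma$ piece and the boundary-of-$[\widehat{\M}(p)]$ piece, match the first via the Koszul sign $(-1)^{n-\ind p}$ against the $(-1)^{|p|}$ in (\ref{eq:morse-complex-differential}), and match the second via (\ref{eq:unstable_manifold_eq}) together with the $(-1)$ built into (\ref{eq:compactified-morse-orientations}) and the line-commutation signs. The paper's argument is exactly this two-term sign check (stated even more tersely), so your write-up is a correct and slightly more explicit version of the same proof.
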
 
\begin{proof}
The dimension of $\widehat{\M}(p)$ is $n - \ind p$. Thus when we commute the boundary operator on $C_*(\P_{b, *} L \times_L \P_{*, b} L, \mathfrak{p})$ through a copy of $\phi_*(\widehat{\M}(p))$ we pick up a sign of $(-1)^{n -\ind p}$, which together with the sign in the first term on the differential (\ref{eq:morse-complex-differential})on $CM_*(\Omega L; f, g)$ gives a total sign of $(-1)^n$. Similarly, the $(-1)$ in the map (\ref{eq:compactified-morse-orientations}) together the commutation of the trivial line through $o(T^+_p) \tensor \Z^\check_{\partial/\partial s} \tensor o(T^-_p) \tensor o(T^-_q)^\check$, combined with the sign in second term in the differential on $CM_*(\Omega L; f, g)$, again give a total sign $(-1)^n$.


\end{proof}
Thus, after shifting the Morse complex, thought of as a $dg$-module, up by degree $n$, we get that $\mathcal{F}$ defines a map of $dg$-modules. 
\begin{lemma}
	The resulting map $\mathcal{F}$ is a quasi-isomorphism.
\end{lemma}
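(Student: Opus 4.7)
The plan is to run a spectral sequence argument using filtrations indexed by the Morse index of the critical point. I will arrange that $\mathcal{F}$ becomes a filtered map between bounded filtrations, and then show that the induced map on the associated graded is a quasi-isomorphism summand by summand via a Kunneth-type calculation.

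First I would define an increasing filtration on the source by
\[ G_k := \bigoplus_{n - \ind p \leq k} \Z^\check_p \tensor o(T^-_p) \tensor C_*(\P L, \p)(p, b); \]
this is a subcomplex because every nontrivial Morse moduli space $\overline{\M}^M(p, q)$ has $\ind p > \ind q$, so the Morse contribution to the differential only raises the index of the critical point and hence lowers $n - \ind p$. On the target, the maps $\lambda_p$ give a CW structure on $L$ with one $(n - \ind p)$-cell per critical point; I would filter the target by $F_k := C_*(\pi^{-1}(L^{(k)}); \mathfrak{p})$, where $L^{(k)}$ is the $k$-skeleton and $\pi : \P_{b,*}L \times_L \P_{*,b}L \to L$ is the evaluation fibration. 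Since $\phi_*[\widehat{\M}(p)]$ is by construction supported in $\pi^{-1}(\lambda_p(\widehat{\M}(p))) \subseteq \pi^{-1}(L^{(n - \ind p)})$, the map $\mathcal{F}$ sends $G_k$ into $F_k$.

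I would then analyze the induced map on the associated graded. The quotient $G_k/G_{k-1}$ is a direct sum over critical points $p$ with $n - \ind p = k$ of the summands $\Z^\check_p \tensor o(T^-_p) \tensor C_*(\P L, \p)(p, b)$, each with differential $\pm \partial$. On the other side, excision identifies $F_k/F_{k-1}$ with $\bigoplus_{n - \ind p = k} C_*(\pi^{-1}(\widehat{\M}(p)), \pi^{-1}(\partial \widehat{\M}(p)); \mathfrak{p})$. Since $\widehat{\M}(p)$ is a contractible topological disk of dimension $k$, the restriction of $\pi$ over it is trivializable; I would construct an explicit trivialization using the Moore paths $\gamma_r$ parametrizing $r \in \widehat{\M}(p)$, identifying the fiber over $\lambda_p(r)$ with the fiber over $p$ via concatenation with $\gamma_r$ and $i^* \gamma_r$. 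Combined with Eilenberg-Zilber, this would identify the target associated graded summand for $p$ with $C_*(D^k, S^{k-1}) \tensor C_*(\P L, \p)(p, b)$ and exhibit $\mathcal{F}$ there, up to sign, as the map $\gamma \mapsto [D^k, S^{k-1}] \tensor \gamma$; since the relative fundamental class generates the top homology, this is a quasi-isomorphism.

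Because both filtrations are bounded (indexed by $0 \leq k \leq n$), the two spectral sequences converge strongly and an isomorphism on $E_1$ implies that $\mathcal{F}$ is a quasi-isomorphism. The hardest step will be verifying that the trivialization of $\pi$ over each cell $\widehat{\M}(p)$ is compatible with the local system $\mathfrak{p}$ of Pin structures: this demands working with the canonical sections of $\mathfrak{p}$ produced by the functor $\Delta$ of Definition~\ref{def:delta-functor} when evaluating fundamental classes, and checking that the concatenation and gluing identities for Pin structures commute with the trivialization of the fibration. Once this bookkeeping is in place, the remainder is a standard spectral-sequence comparison.
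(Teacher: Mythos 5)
Your proposal is correct and follows essentially the same route as the paper: filter the source by $\dim L - \ind p$, filter the target by the CW filtration induced by the blown-up stable manifolds, observe that $\mathcal{F}$ is filtered, and compare $E_1$ pages of the resulting bounded spectral sequences. The paper's proof simply asserts the $E_1$ isomorphism, whereas you supply the excision/trivialization argument over each contractible cell $\widehat{\M}(p)$ that justifies it; this is a legitimate filling-in of detail rather than a different method.
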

\begin{proof}
  Equip $CM_*(\Omega L; f, g)$ with the filtration given by $\dim L - \ind x$ for $x \in Crit f$, and the target with the filtration coming from the filtration on $\P_b L \times_L \P_b L$ pulled back by the filtration on $C_*(L)$ induced by the CW filtration of $L$ coming from  the blown-up stable manifolds. With these filtrations, $\mathcal{F}$ is a map of filtered complexes. It is an isomorphism on the $E_1$ page, which on both sides is just 
  \begin{equation}
  \bigoplus_{x \in Crit f, q \geq 0} H_q(\P_{b, x}L \times \P_{x,b}L; \mathfrak{p} \tensor \mathfrak{f}) \M(x).
  \end{equation} So it is an equivalence. 
\end{proof}

Combining the three lemmata of this section, we have proved the 
\begin{proposition}
	\label{prop:convenient-model}
	When defined, the complex
	\[CM(\Omega L_0; f, g)[n]\]
	is quasi-isomorphic as a bimodule to the diagonal bimodule of $C_*(\Omega L_0, \p)$. 
\end{proposition}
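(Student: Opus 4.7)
The plan is to exhibit the quasi-isomorphism through an intermediate bimodule model that is easier to compare directly with Morse theory. First I would introduce a ``doubled path space'' $\P_{b,*} L \times_L \P_{*,b} L$, with the local system $\mathfrak{p}$ pulled back from $\mathfrak{p}_{b,b}$ on $\P_{b,b} L$ via the midpoint-concatenation map $\epsilon\colon \P_{b,*} L \times_L \P_{*,b} L \to \P_{b,b} L$. Splitting a Moore path at its midpoint provides a homotopy inverse, so $\epsilon$ is a weak equivalence. Moreover, $\epsilon$ intertwines the bimodule structures of concatenation at $b$, and it is compatible with the gluing of Pin structures relative to the ends that defines $\mathfrak{p}_{b,b}$. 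Therefore $\epsilon_*$ gives a quasi-isomorphism of $(A_L, A_L)$-bimodules from $C_*(\P_{b,*} L \times_L \P_{*,b} L, \mathfrak{p})$ to the diagonal bimodule of $A_L$.

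Next I would construct a comparison map
\[
\mathcal{F}\colon CM_*(\Omega L; f, g) \to C_*(\P_{b,*} L \times_L \P_{*,b} L, \mathfrak{p})
\]
using the blown-up stable manifolds $\widehat{\M}(x)$. Each point $r \in \widehat{\M}(x)$ produces a Moore path $\gamma_r \in \P_{x,*} L$ and its inverse, and using the canonical section of $\mathfrak{p}$ over such doubled paths (as in the functor $\Delta$ of Definition \ref{def:delta-functor}) I get a chain $\phi_*[\widehat{\M}(x)]$ in $C_*(\P_{x,*} L \times_L \P_{*,x} L, \mathfrak{p}) \tensor o(T_x^+)$. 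The map $\mathcal{F}$ sends a generator $\gamma \in \Z^\check_p \tensor o(T_p^-) \tensor C_*(\P L, \mathfrak{p})(p,b)$ to $(-1)^n \phi_*[\widehat{\M}(p)] *_1 \gamma$, where $*_1$ concatenates the $\P L$-component of $\gamma$ with the second factor of the doubled path, uses $o(T_p^+) \simeq o(T_p^-)^\check$ from the orientation of $L$ to kill the orientation lines, and absorbs the remaining trivial line. That $\mathcal{F}$ is a map of right $A_L^{\mathrm{op}} \tensor A_L$-modules follows from Lemma \ref{lemma:left-compostion-is-module-map}, and the identity $\mathcal{F} d = (-1)^n d \mathcal{F}$ is a bookkeeping computation matching the Koszul sign from moving $\partial$ past a chain of dimension $\dim L - \ind p$ against the sign conventions in Eq.~\eqref{eq:morse-complex-differential}, and matching the boundary decomposition Eq.~\eqref{eq:unstable_manifold_eq} of $\widehat{\M}(p)$ against the $\overline{EZ}$-sign of Eq.~\eqref{eq:compactified-morse-orientations}. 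Shifting by $n$ converts $\mathcal{F}$ into a bimodule chain map.

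The main obstacle is showing $\mathcal{F}[n]$ is a quasi-isomorphism, for which I would use a filtration argument. Filter the Morse side by $\dim L - \ind x$ on the generator indexed by $x \in \mathrm{Crit}(f)$, and filter the doubled path space side by pulling back the CW filtration of $L$ coming from the cells $\{\widehat{\M}(x)\}_{x \in \mathrm{Crit}(f)}$ along either projection to $L$. The map $\mathcal{F}$ is filtered because $\phi_*[\widehat{\M}(x)]$ lands, after pullback, in the fibers over $\widehat{\M}(x)$. The key input is that $\widehat{\M}(x)$ is a topological disk, hence contractible, so pushing $[\widehat{\M}(x)]$ into the doubled path space over $L$ represents the fundamental class of the $\widehat{\M}(x)$-cell with coefficients in the fiber of $\P_{b,*} L \times_L \P_{*,b} L \to L$ over $x$. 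Thus on the $E^1$-page both sides compute
\[
\bigoplus_{x \in \mathrm{Crit}(f)} H_*(\P_{b,x} L \times \P_{x,b} L; \mathfrak{p}) \tensor o(T_x^+) \tensor C_*(\widehat{\M}(x)/\partial \widehat{\M}(x)),
\]
and $\mathcal{F}$ induces an isomorphism. Composing with $\epsilon_*$ from the first step gives the claimed quasi-isomorphism of bimodules between $CM_*(\Omega L; f, g)[n]$ and the diagonal bimodule of $C_*(\Omega L, \mathfrak{p})$.
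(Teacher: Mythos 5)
Your proposal follows the paper's own proof essentially step for step: the same intermediate model $C_*(\P_{b,*}L \times_L \P_{*,b}L, \mathfrak{p})$ with the midpoint-splitting equivalence $\epsilon$, the same map $\mathcal{F}$ built from fundamental chains of the blown-up stable manifolds $\widehat{\M}(x)$, the same sign bookkeeping giving $\mathcal{F}d = (-1)^n d\mathcal{F}$, and the same filtration-by-index spectral sequence comparison to conclude that $\mathcal{F}$ is a quasi-isomorphism. The argument is correct and there is nothing substantive to add.
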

\subsection{Proof of Proposition \ref{prop:resolution}}\label{sec:proof-of-proposition-resolution} 
\begin{proof}
	Let $L_1$ be a transversely intersecting Hamiltonian isotopy of $L_0$  obtained from the time-dependent hamiltonian $H$. By Lemma \ref{prop:hamiltonian-is-part-of-regular-floer-datum}, there exists a $J$ so that $(H, J)$ is a regular Floer datum. Making the choices as in Section \ref{sec:loopy-floer-assumptions}, we can define $CF(\Omega L_0, \Omega L_0; H, J)$, which is an iterated extension of free bimodules of size equal to the number of intersection points of $L_0$ and $L_1$ (see Sec. \ref{sec:bimodule-structure}). By Propositions \ref{prop:bimodule-structures-invariance} and \ref{prop:morse-floer-comparison}, it is quasi-isomorphic as a $(C_*(\Omega L_0, \p),(C_*(\Omega L_0, \p))$  to $CM(\Omega L_0; f, g)$ for some Morse-Smale pair $(f, g)$. The latter bimodule is quasi-isomorphic to the diagonal bimodule by Proposition \ref{prop:convenient-model} . The quantity in the proposition is by definition a lower bound on the size of $CF(\Omega L_0, \Omega L_0; H, J)$.
\end{proof}
\section{Appendix}
\renewcommand{\thesubsection}{\Alph{subsection}}

\subsection{Conventions in $dg$-algebra}
\label{sec:dg-algebra-conventions}
In this section we describe our conventions on $dg$-algebras, $dg$-modules, and iterated extensions of free modules.

For this section, $k$ is a natural number. Write $S$ for $\Z/2k\Z$ and write $| \, \cdot \, |: S \to \Z/2\Z$ for the projection. Let $R$ be a commutative ring.

\begin{definition}
	An $S$-graded $R$-module is an $R$-module $M$ equipped with a decomposition $M = \oplus_{s \in S} M_s$, where $M_s$ denotes the part of \emph{degree} $s$. We say $m \in M$ is \emph{homogeneous} if it lies in one of the $M_s$. The $R$-module of $R$-linear homomorphisms $Hom_R(M, N)$ between a pair of $R$-modules $M, N$ is $S$-graded: an endomorphism $E$ has degree $s$ if $E$ maps $M_{s'}$ to $N_{s+s'}$ for all $s \in S$. 
	
	An $S$-graded $R$-chain complex is an $S$-graded $R$-module $M$ equipped with an endomorphism $d: M \to M$ of degree $-1$, called the differential, such that $d^2 = 0$. A \emph{map} of chain complexes is a map of $S$-graded $R$-modules of degree zero which commutes with the differential. In the rest of this section, we will call a $S$-graded $R$-chain complex a \emph{chain complex}. A \emph{morphism} of chain complexes is an element $f \in Hom_R(M, N)$; the set of morphisms is a chain complex, with $(df)(m) = d(f(m)) - (-1)^{|m|}f(dm)$. 
	
	There are obvious notions of a subcomplex, a quotient complex, and of a direct sum of complexes; in the latter, the degree of $a \oplus b$ is defined only if $a$ and $b$ are homogeneous of equal degree, in which case it is the degree of $a$.
	
	The \emph{tensor product} $M \tensor N$ of two chain complexes $M, N$, is the $R$-module $M \tensor_R N$, with the grading characterized by the property  that if $m \in M_s, n \in N_{s'}$, then $m \tensor n \in M_{s+s'}$, and the differential 
	$d$ characterized by the property that $d(m \tensor n) = dm \tensor n + (-1)^{|m|}m \tensor dn$. This makes chain complexes into a symmetric monoidal category using the usual Koszul sign rule for the braiding.

	A $S$-graded $dg$-algebra over $R$ is a $S$-graded $R$-chain complex $A$ equipped with a map of chain complexes $A \to A$, $(a, b) \mapsto a \cdot_A b$ which satisfies the obvious associativity relation for an associative product. We will call an $S$-graded $dg$-algebra over $R$ a \emph{dg-algebra} for the remainder of this section. Notice that if we forget the differential and the grading, then $A$ is an $R$-algebra. We will write $a \cdot_A b = ab$ whenever the context is clear.
	
	A \emph{dg-category} is a category enriched in chain complexes; thus $dg$-algebras are $dg$-categories with one object.
	
	A \emph{right module} $M$ over a dg-algebra $A$ is a chain complex $M$ equipped with a map of chain complexes $M \tensor A \to M$, $m \tensor a \mapsto m a$ which makes $M$ into a right $A$-module over the $R$-algebra $A$. Similarly, a \emph{left module} $M$ over a $dg$-algebra $A$ is a chain complex $M$ equipped with a map of chain complexes $A \tensor M \to M$, $a \tensor m \mapsto am$,   which makes $M$ into a left $A$-module over the $R$-algebra $A$. A \emph{map}, or a \emph{morphism}, of right modules is a map or morphism $\phi$ of chain complexes which commutes with multiplication in the obvious way, i.e. $\phi(ma) = \phi(m)a$; the morphisms between two right modules naturally form a chain complex.
	
	There are obvious notions of a submodule, of a quotient module, and of a direct sum of modules. A \emph{free} module over $A$ is a module isomorphic to $A$ thought of as an $A$-module.
\end{definition}

\begin{definition}
	A \emph{homotopy} between two maps of chain complexes $f_0, f_1: M \to N$ is an element $f\in Hom_R(M, N)_1$ with $df = f_0-f_1$. Chain complexes under composition of maps form a category, and the existence of a homotopy betwen a pair of maps is an equivalence relation on this category; the quotient by this equivalence relation is the \emph{homotopy category of $S$-graded $R$-chain complexes}. 
	
	Similarly, a homotopy between two maps $f_0, f_1: M \to N$ of right modules over a $dg$-algebra $A$ is a degree $1$ morphism of modules with $df = f_0 - f_1$, and the \emph{homotopy category of right $A$-modules} is the quotient of the category of maps of right $A$-modules by the equivalence relation given by the existence of a homotopy.
\end{definition}

\begin{definition}
	Taking cohomology with respect to $d$ defines a functor from the category of chain complexes and maps between them to the category of $S$-graded $R$-modules.
	
	A \emph{quasi-isomorphism} of chain complexes is a map of chain complexes inducing an isomorphism on cohomology; a quasi-isomorphism of right $A$-modules is a map of $A$-modules which is a quasi-isomorphism of the underlying chain complexes.
\end{definition}

\begin{definition}
	\label{def:tensor-product-dga}
	The \emph{tensor product} of two $dg$-algebras $A, B$ is, as a chain complex, the tensor product of chain complexes, with the multiplication characterized by the property that for $a, a' \in A$, $b, b' \in B$ all of pure degree, we have that 
	\[ (a' \tensor b')(a \tensor b) = (-1)^{|b'||a|} (a'a) \tensor (b'b).\]
\end{definition}
\begin{definition}
	Given a $dg$-algebra $A$, the \emph{opposite $dg$-algebra} $A^{op}$ is the $dg$ algebra with the same underlying 
	chain complex as $A$, but with $a \cdot_{A^{op}} b  = (-1)^{|a||b|}b \cdot_A a$ for elements $a, b \in A$ of pure degree.

	There is a bijection between right modules $M$ over $A$ and left modules $M^{op}$ over $A^{op}$: one sets the underlying chain complex of $M^{op}$ to the be that of $M$, and one defines $am = (-1)^{|m||a|}(am)$. 
\end{definition}
\begin{remark}
		Maps, morphisms, etc. of left modules are defined such that they naturally give the already-defined notion for the corresponding right modules; this introduces certain Koszul signs into the theory of left modules over a dg-algebra, which we will not write out explicitly.
\end{remark}

\begin{definition}
	The \emph{shift} $M[k]$ of a \emph{right} module $M$ over a $dg$-algebra $A$ is the right $A$-module obtained by redefining the degree of a degree $s$ element of $M$ to be $s + k$, and the differential is multiplied by $(-1)$.
	
	The \emph{shift} $M[k]$ of a \emph{left} module $M$ over a $dg$-algebra $A$ is defined as $(M^{op}[k])^{op}$. 
\end{definition}

\begin{definition}
	Given two dg-algebras $A, B$, we define an $(A, B)$-\emph{bimodule} to be a chain complex equipped simultaneously with 
	the structure of a left $A$-module and a right $B$-module, which commute in the sense that $(am)b = a(mb)$ for $a \in A, m \in M, b \in B$. The sign in Def. \ref{def:tensor-product-dga} gives a bijection between $(A, B)$-bimodules, right $A^{op} \tensor B$ modules, and left $A \tensor B^{op}$ module. 
	
	The multiplication on $A$ makes $A$ into an $(A, A)$-bimodule called the \emph{diagonal} bimodule. For most of this paper we will think of the diagonal bimodule as a right $A^{op} \tensor A$-module. 
\end{definition}

\begin{definition}
	\label{def:twisted-complex}
	Suppose we are given a right module $M$ over a $dg$-algebra $A$ admitting an $\R$-filtration by submodules $M^{\leq r} \subset M$ for $r \in \R$, write $M^{<r}$ for the smallest submodule containing all $M^{\leq \ell}$ for $\ell < r$; this makes sense because the set-theoretic intersection of submodules is a submodule.
	
	A \emph{iterated extension of free modules} is an $M$ as above with the property that the $A$-modules $M^{\leq r}/M^{<r}$ are quasi-isomorphic as right $A$-modules to direct sums of shifts of $A$ considered as a right $A$-module, i.e. 
	\[ M^{\leq r}/M^{<r} \simeq \oplus_{i=1}^{a_r} A[b_i]\]
	for some cardinals $a_r$ and some integers $b_i$; and more over that the quotients $M^{\leq r}/M^{<r}$ are nonzero for only finitely many $r$, and when they are nonzero the cardinals $a_r$ are finite.
	
	The \emph{size} of an iterated extension of free modules $M$ is
	\[ \sum_{ M^{\leq r}/M^{<r} \neq 0} a_r.\]
	
	Maps and quasi-isomorphisms of iterated extensions of free modules are simply maps and quasi-isomorphisms of the underlying $A$-modules.
	
	A interated extension of free $(A, B)$-bimodules is an iterated extension of free modules over $A^{op} \tensor B$.
\end{definition}

\begin{remark}
	What we call an iterated extension of free modules is  quasi-isomorphic to the usual notion of a twisted complex, where one requires that the subquotients $M^{\leq r}/M^{<r}$ are \emph{equal} to direct sums of shifts of $A$; with that change, the category of twisted complexes can be described very explicitly. In particular, \emph{an iterated extension of free modules of size $s$ is quasi-isomorphic to a twisted complex made of $s$ underlying free modules.} The paper \cite{bondal_kapranov} used the notion of a twisted complex to give an explicit $dg$-model for derived categories with a finite number of generators. Section I of \cite{seidel} gives a nice introduction to the machinery of $\Z$-graded twisted complexes in the $A_\infty$ setting. 
	
    The differential in our dg-algebras decreases degree, which means that our conventions disagree with the above two sources, but there should be no trouble converting between the conventions. Our basic conventions agree with those in the Stacks project \cite[\href{https://stacks.math.columbia.edu/tag/09JD}{Chapter 09JD}]{stacks-project}. Our notion of a an iterated extension of free modules is equivalent to that of \cite{seidel} when $S = \Z$ after only remembering the ordering of the nonzero sub-quotients of the $\R$-filtration, and replacing the condition $M^{\leq r}/M^{<r}$ is quasi-isomorphic to $A$ by the condition that it is equal to $A$. We use quasi-isomorphism primarily because in Lemma \ref{lemma:left-compostion-is-module-map}, the map in (\ref{eq:tensor-product-to-hom-map}) is only a quasi-isomorphism rather than an equality, of chain complexes; see also Remark \ref{rk:homotopy-vs-qis}.
\end{remark}

\subsection{Pin groups and Pin structures}
\label{sec:pin-conventions}
In this section we clarify what we mean when we discuss $Pin$ structures, because there are several conventions in the literature. In particular, in Appendix \ref{sec:gluing_pin_structures_relative_to_ends} we explain the language of relative $Pin$ structures that is used throughout the text in.

\subsubsection{Pin structures}

The Lie group $Spin(n)$ is the universal cover of the Lie group $SO(n)$. The Lie group $O(n) \supset SO(n)$ is not connected, so there is not a unique group structure on its universal cover. The Lie group $Pin(n)$ is characterized by the property that it admits a map of Lie groups
$\pi: Pin(n) \to O(n)$ which is a universal cover on each component of $O(n)$, such that the inverse image under $\pi$ of the group generated by a reflection in $O(n)$ is isomorphic to $\Z/4\Z$. An explicit description of the map $\pi$ is given in Appendix \ref{sec:pin-minus-construction}

Given a real vector bundle $E \to B$ over a topological space $B$ with a Riemannian metric on $E$, we can form the \emph{frame bundle} of $E$, $O(E) \to B$, which is a principal $O(n)$-bundle over $B$. We say that a \emph{Pin structure} on $E$ is a principal $Pin(n)$-bundle $P(E) \to B$ which admits a map over $B$ to $O(E)$ that respects the $Pin(n)/O(n)$ torsor structures on both sides. A map of Pin structures on $E$, is a map of principal $Pin(n)$-bundles over $B$, $P(E) \to P'(E)$, over $O(E)$. 

\begin{proposition}[\cite{kirby_taylor_1991}, Lemma 1.3, Remark on p. 184]
	\label{prop:pin_structure_classification}
	Suppose that $B$ is paracompact Hausdorff and has the homotopy type of a $CW$ complex. Then
	 bundle $E$ admits a Pin structure iff $w_2(E) = 0$. If $E$ admits a Pin structure, then the set of Pin structures on $E$ up to isomorphsim is a torsor over $H^1(B, \Z/2\Z)$. 
\end{proposition}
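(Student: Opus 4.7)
The plan is to reduce the statement to standard obstruction theory for lifting the classifying map of $E$ through a fibration of classifying spaces. By definition, a $Pin$ structure on $E$ is a reduction of the structure group of the orthonormal frame bundle $O(E)$ along the covering homomorphism $\pi: Pin(n) \to O(n)$. If $f: B \to BO(n)$ is the classifying map of $E$, this is equivalent to choosing a lift $\tilde f$ of $f$ through the fibration $\rho: BPin(n) \to BO(n)$, considered up to vertical homotopy. Since $\ker(\pi) = \Z/2$, the homotopy fiber of $\rho$ is $K(\Z/2, 1)$, so $\rho$ is classified by a single $k$-invariant $k \in H^2(BO(n); \Z/2)$ fitting into a fiber sequence
\[ BPin(n) \to BO(n) \xrightarrow{k} K(\Z/2, 2). \]

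The first main step is to identify $k = w_2 \in H^2(BO(n); \Z/2)$. I would do this by restricting to the subspace $BSO(n) \subset BO(n)$, where the pulled-back fibration becomes $BSpin(n) \to BSO(n)$, whose $k$-invariant is classically $w_2$ (one checks this on $\pi_2(BSO(n)) \cong \Z/2$, where $w_2$ is the unique nontrivial class). A Serre spectral sequence computation for the fibration $BSO(n) \to BO(n) \to B(\Z/2)$ shows that the restriction map $H^2(BO(n); \Z/2) \to H^2(BSO(n); \Z/2)$ is injective in the relevant range, so the identification descends to $BO(n)$.

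The second step is pure obstruction theory. Standard results for lifting maps through fibrations with $K(A, n)$ fibers (which apply because $B$ is paracompact Hausdorff of the homotopy type of a CW complex) give that a lift of $f$ through $\rho$ exists if and only if $f^*k = 0$, i.e.\ if and only if $w_2(E) = 0$. When this holds, the set of vertical-homotopy classes of lifts forms a torsor over the group $[B, K(\Z/2, 1)] = H^1(B; \Z/2)$, and a short separate argument (either direct or via the Whitehead theorem, using that $B$ has the homotopy type of a CW complex) shows that this set of homotopy classes coincides with the set of isomorphism classes of $Pin$ structures.

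The hardest step is the identification $k = w_2$ — the rest is formal once one trusts obstruction theory. An alternative, more elementary approach, closer to the cited argument of Kirby--Taylor, avoids classifying spaces entirely: working cellularly on $B$, one builds a $Pin$ lift skeleton by skeleton, the obstruction to extending over a $2$-cell being a cocycle representative of $w_2(E)$ on that cell, and the ambiguity in extending over a $1$-cell being a $\Z/2$-cocycle whose class in $H^1(B; \Z/2)$ measures the difference between two $Pin$ structures. This cellular approach avoids the spectral-sequence identification of $k$ but requires somewhat more bookkeeping with transition functions of $E$ and the extension $1 \to \Z/2 \to Pin(n) \to O(n) \to 1$.
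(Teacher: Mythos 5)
The paper offers no proof of this statement---it is quoted directly from Kirby--Taylor---so I am assessing your argument on its own. The overall framework (Pin structures as vertical-homotopy classes of lifts of the classifying map through the principal $K(\Z/2,1)$-fibration $BPin(n)\to BO(n)$, existence governed by the pullback of the $k$-invariant, lifts forming a torsor over $H^1(B;\Z/2)$ when nonempty) is correct and standard. But your "first main step" contains a genuine error: the restriction map $H^2(BO(n);\Z/2)\to H^2(BSO(n);\Z/2)$ is \emph{not} injective. In degree $2$ the source has basis $\{w_2, w_1^2\}$ and the target has basis $\{w_2\}$; the kernel is generated by $w_1^2$. This is not a technicality---it is exactly the ambiguity that distinguishes $Pin^+$ from $Pin^-$: both fibrations $BPin^{\pm}(n)\to BO(n)$ restrict over $BSO(n)$ to $BSpin(n)\to BSO(n)$, yet their $k$-invariants are $w_2$ and $w_2+w_1^2$ respectively. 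Your argument as written would "prove" that the obstruction for $Pin^-$ is also $w_2$, which is false. (Note the paper's $Pin$ is $Pin^+$, so the statement with obstruction $w_2$ is the one you need, but your method cannot see which of the two classes you have.)

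To close the gap you must pin down the $w_1^2$-component of $k$ separately. One clean way: restrict further to $BO(1)\subset BO(n)$, where $H^2(BO(1);\Z/2)=\langle w_1^2\rangle$. The extension $\Z/2\to Pin^+(1)\to O(1)$ is split ($Pin^+(1)\cong\Z/2\times\Z/2$, since a unit vector squares to $+1$ in the Clifford algebra with $vw+wv=2(v,w)$), so $BPin^+(1)\to BO(1)$ admits a section and the restricted $k$-invariant vanishes; hence the $w_1^2$-coefficient of $k$ is zero and $k=w_2$. (For $Pin^-(1)\cong\Z/4$ the extension is nonsplit and one gets $k=w_2+w_1^2$.) Alternatively, follow the cellular route you sketch at the end, which is essentially Kirby--Taylor's own argument and computes the obstruction cocycle directly from the transition functions; that version does not suffer from this ambiguity, but as you note it requires the extra bookkeeping you have not carried out. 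The remaining steps (torsor structure over $H^1(B;\Z/2)$, and the identification of vertical-homotopy classes of lifts with isomorphism classes of Pin structures) are fine.
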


\subsubsection{The group $Pin$}
\label{sec:pin-minus-construction}
Our definition of $Pin(n)$ agrees with that of $Pin^+(n)$ in \cite{kirby_taylor_1991}. 

Let $V$ be a real vector space with a positive-definite inner product $(\cdot, \;\cdot)$. Let $Cliff^+(V)$ be the universal associative $\R$-algebra generated by $V$ satisfying the relation
\[ vw + wv = 2(v, w). \]
Let $Pin(V) \subset Cliff^+(V)$ be the multiplicative submonoid generated by the unit sphere in $V$; this is, in fact, a Lie group. An element $x \in Pin(V)$ acts on $V$ via
\[ v \mapsto -xvx; \]
this action preserves the norm on $V$, and so gives a homomorphism $\pi: Pin(V) \to O(V)$, which is the covering map used in Appendix \ref{sec:pin-conventions}.

=\subsubsection{Relative Pin structures}
\label{sec:gluing_pin_structures_relative_to_ends}
Let $\ell > 0$ and $E \to [0, \ell]$ be a Riemannian bundle equipped with a pair of Pin structures $\mathfrak{p}_0$, $\mathfrak{p}_1$ on $E_0 \to 0$, $E_\ell \to \ell$, respectively; we say that $E$ is a bundle equipped with \emph{Pin structures at the ends}.  A \emph{Pin structure relative to the ends} on $E$ is a choice of Pin structure $\p$ on $E$ together with a pair of isomorphisms of pin structures $\lambda_\p^0: \p|_{E_0} \to \p_0$, $\lambda_\p^1: \p|_{E_1} \to \p_1$. An \emph{isomorphism of Pin structures relative to the ends} on $E$ is an isomorphism of Pin structures $\phi: \p \to \p'$ such that for $k = 0, 1$, $\lambda_{\p'}\phi|_k = \lambda_\p$. 

\begin{proposition}
	\label{prop:pin_structure_relative_classification}
	Let $E$ be a bundle equipped with Pin structures at the ends. The set of Pin structures relative to the ends on $E$ up to isomorphism is a $\Z/2$-torsor. 
\end{proposition}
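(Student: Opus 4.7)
The plan is to reduce this to a direct count using Proposition \ref{prop:pin_structure_classification} and the contractibility of $[0,\ell]$. First I would observe that since $[0,\ell]$ is contractible, every real bundle over it is trivializable (so $w_2(E) = 0$) and $H^1([0,\ell], \Z/2) = 0$. By Proposition \ref{prop:pin_structure_classification}, $E$ therefore admits a Pin structure $\mathfrak{p}$, and $\mathfrak{p}$ is unique up to (unstructured) isomorphism.

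Next I would parametrize the data of a Pin structure relative to the ends with underlying Pin structure equal to a fixed representative $\mathfrak{p}$: such data is a pair of isomorphisms $\lambda^k : \mathfrak{p}|_{E_k} \to \mathfrak{p}_k$ for $k \in \{0, \ell\}$. At each endpoint, the torsor of isomorphisms between two specified Pin structures on a fixed $0$-dimensional fiber is a torsor over $\mathrm{Aut}(\mathfrak{p}_k) \cong \Z/2$, the latter generated by the central element $-1 \in Pin(n)$. Hence the set of pairs $(\lambda^0, \lambda^\ell)$ is a $(\Z/2)^2$-torsor.

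Then I would identify two such pairs as giving isomorphic relative Pin structures precisely when they differ by an automorphism of $\mathfrak{p}$ in the sense of the definition preceding the proposition. Since $[0,\ell]$ is connected, $\mathrm{Aut}(\mathfrak{p}) \cong \Z/2$, with the nontrivial element acting by the central $-1$ uniformly along the whole interval — so its restriction to either endpoint is $-1$ at that endpoint as well. Hence the induced action of $\mathrm{Aut}(\mathfrak{p})$ on pairs $(\lambda^0, \lambda^\ell)$ is the diagonal embedding $\Z/2 \hookrightarrow (\Z/2)^2$. The quotient $(\Z/2)^2 / \Delta(\Z/2) \cong \Z/2$ is the desired torsor. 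Since any two choices of representative $\mathfrak{p}$ differ by an (unstructured) isomorphism, which bijectively transports the $(\lambda^0, \lambda^\ell)$-data and commutes with the diagonal $\mathrm{Aut}$-action, the resulting $\Z/2$-torsor structure is independent of the choice of $\mathfrak{p}$.

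Given how much is already packaged into Proposition \ref{prop:pin_structure_classification}, I do not foresee a serious obstacle; the only point that requires a moment of care is verifying that the nontrivial central automorphism of the global $\mathfrak{p}$ restricts to the nontrivial automorphism at \emph{both} ends simultaneously (and not in a way that could be nontrivial at one end and trivial at the other), which is clear since the center of $Pin(n)$ acts as a single global section of the bundle $Pin(E) \to [0,\ell]$.
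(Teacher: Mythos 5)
Your proof is correct, and it takes a genuinely different route from the paper's. The paper chooses an isomorphism $\p_0 \to \p_1$ and uses it to glue the two ends of $E$ into a bundle $\bar{E}$ over $S^1$, so that relative Pin structures on $E$ map to Pin structures on $\bar{E}$; it then quotes Proposition \ref{prop:pin_structure_classification} for $S^1$ (a torsor over $H^1(S^1,\Z/2) = \Z/2$) and checks that this comparison map is a bijection on isomorphism classes, the injectivity requiring a small gauge-transformation argument. You instead stay on the interval: triviality of $H^1([0,\ell],\Z/2)$ and of $w_2$ gives existence and uniqueness of the underlying Pin structure $\mathfrak{p}$, the end-identifications $(\lambda^0,\lambda^\ell)$ form a $(\Z/2)^2$-torsor, and the global automorphism group $\mathrm{Aut}(\mathfrak{p})\cong\Z/2$ (constant by connectedness of the interval, central in $Pin(n)$) acts diagonally, leaving the quotient $(\Z/2)^2/\Delta(\Z/2)\cong\Z/2$. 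Your approach is more self-contained and makes the origin of the $\Z/2$ completely explicit (the two ends each contribute a $\Z/2$ of framing ambiguity, and one diagonal copy is gauged away), whereas the paper's approach buys a direct link to the $H^1$-torsor language and to the circle bundles that appear when these relative structures are later concatenated into loops. The only points worth tightening are cosmetic: the fibers $E_0, E_\ell$ are $n$-dimensional vector spaces over a $0$-dimensional base (not ``$0$-dimensional fibers''), and you should state explicitly that every relative Pin structure is isomorphic to one whose underlying Pin structure equals the chosen representative $\mathfrak{p}$ (this follows immediately from the uniqueness of $\mathfrak{p}$ up to isomorphism, by transporting the $\lambda^k$ along any such isomorphism, as you do in your final paragraph).
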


\begin{proof}
	An isomorphism of Pin structures $\p_0 \to \p_1$, which always exists, gives a map $\tau$ from Pin structures relative to the ends on $E$ to Pin structures on a certain vector bundle $\bar{E}$ on a circle. This map sends isomorphic Pin structures relative to the ends on $E$ to isomorphic Pin structures on that vector bundle. The isomorphism classes of the second kind are a torsor over $H^1(S^1, \Z/2) = \Z/2$; so it suffices to check that $\tau$ is a bijection on isomorphism classes. Surjectivity is straightforward, and injectivity follows from the fact that any isomorphism of Pin structures on a vector bundle $\bar{E}$ over $S^1$, where one already has another specified isomorphism $\bar{\lambda}$ between the Pin structures restricted to $E|_{0}$, gives rise via a gauge transformation to another isomorphism between the Pin structures that is equal to $\bar{\lambda}$ at $E|_{0}$.
\end{proof}

\begin{definition}
\label{def:set-of-pin-structures-at-ends}
If $E \to [0, \ell]$ is a bundle with Pin structures at the ends, let 
\begin{equation}
\label{eq:def-set-of-pin-structures-at-ends}
\Pi(E) 
\end{equation}denote the set of isomorphism classes of Pin structures relative to the ends on $E$.
\end{definition}

Given a pair of Riemannian vector bundles $E \to [0, \ell]$, $E' \to [0, \ell']$ equipped with Pin structures at the ends $\p^E_k, \p^{E'}_k$, $k = 0, 1$, together with identifications of Riemannian vector spaces $E|_{\ell} = E'_0$, and an identification of the corresponding Pin structures at the ends $\p^E|_1 = \p^{E'}|_0$, one can \emph{glue} the vector bundles to get a vector bundle $E \# E' \to [0, \ell + \ell']$ equipped with pin structures at the ends given by $\p^E_0, \p^{E'}_1$,  and one has a corresponding operation 
\begin{equation}
g_{E, E'}: \Pi(E) \times \Pi(E') \to \Pi(E \# E').
\end{equation} 
Given a third bundle $E'' \to [0, \ell'']$ equipped with Pin structures at the ends $\p^{E''}_0, \p^{E''}_1$, and an identification $\p^{E'}_1 \to \p{E''}_0$, one has that 
\begin{equation}E \# E' \# E'' := (E \# E') \# E'' = E \# (E' \# E'')
\end{equation} and 
the two possible gluing maps 
\begin{equation}\Pi(E) \times \Pi(E') \times \Pi(E'') \to \Pi(E \# E' \# E'')\end{equation} given by 
\begin{equation}
\begin{gathered}
g_{E \# E', E''}\circ (g_{E, E'} \times 1), \text{ and } \;g_{E, E' \# E''}\circ ( 1 \times g_{E', E''}),
\end{gathered}
\end{equation} 
are equal. 

\begin{remark}
	\label{rk:unital-pin-structure}
	The notion of Pin structure relative to the ends on $E \to [0,\ell]$ can be extended to the case when $\ell = 0$. In that case, $E_0 = E_1$ and we require that the pin structures $\p_0$ and $\p_1$ on $E_0$ and $E_1$ respectively satisfy $\p_0 = \p_1$. In this case we define $\Pi(E) = \Z/2$; the element $0 \in \Z/2$ is the pin structure $\p = \p_0 = \p_1$ on $E$, and the element $1$ is a ``formal inverse'' of that pin structure. Clearly $\Pi(E)$ is still a $\Z/2$-torsor; moreover, given another bundle $E' \to [0, \ell']$, there are gluing maps 
	\[ \Pi(E) \times \Pi(E') \to \Pi(E'),\]
	\[ \Pi(E') \times \Pi(E) \to \Pi(E),\]
	defined using the action of $\Pi(E) = \Z/2$ on $\Pi(E')$, and these satisfy the associativity axioms with the previously defined gluing maps.
	
	This special case is defined so as to make sense of the $dg$-algebras $C_*(\P_{x,x}L, \p_{x,x})$ (\ref{eq:multiplication}) and $C_*(\P L, \p)(y_b, y_b)$(\ref{eq:tensor-product-to-hom-map}); in particular, the \emph{unit} in $C_*(\P_{x,x}L, \p_{x,x})$ is the ``Pin structure'' over the constant loop at the basepoint corresponding to $0 \in \Z/2 = \Pi(T_{x}L)$. 
\end{remark}

\subsection{Technical conventions for Floer theory}
\label{sec:technical-floer-appendix}
In this appendix we describe our conventions for the equations and moduli spaces of Lagrangian Floer homology.

Let $(M, \omega, \theta)$ be a Liouville domain containing a pair of closed exact Lagrangian submanifolds $L_0, L_1$, as in Section \ref{sec:floer-theory-start}. 
Let $\mathcal{J}$ be the set of $\omega$-compatible almost-complex structures on $M$ which, in some neighborhood of $\partial M$, are $X$-invariant and satisfy $dh \circ J = \theta$.

\subsubsection{Riemann surfaces}

All Riemann surfaces will be (possibly punctured) Riemann surfaces with boundary.

Our standard coordinate on $\C$ is $z = s+it$. 

Define the following Riemann surfaces:
\begin{align*}
&\mathbb{\D} \;\;\,:= \{z \in \C: |z|^2 \leq 1 \} \\
&Z \;\;\,:= \R \times [0,1] \\
&Z^{\pm} := \R^\pm \times [0, 1]\\
&H \;\;:= \{z \in \C | \text{Im}(z) \geq 0 \}.
\end{align*}

\begin{remark}
	Sometimes in this paper $H$ will also denote a Hamiltonian perturbation term; the meaning of the symbol should be clear from the context.
\end{remark}

\begin{definition}
	A \emph{boundary-marked Riemann surface} is a triplet $(\hat{S}, \Sigma^+, \Sigma^-)$, where $\hat{S}$ is a a Riemann surface with boundary $\partial \hat{S}$ and $\Sigma^+, \Sigma^-$ are non-intersecting finite subsets of $\partial \hat{S}$. We will call the sets $\Sigma^+$, $\Sigma^-$ the \emph{outgoing} and \emph{incoming} points, respectively. 
	
	A boundary-marked Riemann surface canonically defines a Riemann surface $S := \hat{S} \setminus (\Sigma^+ \cup \Sigma^-)$. We will denote the boundary-marked Riemann surface by $S$, supressing additional notation.
	
	We give the Riemann surfaces $Z$ and $Z^\pm$ the structure of boundary-marked Riemann surfaces by compactifying them by adding the points at $s = \pm\infty$. We think of the point at $s = + \infty$ as $\Sigma^+$, and the point at $s = -\infty$ as $\Sigma^-$, whenever those points were added to the compactification. Similarly we give $H$ the structure of a boundary-marked Riemann surface by adding on the point at infinity and choosing it as $\Sigma^-$. 
\end{definition}

\begin{definition}
	A choice of \emph{strip-like ends} for a boundary-marked Riemann surface $\Sigma$ is, for every $\zeta \in \Sigma^\pm$, a proper holomorphic embedding $\epsilon_\zeta: Z^{\pm} \to S$ such that $\epsilon_\zeta^{-1}(\partial S) = \R^\pm \times \{0,1\}$, with the images of $\epsilon_\zeta$ pairwise disjoint, and such that $\lim_{s \to \pm \infty} \epsilon_\zeta(s, \cdot) = \zeta$. 
\end{definition}

\subsubsection{Floer data}
Let $\mathcal{J}$ be the set of almost complex structures compatible with $M$, and let $\mathcal{H}$ be the space of smooth functions of $M$ which are zero on a neighborhood of $\partial M$.

\begin{definition}
	A \emph{Floer datum} for the pair $(L_0, L_1)$ is a pair 
	\[(H, J) \in C^\infty([0,1], \mathcal{H}) \times C^\infty([0,1],\mathcal{J})\]
	such that under the time-$1$ flow of the Hamiltonian $H$, $L_0$ intersects $L_1$ transversely.
\end{definition}

\begin{definition}
	A \emph{boundary-marked Riemann surface equipped with boundary Floer data} is 
	\begin{itemize}
		\item a boundary-marked Riemann surface $S$ equipped with a choice of strip-like end $\epsilon_\zeta: Z^{\pm} \to S$ for every end $\zeta \in \Sigma^{\pm}$ of $S$ 
		\item an assignment of a closed exact Lagrangian submanifold $L_i$ for each boundary component $(\partial S)_i$ of $\partial S$,
		\item For every end $\zeta \in \Sigma^{\pm}$,  a choice of Floer data $(H_\zeta, J_\zeta)$ for the pair of Lagrangians $(L'_0, L'_1)$, with $L'_i$ the Lagrangian assigned to the connected component of $\partial S)$ containing $\epsilon_\zeta(*, i)$. 
	\end{itemize}  
	We will denote a choice of boundary Floer data $(\{L_i\}_{i \in \pi_0(\partial S)}, \{(H_\zeta, J_\zeta)\}, \{\epsilon_\zeta\}\}$ by $\mathfrak{B}$. 
\end{definition}

\subsubsection{Perturbation data}
Let $X_t$ be the vector field corresponding to the Hamiltonian flow of $H(t)$ on $M$, namely, the one satisfying the condition
\begin{equation}
\omega(X_t, \cdot) = dH.
\end{equation} 
Let $\mathcal{C}(L_0, L_1; H)$ be the finite set of time-$1$-trajectories of $X_t$ from $L_0$ to $L_1$. 

\begin{definition}
	A \emph{perturbation datum} for a boundary-marked Riemann surface equipped with boundary Floer data is a pair 
	\[(K, J) \in \Omega^1(S, \mathcal{H}) \times C^\infty(S, \mathcal{J}) \] 
	such that:
	\begin{itemize}
		\item 
		for each component $\partial_iS$ of $\partial S$ with corresponding Lagrangian submanifold $L_i$, we have that 
		$K(T\partial_iS)|_{L_i} = 0$; and also such that
		\item the perturbation datum is compatible with the boundary Floer data, in the sense that 
		\begin{align*}&\epsilon^*_\zeta K = H_\zeta(t) dt, \text{ and}\\
		&J(\epsilon_\zeta(s,t)) = J_\zeta(t) \text{ for every } \zeta \in \Sigma.
		\end{align*}
	\end{itemize}
	
	We denote by $Y \in \Omega^1(S, C^\infty(TM))$ the vector valued one form obtained from $K$ by considering the Hamiltonian flows of the corresponding Hamiltonians. 
\end{definition}

\subsection{Floer moduli spaces}
\label{sec:floer-moduli-spaces}
For any $\gamma_\pm \in \mathcal{C}(L_0, L_1; H)$ we can consider the set of solutions to Floer's equation
\begin{equation}
\label{eq:floer_eq}
\widetilde\M^F(\gamma_-, \gamma_+; H, J) := \left \{
\begin{tabu}{c|c}
& \partial_s u + J(t, u)(\partial_t u - X(t,u)) = 0; \\
u \in C^\infty(Z, M) & u(s,i) \in L_i, i = 0,1 \\
& (u(s, \cdot)) \xrightarrow[s \to \infty]{e} \gamma_{\pm}(\cdot)  \\
\end{tabu}
\right\}.
\end{equation}
In the above equation, the notation 
\[ (u(s, \cdot)) \xrightarrow[s \to \infty]{e} \gamma_{\pm}(\cdot),\]
means that the function $u(s, \cdot)$ converges exponentially fast to $\gamma^\pm(\cdot)$ in a local chart on $M$ near $\gamma^\pm$. We topologize $\widetilde\M^F(\gamma_-, \gamma_+; H, J)$ with the topology of uniform convergence. 

The moduli space $\widetilde\M^F(\gamma_+, \gamma_-; H, J)$ admits a continuous $\R$ action given by translation in the $s$ coordinate; let $\M^F(\gamma_+, \gamma_-; H, J)  := \widetilde\M^F(\gamma_+, \gamma_-; H, J) /\R$. We will omit $H$ and $J$ from the notation when the dependence on them is clear from the context.

Given a boundary-marked Riemann surface $\Sigma$ equipped with boundary Floer data $\mathfrak{B} := (\{L_i\}, \{(H_\zeta, J_\zeta) \}$ and perturbation data $(K, J)$, then for any collection $\{\gamma_\zeta\}$ with $\gamma_\zeta \in \mathcal{C}(L_\zeta, L'_\zeta; H)$  we can consider the set of solutions to the \emph{inhomogeneous pseudoholomorphic map equation} 
\begin{equation}
\label{eq:inhomogeneous_map}
\M^C(\{\gamma_\zeta\}; \Sigma, \mathfrak{B}, K, J ) :=   \left\{
\begin{tabu}{c|c}
& (Du - Y)_J^{0,1} = 0, \\
u \in C^\infty(\Sigma, M) & u(\partial_i C) \subset L_i;\;   \\
& \forall_{\zeta \in \Sigma^\pm} u(\epsilon_\zeta(s, \cdot) \xrightarrow[s \to \pm \infty]{e} y_\zeta(\cdot)\\
\end{tabu}
\right\}
\end{equation}
which is topologized in the topology of uniform convergence.

\begin{remark}
	\label{rk:floer_gives_perturbation_data}
	A choice of Floer data $(H, J)$ for $(L_0, L_1)$ equips $Z$ with a perturbation datum given by the Floer datum itself, denoted by $(H, J)^\#$. In that setting the inhomogeneous pseudoholomorphic map equation reduces to Floer's equation, and (\ref{eq:inhomogeneous_map}) reduces to (\ref{eq:floer_eq}).
\end{remark}

\begin{definition}
	\label{def:stratified-top-space}
	A \emph{stratified topological space} $T$ is a topological space $T$ together with a collection of closed subspaces $\partial^i T \supset \partial^{i+1}T$ for $i \geq 0$ and $\partial^0 T = T$, called the \emph{strata} of $T$. The \emph{open strata} of the space are then the subspaces $\partial^i_o T := \partial^{i}T \setminus \partial^{i+1}T$ for $i \geq 0$. The index labels the \emph{codimension} of the stratum.
\end{definition}
The Gromov-Floer bordifications
\begin{equation}
\label{eq:bordification}
\overline{\M}^F(\gamma_+, \gamma_{-}; H, J),  \overline{\M}^C(\gamma_+, \gamma_-; \Sigma, \mathfrak{B}, K, J)
\end{equation}
are stratified topological spaces which contain 
\[\M^F(\gamma_+, \gamma_{-}; H, J),\M^C(\gamma_+, \gamma_-; \Sigma, \mathfrak{B}, K, J) \]
as codimension $0$ open strata, respectively. Suppressing the Floer data and perturbation data from the notation, these satisfy
\begin{equation}
\label{eq:floer-bdry-1} \partial^1\overline{\M}^F(\gamma_-, \gamma_+) = \bigcup_{\gamma_0 \in \mathcal{C}(L_0, L_1)} \overline{\M}^F(\gamma_-,\gamma_0) \times \overline{\M}^F(\gamma_0,\gamma_+);
\end{equation}
and if $\Sigma = Z$ with boundary Lagrangians $L_0, L_1$, 
\begin{equation*}
\begin{split}
\partial^1\overline{\M}^C(\gamma_-, \gamma_{+}) = \bigcup_{\gamma_0 \in \mathcal{C}(L_0, L_1)} 
&\overline{\M}^F(\gamma_-,\gamma_0) \times \overline{\M}^C(\gamma_0,\gamma_+) \;\cup \overline{\M}^C(\gamma_-,\gamma_0) \times \overline{\M}^F(\gamma_0,\gamma_+) 
\end{split}
\end{equation*}  
and if $\Sigma = H$ with boundary Lagrangian $L_0$ then
\begin{equation*}
\partial^1\overline{\M}^C(\{\gamma_-\}) = \bigcup_{\gamma_0 \in \mathcal{C}(L_0, L_1)} 
\overline{\M}^F(\gamma_-,\gamma_0) \times \overline{\M}^C(\{\gamma_0\}).
\end{equation*}  

\subsection{Some topological preliminaries}
\label{sec:topological-preliminaries}
Let $X$ be a topological $n$-manifold. Then $X$ carries an \emph{orientation sheaf} $\mathfrak{o}_X$, which is a $\Z$-local system with stalk at $p$ given by $H_n(X, X\setminus p)$. If $X$ is closed, then there is a fundamental class $[X]$ in
\[ H_n(X, \mathfrak{o}_X^\check ) \]
which is a generator for this group. It is a convenient fact that for the right dual 
\[\mathfrak{o}^\check_X := Hom(\mathfrak{o}^\check_X, \Z)\] 
we have a canonical isomorphism $\mathfrak{o}^\check_X \simeq \mathfrak{o}_X$ of local systems, and moreover when we equip $\mathfrak{o}_X$ with the $\Z/2$-grading given by the dimension of $X$, then this is an isomorphism of $\Z/2$-graded local systems. In the rest of the paper we use $\mathfrak{o}_X$ instead of $\mathfrak{o}_X^\check$ everywhere.

Let $\bar{X}$ be a topological $n$-manifold with boundary $\partial \bar X$; namely a second countable Haussdorff topological space locally modeled on $\R_{> 0} \times \R^{n-1}$.  Let $j: X \to \bar{X}$ be the inclusion of the interior. Then there there is a natural sheaf on $\bar{X}$ given by 

\[ \mathfrak{o}_{\bar{X}} := j_*\mathfrak{o}_{X}, \text{ the orientation sheaf of }\bar{X} \]
which is also a $\Z$-local system. There is an identification 

\begin{equation}
\label{eq:boundary-map-orientation-lines}
\mathfrak{o}_{\bar{X}}|_{\partial X} \simeq \mathfrak{o}_{\partial \bar{X}}
\end{equation} 
which arises, in a local chart near $p \in \partial \bar{X}$, from the isomorphism
\renewcommand{\o}{\mathfrak{o}}
\[ H_n(\R^n, \R^n \setminus 0) = H_1(\R, \R\setminus 0) \tensor H_{n-1}(\R^{n-1}, \R^{n-1} \setminus 0) \simeq \Z \tensor H_{n-1}(\R^{n-1}, \R^{n-1} \setminus 0) \]
coming from the Kunneth formula and the trivialization of the orientation of $\R$ coming from taking a tangent vector pointed in the negative direction. The identification gives rise to the usual boundary map

\[ H_n(\bar{X}, \partial \bar X, \o_{\bar{X}}) \to H_{n-1}(\partial \bar {X}, \o_{\partial \bar X}). \]
The domain of this map is generated by the relative fundamental class $[\bar X]$, 
and map takes $[\bar X]$ to $[\partial \bar X]$. 

\begin{definition}
	\label{def:top-manifold-with-corners}
	A \emph{topological manifold with corners} of dimension $n$ is a second-countable Hausdorff stratified topological space $X$ such that each point has a neighborhood equipped with a homeomorphism to an open subset of $A_{n, k} := [0, \infty)^k \times \R^{n-k} $ for some $n, k$, where the homeomorphism takes the stratification on $X$ to the standard stratification on $A_{n, k}$. Thus $\partial_0^k X$ is a topological manifold of dimension $n-k$ if $k \leq n$ and is empty otherwise.
\end{definition}

Notice that every topological $n$- manifold with corners $\bar{X}$ is a topological $n$-manifold with boundary, and has a fundamental class in $H_n(\bar{X}, \partial \bar{X}; \o_{\bar{X}}^\check)$. 


\subsection{Regularity}
\label{sec:regularity}
Fixing $2 < p < \infty$, one considers the Banach manifold $\mathcal{B}_S$ of maps $u: S \to M$ which are locally $W^{1,p}$ and converge to some $\{y_\zeta\}$ on the strip like ends with exponential speed in the $C^1$ norm. There is a Banach vector bundle $\mathcal{E}_S$ with fibre at $u$ given by $L^p(S, \Omega^{0,1}S \tensor u^*TM)$, and the quantity appearing in the inhomogeneous pseudoholomorphic map equation, $(du-Y)^{0,1}$, can be seen as a section of this bundle; thus the zero-set of the section is set of solutions that we are interested in, since any solutions lie in the image of $C^{\infty}(Z, M)$ by elliptic regularity.  At a zero $u$ of the section, we have a Cauchy Riemann problem $D_u: (T\mathcal{B}_S)_u = W^{1,p}(S,u^*TM,\cup_i u^*TL_i) \to L^p(S, \Omega^{0,1}S \tensor u^*TM)$ which is the linearization of $(du - Y)^{0,1}$ (see e.g. \cite[Section~II.8h]{seidel}). Given a Fredholm operator $D_u$ we write 
\begin{equation}
\label{eq:determinant-line-def}
\det D_u = \left(\Lambda^{top} \coker D_u\right)^\check \tensor \Lambda^{top} \ker D_u
\end{equation}
for its \emph{determinant line}. At any zero $u$ of the section, by elliptic regularity results, the kernel and cokernel of $D_u$ are both finite dimensional and independent of $p$, and thus the determinant lines of the linearizations patch together to a canonical line bundle with a canonical associated $\Z$-local system $\mathfrak{d}_{\M^C(\gamma_+, \gamma_-; \Sigma, \mathfrak{B}, K, J )}$ over $\M^C(\gamma_+, \gamma_-; \Sigma, \mathfrak{B}, K, J )$ called the \emph{determinant local system}. 


We say that a solution $u$ to the inhomogeneous pseudoholomorphic map equation is \emph{regular} if the linearized operator $D_u$ is surjective; we say that a perturbation datum for $S$ is \emph{regular} if every solution to the inhomogeneous pseudoholomorhic map equation for that perturbation datum is regular. The following fact asserting the existence of enough regular perturbation data is standard \cite{seidel}:

\begin{proposition}
	Regular perturbation data exist for any pair of closed exact Lagrangians in an exact symplectic manifold with convex boundary. In the same setting, given a boundary-marked Riemann surface equipped with regular boundary Floer data, a regular perturbation datum for this boundary-marked Riemann surface exists. $\qed$
\end{proposition}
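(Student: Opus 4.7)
The plan is to prove this by the standard universal moduli space and Sard--Smale argument, originally developed by Floer, Hofer, and Salamon in the Floer-theoretic setting. First I would introduce a separable Banach manifold $\mathcal{W}$ of admissible perturbation data $(K, J)$ on $S$ which agree with the given regular boundary Floer data on the strip-like ends (for concreteness, built from $C^\varepsilon$-small variations supported away from the ends, using Floer's $C^\varepsilon$-topology to ensure separability). Then for each choice of asymptotics $\{\gamma_\zeta\}$ I would form the universal moduli space
\[\mathcal{U}(\{\gamma_\zeta\}) := \{(u, (K, J)) : (K,J) \in \mathcal{W},\; u \in \M^C(\{\gamma_\zeta\}; S, \mathfrak{B}, K, J)\}\]
and show it is a Banach manifold. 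This reduces to proving that at every $(u,(K,J)) \in \mathcal{U}$ the full linearization, i.e.\ the $u$-linearization $D_u$ augmented by the derivative in $(K,J)$, is surjective onto $L^p(S, \Omega^{0,1}S \otimes u^*TM)$. Once $\mathcal{U}$ is a Banach manifold, the projection $\pi: \mathcal{U} \to \mathcal{W}$ is Fredholm of index equal to the virtual dimension of the moduli space, and Sard--Smale produces a comeager (hence dense) set of regular values; such a regular value is precisely a regular perturbation datum.

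The main obstacle will be establishing this \emph{universal surjectivity}. The strategy is standard: suppose $\eta \in L^q(S, \Omega^{0,1}S \otimes u^*TM)$, with $1/p + 1/q = 1$, pairs to zero with the image of the combined linearization. Elliptic regularity for Cauchy--Riemann operators applied to the formal adjoint equation forces $\eta$ to be smooth, and unique continuation forces $\eta \equiv 0$ once it vanishes on any nonempty open set. To derive such vanishing, I would locate somewhere-injective interior points of $u$ in the complement of the strip-like ends. Near such a point one can construct variations $\delta K$ supported in a small ball and pointing in any prescribed direction in $C^\infty(M, \mathcal{H})$, which pair non-trivially with $\eta$ unless $\eta$ vanishes identically near the point. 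The existence of somewhere-injective points is the place where the hypotheses on $\mathfrak{B}$ matter: distinct asymptotic chords, together with the fact that $L_0, L_1$ are closed exact Lagrangians and $M$ has convex boundary (so the maximum principle keeps $u$ inside a compact subset of the interior by Lemma II.7.5 of \cite{seidel}), ensure $u$ is nonconstant on each component, and the standard Floer--Hofer--Salamon analysis then yields a dense set of somewhere-injective points.

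For the Floer moduli spaces $\M^F(\gamma_-,\gamma_+; H, J)$ on the strip $Z$ one must allow only $s$-independent perturbations because of the $\mathbb{R}$-translation action, so the universal surjectivity argument requires a small modification: one exploits the genuine $t$-dependence of $H$ to distinguish nearby $s$-slices of $u$, and one works with somewhere-injective points in the $t$-direction as in Floer--Hofer--Salamon \cite{FHS}. For the inhomogeneous map equation on a boundary-marked Riemann surface $S$ with prescribed regular boundary Floer data, the compatibility condition along strip-like ends is compatible with the argument because exponential decay of $u$ and $\eta$ at the ends confines any potentially problematic behavior to a compact part of $S$ where variations of $(K, J)$ are unconstrained. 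Combining these pieces with Sard--Smale, regular perturbation data form a comeager, in particular nonempty, subset of $\mathcal{W}$, proving the proposition.
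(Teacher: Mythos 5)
The paper does not actually prove this proposition: it is stated as a standard fact with a citation to Seidel's book, and the $\qed$ closes the statement with no argument. Your proposal is a correct outline of exactly the standard proof being cited (universal moduli space in a separable $C^\varepsilon$-type Banach manifold of perturbation data, surjectivity of the augmented linearization via an annihilator $\eta$ killed by elliptic regularity and unique continuation, then Sard--Smale), so there is nothing to reconcile with the paper's own reasoning.

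One remark on efficiency rather than correctness: for the second half of the statement, where the perturbation datum $(K,J)$ is a one-form on $S$ valued in Hamiltonians, the variation $\delta K$ may be localized near a point $z$ of the \emph{domain} $S$ and chosen so that $(\delta Y)^{0,1}(z)$ is arbitrary in $(\Omega^{0,1}S \tensor u^*TM)_z$; hence $\eta$ is forced to vanish on the open part of $S$ away from the strip-like ends without any appeal to somewhere-injectivity of $u$. This is precisely why domain-dependent perturbation data make transversality easier than the translation-invariant Floer case, where, as you correctly note, one genuinely needs the Floer--Hofer--Salamon regular-point analysis because the perturbation cannot depend on $s$. Your argument is not wrong in invoking somewhere-injective points for the general surface, only stronger than necessary.
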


Moreover, 
\begin{proposition}
	\label{prop:hamiltonian-is-part-of-regular-floer-datum}
	Given a pair of closed exact Lagrangians $L_0, L_1$ in a Liouville domain $M$ and a time-dependent Hamiltonian $H \in C^\infty([0,1] \times M)$, if the image of $L_0$ under the time-1 flow of $H$ intersects $L_1$ transversely, then $H$ is the Hamiltonian part of a regular Floer datum for the Lagrangians. $\qed$
\end{proposition}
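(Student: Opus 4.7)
The plan is to fix the Hamiltonian $H$ and show that the set of $J \in C^\infty([0,1], \mathcal{J})$ for which $(H, J)$ is a regular Floer datum is non-empty; in fact it is Baire-generic. The transverse intersection hypothesis on $L_0 \cap \phi_H^1(L_1)$ guarantees that $\mathcal{C}(L_0,L_1;H)$ is a finite set of non-degenerate chords, so the only question is regularity of the linearized operator $D_u$ at Floer strips $u$.

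First I would fix a pair $\gamma_\pm \in \mathcal{C}(L_0,L_1;H)$ and set up the standard universal moduli space
\[
\widetilde{\mathcal{M}}^F_{\mathrm{univ}}(\gamma_-,\gamma_+) := \{(u, J) : J \in \mathcal{J}_\ell,\ u \in \widetilde{\mathcal{M}}^F(\gamma_-,\gamma_+;H,J)\},
\]
where $\mathcal{J}_\ell$ denotes the separable Banach manifold of $t$-dependent $C^\ell$ almost complex structures compatible with $\omega$ and of contact type near $\partial M$, for some large $\ell$. View Floer's equation as the zero set of a section $\bar\partial$ of the appropriate Banach bundle over $\mathcal{B}_S \times \mathcal{J}_\ell$. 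The linearization at $(u, J)$ takes the schematic form
\[
D_u^{\mathrm{univ}}(\xi, Y) = D_u \xi + \tfrac{1}{2} Y(u) \circ du \circ j,
\]
and the key analytic step is to show this enlarged operator is surjective at every $(u, J)$ in the universal zero set.

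The main obstacle --- and the standard one in this kind of argument --- is the somewhere injectivity of Floer strips. Following the arguments of Floer--Hofer--Salamon (and as laid out in \cite[Section~II.9]{seidel}), one shows that for any solution $u$ of Floer's equation connecting non-constant chords, there is a point $z \in Z$ where $u$ is injective and $du(z) \neq 0$, and such that the image $u(z)$ is disjoint from $L_0 \cup L_1 \cup \partial M$. Given this, a standard duality argument shows that if $\eta \in L^q$ (with $1/p+1/q=1$) pairs to zero with the image of $D_u^{\mathrm{univ}}$, then $\eta$ vanishes on a neighborhood of $z$ (by varying $Y$ supported near $u(z)$) and then on all of $Z$ by unique continuation for $D_u^*$; hence $D_u^{\mathrm{univ}}$ is surjective. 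The case of constant chord limits is handled by choosing $J$ so that the relevant $L_0, L_1$ meet transversely under the Hamiltonian flow, which is already part of our hypothesis.

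Once surjectivity of $D_u^{\mathrm{univ}}$ is established, the implicit function theorem makes $\widetilde{\mathcal{M}}^F_{\mathrm{univ}}(\gamma_-,\gamma_+)$ a separable Banach manifold, and the projection to $\mathcal{J}_\ell$ is Fredholm of the same index as $D_u$. Sard--Smale then yields a Baire subset of regular values $J \in \mathcal{J}_\ell$, over which $\widetilde{\mathcal{M}}^F(\gamma_-,\gamma_+;H,J)$ is a smooth manifold cut out transversely. Intersecting these Baire sets over the finitely many pairs $(\gamma_-,\gamma_+)$ gives a Baire-generic set of regular $J$. A standard Taubes trick upgrades regularity from $C^\ell$ to $C^\infty$: the regular set in $C^\infty([0,1],\mathcal{J})$ is a countable intersection of open dense subsets indexed by energy bounds, and is therefore non-empty. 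Any such $J$ completes $H$ to a regular Floer datum.
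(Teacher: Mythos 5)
Your argument is correct and is precisely the standard Floer--Hofer--Salamon/Sard--Smale transversality argument that the paper itself does not spell out: Proposition \ref{prop:hamiltonian-is-part-of-regular-floer-datum} is stated with a $\qed$ and deferred to the standard references (e.g.\ \cite{seidel}), since non-degeneracy of the chords is exactly the transverse-intersection hypothesis and regularity of the strips is achieved by a generic choice of $t$-dependent $J$. The only point to phrase carefully is the injectivity step: for Hamiltonian-perturbed strips one uses the Floer--Hofer--Salamon notion of regular points (injectivity of $s \mapsto u(s,t)$ at some interior $t$) rather than somewhere-injectivity in the unperturbed sense, but since your perturbations $Y$ are $t$-dependent this is exactly what the duality argument needs, and the constant-strip case is automatic from non-degeneracy of the chords.
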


Finally, it is known that 
\begin{proposition}
	Let $(\Sigma, \mathfrak{B}, K, J)$ be a choice of boundary marked Riemann surface equipped with regular boundary Floer data and regular perturbation data. Then every connected component of $\overline{\M}^C(\gamma_+, \gamma_-; \Sigma, \mathfrak{B}, K, J )$, with the induced stratification,
	is a topological manifold with corners. 
	
	Moreover the determinant local system on the open stratum of each connected component extends to the entire connected component and is canonically isomorphic to the corresponding orientation sheaf of that component. In particular, if $(\Sigma, \mathfrak{B}, K, J) = (H, J)^\#$ (see Remark \ref{rk:floer_gives_perturbation_data}), then there is a canonical isomorphism \[\mathfrak{d}_{\widetilde\M^F(\gamma_+, \gamma_-; H, J)} \simeq \R_{\partial/\partial s} \tensor \o_{ \M^F(\gamma_+, \gamma_-; H, J)},\]
	due to the definition of $\M^F$ as the quotient of $\M^C(\Sigma, \mathfrak{B}, K, J) = \widetilde{\M}^F(H, J)$ by the translation $\R$-action (see Appendix \ref{sec:orientation-conventions} for our conventions about determinant lines of quotients). $\qed$
\end{proposition}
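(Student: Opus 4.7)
The plan is to verify the three assertions — the manifold-with-corners structure, the extension of the determinant local system, and the canonical isomorphism to the orientation sheaf — in that order, since each relies on the previous.

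First I would establish the manifold-with-corners structure. On the codimension-zero open stratum $\M^C$, regularity of $(K,J)$ means that at every solution $u$ the linearization $D_u$ is surjective, so the implicit function theorem applied to the section $(du-Y)^{0,1}$ of $\mathcal{E}_S\to \mathcal{B}_S$ exhibits a smooth chart of dimension $\dim \ker D_u$; combined with elliptic regularity this shows $\M^C$ is a smooth manifold with the expected local dimension. Near a point of a codimension-$k$ stratum (a broken configuration $(u_1,\ldots,u_{k+1})$ of regular pieces whose asymptotic limits match), the standard Floer-theoretic gluing theorem (cf.\ \cite[Section~II.10]{seidel}) produces, via pregluing and Newton iteration, a homeomorphism from $A_{n,k}=[0,\infty)^k\times\R^{n-k}$ onto a neighborhood of the broken configuration, with the $k$ gluing parameters parametrizing the outgoing normal directions and zero gluing parameter recovering the broken configuration. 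These charts are manifestly compatible with the stratification inherited from the recursive descriptions of $\partial^j\overline{\M}^C$ given in Eq.~\eqref{eq:floer-bdry-1} and the displayed equations following it, so Definition \ref{def:top-manifold-with-corners} applies to each connected component.

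Second I would extend the determinant local system across the boundary strata. The point is that linear gluing for Fredholm Cauchy–Riemann operators with totally real boundary conditions, reviewed in Appendix \ref{sec:gluing_and_orientations}, produces, for any pregluing parameter, a canonical (up to sign determined by the setup) isomorphism
\[
\det D_{u_1}\tensor\cdots\tensor\det D_{u_{k+1}} \xrightarrow{\;\sim\;} \det D_{u_1\#\cdots\# u_{k+1}},
\]
and the right-hand side depends continuously on the gluing parameter and on the nearby smooth solutions, while the left-hand side is, stratum by stratum, the pullback of the determinant local system from products of lower-dimensional moduli spaces. Concretely, over each gluing chart $A_{n,k}$ this identifies the determinant line of the glued operator with the tensor product of the determinant lines of the pieces, and hence gives a unique extension of $\mathfrak{d}_{\M^C}$ as a $\Z$-local system on the closure. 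Uniqueness is forced by continuity on the open stratum, so the extension is canonical.

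Third I would construct the isomorphism with the orientation sheaf. On the open stratum this is just the content of the Fredholm theory: the implicit function theorem identifies $T_u\M^C$ with $\ker D_u$, and surjectivity of $D_u$ kills the cokernel, so $\det D_u=\Lambda^{\mathrm{top}}\ker D_u=\Lambda^{\mathrm{top}}T_u\M^C$, giving $\mathfrak{d}_{\M^C}\simeq\mathfrak{o}_{\M^C}$ tautologically. To see that this isomorphism extends to the compactification and agrees stratum-wise with the orientation sheaf of $\overline{\M}^C$ as a topological manifold with corners, one checks that the gluing isomorphism above is compatible, after identifying each factor on the left with the orientation line of the corresponding moduli space of pieces, with the product-and-boundary isomorphism of orientation sheaves of Section \ref{sec:topological-preliminaries}. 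This is the standard compatibility of linear and nonlinear gluing; its proof tracks the outward-pointing gluing-parameter direction and matches it with the boundary map of Eq.~\eqref{eq:boundary-map-orientation-lines}. The hard part of the whole proposition lies here: keeping the signs in the linear-gluing isomorphism consistent with the topological boundary map on orientation sheaves across all codimensions. Finally, for the case $(\Sigma,\mathfrak{B},K,J)=(H,J)^{\#}$, the translation $\R$-action on $\widetilde{\M}^F$ gives a nowhere-vanishing section $\partial/\partial s$ of $T\widetilde{\M}^F$ which lies in $\ker D_u$ at every solution, so $\det D_u\simeq \R_{\partial/\partial s}\tensor\Lambda^{\mathrm{top}}(\ker D_u/\R\tfrac{\partial}{\partial s})\simeq \R_{\partial/\partial s}\tensor\mathfrak{o}_{\M^F}$, which is the stated isomorphism.
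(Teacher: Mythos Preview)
Your outline is correct and follows the standard argument from the literature. Note, however, that the paper does not actually prove this proposition: the statement ends with a bare $\qed$ and no proof, signalling that the result is being quoted as known (implicitly from sources such as \cite{seidel}). So there is no ``paper's own proof'' to compare against; you have supplied the standard sketch that the author chose to omit.
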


\subsection{Orientation lines}
\subsubsection{Conventions on orientations}
\label{sec:orientation-conventions}
Given a vector space $V$ we have the associated determinant line $\det V = \Lambda^{top} V$. 

Given an exact sequence of vector spaces
\[ 0 \to V' \to V \to V'' \to 0 \]
there is a canonical (up to a multiplicative action of the positive real numbers) isomorphism of determinant lines
\[ \det V \simeq \det V' \tensor \det V'' \]
arising from a choice of decomposition $V \simeq V' \oplus V''$ coming from a choice of section $V'' \to V$. 

This gives a canonical isomorphism of associated orientation groups
\[|\det V| \simeq |\det V'| \tensor_\Z |\det V''|.\]

There is also an isomorphism 
\[ \det V^\check \tensor \det V \to \R \]
given by evaluation, which induces an isomorphism of orientation groups
\[ |\det V^\check | \tensor |\det V| \to \Z. \]

Given a pair of vector spaces $V, V'$, we have an isomorphism 
\[ V \times V' \simeq V' \times V \]
but the associated isomorphism of determinant lines 
\[ \det V \tensor \det V' \simeq \det V' \tensor \det V \]
differs from the standard braiding in the tensor category of vector spaces by a sign $(-1)^{\dim V \dim V'}$. In other words, if we equip an orientation group $|\det V|$ with the $\Z/2= \{\pm 1\}$ grading $(-1)^{\dim V}$, then the associated isomorphim of orientation groups becomes the braiding in the tensor category of free rank $1$ abelian supergroups. An important point to keep in mind is that isomorphisms like $|\langle v \rangle^\check| \tensor |\det V| \simeq |\det V|$ or $|\R_{a_1}| \tensor |\R_{a_2}| \simeq |\R|$ do \emph{not} respect the mod-2 grading of these orientation groups, altough they are well defined isomorphisms of groups. We will thus call them \emph{grading-shifting isomorphisms} of orientation groups. They will be necessary in the definition of the Floer complexes of this paper.

\subsubsection{Some recollections on the topology of the Lagrangian Grassmannian}

Let $W$ be a symplectic vector space, and let $LGr(W)$ be the space of (unoriented) Lagrangian subspaces of $W$. We introduce in this paragraph a certain fiber bundle $\widehat{LGr(n)}$ over $LGr(W)$. There are canonical cohomology classes
\begin{equation}
\begin{gathered}
\mu \in H^1(LGr(W), \Z), \; \text{the Maslov class, and} \\
w_2 \in H^2(LGr(W), \Z/2), \; \text{the second Steifel-Whitney class},
\end{gathered} 
\end{equation}
which define isomorphisms
$\pi_1(LGr(W))\simeq\Z$ and  $\pi_2(LGr(W))\simeq \Z/2$, respectively. We write 
\[LGr(n) := LGr(\C^n).\] 

We have the fiber sequences 
\[ \Z/2 \to Pin(n) \to O(n) \to B \Z/2 \to BPin(n) \to BO(n) \]
and
\[ O(n) \to U(n) \to LGr(n) \to BO(n)\]
the fiber product 
\begin{equation}\widehat{LGr(n)} := BPin(n) \times_{BO(n)} LGr(n)
\end{equation} is a $B \Z/2$-fibration over $LGr(n)$.  The map $\widehat{LGr(n)} \to LGr(n)$ is an isomorphism on $\pi_k$ for $k \neq 2$; however, $\pi_2(\widehat{LGr(n)})= 0$.  Thus, the based loop space of $\widehat{LGr(n)}$ has $\Z$ connected components indexed by the Maslov class, and each connected component is simply connected. 

By construction, picking a Pin structure on $L \in LGr(n)$ is equivalent to choosing a lift of $L$ to $\widehat{LGr(n)}$. 

\subsubsection{Definitions of orientation lines}
\label{sec:def-orientation-lines}
Let $(\widehat{\Lambda_1}, \widehat{\Lambda_2})$ be a pair of elements of $\widehat{LGr(W)}$ so that the corresponding pair of Lagrangian subspaces $(\Lambda_1, \Lambda_2)$ in $LGr(W)$ intersect transversely. The construction in the previous section shows that every 
component of the space $\P_{\widehat{\Lambda_1}, \widehat{\Lambda_2}}$ of paths from $\widehat{\Lambda_1}$ to $\widehat{\Lambda_2}$,  is simply connected. For any element $\tilde{\gamma} \in \P_{\Lambda_1, \Lambda_2}$ with image $\gamma$ in the space of paths 
in the Lagrangian Grassmannian, we can construct a Cauchy-Riemann problem as follows: let $S = H$, $E = H \times W$, $\psi$ be a nondecreasing smooth function $\R \to [0,1]$ with $\psi(s) = 0$ 
for $s << 0$, $\psi(s) = 1$ for $s >> 1$; $\Lambda_s = \gamma(\psi(s))$, the connection is the trivial connection; the strip-like end is just the unique conformal parameterization of the complement to a large semicircle in the upper half plane, the limiting bundle 
$E$ is also trivial with $\phi$ the identity and the remaining limiting data are the only ones possible for this setup. The resulting Fredholm operators $D$ have a locally constant index $ind(D) \in \Z$
and give rise to a determinant line $\det D \to \P_{\widehat{\Lambda_1}, \widehat{\Lambda_2}}$; the indices bijectively label the connected components of $\P_{\widehat{\Lambda_1}, \widehat{\Lambda_2}}$ by integers, and the determinant lines are trivial on each component
because each component is simply connected. Let $\P^n_{\widehat{\Lambda_1}, \widehat{\Lambda_2}}$ be the component of $\P_{\widehat{\Lambda_1}, \widehat{\Lambda_2}}$ corresponding to the index $n$ Fredholm operators. Let $\det^n(\widehat{\Lambda_1}, \widehat{\Lambda_2})$ denote the (trivializable) determinant line bundle $\det D|_{\P^n_{\widehat{\Lambda_1},\widehat{\Lambda_2}}}$, and let $o^n(\widehat{\Lambda_1}, \widehat{\Lambda_2})$ denote the free abelian group corresponding to the $\Z/2$-torsor of orientations of $\det^n(\widehat{\Lambda_1}, \widehat{\Lambda_2})$. 

There is a slight generalization of this construction which must be used to describe the Floer complex. Namely, suppose we are given a \emph{limiting datum}: a Hermitian vector bundle $E' \to [0,1]$ with symplectic form $\omega_{E'}$ and complex structure $J_{E'}$, a as well as a symplectic connection $\Grad_{E'}$ and a pair of of Lagrangian subspaces $\Lambda_{E'}^0 \subset E'|_0,  \Lambda_{E'}^1 \subset E'|_1$,  such that the $\Grad_{E'}$ parallel transport of $\Lambda_{E'}^0$ to $E'|_1$ is transverse to $\Lambda_{E'}|_1$. Moreover, assume that this limiting datum is \emph{equipped with a Pin structure}, namely, assume that $\Lambda_{E'}^k$ have given Pin structures for $k = 0,1$, corresponding to lifts $\widehat{\Lambda_{E'}^k}$ of $\Lambda_{E'}^k$ to $\widehat{LGr}(E'|_k)$.  Then we can define an analog of the orientation line by trivializing $E'$ using $\Grad_{E'}$, viewing this as a new limiting datum on a trivial bundle $W \times [0,1] \to [0,1]$ with trivial connection (for definiteness, choose $W = E'|_1$); extending this trivial bundle to $H$ with the same strip-like end and trivial connection, and choosing boundary conditions by choosing a path between the images of $\widehat{\Lambda_{E'}^k}$ in $W$ under this trivialization, all as in the previous paragraph.  This procedure is discussed in more detail in \cite[Section~II.11.l]{seidel}. The space of limiting data retracts onto the space of pairs of Lagrangian subspaces $(\widehat{\Lambda_{E'}^0}, \widehat{\Lambda_{E'}^1})$ equipped with Pin structures, and the determinant line of the operator $D$ defined by a limiting datum equipped with a Pin structure is canonically isomorphic to $o^{ind D}(\widehat{\Lambda_{E'}^0}, \widehat{\Lambda_{E'}^1})$ for the corresponding pair $(\widehat{\Lambda_{E'}^0},\widehat{\Lambda_{E'}^1})$.

A Hamiltonian chord $y$ for a Floer datum gives a limiting datum equipped with a Pin structure; let the corresponding pair of Lagrangian subspace equipped with Pin structures be $(\widehat{\Lambda^0_y}, \widehat{\Lambda^1_y})$, and call
\begin{equation}
o^n(y) := o^n(\widehat{\Lambda_1}, \widehat{\Lambda_2})
\end{equation}
the \emph{orientation line of $y$ of index $n$}.

\subsubsection{Shift line}
\label{sec:shift-line}
Finally, there is the \emph{shift line}, which is the determinant line of a certain family of index $1$ Cauchy-Riemann problems on $Z$. Given a limiting datum, define a Hermitian bundle over $Z$ by the pullback via projection to the second coordinate of the bundle $E' \to [0,1]$ given by the limiting datum, and equip it with the pullback connection. Choose Lagrangian boundary conditions by choosing the constant Lagrangian boundary condition on $\R \times \{1\}$ (so it is just $L_{E'}^1$, and choose the projection to the Lagrangian grassmannian of any loop from $\widehat{\Lambda_{E'}^0}$ to itself, composed with $\psi$, as the boundary condition for $\R \times \{0\}$, such that the associated Fredholm operator has index $1$. The determinant lines of these operators naturally form bundle over a connected component of $\Omega \widehat{LGr}(E'|_0)$ based at $\widehat{\Lambda_{E'}^0}$; this space is simply connected and so the line bundle is trivializable. The free abelian group corresponding to the torsor of orientations of this line bundle is the \emph{shift line} associated to the limiting datum. If the limiting datum arises from a Hamiltonian chord $y$ of a Floer datum equipped with Pin structures as in (\ref{eq:pin-structures-choice-floer}), then we will denote this line  by $o^S(y)$. 

\subsubsection{The essential trick}
\label{sec:gluing_and_orientations}
Suppose that we are goven a Cauchy-Riemann problem arising as the linearization of the pseudoholomorphic map equation at a map $u$, with the target a manifold of dimension $2n$, and the domain a Riemann surface equipped with boundary Floer data and perturbation data, such that $u$ is asymptoting to Hamiltonian chords $\{y_\zeta\}$ at each of the ends $\zeta$ of the Riemann surface, and the chords are equipped with Pin structures as in (\ref{eq:pin-structures-choice-floer}). Then it makes sense to apply the gluing theorem to $D_u$ and $\tensor_{\zeta \in \Sigma^+}$ $o^{n_\zeta}(y_\zeta)$, where the $n_\zeta$ are arbitrary integers. If the Lagrangian boundary conditions of the pseudoholomorphic map $u$ are equipped with Pin structures relative to the given Pin structures on $\{y_\zeta\}$, then the standard argument giving coherent orientations in Lagrangian Floer theory shows, if there was exactly one incoming point with asymptotic Hamiltonian chord $y_-$, then  
\begin{equation}
\label{eq:gluing-1}
o^{f_1}(y_-) \simeq  |\det D_u| \bigotimes \tensor_{\zeta \in \Sigma^+} o^{n_\zeta}(y_\zeta)
\end{equation} 
where $f_1 = \text{ind} D_u + \sum_{\zeta \in \Sigma^+} n_\zeta$. The argument for this described in \cite[Section~II.11.13]{seidel}. If there are no incoming points then
\[ \lambda^{top}(\Lambda_0) \simeq  |\det D_u| \bigotimes \tensor_{\zeta \in \Sigma^+} o^{n_\zeta}(y_\zeta). \]
if we choose $n_\zeta$ such that $\sum_\zeta n_\zeta = n - \text{ind} D_u$; because then the glued Cauchy Riemann problem on the disk has maslov index $n$ and a Pin structure along its boundary, and the space of such Cauchy-Riemann problems is simply connected and contains the one with the constant boundary condition at $u(0)$, for which a standard computation (e.g. \cite[Lemma II.11.7]{seidel}) shows that this Cauchy-Riemann problem gives rise to a surjective Fredholm operator with kernel isomorphic to the vector space $\Lambda_0$ via the evaluation map. The Pin structure along the boundary of $u$ usually comes from choices of global Pin structures on various Lagrangians. The essential idea of this paper is that one can do away with global Pin structures, and instead build all possible local choices of Pin structures into the algebra of Floer theory.

Now, in the above setting, the gluing theorem also gives canonical isomorphisms 
\[ o^{n_\zeta + 1}(y_\zeta) \simeq o^S(y_\zeta) \tensor o^{n_\zeta}(y_\zeta) \]
and thus canonical isomorphisms 
\begin{equation}
\label{eq:tensor-square}
o^{n_\zeta + 2}(y_\zeta) \simeq o^{n_\zeta}(y_\zeta)
\end{equation}
since the tensor square of a line is canonically trivial. The proof of the isomorphism in (\ref{eq:gluing-1}) also shows when the underlying Riemann surface is $Z$, we have that
\begin{equation}
\label{eq:equivariance-of-gluing}
\begin{minipage}{0.9 \textwidth}
the isomorphism (\ref{eq:gluing-1}) is equivariant under the canonical isomorphisms \[o^{f_1}(y_-) \simeq o^{f_1 + 2}(y_-), \; o^{n_+}(y_+) \simeq o^{n_++2}(y_+)\]
	\end{minipage}
\end{equation}

However, the isomorphims of the previous discussion are proven \emph{using a choice of Pin structure relative to the ends on the boundary conditions} of the pseudoholomorphic map; the isomorphisms are \emph{changed by a sign by changing the Pin structures on any boundary component}. Thus, suppose the Riemann surface under consideration is $Z$. Write $\Pi = |\Pi(TL|_{\R \times \{0\}})| \tensor |\Pi(TL|_{\R \times \{1\}})|$ for the tensor product of lines associated to choices of Pin structure along the Lagrangian bundles on each boundary component of the Cauchy-Riemann problem. If the Hamiltonian chords at the input and output are $y_-$ and $y_+$, one has an invariant isomorphism
\begin{equation}
\label{eq:floer-moduli-orientation-invariant}
o^{f_1}(y_-)\simeq |\det D_u| \tensor o^{f_1 - \ind D_u}(y_+) \tensor \p 
\end{equation} 
which does \emph{not} depend on a choice of Pin structure along the boundary of the pseudoholomorphic curve.
Similarly, if the Riemann surface is $H$ with the end as an input, then the isomorphism is 
\begin{equation}
\label{eq:floer-moduli-orientation-invariant-no-inputs}\lambda^{top}(u(0)) \simeq |\det D_u| \tensor o^{n - \ind D_u}(y_+) \tensor |\Pi(TL|_{\partial H})|.
\end{equation}

\subsection{Technical setup for Morse theory}
\label{sec:morse-theory-technical}
Let $f$ be a Morse function on a manifold $L$. Let $|p|$ denote the index of a critical point $p$ of $f$ and let $Crit(f)$ denote the set of critical points of $f$.  Then the following  facts are standard:

\begin{proposition}
	\label{prop:morse-theory-technical1}
	There exists a nonempty Banach manifold of smooth metrics on $L$ and a comeagre set of metrics in this manifold such that for any such metric $g$, $f$ is Morse-Smale, and therefore, for any pair $p$ and $q$ of critical points of $f$, the set
	\[\widetilde\M^M(p, q; f, g) = \{ \tilde\gamma \in C^\infty(\R, M); \frac{d\tilde\gamma}{dt} = - \Grad f; \lim_{s \to -\infty} \tilde\gamma(s) = p; \lim_{s \to + \infty} \tilde\gamma(s) = q\}\]
	topologized with the subspace topology under the inclusion into Moore paths on $L$ given by sending a Morse trajectory to the corresponding path parametrized by length, is a topological manifold of dimension $|p| - |q|$, and the quotient by the (continuous, fixed point free) $\R$ action on this set $\M^M(p, q; f,g) = \widetilde \M^M(p, q; f, g)/\R$ is the open stratum of a topological manifold with corners
	\[ \overline{\M}^M(p, q; f, g) = \bigcup_{k \geq 1} \M^M(q_0, q_1; f,g) \times \ldots \times \M^M(q_{i}, q_{i+1}; f, g) \times \ldots \times \M^M(q_{k-1}, q_k; f, g). \]
\end{proposition}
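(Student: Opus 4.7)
The plan is to assemble this from three standard ingredients, each of which is by now textbook material (see for example Schwarz's book on Morse homology or the treatments in Audin–Damian and Hutchings's notes). The Banach manifold of metrics in question can be taken to be the space of $C^k$ Riemannian metrics on $L$ for some sufficiently large $k$ (or a space of $C^\infty$ metrics with a suitable Fréchet-type topology, together with an application of Taubes' trick to obtain a comeagre smooth subset). First I would verify the Morse–Smale genericity: for each ordered pair $(p,q)$ of critical points one considers the universal moduli space
\begin{equation*}
\widetilde{\mathcal{M}}^{\text{univ}}(p,q) = \{(g, \tilde\gamma) : g \in \mathcal{G}^k,\ \tilde\gamma \text{ is a $-\Grad_g f$ trajectory from } p \text{ to } q\},
\end{equation*}
shows it is a Banach manifold by a standard implicit function theorem argument applied to the linearization of the gradient equation, and then applies the Sard–Smale theorem to the projection to $\mathcal{G}^k$. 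The regular values are exactly the metrics for which $W^u(p) \pitchfork W^s(q)$ for all pairs $(p,q)$, and the intersection over finitely many pairs of these comeagre sets is comeagre.

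Next I would establish the manifold structure on $\widetilde{\mathcal{M}}^M(p,q;f,g)$. For a Morse–Smale pair, $\widetilde{\mathcal{M}}^M(p,q;f,g)$ can be identified with $W^u(p) \cap W^s(q)$, which by transversality is a smooth submanifold of $L$ of dimension $|p| - |q|$. The parametrization by Moore paths (using arc length in $g$) is a homeomorphism onto its image, so the subspace topology from Moore paths agrees with the submanifold topology (topologically — we only claim a topological manifold structure). The free, proper $\R$-action by translation then yields a topological manifold $\mathcal{M}^M(p,q;f,g) = \widetilde{\mathcal{M}}^M(p,q;f,g)/\R$ of dimension $|p|-|q|-1$ when $p \neq q$.

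Finally, I would set up the compactification and the corner structure. Compactness of $\overline{\mathcal{M}}^M(p,q;f,g)$ as a topological space, together with the identification of its strata with tuples of unbroken trajectories $(q=q_0, q_1, \ldots, q_k = p)$ with strictly decreasing $f$-values, follows from the standard compactness argument: any sequence of trajectories has a subsequence converging to a broken trajectory because the energy $f(p)-f(q)$ is fixed and $L$ is compact (after restriction to a sublevel set). For the manifold-with-corners charts, I would invoke the gluing theorem: given a broken trajectory $(\gamma_1, \ldots, \gamma_k)$ with $\gamma_i \in \mathcal{M}^M(q_{i-1},q_i;f,g)$, one produces, for gluing parameters $(\rho_1, \ldots, \rho_{k-1}) \in (R,\infty]^{k-1}$ with $R$ sufficiently large, a family of unbroken-or-broken trajectories in $\overline{\mathcal{M}}^M(p,q;f,g)$ depending continuously on the $\gamma_i$ and the $\rho_j$, where $\rho_j = \infty$ recovers the broken situation. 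Pre-gluing followed by a contraction mapping / implicit function theorem argument in the unstable/stable manifolds gives these gluing maps, and a standard argument (injectivity of gluing plus surjectivity onto a neighborhood of the broken trajectory via exponential decay) shows the gluing maps are local homeomorphisms onto their image. Charts of the form $(\text{open in }\mathcal{M}^M(q_{i-1},q_i)) \times [R,\infty]^{k-1}$, after coordinate change $\rho \mapsto 1/\rho$, become charts modeled on $\R^{|p|-|q|-k} \times [0,1/R)^{k-1}$, which is exactly the local model for a topological manifold with corners in Definition \ref{def:top-manifold-with-corners}. The main technical step — and the only one that requires real care — is the gluing theorem producing those charts; the genericity and the smooth manifold structure on the open stratum are standard transversality, while the stratification itself is a consequence of the gluing maps being local homeomorphisms.
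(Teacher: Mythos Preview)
Your sketch is correct and follows the standard textbook approach. The paper itself does not prove this proposition: it is stated in Appendix~\ref{sec:morse-theory-technical} under the preamble ``the following facts are standard'' and left without proof, so there is no argument to compare against beyond noting that your outline is exactly the kind of standard argument the paper is implicitly invoking.
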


\begin{proposition}
	\label{prop:morse-theory-technical2}
	Choose a smooth function $\phi: \R \to [0,1]$ such that $\phi$ is nondecreasing and has $\phi(s) = 0$ for $s <<0$ and $\phi(s) = 1$ for $s >>0$. 
	Simiarly, given $f_0, f_1 \in C^\infty(L)$ Morse-Smale with respect to a metric $g$, there exists a nonempty Banach manifold of functions $f_t \in C^\infty([0,1], C^\infty(L))$ agreeing with $f_0$ and $f_1$ at the ends of the interval, together with a comeagre subset of this manifold, such that for any $f_t$ in this subset,
	\begin{equation}
	\label{eq:perturbed_morse_equation}\M^{MC}(p, q; f_t, g) = \{ \tilde\gamma \in C^\infty(\R, M); \frac{d\tilde\gamma}{dt} = - \Grad f_{\phi(t)}; \lim_{s \to -\infty} \tilde\gamma(s) = p; \lim_{s \to + \infty} \tilde\gamma(s) = q\} 
	\end{equation}
	topologized with the subspace topology under its inclusion into Moore paths on $L$ given by associating to a gradient trajectory the corresponding path parametrized by length, is a topological manifold of dimension $|p|-|q|$ for all $p \in Crit(f_0)$, $q \in Crit(f_1)$, and its compactification is a topological manifold with corners with codimension $1$ strata of the form 
	\[ \overline\M^M(p, p'; f_0, g) \times \overline\M^{MC}(p', q; f_0, g), \overline\M^{MC}(p, q'; f_t, g)\times \overline\M^M(p, q',q; f_1, g).\]
\end{proposition}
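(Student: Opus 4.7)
The plan is to prove this by the standard Banach manifold and Sard--Smale technique, exactly parallel to the continuation moduli spaces in Floer theory used elsewhere in the paper. First, I would fix $1 < p < \infty$ (or work in a $C^\infty$ exponentially weighted setting) and form the Banach manifold $\mathcal{B}_{p,q}$ consisting of paths $\tilde\gamma \colon \mathbb{R} \to L$ that are $W^{1,p}_{\mathrm{loc}}$, asymptote exponentially to $p$ as $s \to -\infty$ and to $q$ as $s \to +\infty$. Over $\mathcal{B}_{p,q}$ sits a Banach vector bundle $\mathcal{E}$ with fiber $L^p(\mathbb{R}, \tilde\gamma^*TL)$, and the perturbed gradient equation defines a smooth Fredholm section
\[ \mathcal{F}(\tilde\gamma) \;=\; \frac{d\tilde\gamma}{ds} \;+\; \Grad f_{\phi(s)}(\tilde\gamma(s)). \]
The linearization of $\mathcal{F}$ at a solution is the operator $D_{\tilde\gamma} = \partial_s + A(s)$ where $A(s)$ converges exponentially at $\pm\infty$ to the hyperbolic self-adjoint operators $\mathrm{Hess}_p(f_0)$ and $\mathrm{Hess}_q(f_1)$; its Fredholm index is exactly $|p| - |q|$ by the usual spectral flow computation.

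Next I would establish transversality by passing to the universal moduli space parametrized by a suitable separable Banach space $\mathcal{P}$ of interpolations $f_t$ agreeing with $f_0, f_1$ at $t = 0, 1$, equipped with a Floer $C^\varepsilon$-type norm that makes smooth interpolations dense. At a zero $(\tilde\gamma, f_t)$, the full linearization is $(D_{\tilde\gamma}, \;\eta \mapsto \Grad \eta(\phi(s), \tilde\gamma(s)))$, and it is surjective by the standard argument: any cokernel element $\xi \in L^q$ orthogonal to the image satisfies the formal adjoint equation along $\tilde\gamma$, and unique continuation together with the freedom to perturb $f_t$ on a small open set where $\phi(s) \in (0,1)$ and $\tilde\gamma$ is locally injective force $\xi \equiv 0$. (If $\tilde\gamma$ is not injective, one argues as in the standard Morse-theoretic references using that $p \neq q$ and the perturbation is generic.) The Sard--Smale theorem applied to the projection from the universal moduli space to $\mathcal{P}$ then yields the desired comeagre set of regular $f_t$, for which each $\M^{MC}(p,q; f_t, g)$ is a smooth manifold of dimension $|p|-|q|$. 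The topology induced from the Moore path embedding agrees with the $W^{1,p}$ topology on the open stratum by interior elliptic regularity for the ODE.

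For the compactness and the boundary description, I would use the standard Morse-theoretic energy identity
\[ \frac{d}{ds}\,f_{\phi(s)}(\tilde\gamma(s)) \;=\; -\bigl|\Grad f_{\phi(s)}(\tilde\gamma(s))\bigr|^2 \;+\; \phi'(s)\,\bigl(\partial_t f_t\bigr)\bigl(\phi(s), \tilde\gamma(s)\bigr), \]
which since $\phi' \geq 0$ has compact support and $L$ is compact, bounds the total energy $\int |\Grad f_{\phi(s)}|^2 \,ds$ uniformly. A standard Arzel\`a--Ascoli argument on compact subintervals, together with gradient flow bounds for $f_0$ and $f_1$ outside a large compact interval in $s$, shows that any sequence in $\M^{MC}(p,q; f_t, g)$ subconverges to a broken trajectory consisting of a (possibly broken) negative gradient flow for $f_0$ from $p$ to some $p' \in \mathrm{Crit}(f_0)$, followed by an $\M^{MC}$-trajectory from $p'$ to some $q' \in \mathrm{Crit}(f_1)$, followed by a (possibly broken) $f_1$-flow from $q'$ to $q$. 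Breaking occurs only at these two types of interfaces because the translation $\mathbb{R}$-action is broken by $\phi$: the moduli space $\M^{MC}$ itself has no translation, so internal bubbling in the middle would have to collapse back to an $\M^{MC}$-trajectory.

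The topological-manifold-with-corners structure near a broken configuration follows from a standard Newton-iteration pregluing argument, applied independently at each breaking point. The main obstacle I anticipate is not any single analytic step but the bookkeeping required to verify that the gluing map is a local homeomorphism onto a neighborhood of the stratum, realizing the claimed codimension-one strata $\overline\M^M(p,p';f_0,g) \times \overline\M^{MC}(p',q;f_t,g)$ and $\overline\M^{MC}(p,q';f_t,g) \times \overline\M^M(q',q;f_1,g)$ as faces. Since we work with topological (not smooth) manifolds with corners per Definition \ref{def:top-manifold-with-corners}, the transversality of the stratum intersections is automatic from the Fredholm indices, and the local model $[0,\infty)^k \times \mathbb{R}^{n-k}$ is produced by the usual gluing chart.
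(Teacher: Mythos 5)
The paper does not actually prove this proposition: it is stated in Appendix \ref{sec:morse-theory-technical} under the preamble ``the following facts are standard,'' with the manifold-with-corners structure of the related compactified Morse moduli spaces deferred to references such as \cite{burghlea} and \cite{wehrheim}. So there is no in-paper argument to compare yours against; what you have written is a correct outline of the standard Sard--Smale proof, and it is the argument one would expect to find in those references. Your Fredholm setup, the index computation via spectral flow, the energy identity exploiting that $\phi'$ has compact support, and the identification of the codimension-one strata (where you silently correct two typos in the paper's statement, which writes $\overline\M^{MC}(p',q;f_0,g)$ for $\overline\M^{MC}(p',q;f_t,g)$ and $\overline\M^M(p,q',q;f_1,g)$ for $\overline\M^M(q',q;f_1,g)$) are all right.

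Two points deserve more care than your sketch gives them. First, the transversality step: because $f_t$ is pinned to $f_0$ and $f_1$ at $t=0,1$ and $\phi$ is locally constant outside a compact interval, the admissible perturbations only act on a compact $s$-interval $I$ where $\phi'$ can be nonzero, so the unique-continuation argument must be run on $\tilde\gamma|_I$; the degenerate case is a trajectory that is constant on $I$ (hence constant everywhere by ODE uniqueness), which forces $p=q$ to be critical for every $f_{\phi(s)}$ and is excluded for generic interpolations but must be ruled out explicitly, since it cannot be perturbed away from inside the universal moduli space. Second, the claim that the Moore-path subspace topology agrees with the $W^{1,p}$ topology requires the exponential decay of solutions at the ends (so that trajectories have finite length and the length parametrization is continuous in the solution); this is true but is an input, not a consequence of interior regularity alone. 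Neither issue is a gap in the mathematics, only in the level of detail, which is consistent with the paper's own decision to cite the result rather than prove it.
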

\begin{proposition}
	\label{prop:morse-theory-technical3}
	Finally, given any pair of $f_t, f'_t$ as in Proposition \ref{prop:morse-theory-technical2} above, with asymptotics $(f_t)_{t = \eta} = (f'_t)_{t = \eta} = f_\eta$ for $\eta = 0, 1$, there exists a function $f_{s,t} \in C^\infty([0,1]^2_{s,t}, \C^\infty(M))$ agreeing with $f_t$ for $s=0$ and with $f'_t$ for $s = 1$, and such that $f_{s, 0} = f_0 = f'_0, f_{s, 1} = f_1 = f'_1$, where moreover, the set
	\[ \overline{\M}^H(p, q; f_{s,t}, g) := \cup_{s \in [0,1]}\overline\M^{MC}(p, q; f_{s,t}, g) \]
	topologized with the weakest topology making the projection to $s$ continuous and making all the subsets $\overline\M^{MC}(p, q; f_{s,t}, g)$ into subspaces, is a topological manifold with corners, with codimension $1$ strata given by 
	\begin{gather*}
	\overline\M^{MC}(p, q; f_t, g), \overline\M^{MC}(p, q; f'_t, g), \\\overline\M^M(p, p'; f_0, g) \times \overline{\M}^H(p', q; f_{s,t}, g),\\\overline{\M}^H(p, q'; f_{s,t}, g)\times \overline\M^M(q', q; f_1, g). 
	\end{gather*}
\end{proposition}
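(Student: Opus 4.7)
The plan is to prove this by a standard parametrized transversality argument followed by a compactness and gluing analysis, closely modeled on the proof of Proposition \ref{prop:morse-theory-technical2}. First, I would introduce a Banach manifold of admissible homotopies $f_{s,t}$ extending the prescribed boundary data on $\partial([0,1]^2_{s,t})$: namely, the conditions $f_{s,t}|_{s=0}=f_t$, $f_{s,t}|_{s=1}=f'_t$, $f_{s,0}=f_0$, $f_{s,1}=f_1$. For any such $f_{s,t}$, one then considers the parametrized universal moduli space
\[
\mathcal{U}(p,q) = \{(s,\tilde\gamma) : s \in (0,1),\ \tilde\gamma \in \M^{MC}(p,q;f_{s,t},g)\}
\]
together with the augmented space $\mathcal{U}_{\mathrm{univ}}(p,q)$ which varies over all admissible $f_{s,t}$. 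The usual Floer/Morse argument shows the linearized operator of the universal problem is surjective: infinitesimal variations of $f_{s,t}$ supported away from critical points can hit any cokernel element, using unique continuation for the linearized gradient equation.

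The Sard--Smale theorem then produces a comeagre subset of admissible $f_{s,t}$ for which the projection $\pi:\mathcal{U}_{\mathrm{univ}}(p,q)\to (\text{homotopies})$ is a submersion along the zero set, making $\overline{\M}^H(p,q;f_{s,t},g)\setminus\partial$ a topological manifold whose dimension is one more than that of $\overline{\M}^{MC}(p,q;f_t,g)$, reflecting the extra parameter $s$. The topology agrees with the one induced from the inclusion into Moore paths on $L$ crossed with $[0,1]$ because the length reparametrization varies continuously in both $s$ and the trajectory.

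The compactness/gluing step is the main obstacle. Following the standard pattern, any sequence of parametrized trajectories with $s_n \in [0,1]$ and bounded energy has a Gromov--Morse limit: $s_n\to s_\infty$, and the trajectory component breaks into a broken trajectory for $f_{s_\infty,t}$ possibly concatenated on the incoming end with a broken gradient trajectory of $f_0$ and on the outgoing end with a broken gradient trajectory of $f_1$ (this is where Morse--Smale--ness of $(f_0,g)$ and $(f_1,g)$ is used). If $s_\infty\in(0,1)$ this yields the product strata $\overline{\M}^M(p,p';f_0,g)\times\overline{\M}^H(p',q;f_{s,t},g)$ and $\overline{\M}^H(p,q';f_{s,t},g)\times\overline{\M}^M(q',q;f_1,g)$; if $s_\infty\in\{0,1\}$ one obtains the strata $\overline{\M}^{MC}(p,q;f_t,g)$ and $\overline{\M}^{MC}(p,q;f'_t,g)$ respectively. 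Combined with gluing theorems for the Morse equation at the free interval endpoints, these identifications give the corners chart near each broken configuration: locally it looks like $[0,\epsilon)^k\times\mathbb{R}^{\dim-k}$, where the corner variables are the lengths of the broken components plus, when relevant, the distance of $s$ from $\{0,1\}$.

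The one subtlety I expect to require care is ensuring transversality of the \emph{constrained} homotopy space, since the conditions on $\partial[0,1]^2_{s,t}$ fix $f_{s,t}$ on all four sides of the square. One verifies that the fixed boundary values do not obstruct the surjectivity argument because any broken trajectory in the interior $s\in(0,1)$ passes through an open region of $L\times(0,1)^2_{s,t}$ where the homotopy is still free to vary; the standard bump function construction on this open set produces the required infinitesimal variations. Granted this, the Banach manifold implicit function theorem applied stratum by stratum yields the topological manifold with corners structure and the claimed description of the codimension one strata. $\qed$
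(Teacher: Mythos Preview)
The paper does not actually supply a proof of this proposition: it is stated in the appendix as one of several ``standard facts'' about Morse-theoretic moduli spaces, with no argument given. Your sketch follows precisely the expected standard route --- parametrized Sard--Smale transversality over a Banach manifold of homotopies with prescribed boundary values, followed by Morse--Gromov compactness and gluing --- and correctly identifies the codimension-one strata and the one genuine subtlety (that the homotopy is fixed on all four sides of $[0,1]^2$, which is harmless because variations supported in the interior suffice for surjectivity). There is nothing to compare against in the paper, and your outline is a sound justification of the stated fact.
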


\subsection{Orienting Morse-theoretic moduli spaces}
\label{sec:morse-theory-orientations}
Unlike in Floer homology, orienting Morse-theoretic moduli spaces is straightforward. Indeed, the moduli space $\widetilde\M^M(p, q; f, g)$ recalled in Prop. \ref{prop:morse-theory-technical1} is just the fiber product of the interior of the \emph{unstable manifold} $W^u(p)$ of $p$, and the \emph{stable manifold} $W^s(q)$ of $q$, along their respective inclusion maps to $L$. The map $\widetilde\M^M(p, q; f, g) \to L$ coming from this description as a fiber product can be taken to be the map $\tilde{\gamma} \mapsto \tilde{\gamma}(0)$. The stable and unstable manifolds have parallelizable tangent bundles: parallel transport with respect to $g$ along gradient trajectories gives canonical isomorphisms
\[ TW^u(p) \simeq T^-_p \times W^u(p),\]
\[ TW^s(q) \simeq T^+_q \times W^s(q),\]
where $T^\pm_p$ is the eigenspace of the Hessian of $f$ at $p$ of the corresponding sign (see Figure \ref{fig:morse-notation}). We write $o(T^\pm_p)$ for the orientation lines of these eigenspaces; trivializations of these lines biject with orientations of the corresponding (un)stable manifolds. By the characterization of $\widetilde\M^M(p, q; f, g)$ as a fiber product, the orientation line if a point $\tilde{\gamma} \in \widetilde\M^M(p, q; f, g)$ is canonically isomorphic to $ o(T^-_p) \tensor o(T^+_q) \tensor \det TL_{\tilde{\gamma}(0)}^\check$.  Now there is also a canonical isomorphism $o(T^-_q)^\check \simeq o(T^+_q) \tensor \det TL_q^\check$. Along each fiber of $\widetilde\M^M(p, q; f, g)$ over 
$\overline\M^M(p, q; f, g)$, the bundle $\det L_{\tilde{\gamma}(0)}$ is canonically trivialized by parallel transport. Thus, the orientation local system of  $\overline\M^M(p, q; f_t, g_t)$ canonically isomorphic to the trivial local system with fiber $o(T^-_p) \tensor o(T^-_q)^\check$,  and the orientation local system of $\M^M(p, q; f, g)$ is canonically isomorphic to the trivial local system with fiber
\[ \Z_{\partial/\partial t}^\check \tensor o(T^-_p)\tensor o(T^-_q)^\check.  \]

Similarly, there is a canonical trivialization  
\begin{equation}
\mathfrak{o}_{\overline{\M}^{MC}(p, q; f_t, g)} \simeq o(T^-_p)^{f_0} \tensor (o(T^-_p)^{f_0})^\check,
\end{equation}
 where $o(T^-_r)^{f_i}$ is the determinant of the negative eigenspace of the Hessian of $f_i$ at $r \in Crit(f_i)$. Finally, the standard description of orientation lines of parametrized moduli spaces means that fixing a choice of orientation of the parameter space $[0,1]_s$ gives an isomorphism between the orientation line of the tangent space to $u \in \overline{\M}^H(p, q; f_{s,t}, g)$ (with notation $f_{s,t}$ as in Prop. \ref{prop:morse-theory-technical3}) with the orientation line of the linearization of the perturbed Morse equation (\ref{eq:perturbed_morse_equation}) satisfied by $u$, which is in turn canonically isomorphic to $\det D_p^{f_0} \tensor (\det D_q^{f_1})^\check$. (See also, for example, the discussion after Prop. \ref{prop:homotopy-moduli-space-compactification}.) We orient the parameter space $[0,1]_s$ by requiring that $\partial/\partial s$ is positively oriented.

\subsection{A very brief review of gradings}
\label{sec:gradings-review}
Usually, the Floer complex $CF(L_0, L_1)$ is \emph{not graded}. We briefly recall the theory of gradings on symplectic manifolds \cite{seidel-graded-lagrangians}. We use this in Section \ref{sec:natural_home} to grade Hamiltonian chords, and later in Section \ref{sec:floer_complex} to grade the Floer complex.

A symplectic manifold $M$ has a bundle $LGr(M)$ of Lagrangian Grassmannians. A $\Z/n\Z$ grading (for $n=2,3\ldots, \infty$ where for $n= \infty$ we set $\Z/\infty \Z := \Z$) on $M$ is a fiber bundle $LGr'(M) \to M$ with connected fibers, equipped with a bundle map $LGr'(M) \to LGr(M)$ which is an $n$-fold covering space on each fiber. Thus, for example, every symplectic manifold has a $\Z/2\Z$ grading given by the bundle of oriented Lagrangian suspaces of the tangent bundle. Given a choice of grading $LGr'(M)$ on $M$, a grading on the Lagrangian $L \subset M$ with respect to the chosen grading on $M$ is a lift of the classifying map of the Lagrangian $L \to LGr(M)$ to $LGr'(M)$. Given two Lagrangians that are graded with respect to a $\Z/n\Z$-grading on a symplectic manifold $M$, one can define a $\Z/n\Z$ grading on the Hamiltonian chords between the two Lagranians, and the indices of the pseudoholomorphic curves between these two chords will respect the gradings, in the sense that the index of any such curve modulo $n$ will be equal to the difference of the gradings of the chords at its endpoints.

\begin{figure}
	\centering
	\resizebox{\textwidth}{!}{
		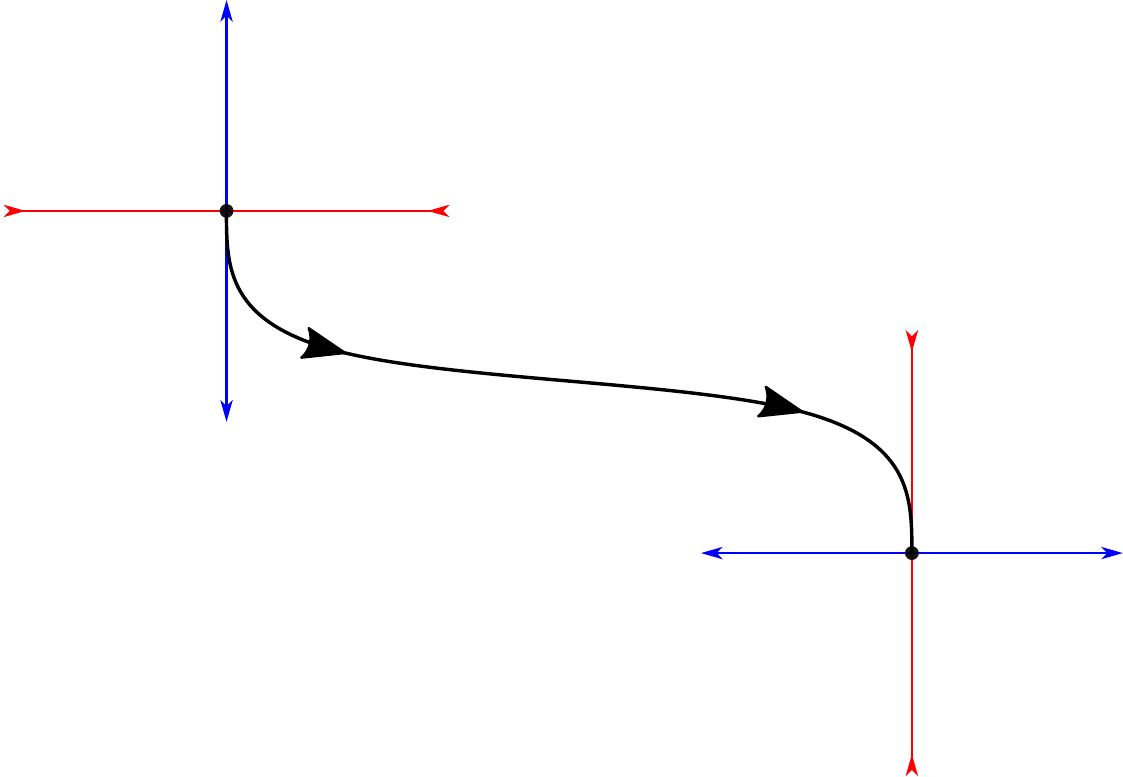}
	\caption{An illustration of how Morse-theoretic moduli spaces are oriented.}
		\label{fig:morse-notation}
\end{figure}

\bibliographystyle{amsplain}
\bibliography{signs_final}

\end{document}